\newcommand{\clr}{rgb:black,1;blue,4;red,1}
\newcommand{\ob}[1]{\mathsf{#1}}
\newcommand{\Uqpn}{\mathrm{U}_q(\mathfrak{p}_n)}
\newcommand{\Uq}{\mathrm{U}_q}
\newcommand{\lcap}{
\begin{tikzpicture}[baseline = 3pt, scale=0.5, color=\clr]
        \draw[-,thick] (1,0) to[out=up, in=right] (0.53,0.5) to[out=left, in=right] (0.47,0.5);
        \draw[-,thick] (0.49,0.5) to[out=left,in=up] (0,0);
\end{tikzpicture}
}
\newcommand{\lcup}{
\begin{tikzpicture}[baseline = 6pt, scale=0.5, color=\clr]
        \draw[-,thick] (1,1) to[out=down, in=right] (0.53,0.5) to[out=left, in=right] (0.47,0.5);
        \draw[-,thick] (0.49,0.5) to[out=left,in=down] (0,1);
\end{tikzpicture}
}
 \providecommand{\og}{``}
\providecommand{\fg}{''} \providecommand{\smfandname}{and}
\def\crulefill{\leavevmode\leaders\hrule height 1pt\hfill\kern 0pt}
\long\def\QUERY#1{%
\leavevmode\newline%
\noindent$\star\star\star$\thinspace\textsf{Comment/Query}\crulefill\newline%
   \space #1\newline\hbox to 120mm{\crulefill}$\star\star\star$\newline}
\newtheorem{Theorem}{Theorem}[section]
\newtheorem{Lemma}[Theorem]{Lemma}
\newtheorem{Cor}[Theorem]{Corollary}
\newtheorem{Prop}[Theorem]{Proposition}
\theoremstyle{definition}
\newtheorem{Defn}[Theorem]{Definition}
\numberwithin{equation}{section}
\theoremstyle{definition}
\def\enumerate{\begingroup\ifnum\@enumdepth>3\@toodeep\else
      \advance\@enumdepth\@ne
      \edef\@enumctr{enum\romannumeral\the\@enumdepth}%
      \topsep\z@\parskip\z@
      \list{\csname label\@enumctr\endcsname}
        {\@nmbrlisttrue\let\@listctr\@enumctr
         \parsep\z@\itemsep\z@\topsep\z@
         \setcounter{\@enumctr}{0}
         \def\makelabel##1{\hss\llap{\rm ##1}}
       }\fi}
\let\bar=\overline
\let\epsilon=\varepsilon
\def\({\big(}
\def\){\big)}
\def\0{\underline{0}}
\DeclareMathOperator{\End}{End}
\def\Std{\mathscr{T}^{std}}
\def\s{\mathfrak s}
\def\t{\mathfrak t}
\def\u{\mathfrak u}
\def\Hom{\text{Hom}}
\def\U{\mathbf U}
  \gdef\set#1{\mathinner{\lbrace\,{\mathcode`\|"8000%
                                   \let|\midvert #1}\,\rbrace}}
  \gdef\seT#1{\mathinner{\Big\lbrace\,{\mathcode`\|"8000%
                                   \let|\midverT #1}\,\Big\rbrace}}
\def\midvert{\egroup\mid\bgroup}
\def\midverT{\egroup\,\Big|\,\bgroup}
\def\Set[#1]#2|#3|{\Big\{\ #2\ \Big| \
           \vcenter{\hsize #1mm\centering #3}\Big\}}
\def\Hom{{\rm Hom}}
\def\Set{{\rm Set}}
\newcommand{\C}{\mathcal{C}}
\def\Std{\mathscr{T}^{std}}%
\def\s{\mathfrak s}%
\def\t{\mathfrak t}%
\def\u{\mathfrak u}%
\def\Hom{\text{Hom}}%
\def\U{\mathbf U}%
\def\textsf#1{{\textit{#1}}}%
\definecolor{white}{HTML}{FFFFFF}
\definecolor{darkblue}{HTML}{111199}
\definecolor{darkgreen}{HTML}{336633}
\definecolor{darkred}{HTML}{993333}
\definecolor{darkpurple}{HTML}{995599}
\begin{document}
\title[{\tiny  The periplectic $q$-Brauer category}]{The periplectic $q$-Brauer category}
\author{ Hebing Rui and  Linliang Song} \address{H.R.  School of Mathematical Science, Tongji University,  Shanghai, 200092, China}\email{hbrui@tongji.edu.cn}
\address{L.S.  School of Mathematical Science, Tongji University,  Shanghai, 200092, China}\email{llsong@tongji.edu.cn}
\subjclass[2010]{17B10, 18D10, 33D80}
\keywords{periplectic $q$-Brauer category, quantum supergroup of type $P$, block}
\thanks{ H. Rui is supported  partially by NSFC (grant No.  11971351).  L. Song is supported  partially by NSFC (grant No.  12071346) and  China Scholarship Council. }
\date{\today}
\sloppy
\maketitle

\begin{abstract} We introduce the periplectic $q$-Brauer category over an integral domain of characteristic not $2$. This  is a  strict monoidal supercategory and   can be considered as a  $q$-analogue of the periplectic  Brauer category in \cite{KT}.   We prove that the  periplectic $q$-Brauer category admits a split triangular decomposition in the sense of \cite{BS}. When the ground ring  is an algebraically closed field, the category of locally finite dimensional right modules
for the periplectic $q$-Brauer category is an upper finite fully stratified category in the sense of \cite{BS}.  We prove that   periplectic $q$-Brauer algebras defined in \cite{AGG} are isomorphic to  endomorphism algebras in the periplectic $q$-Brauer category.  Furthermore, a  periplectic $q$-Brauer algebra  is a standardly based algebra  in the sense of~\cite{DR}. We  construct  Jucys-Murphy basis for any  standard module of  the   periplectic $q$-Brauer algebra with respect to a family of commutative elements called Jucys-Murphy elements. Via them, we  classify blocks for both  periplectic  $q$-Brauer category and    periplectic $q$-Brauer algebras in generic case. Our result shows that  both  periplectic  $q$-Brauer category and    periplectic $q$-Brauer algebras are always not semisimple over any algebraically closed  field.
   \end{abstract}

\maketitle
\section{Introduction} The  periplectic Brauer algebra was introduced by Moon when he studied tensor product of the natural module for  the periplectic Lie superalgebra $\mathfrak p_n$~\cite{Moon}.  Later on,  Kujawa and Tharp \cite{KT} introduced the periplectic Brauer category and proved that any  periplectic Brauer algebra appears  as an endomorphism algebra of a corresponding object in the periplectic Brauer category.
 Since then the representation theory of periplectic Brauer algebras have been studied extensively~\cite{Col,Col1,Col2,BCV}, including the classification of  blocks, decomposition numbers and the related (weak) categorification. These results have important applications in the representation theory of $\mathfrak p_n$. See \cite{Col} for  the classification of  blocks in  the category of finite dimensional  $\mathfrak p_n$-modules.

 There are many directions to generalize previous results. For example,
 the affine version of periplectic Brauer algebra (resp., periplectic Brauer category ) was introduced in \cite{CP} (resp.,\cite{B+9}) and    a
 general monoidal supercategory related to representations of $\mathfrak p_n$ was introduced in \cite{DKM} and has applications in the study of the Howe duality between $\mathfrak p_m$ and $\mathfrak p_n$~\cite{DKM1}. In \cite{AGG}, Ahmed-Grantcharov-Guay introduced the    periplectic $q$-Brauer algebra when they studied the tensor product of the natural module of the quantized enveloping superalgebra   $\U_q(\mathfrak p_n)$.
The aim of this paper is to introduce the periplectic $q$-Brauer category $\mathcal B$ and to study its representation theory.
As expected in  the introduction of  \cite{DKM}, the periplectic $q$-Brauer category is a $q$-analogue of the periplectic  Brauer category and the    periplectic $q$-Brauer algebra appears as an endomorphism algebra of a corresponding object in the periplectic $q$-Brauer category. Motivated by \cite{Col}, we expect that  the periplectic $q$-Brauer category will be a useful tool  to study finite dimensional $\U_q(\mathfrak p_n)$-modules. Details will be given elsewhere.

We briefly explain  this paper as follows. After introducing the notion of the   periplectic $q$-Brauer category  $\mathcal B$, we prove that any morphism set
in $\mathcal B$ is free over an integral  domain $\Bbbk$ of characteristic not $2$. As an application, we prove that the  periplectic $q$-Brauer algebra $\mathcal B_{q,l}$ is isomorphic to the endomorphism algebra of the object $l$ for any natural number $l$. Moreover, we prove that   $\mathcal B$ admits a split triangular decomposition in the sense of \cite[Remark~5.32]{BS}. When the ground ring $\Bbbk$ is an algebraically closed field, the previous result implies that the category of locally finite dimensional right $\mathcal B$-module is an upper finite fully stratified category in the sense of \cite[Definition~3.34]{BS}.  When $q$ is not a root of unity,  the category of locally finite dimensional right $\mathcal B$-module is an upper finite highest weight category in the sense of \cite[Definition~3.34]{BS}. In this case, we classify blocks of   $\mathcal B$. For this purpose, we construct a standard basis of $\mathcal B_{q,l}$ in the sense of \cite[Definition~1.2.1]{DR}. By studying classical branching rule for $\mathcal B_{q,l}$, we construct a nice   basis, called Jucys-Murphy basis  for  any standard module in the category of right $\mathcal B_{q,l}$-modules.
We also construct a family of commutative elements of $\mathcal B_{q,l}$ so that each of them acts upper-triangularly on the Jucys-Murphy basis of a standard module.  Therefore, these commutative elements are Jucys-Murphy elements of  $\mathcal B_{q,l}$ with respect to the Jucys-Muphy basis of standard modules for $\mathcal B_{q,l}$ in the sense of \cite[Definition~2.4]{MA1}\footnote{One can also construct  Jucys-Murphy basis of any standard module in the category of left $\mathcal B_{q,l}$-modules in a similar way. Gluing two kinds of Jucys-Murphy basis together, one will get the Jucys-Murphy basis of $\mathcal B_{q,l}$ in a standard way. We will not give details since we do not need it in this paper.}.   Restricting any standard module to the commutative subalgebra generated by Jucys-Murphy elements, we obtain partial results on the classification of blocks. When $q$ is not a root of unity, we investigate composition factors of certain standard modules for  $\mathcal B_{q,l}$ and finally obtain a classification of blocks for both  $\mathcal B$ and $\mathcal B_{q,l}$ for any natural numbers $l$.  In the later case, $q$ can be a root of unity. However, we need to assume that the quantum characteristic of $q^2$ is bigger than $l$. Our results which can be considered as a counterpart of that for  periplectic Brauer algebras in \cite{Col} may be used to  study   finite dimensional $\U_q(\mathfrak p_n)$-modules. 

The paper is organized as follows. After recalling  the notion of   monoidal supercategories, we introduce the periplectic $q$-Brauer category $\mathcal B$ and give a spanning set of any  morphism space in section~2. In section~3, we construct a monoidal functor from  $\mathcal B$ to the category of  representations of $\U_q(\mathfrak p_n)$. Using this we prove that any morphism space of
$\mathcal B$ is free over $\Bbbk$ with required rank in section~4. As an application, we prove that     periplectic $q$-Brauer algebra $\mathcal B_{q,l}$ is isomorphic to $\End_{\mathcal B}(l)$  for any natural number $l$.
In section 5, we prove that $\mathcal B$ admits a split triangular decomposition in the sense of \cite{BS}. Consequently, we prove that the category of locally finite dimensional
right $\mathcal B$-modules is an upper finite fully stratified category if $\Bbbk$ is an algebraically closed field. In section 6, we study the classical  branching rule for $\mathcal B_{q,l}$. Via it, we construct the Jucys-Murphy basis of any  standard module in the category of right  $\mathcal B_{q,l}$-modules  in section~7.
 Jucys-Murphy elements act on the  Jucys-Murphy basis upper-triangularly. This gives a partial result on the classification of blocks in section~8. Finally, we assume $q$ is not a root of unity (resp., quantum characteristic of $q^2$ is bigger than $l$). We study composition factors of certain standard modules in details and finally obtain a classification of blocks for both     periplectic $q$-Brauer category and     periplectic $q$-Brauer algebras in generic case.

\textbf{Acknowledgement:} This paper is motivated by the definition of periplectic $q$-Brauer algebra~\cite[Definition~5.1]{AGG}. We wish to thank Dr. Y. Wang for  telling us this information.

\section{The periplectic  $q$-Brauer category}
The aim of  this section is to introduce  the periplectic  $q$-Brauer category.
Throughout,  $\Bbbk$ is an integral domain  of characteristic not $2$. First, we recall the notion of a strict monoidal supercategory in \cite{BCK}. See also \cite{BEA}
when  $\Bbbk$ is a field.
\subsection {A strict monoidal supercategory}
A $\Bbbk$-supermodule $M=M_{\bar 0}\oplus M_{\bar 1}$ is  a $\mathbb Z_2$-graded $\Bbbk$-module. Elements in $ M_{\bar 0}$ (resp., $ M_{\bar 1}$ ) have parity $\bar 0$ or  are said to be even (resp., parity $\bar 1$ or  odd).
Following \cite{BCK},     the parity
of a homogeneous element $m\in M$ is denoted by $[m]$. Given two $\Bbbk$-supermodules $M$ and $N$, the tensor product $M\otimes N$  is again a  $\Bbbk$-supermodules with the $\mathbb Z_2$-grading by declaring that $[m\otimes n]=[m]+[n]$ for all
homogeneous elements $(m, n)\in M\times N$. Given two homogeneous homomorphisms $f: M\rightarrow M'$ and $g: N\rightarrow N'$ between $\Bbbk$-supermodules,
the tensor product $f\otimes g$   is defined as
\begin{equation}\label{tensorofsuper}
(f\otimes g)(m\otimes n)=(-1)^{[g] [m]}f(m)\otimes g(n),
\end{equation}
where $[g]$ is the parity of $g$. The following is the   Koszul sign rule where
$f, g, h$ and $k$ are homogeneous homomorphisms:
\begin{equation}\label{superinterchange}
(f\otimes g)\circ (h\otimes k)=(-1)^{[g] [h]}(f\circ h)\otimes (g\circ k).
\end{equation}

  A supercategory  is a category enriched in $\Bbbk$-supermodules in the sense that each morphism set  is a $\Bbbk$-supermodule and composition induces an even $\Bbbk$-homomorphism.  A superfunctor $\mathcal F$ between two supercategories is an even functor enriched in $\Bbbk$-supermodules.

   \begin{Defn}\label{sc}  \cite[\S6]{BCK} A   strict monoidal supercategory  is a supercategory $\C$ equipped with  a  bi-superfunctor
$-\otimes -: \C \times  \C \rightarrow  \C$ and a unit object $\mathds{1}$
such that for all objects $a,b,c$  of $\C$, we have
 $(a\otimes b)\otimes c=a\otimes(b\otimes c)$ and
 $\mathds 1\otimes a=a=a\otimes \mathds 1$,  and for all morphisms $f,g$ and $h$ of $\C$, we have $(f\otimes g)\otimes h=f\otimes (g\otimes h)$ and
 $  1_{\mathds 1}\otimes f=f=f\otimes 1_{\mathds 1}$,
 where  $1_a: a\rightarrow a$ is  the identity morphism for  any object $a$.
\end{Defn}


There is a well-defined graphical calculus for morphisms in a strict  monoidal supercategory $\C$ (c.f. \cite[\S1.2]{BEA}).
For any two objects $ a, b$ in  $\C$,  $ a b$ represents  $ a\otimes  b$.
    A  morphism $g: a\to b$ is drawn as a coupon label by $g$
    $$\begin{tikzpicture}[baseline = 12pt,scale=0.5,color=\clr,inner sep=0pt, minimum width=11pt]
        \draw[-,thick] (0,0) to (0,2);
        \draw (0,1) node[circle,draw,thick,fill=white]{$g$};
        \draw (0,-0.2) node{$ a$};
        \draw (0, 2.3) node{$ b$};
    \end{tikzpicture}~,
     $$ where  $ a$ is  at the bottom and $ b$ is at the top.
The identity morphism $1_a:a\rightarrow a$ is drawn as a string with no coupon
$$\begin{tikzpicture}[baseline = 12pt,scale=0.5,color=\clr,inner sep=0pt, minimum width=11pt]
        \draw[-,thick] (0,0) to (0,2);
               \draw (0,-0.2) node{$ a$};
        \draw (0, 2.3) node{$ a$};
    \end{tikzpicture}.
     $$
 We often omit the  labels  of objects (i.e. $ a$ and $b$ above) if there is no confusion on the objects. Moreover, the axioms of a strict monoidal supercategory make it natural to omit the identity morphism $1_\mathds1$ of the unit object. So, any  morphism $g\in \End(\mathds1)$  will be drawn as a free-floating coupon:  $\begin{tikzpicture}[baseline = 12pt,scale=0.5,color=\clr,inner sep=0pt, minimum width=11pt]
              \draw (0,1) node[circle,draw,thick,fill=white]{$g$};
           \end{tikzpicture}. $
The composition  (resp., tensor product) of two morphisms  is given by vertical stacking (resp., horizontal concatenation) as follows:

\begin{equation}\label{com1}
      g\circ h= \begin{tikzpicture}[baseline = 19pt,scale=0.5,color=\clr,inner sep=0pt, minimum width=11pt]
        \draw[-,thick] (0,0) to (0,3);
        \draw (0,2.2) node[circle,draw,thick,fill=white]{$g$};
        \draw (0,0.8) node[circle,draw,thick,fill=white]{$h$};
    \end{tikzpicture}
    ~,~ \ \ \ \ \ g\otimes h=\begin{tikzpicture}[baseline = 19pt,scale=0.5,color=\clr,inner sep=0pt, minimum width=11pt]
        \draw[-,thick] (0,0) to (0,3);
        \draw[-,thick] (2,0) to (2,3);
        \draw (0,1.5) node[circle,draw,thick,fill=white]{$g$};
        \draw (2,1.5) node[circle,draw,thick,fill=white]{$h$};
    \end{tikzpicture}~.
\end{equation}
 Since $-\otimes -: \C\times\C \rightarrow \C$ is a superfunctor, there is a  super interchange law
\begin{equation}\label{wwcirrten}
 (f\otimes g)\circ(k\otimes h)=(-1)^{[k][g]}(f\circ k)\otimes (g\circ h),
\end{equation}
for any homogeneous morphisms $f, g, k$ and $h$.
The left side of \eqref{wwcirrten} is drawn as
 \begin{equation*}
    \begin{tikzpicture}[baseline = 19pt,scale=0.5,color=\clr,inner sep=0pt, minimum width=11pt]
        \draw[-,thick] (0,0) to (0,3);
        \draw[-,thick] (2,0) to (2,3);
        \draw (0,2.2) node[circle,draw,thick,fill=white]{$f$};
        \draw (2,2.2) node[circle,draw,thick,fill=white]{$g$};
        \draw (0,0.8) node[circle,draw,thick,fill=white]{$k$};
        \draw (2,0.8) node[circle,draw,thick,fill=white]{$h$};
    \end{tikzpicture}.
\end{equation*}
which is not the same as $(f\circ k)\otimes (g\circ h)$ in general.
By the super interchange law, we have
\begin{equation}
  \begin{tikzpicture}[baseline = 19pt,scale=0.5,color=\clr,inner sep=0pt, minimum width=11pt]
        \draw[-,thick] (0,0) to (0,3);
        \draw[-,thick] (2,0) to (2,3);
        \draw (0,1.8) node[circle,draw,thick,fill=white]{$f$};
        \draw (2,1.2) node[circle,draw,thick,fill=white]{$g$};
        \end{tikzpicture} = \begin{tikzpicture}[baseline = 19pt,scale=0.5,color=\clr,inner sep=0pt, minimum width=11pt]
        \draw[-,thick] (0,0) to (0,3);
        \draw[-,thick] (2,0) to (2,3);
        \draw (0,1.2) node[circle,draw,thick,fill=white]{$f$};
        \draw (2,1.2) node[circle,draw,thick,fill=white]{$g$};
        \end{tikzpicture}=(-1)^{[f][g]}\begin{tikzpicture}[baseline = 19pt,scale=0.5,color=\clr,inner sep=0pt, minimum width=11pt]
        \draw[-,thick] (0,0) to (0,3);
        \draw[-,thick] (2,0) to (2,3);
        \draw (0,1.2) node[circle,draw,thick,fill=white]{$f$};
        \draw (2,1.8) node[circle,draw,thick,fill=white]{$g$};
        \end{tikzpicture}.
\end{equation}


    \subsection{The periplectic $q$-Brauer category}
     Throughout, we assume $q,q-q^{-1}\in \Bbbk^\times$ where $\Bbbk^\times $ is the set of all invertible elements in $\Bbbk$.

    \begin{Defn}\label{pqbc}
    The  periplectic $q$-Brauer category $\mathcal B$ is a strict monoidal supercategory generated   by a single object  \begin{tikzpicture}[baseline = 1.5mm]
	\draw[-,thick,darkblue] (0.18,0) to (0.18,.4);
\end{tikzpicture}
and two even  morphisms
  $$ \begin{tikzpicture}[baseline = 2.5mm]
	\draw[-,thick,darkblue] (0.28,0) to[out=90,in=-90] (-0.28,.6);
	\draw[-,line width=4pt,white] (-0.28,0) to[out=90,in=-90] (0.28,.6);
	\draw[-,thick,darkblue] (-0.28,0) to[out=90,in=-90] (0.28,.6);
\end{tikzpicture}:\  \begin{tikzpicture}[baseline = 2.5mm]
	\draw[-,thick,darkblue] (0.18,0) to (0.18,.6);
\end{tikzpicture} \ \ \begin{tikzpicture}[baseline = 2.5mm]
	\draw[-,thick,darkblue] (0.18,0) to (0.18,.6);
\end{tikzpicture}\longrightarrow\begin{tikzpicture}[baseline = 2.5mm]
	\draw[-,thick,darkblue] (0.18,0) to (0.18,.6);
\end{tikzpicture}\ \  \begin{tikzpicture}[baseline = 2.5mm]
	\draw[-,thick,darkblue] (0.18,0) to (0.18,.6);
\end{tikzpicture}\quad\text{ and }\quad
\begin{tikzpicture}[baseline = 2.5mm]
	\draw[-,thick,darkblue] (-0.28,-.0) to[out=90,in=-90] (0.28,0.6);
	\draw[-,line width=4pt,white] (0.28,-.0) to[out=90,in=-90] (-0.28,0.6);
	\draw[-,thick,darkblue] (0.28,-.0) to[out=90,in=-90] (-0.28,0.6);
\end{tikzpicture}:\ \begin{tikzpicture}[baseline = 2.5mm]
	\draw[-,thick,darkblue] (0.18,0) to (0.18,.6);
\end{tikzpicture}\ \ \begin{tikzpicture}[baseline = 2.5mm]
	\draw[-,thick,darkblue] (0.18,0) to (0.18,.6);
\end{tikzpicture}\longrightarrow\begin{tikzpicture}[baseline = 2.5mm]
	\draw[-,thick,darkblue] (0.18,0) to (0.18,.6);
\end{tikzpicture}\ \  \begin{tikzpicture}[baseline = 2.5mm]
	\draw[-,thick,darkblue] (0.18,0) to (0.18,.6);
\end{tikzpicture}\,$$
 and two odd morphisms
 $$\lcup:\ \mathds{1}\longrightarrow  \begin{tikzpicture}[baseline = 2.5mm]
	\draw[-,thick,darkblue] (0.18,0) to (0.18,.6);
\end{tikzpicture}\ \ \begin{tikzpicture}[baseline = 2.5mm]
	\draw[-,thick,darkblue] (0.18,0) to (0.18,.6);
\end{tikzpicture}\quad \text{ and }\quad
 \lcap:\  \begin{tikzpicture}[baseline = 2.5mm]
	\draw[-,thick,darkblue] (0.18,0) to (0.18,.6);
\end{tikzpicture}\ \ \begin{tikzpicture}[baseline = 2.5mm]
	\draw[-,thick,darkblue] (0.18,0) to (0.18,.6);
\end{tikzpicture}\longrightarrow \mathds{1} ,$$
 subject to the following defining relations:

 \begin{itemize}
 \item [(R1)] The braid relations:
 $\begin{tikzpicture}[baseline = -1mm]
	\draw[-,thick,darkblue] (0.28,-.6) to[out=90,in=-90] (-0.28,0);
	\draw[-,thick,darkblue] (-0.28,0) to[out=90,in=-90] (0.28,.6);
	\draw[-,line width=4pt,white] (-0.28,-.6) to[out=90,in=-90] (0.28,0);
	\draw[-,thick,darkblue] (-0.28,-.6) to[out=90,in=-90] (0.28,0);
	\draw[-,line width=4pt,white] (0.28,0) to[out=90,in=-90] (-0.28,.6);
	\draw[-,thick,darkblue] (0.28,0) to[out=90,in=-90] (-0.28,.6);
\end{tikzpicture}=
\mathord{
\begin{tikzpicture}[baseline = -1mm]
	\draw[-,thick,darkblue] (0.18,-.6) to (0.18,.6);
	\draw[-,thick,darkblue] (-0.18,-.6) to (-0.18,.6);
\end{tikzpicture}
}\:=
\mathord{
\begin{tikzpicture}[baseline = -1mm]
	\draw[-,thick,darkblue] (0.28,0) to[out=90,in=-90] (-0.28,.6);
	\draw[-,line width=4pt,white] (-0.28,0) to[out=90,in=-90] (0.28,.6);
	\draw[-,thick,darkblue] (-0.28,0) to[out=90,in=-90] (0.28,.6);
	\draw[-,thick,darkblue] (-0.28,-.6) to[out=90,in=-90] (0.28,0);
	\draw[-,line width=4pt,white] (0.28,-.6) to[out=90,in=-90] (-0.28,0);
	\draw[-,thick,darkblue] (0.28,-.6) to[out=90,in=-90] (-0.28,0);
\end{tikzpicture}}\quad \text{and }\quad \mathord{
\begin{tikzpicture}[baseline = -1mm]
	\draw[-,thick,darkblue] (0.45,-.6) to (-0.45,.6);
        \draw[-,thick,darkblue] (0,-.6) to[out=90,in=-90] (-.45,0);
        \draw[-,line width=4pt,white] (-0.45,0) to[out=90,in=-90] (0,0.6);
        \draw[-,thick,darkblue] (-0.45,0) to[out=90,in=-90] (0,0.6);
	\draw[-,line width=4pt,white] (0.45,.6) to (-0.45,-.6);
	\draw[-,thick,darkblue] (0.45,.6) to (-0.45,-.6);
\end{tikzpicture}
}
\begin{tikzpicture}[baseline = -1mm]
	 \node at (0.,0) {$\text {=}$};
\end{tikzpicture}
\mathord{
\begin{tikzpicture}[baseline = -1mm]
	\draw[-,thick,darkblue] (0.45,-.6) to (-0.45,.6);
        \draw[-,line width=4pt,white] (0,-.6) to[out=90,in=-90] (.45,0);
        \draw[-,thick,darkblue] (0,-.6) to[out=90,in=-90] (.45,0);
        \draw[-,thick,darkblue] (0.45,0) to[out=90,in=-90] (0,0.6);
	\draw[-,line width=4pt,white] (0.45,.6) to (-0.45,-.6);
	\draw[-,thick,darkblue] (0.45,.6) to (-0.45,-.6);
\end{tikzpicture}},$
\item [(R2)] The skein relation:
$\mathord{
\begin{tikzpicture}[baseline = 2.5mm]
	\draw[-,thick,darkblue] (0.28,0) to[out=90,in=-90] (-0.28,.6);
	\draw[-,line width=4pt,white] (-0.28,0) to[out=90,in=-90] (0.28,.6);
	\draw[-,thick,darkblue] (-0.28,0) to[out=90,in=-90] (0.28,.6);
\end{tikzpicture}
}-\mathord{
\begin{tikzpicture}[baseline = 2.5mm]
	\draw[-,thick,darkblue] (-0.28,-.0) to[out=90,in=-90] (0.28,0.6);
	\draw[-,line width=4pt,white] (0.28,-.0) to[out=90,in=-90] (-0.28,0.6);
	\draw[-,thick,darkblue] (0.28,-.0) to[out=90,in=-90] (-0.28,0.6);
\end{tikzpicture}
}\begin{tikzpicture}[baseline = -1mm]
	 \node at (0.,0) {$\text {=}$};
 \end{tikzpicture}
(q-q^{-1})\:\mathord{
\begin{tikzpicture}[baseline = 2.5mm]
	\draw[-,thick,darkblue] (0.18,0) to (0.18,.6);
	\draw[-,thick,darkblue] (-0.18,0) to (-0.18,.6);
\end{tikzpicture}}\ ,
$
\item [(R3)] The snake relations:
$\begin{tikzpicture}[baseline = -0.5mm]
\draw[-,thick,darkblue] (0.5,0) to[out=down,in=left] (0.75,-0.25) to[out=right,in=down] (1,0);
\draw[-,thick,darkblue] (0,0) to[out=up,in=left] (0.25,0.25) to[out=right,in=up] (0.5,0);
\draw[-,thick,darkblue] (1,0) to (1,0.3);
\draw[-,thick,darkblue] (0,0) to (0,-0.3);
  \end{tikzpicture}~=~\begin{tikzpicture}[baseline = -0.5mm]
  \draw[-,thick,darkblue] (0,-0.25) to (0,0.25);
  \end{tikzpicture}\quad \text{ and }\quad \begin{tikzpicture}[baseline = -0.5mm]
\draw[-,thick,darkblue] (0,0) to (0,0.3);
\draw[-,thick,darkblue] (0,0) to[out=down,in=left] (0.25,-0.25) to[out=right,in=down] (0.5,0);
\draw[-,thick,darkblue] (0.5,0) to[out=up,in=left] (0.75,0.25) to[out=right,in=up] (1,0);
\draw[-,thick,darkblue] (0,0) to (0,0.3);
\draw[-,thick,darkblue] (1,0) to (1,-0.3);
  \end{tikzpicture}
  ~=~
-  \begin{tikzpicture}[baseline = -0.5mm]
  \draw[-,thick,darkblue] (0,-0.25) to (0,0.25);
  \end{tikzpicture}\ ,$
\item [(R4)] The untwisting relations:
$\begin{tikzpicture}[baseline = 6pt, scale=0.5, color=\clr]
        \draw[-,thick] (1,0.4) to[out=down, in=right] (0.53,-0.2) to[out=left, in=right] (0.47,-0.2);
        \draw[-,thick] (0.49,-0.2) to[out=left,in=down] (0,0.4);
        \draw[-,thick](0,1) to (1,0.4);
        \draw[-,line width=4pt,white](0,0.4) to (1,1);
        \draw[-,thick](0,0.4) to (1,1);
                \end{tikzpicture}~=~ q~\begin{tikzpicture}[baseline = 6pt, scale=0.5, color=\clr]
        \draw[-,thick] (1,0.8) to[out=down, in=right] (0.53,0.3) to[out=left, in=right] (0.47,0.3);
        \draw[-,thick] (0.49,0.3) to[out=left,in=down] (0,.8);
\end{tikzpicture}\quad\text{ and }\quad
\begin{tikzpicture}[baseline = 6pt, scale=0.5, color=\clr]
         \draw[-,thick,darkblue] (1.25,0) to[out=90,in=-90] (0.5,1);
         \draw[-,line width=4pt,white](1,1) to[out=down, in=right] (0.53,0) to[out=left, in=right] (0.47,0);
        \draw[-,thick] (1,1) to[out=down, in=right] (0.53,0) to[out=left, in=right] (0.47,0);
        \draw[-,thick] (0.49,0) to[out=left,in=down] (0,1);
\end{tikzpicture}=
  \begin{tikzpicture}[baseline = 6pt, scale=0.5, color=\clr]
        \draw[-,thick,darkblue] (-0.25,0) to[out=90,in=-90] (0.55,1);
        \draw[-,thick] (1,1) to[out=down, in=right] (0.53,0) to[out=left, in=right] (0.47,0);
        \draw[-,line width=4pt,white](0.49,0) to[out=left,in=down] (0,1);
        \draw[-,thick] (0.49,0) to[out=left,in=down] (0,1);
  \end{tikzpicture}\ ,$
  \item[(R5)] The loop removing  relation: $\mathord{
\begin{tikzpicture}[baseline = -0.5mm]
\draw[-,thick,darkblue] (0,0) to[out=down,in=left] (0.25,-0.25) to[out=right,in=down] (0.5,0);
  \draw[-,thick,darkblue] (0,0) to[out=up,in=left] (0.25,0.25) to[out=right,in=up] (0.5,0);
  \end{tikzpicture}}\begin{tikzpicture}[baseline = -1mm]
	 \node at (0.,0) {$\text {=}$};
 \end{tikzpicture}0$.
 \end{itemize}
  \end{Defn}
 Since  $\mathcal B$ is generated by a  single object
 \begin{tikzpicture}[baseline = 10pt, scale=0.5, color=\clr]
                \draw[-,thick] (0,0.5)to[out=up,in=down](0,1.2);
    \end{tikzpicture},  the set of  objects  is $\mathbb N$ where $0$ is the unit object
$\begin{tikzpicture}[baseline = 10pt, scale=0.5, color=\clr]
                \draw[-,thick] (0,0.5)to[out=up,in=down](0,1.2);
    \end{tikzpicture}^{\otimes 0}$  and $m$ is
$~ \begin{tikzpicture}[baseline = 10pt, scale=0.5, color=\clr]
                \draw[-,thick] (0,0.5)to[out=up,in=down](0,1.2);
                    \end{tikzpicture}^{\otimes m}$.
This notation is different from that in Definition~\ref{sc} where the unit object is denoted by  $\mathds{1}$. In the current case,
 each endpoint  at both rows of the  diagrams $\begin{tikzpicture}[baseline = 2.5mm]
	\draw[-,thick,darkblue] (0.28,0) to[out=90,in=-90] (-0.28,.6);
	\draw[-,line width=4pt,white] (-0.28,0) to[out=90,in=-90] (0.28,.6);
	\draw[-,thick,darkblue] (-0.28,0) to[out=90,in=-90] (0.28,.6);
\end{tikzpicture}, \begin{tikzpicture}[baseline = 2.5mm]
	\draw[-,thick,darkblue] (-0.28,-.0) to[out=90,in=-90] (0.28,0.6);
	\draw[-,line width=4pt,white] (0.28,-.0) to[out=90,in=-90] (-0.28,0.6);
	\draw[-,thick,darkblue] (0.28,-.0) to[out=90,in=-90] (-0.28,0.6);
\end{tikzpicture}$, $\lcup,\lcap$ represents the object $ 1$. If there is no endpoints at a row, then the object at this row is the unit object $0$. For example, the object  at the bottom (resp., top) row of $\lcup$ is $ 0$ (resp., $2$). Note that   the identity morphism $1_{\ob m}$  is drawn as the object itself when $m>0$. For example, $\begin{tikzpicture}[baseline = 10pt, scale=0.5, color=\clr]
                \draw[-,thick] (0,0.5)to[out=up,in=down](0,1.2);\draw[-,thick] (0.5,0.5)to[out=up,in=down](0.5,1.2);
    \end{tikzpicture}$ in (R1) is the identity morphism $1_2$.

   \begin{Lemma}\label{basicrel} In $\mathcal B$, we have
\begin{multicols}{2} \item [(1)] $\begin{tikzpicture}[baseline = 6pt, scale=0.6, color=\clr]
        \draw[-,thick] (1,0) to[out=up, in=right] (0.53,0.9) to[out=left, in=right] (0.47,0.9);
        \draw[-,thick] (0.49,0.9) to[out=left,in=up] (0,0);
        \draw[-,thick] (2.5,0) to[out=up, in=right] (2.03,0.5) to[out=left, in=right] (1.97,0.5);
        \draw[-,thick] (1.99,0.5) to[out=left,in=up] (1.5,0);
\end{tikzpicture}~=~\begin{tikzpicture}[baseline = 6pt, scale=0.6, color=\clr]
        \draw[-,thick] (1,0) to[out=up, in=right] (0.53,0.5) to[out=left, in=right] (0.47,0.5);
        \draw[-,thick] (0.49,0.5) to[out=left,in=up] (0,0);
        \draw[-,thick] (2.5,0) to[out=up, in=right] (2.03,0.5) to[out=left, in=right] (1.97,0.5);
        \draw[-,thick] (1.99,0.5) to[out=left,in=up] (1.5,0);
\end{tikzpicture}~=~-~\begin{tikzpicture}[baseline = 6pt, scale=0.6, color=\clr]
        \draw[-,thick] (1,0) to[out=up, in=right] (0.53,0.5) to[out=left, in=right] (0.47,0.5);
        \draw[-,thick] (0.49,0.5) to[out=left,in=up] (0,0);
        \draw[-,thick] (2.5,0) to[out=up, in=right] (2.03,0.9) to[out=left, in=right] (1.97,0.9);
        \draw[-,thick] (1.99,0.9) to[out=left,in=up] (1.5,0);
\end{tikzpicture}$,

\item[(2)] \begin{tikzpicture}[baseline = 7pt, scale=0.7, color=\clr]
\draw[-,thick] (0.6,0) to (1.2,0.8);
\draw[-,line width=4pt,white](1.2,.3) to[out=left,in=down] (1,.6);
        \draw[-,thick] (1.2,0) to[out=up, in=right] (0.63,0.6) to[out=left, in=right] (0.57,0.6);
        \draw[-,thick] (0.59,0.6) to[out=left,in=up] (0,0);
  \end{tikzpicture} =
     \begin{tikzpicture}[baseline = 7pt, scale=0.7, color=\clr]
\draw[-,thick] (0.5,0) to (0,0.8);
\draw[-,line width=4pt,white](.3,.4) to[out=left,in=down] (.1,.3);
        \draw[-,thick] (1.2,0) to[out=up, in=right] (0.63,0.6) to[out=left, in=right] (0.57,0.6);
        \draw[-,thick] (0.59,0.6) to[out=left,in=up] (0,0);
  \end{tikzpicture},

\item[(3)]
  $\begin{tikzpicture}[baseline = -0.3mm]
\draw[-,thick,darkblue] (-0.28,-.4) to[out=90,in=-90] (0.28,.2);
	\draw[-,line width=4pt,white](0.28,-0.4) to[out=90,in=-90] (-0.28,.2);
\draw[-,thick,darkblue] (0.28,-0.4) to[out=90,in=-90] (-0.28,.2);
        \draw[-,thick,darkblue] (0.28,0.2) to[out=up, in=right] (0.1,0.4) to[out=left, in=right] (0,0.4);
        \draw[-,thick,darkblue] (0,0.4) to[out=left,in=up] (-0.28,0.2);
  \end{tikzpicture}
  ~=~-q~\begin{tikzpicture}[baseline = -0.3mm, color=\clr]
        \draw[-,thick] (1,-0.3) to[out=up, in=right] (0.53,0.2) to[out=left, in=right] (0.47,0.2);
        \draw[-,thick] (0.49,0.2) to[out=left,in=up] (0,-0.3);
\end{tikzpicture} $,
\item[(4)]$\begin{tikzpicture}[baseline = -0.5mm]\draw[-,thick,darkblue] (0.3,.2) to [out=180,in=90](0,-0.3);
\draw[-,line width=4pt,white] (-0.1,0.2) to (.2,-0.1);
	\draw[-,thick,darkblue] (0,0.6) to (0,0.3);
\draw[-,thick,darkblue] (0.5,0) to [out=90,in=0](.3,0.2);
	\draw[-,thick,darkblue] (0,-0.3) to (0,-0.6);
\draw[-,thick,darkblue] (0.3,-0.2) to [out=0,in=-90](.5,0);
\draw[-,thick,darkblue] (0,0.3) to [out=-90,in=180] (.3,-0.2);
\end{tikzpicture}~=~\begin{tikzpicture}[baseline = -0.5mm]
	\draw[-,thick,darkblue] (0,0.6) to (0,0.3);
	\draw[-,thick,darkblue] (0.5,0) to [out=90,in=0](.3,0.2);
	\draw[-,thick,darkblue] (0,-0.3) to (0,-0.6);
	\draw[-,thick,darkblue] (0.3,-0.2) to [out=0,in=-90](.5,0);
	\draw[-,thick,darkblue] (0,0.3) to [out=-90,in=180] (.3,-0.2);
	\draw[-,line width=4pt,white] (0.3,.2) to [out=180,in=90](0,-0.3);
	\draw[-,thick,darkblue] (0.3,.2) to [out=180,in=90](0,-0.3);\end{tikzpicture}~=~
q^{-1}~\begin{tikzpicture}[baseline = -0.5mm]
	\draw[-,thick,darkblue] (0,0.6) to (0,-0.6);
\end{tikzpicture}$, \item[(5)]
$\begin{tikzpicture}[baseline = -0.5mm]
        \draw[-,thick,darkblue](-0.4,0) to[out=up, in=left] (-0.2,0.3) to[out=right, in=up](0.05,0);
   \draw[-,thick,darkblue](-0.4,0) to (-0.4,-0.05);
        \draw[-,line width=4pt,white] (-0.05,-0.05) to (0.05,0.05);
               \draw[-,thick,darkblue] (0.1,0.1) to(0.1,0.6);
        \draw[-,thick,darkblue] (0.11,-0.14)to (0.11,-0.6);
        \draw[-,thick,darkblue](-0.4,-0.05) to[out=down, in=left] (-0.18,-0.3) to[out=right, in=down](0.1,0.1);
          \end{tikzpicture}~=~\begin{tikzpicture}[baseline = -0.5mm]
        \draw[-,thick,darkblue](-0.4,0) to[out=down, in=left] (-0.2,-0.3) to[out=right, in=down](0,0);
   \draw[-,thick,darkblue](-0.4,0) to (-0.4,0.05);\draw[-,line width=4pt,white] (-0.05,0.05) to (0.05,-0.05);
   \draw[-,thick,darkblue](-0.4,0.05) to[out=up, in=left] (-0.18,0.3) to[out=right, in=up](0.06,-0.1);
\draw[-,thick,darkblue] (0.07,-0.1)to (0.07,-0.6);
\draw[-,thick,darkblue] (0.08,0.1) to(0.08,0.6);
          \end{tikzpicture}~=~-q~\begin{tikzpicture}[baseline = -0.5mm]
	\draw[-,thick,darkblue] (0,0.6) to (0,-0.6);
\end{tikzpicture}$.\end{multicols}
\end{Lemma}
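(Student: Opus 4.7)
The plan is to derive each identity from the defining relations~(R1)--(R5) together with the super-interchange law~\eqref{wwcirrten}, with no further structural input required.

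For (1), since $\lcap$ is odd, the super-interchange law gives the two decompositions
\begin{equation*}
\lcap \otimes \lcap \;=\; \lcap\circ(1_2\otimes\lcap) \;=\; -\,\lcap\circ(\lcap\otimes 1_2),
\end{equation*}
the sign arising from $(-1)^{[\lcap][\lcap]}=-1$. The first picture of~(1) is $\lcap\circ(1_2\otimes\lcap)$ (the lower right cap is applied first), the middle picture is $\lcap\otimes\lcap$ drawn with the two caps at the same height, and the third picture is $\lcap\circ(\lcap\otimes 1_2)$, so the equalities follow at once.

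For (2), both sides are morphisms $3\to 1$ built from a single crossing and a single cap. The plan is to equate them by attaching a cup underneath via the snake relation~(R3), applying an instance of the untwisting relation~(R4) to move the crossing past the cap (the scalars introduced cancel once the snake is closed), and using a braid relation~(R1) to tidy up. Equivalently, (2) is the cap-side counterpart of the second identity in~(R4), and can be obtained from it by conjugation with snakes.

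For (3), my plan is to insert a snake $\lcup\lcap$ just below the crossing via~(R3); this converts the diagram into a cap on top of a cup-crossing configuration, to which the first identity of~(R4) applies with scalar~$q$. Combined with the factor $-1$ contributed by the second snake of~(R3), this gives $-q\cdot\lcap$ as claimed. As a by-product, the skein relation~(R2) then yields the companion identity $\lcap\circ\swap=-q^{-1}\lcap$, which is useful in what follows.

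For (4), observe that the two pictures on the left differ only in which strand crosses over at a single self-crossing. By the skein relation~(R2), their difference equals $(q-q^{-1})$ times the diagram in which that crossing has been resolved to parallel strands; this resolved diagram contains a closed loop of the form $\lcap\circ\lcup$, which is zero by~(R5), so the two pictures coincide. To identify their common value with $q^{-1}\cdot 1_1$, I interpret the central loop as a first-Reidemeister-type kink composed of $\lcup$, a crossing and $\lcap$, apply~(R4) to eliminate the crossing next to the cup, and collapse the remaining snake via~(R3). For~(5) the plan is identical for the mirror configuration; here the factor~$-q$ arises from combining the other untwisting in~(R4) with the sign contributed by the second snake of~(R3).

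The principal technical difficulty is sign bookkeeping: the super-interchange signs generated by moving the odd morphisms~$\lcup$ and~$\lcap$ past each other, the factor~$-1$ in the second snake relation of~(R3), and the asymmetry between the two untwisting identities in~(R4) all interact and must be tracked consistently. Once the signs are pinned down, each identity reduces to a short diagrammatic calculation.
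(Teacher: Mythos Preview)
Your plan for (1) is exactly the paper's: the super-interchange law applied to two odd caps. Your plan for (2) also matches the paper, which proves it by attaching a snake via~(R3), using the height move from~(1), applying the second identity of~(R4), and undoing the snake via~(R3); your phrase ``conjugation with snakes'' captures this. For (4) and~(5) your outline (skein~(R2) plus loop removal~(R5) to identify the two curls, then untwist via~(R4) and straighten via~(R3)) is again what the paper has in mind when it says these follow easily from~(R2)--(R5) together with~(2) and~(3).

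The gap is in your plan for~(3). The crossing appearing in~(3) is the \emph{second} generator (the one with the bottom-right to top-left strand on top), whereas the first identity of~(R4) involves the \emph{first} generator acting on a cup. Inserting a single snake does not convert one crossing type into the other, so your two-step recipe ``insert a snake, read off the factor~$q$ from~(R4), pick up~$-1$ from~(R3)'' does not go through as stated: after the snake you do not arrive at a configuration to which the first~(R4) applies directly. The paper's proof of~(3) is the longest of the five and requires more ingredients than you list. It attaches a snake via~(R3) (contributing~$-1$), applies the height move from~(1), then uses the \emph{second} identity of~(R4) to slide a strand through a cup; at that point one must invoke the skein relation~(R2) together with loop removal~(R5) to simplify, then apply the already-proved part~(2) to move the strand past a cap, and only then does the \emph{first} identity of~(R4) become applicable and produce the factor~$q$; a final~(R3) closes up. So you have correctly identified the source of the scalars~$-1$ and~$q$, but the bridge between them needs~(R2), (R5), and part~(2), none of which your plan for~(3) mentions.
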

\begin{proof} The  supercategroy structure means that the height moves satisfy the super interchange law  in \eqref{wwcirrten}. This proves (1).
We have
$$
\begin{tikzpicture}[baseline = 3pt, scale=0.5, color=\clr]
\draw[-,thick] (0.6,0) to (1.2,0.8);
\draw[-,line width=4pt,white](1.2,.3) to[out=left,in=down] (1,.6);
        \draw[-,thick] (1.2,0) to[out=up, in=right] (0.63,0.6) to[out=left, in=right] (0.57,0.6);
        \draw[-,thick] (0.59,0.6) to[out=left,in=up] (0,0);
  \end{tikzpicture} ~\overset{\text{(R3)}}=~- ~ \begin{tikzpicture}[baseline = 3pt, scale=0.5, color=\clr]
\draw[-,thick] (0.6,0) to (1.2,0.8);
\draw[-,line width=4pt,white](1.2,.3) to[out=left,in=down] (1,.6);
        \draw[-,thick] (1.2,0) to[out=up, in=right] (0.63,0.6) to[out=left, in=right] (0.57,0.6);
        \draw[-,thick] (0.59,0.6) to[out=left,in=up] (0,0);
       \draw[-,thick,darkblue] (1.2,0) to[out=down,in=left] (1.55,-0.6) to[out=right,in=down] (1.8,-0.1);
\draw[-,thick,darkblue] (1.8,-0.1) to[out=up,in=left] (2.15,0.25) to[out=right,in=up] (2.4,0);
\draw[-,thick](2.4,0) to(2.4, -0.6);
          \end{tikzpicture}~\overset{(1)}=
    ~ \begin{tikzpicture}[baseline = 3pt, scale=0.5, color=\clr]
\draw[-,thick] (0.6,0) to (1.2,0.8);
\draw[-,line width=4pt,white](1.2,.3) to[out=left,in=down] (1,.6);
        \draw[-,thick] (1.2,0) to[out=up, in=right] (0.63,0.6) to[out=left, in=right] (0.57,0.6);
        \draw[-,thick] (0.59,0.6) to[out=left,in=up] (0,0);
        \draw[-,thick,darkblue] (1.2,0) to[out=down,in=left] (1.55,-0.6) to[out=right,in=down] (1.8,-0.1);
\draw[-,thick,darkblue] (1.8,-0.1) to[out=up,in=left] (2.15,1) to[out=right,in=up] (2.4,0);
\draw[-,thick](2.4,0) to(2.4, -0.6);
        \end{tikzpicture}
        \overset{\text{(R4)}} =\begin{tikzpicture}[baseline = 3pt, scale=0.5, color=\clr]
        \draw[-,thick] (1.2,0) to[out=up, in=right] (0.63,0.6) to[out=left, in=right] (0.57,0.6);
        \draw[-,thick] (0.59,0.6) to[out=left,in=up] (0,0);
        \draw[-,thick] (1.55,0.2) to (2.1,-0.6);
      \draw[-,line width=2pt,white] (1.8,-0.6)to (1.8,0.2);
        \draw[-,thick,darkblue] (1.2,0) to[out=down,in=left] (1.55,-0.6) to[out=right,in=down] (1.8,-0.1);
\draw[-,thick,darkblue] (1.8,-0.1) to[out=up,in=left] (2.15,1) to[out=right,in=up] (2.4,0);
\draw[-,thick](2.4,0) to(2.4, -0.6);
        \end{tikzpicture}~\overset{\text{(R3)}}=~  \begin{tikzpicture}[baseline = 3pt, scale=0.5, color=\clr]
\draw[-,thick] (0.5,0) to (0,0.8);
\draw[-,line width=4pt,white](.3,.4) to[out=left,in=down] (.1,.3);
        \draw[-,thick] (1.2,0) to[out=up, in=right] (0.63,0.6) to[out=left, in=right] (0.57,0.6);
        \draw[-,thick] (0.59,0.6) to[out=left,in=up] (0,0);
  \end{tikzpicture},
$$ and  (2) follows.
The equation in (3) follows from the following equalities:
 $$\begin{aligned}
 \begin{tikzpicture}[baseline = -0.5mm]
 \draw[-,thick,darkblue] (-0.28,-.3) to[out=90,in=-90] (0.28,.3);
	\draw[-,line width=4pt,white](0.28,-0.3) to[out=90,in=-90] (-0.28,.3);
\draw[-,thick,darkblue] (0.28,-0.3) to[out=90,in=-90] (-0.28,.3);
	        \draw[-,thick,darkblue] (0.28,0.3) to[out=up, in=right] (0.1,0.5) to[out=left, in=right] (0,0.5);
        \draw[-,thick,darkblue] (0,0.5) to[out=left,in=up] (-0.28,0.3);
  \end{tikzpicture}~&\overset{\text{(R3)}}=~ -~\begin{tikzpicture}[baseline = -0.5mm]
	\draw[-,thick,darkblue] (-0.28,-.3) to[out=90,in=-90] (0.28,.3);
	\draw[-,line width=4pt,white](0.28,-0.3) to[out=90,in=-90] (-0.28,.3);
\draw[-,thick,darkblue] (0.28,-0.3) to[out=90,in=-90] (-0.28,.3);
        \draw[-,thick,darkblue] (0.28,0.3) to[out=up, in=right] (0.1,0.5) to[out=left, in=right] (0,0.5);
        \draw[-,thick,darkblue] (0,0.5) to[out=left,in=up] (-0.28,0.3);

        \draw[-,thick,darkblue] (0.28,-0.3) to[out=down,in=left] (0.58,-0.9) to[out=right,in=down] (0.88,-0.4);
\draw[-,thick,darkblue] (0.88,-0.4) to[out=up,in=left] (1.18,-0.05) to[out=right,in=up] (1.48,-0.3);
\draw[-,thick] (1.48,-0.3) to (1.48,-0.9);
  \end{tikzpicture}
  \overset{(1)}=~ ~\begin{tikzpicture}[baseline = -0.5mm]
	\draw[-,thick,darkblue] (-0.28,-.3) to[out=90,in=-90] (0.28,.3);
	\draw[-,line width=4pt,white](0.28,-0.3) to[out=90,in=-90] (-0.28,.3);
\draw[-,thick,darkblue] (0.28,-0.3) to[out=90,in=-90] (-0.28,.3);
        \draw[-,thick,darkblue] (0.28,0.3) to[out=up, in=right] (0.1,0.5) to[out=left, in=right] (0,0.5);
        \draw[-,thick,darkblue] (0,0.5) to[out=left,in=up] (-0.28,0.3);
        \draw[-,thick,darkblue] (0.28,-0.3) to[out=down,in=left] (0.58,-0.9) to[out=right,in=down] (0.88,-0.4);
\draw[-,thick,darkblue] (0.88,-0.4) to[out=up,in=left] (1.18,0.7) to[out=right,in=up] (1.48,-0.3);
\draw[-,thick] (1.48,-0.3) to (1.48,-0.9);
  \end{tikzpicture}
  ~  \overset{\text{(R4)}}=~ ~\begin{tikzpicture}[baseline = -0.5mm]
        \draw[-,thick,darkblue] (0.28,0.3) to[out=up, in=right] (0.1,0.5) to[out=left, in=right] (0,0.5);
        \draw[-,thick,darkblue] (0,0.5) to[out=left,in=up] (-0.28,0.3);
        \draw[-,thick,darkblue] (-0.28,0.3) to (-0.28,-0.3);
        \draw[-,thick,darkblue] (0.28,0.3) to (0.28,-0.2);
        \draw[-,thick,darkblue] (0.28,-0.2) to (0.98,-0.8);
        \draw[-,line width=4pt,white](0.48,-0.7) to  (1.08,0);
        \draw[-,thick,darkblue] (-0.28,-0.3) to[out=down,in=left] (0.28,-0.9) to[out=right,in=down] (0.68,-0.4);
\draw[-,thick,darkblue] (0.68,-0.4) to[out=up,in=left] (1.08,0.7) to[out=right,in=up] (1.48,-0.3);
\draw[-,thick] (1.48,-0.3) to (1.48,-0.9);
  \end{tikzpicture} \\
  ~\overset{\text{(R2), (R5)}}=~&~\begin{tikzpicture}[baseline = -0.5mm]
        \draw[-,thick,darkblue] (0.28,0.3) to[out=up, in=right] (0.1,0.5) to[out=left, in=right] (0,0.5);
        \draw[-,thick,darkblue] (0,0.5) to[out=left,in=up] (-0.28,0.3);
        \draw[-,thick,darkblue] (-0.28,0.3) to (-0.28,-0.3);
        \draw[-,thick,darkblue] (0.28,0.3) to (0.28,-0.2);
        \draw[-,thick,darkblue] (-0.28,-0.3) to[out=down,in=left] (0.28,-0.9) to[out=right,in=down] (0.68,-0.4);
\draw[-,thick,darkblue] (0.68,-0.4) to[out=up,in=left] (1.08,0.7) to[out=right,in=up] (1.48,-0.3);
\draw[-,thick] (1.48,-0.3) to (1.48,-0.9);\draw[-,line width=4pt,white](0.28,-0.2) to (0.98,-0.8);
        \draw[-,thick,darkblue] (0.28,-0.2) to (0.98,-0.8);
  \end{tikzpicture}~\overset{(2)}=~
    \begin{tikzpicture}[baseline = -0.5mm]
    \draw[-,thick,darkblue] (-0.28,-0.4) to[out=down,in=left] (0,-0.9) to[out=right,in=down] (0.28,-0.4);
    \draw[-,thick,darkblue](-0.28,0.1) to( 0.28,-0.4);
    \draw[-,line width=4pt,white](-0.28,-0.4) to( 0.28,0.1);
    \draw[-,thick,darkblue](-0.28,-0.4) to( 0.28,0.1);
    \draw[-,thick,darkblue]   ( 0.28,0.1) to[out=up,in=left] (0.48,0.3) to[out=right,in=up] (0.68,0.1);
    \draw[-,thick,darkblue](0.68,0.1)to( 0.68,-0.9);
    \draw[-,thick,darkblue]   ( -0.28,0.1) to[out=up,in=left] (0.68,0.6) to[out=right,in=up] (1.28,0.1);
    \draw[-,thick,darkblue](1.28,0.1)to( 1.28,-0.9);
  \end{tikzpicture}~\overset{\text{(R4)}}=~ q ~\begin{tikzpicture}[baseline = -0.5mm]
    \draw[-,thick,darkblue] (-0.28,0.1) to[out=down,in=left] (0,-0.9) to[out=right,in=down] (0.28,0.1);
    \draw[-,thick,darkblue]   ( 0.28,0.1) to[out=up,in=left] (0.48,0.3) to[out=right,in=up] (0.68,0.1);
    \draw[-,thick,darkblue](0.68,0.1)to( 0.68,-0.9);
    \draw[-,thick,darkblue]   ( -0.28,0.1) to[out=up,in=left] (0.68,0.6) to[out=right,in=up] (1.28,0.1);
    \draw[-,thick,darkblue](1.28,0.1)to( 1.28,-0.9);
  \end{tikzpicture}~\overset{\text{(R3)}}=~-q\begin{tikzpicture}[baseline = 3pt, scale=0.5, color=\clr]
        \draw[-,thick] (1,0) to[out=up, in=right] (0.53,0.5) to[out=left, in=right] (0.47,0.5);
        \draw[-,thick] (0.49,0.5) to[out=left,in=up] (0,0);
\end{tikzpicture}.
    \end{aligned}
$$
Finally, one can check (4)-(5) via  Definition~\ref{pqbc}(R2)--(R5) and (2)-(3), easily.
\end{proof}

We are going to construct a spanning set of $\Hom_{\mathcal B}(m, s)$ via
$(m,s)$-{\em  tangle  diagrams}. An $(m,s)$-{ tangle  diagram} is a string diagram obtained by tensor products and compositions of
 the generating morphisms $\begin{tikzpicture}[baseline = 2.5mm]
	\draw[-,thick,darkblue] (0.28,0) to[out=90,in=-90] (-0.28,.6);
	\draw[-,line width=4pt,white] (-0.28,0) to[out=90,in=-90] (0.28,.6);
	\draw[-,thick,darkblue] (-0.28,0) to[out=90,in=-90] (0.28,.6);
\end{tikzpicture}, \begin{tikzpicture}[baseline = 2.5mm]
	\draw[-,thick,darkblue] (-0.28,-.0) to[out=90,in=-90] (0.28,0.6);
	\draw[-,line width=4pt,white] (0.28,-.0) to[out=90,in=-90] (-0.28,0.6);
	\draw[-,thick,darkblue] (0.28,-.0) to[out=90,in=-90] (-0.28,0.6);
\end{tikzpicture}$, $\lcup,\lcap$  and the identity  morphism \begin{tikzpicture}[baseline = 10pt, scale=0.5, color=\clr]
                \draw[-,thick] (0,0.5)to[out=up,in=down](0,1.5);\end{tikzpicture}
     such that there are $m$ (resp., $s$) endpoints  on the bottom (resp., top) row of the  resulting diagram.
 For example, the diagram in \eqref{exampleta} is a $(5,3)$-tangle diagram:
  \begin{equation}\label{exampleta}
     \begin{tikzpicture}[baseline = 25pt, scale=0.35, color=\clr]
         \draw[-,thick] (0,0) to[out=up,in=left] (2,1.5) to[out=right,in=up] (4,0);
       \draw[-,thick] (1.5,0) to[out=up,in=down] (1.5,1.9);
       \draw[-,thick] (1.5,3.2) to[out=up,in=down] (1.5,5);


\draw[-,thick,darkblue] (3,2.55) to [out=90,in=0](2.4, 3.1);
	\draw[-,thick,darkblue] (2.4,2) to [out=0,in=-90](3,2.55);
	\draw[-,thick,darkblue] (1.5,3.2) to [out=-90,in=180] (2.4,2);
	\draw[-,line width=4pt,white](2.4, 3.1) to [out=180,in=90](1.5,1.9);
	\draw[-,thick,darkblue] (2.4, 3.1) to [out=180,in=90](1.5,1.9);

                \draw[-,thick] (6.6,4.5) to (6.5,4.5) to[out=left,in=up] (4.5,2.5)
                        to[out=down,in=left] (6.5,0.5)
                        to[out=right,in=down] (8.5,2.5)
                        to[out=up,in=right] (6.5,4.5);

  \draw[-,line width=4pt,white] (8,0) to[out=up,in=down] (6,5);
    \draw[-,thick] (8,0) to[out=up,in=down] (6,5);
       \draw[-,line width=4pt,white] (8,2.5) to[out=up,in=down] (0,5);
     \draw[-,thick] (8,2.5) to[out=up,in=down] (0,5);

       \draw[-,line width=4pt,white]  (2,0) to[out=up,in=down] (8,2.5);
        \draw[-,thick] (2,0) to [out=up,in=down] (8,2.5);
        \draw[-,line width=4pt,white] (0,0) to[out=up,in=left] (2,1.5) to[out=right,in=up] (4,0);
         \draw[-,thick] (0,0) to[out=up,in=left] (2,1.5) to[out=right,in=up] (4,0);
 \end{tikzpicture}~~.
\end{equation}
  A  strand with no endpoints in a tangle diagram  is called a loop,  namely a loop   is obtained by projecting a connected framed link (i.e., a framed knot~\cite[Page~ 15]{Tu1}) in $\mathbb R^3$. For example there is a loop in the tangle diagram in
  \eqref{exampleta}.

 Let  $\mathbb{B}_{m,s}$  be  the set of  all $(m,s)$-tangle  diagrams.
     Obviously,, $\mathbb{B}_{m,s}$  spans  $\Hom_{\mathcal B}(m, s)$ when $m+s$ is even. In the remaining cases,  $\mathbb{B}_{m,s}=\emptyset$ and
   $\Hom_{\mathcal B}(m, s)=0$.

   We always assume that $m+s$ is even later on. In order to obtain a basis of $\Hom_{\mathcal B}(m, s)$, we need to consider special $(m, s)$-tangle diagrams as follows.
We label endpoints at   bottom (resp., top) row  of  an  $(m, s)$-tangle diagram $d$ as  $1, 2, \ldots, m$
(resp., $\bar 1, \bar 2,   \ldots,  \bar s$) from  left to right such that
\begin{equation}\label{ord12} 1<2<\cdots<m<\bar s<\cdots<\bar 2<\bar1.\end{equation}
For example, $d_1, d_2$ and $d_3$ are three
  $(4,2)$-tangles, where
   \begin{equation}\label{diagramc}d_1=\begin{tikzpicture}[baseline = 25pt, scale=0.35, color=\clr]
        \draw[-,thick] (2,0) to[out=up,in=down] (0,5);
       \draw[-,line width=4pt,white] (0,0) to[out=up,in=left] (2,1.5) to[out=right,in=up] (4,0);
       \draw[-,thick] (0,0) to[out=up,in=left] (2,1.5) to[out=right,in=up] (4,0);
    \draw (0,-0.34) node{\footnotesize{$1$}};
     \draw (6,5.7) node{\footnotesize{$\bar 2$}};
      \draw (0,5.7) node{\footnotesize{$\bar 1$}};
     \draw (4,-0.34) node{\footnotesize{$3$}};
      \draw (2,-0.34) node{\footnotesize{$2$}};
              \draw (6,-0.34) node{\footnotesize{$4$}};
               \draw[-,thick] (6,0) to[out=up,in=down] (6,5);
           \end{tikzpicture},\qquad d_2=\begin{tikzpicture}[baseline = 25pt, scale=0.35, color=\clr]
       \draw[-,thick] (6,2.3) to[out=up,in=down] (6,5);
       \draw[-,line width=4pt,white] (8,2.5) to[out=up,in=down] (0,5);
     \draw[-,thick] (8,2.5) to[out=up,in=down] (0,5);
      \draw[-,thick] (6,0) to[out=up,in=down] (6,2.4);
       \draw[-,line width=4pt,white]  (2,0) to[out=up,in=down] (8,2.5);
        \draw[-,thick] (2,0) to [out=up,in=down] (8,2.5);
        \draw[-,line width=4pt,white] (0,0) to[out=up,in=left] (2,1.5) to[out=right,in=up] (4,0);
         \draw[-,thick] (0,0) to[out=up,in=left] (2,1.5) to[out=right,in=up] (4,0);

    \draw (0,-0.34) node{\footnotesize{$1$}};
     \draw (6,5.7) node{\footnotesize{$\bar 2$}};
      \draw (0,5.7) node{\footnotesize{$\bar 1$}};
     \draw (4,-0.34) node{\footnotesize{$3$}};
      \draw (2,-0.34) node{\footnotesize{$2$}};
              \draw (6,-0.34) node{\footnotesize{$4$}};

           \end{tikzpicture},\qquad  d_3=\begin{tikzpicture}[baseline = 25pt, scale=0.35, color=\clr]
     \draw[-,thick] (2,0) to[out=up,in=down] (0,5);
        \draw[-,line width=4pt,white](0,0) to[out=up,in=left] (2,1.5) to[out=right,in=up] (4,0);
         \draw[-,thick] (0,0) to[out=up,in=left] (2,1.5) to[out=right,in=up] (4,0);
         \draw[-,thick] (-1.1,4) to (-1,4) to[out=left,in=up] (-2,3) to[out=down,in=left] (-1,2)to[out=right,in=down] (0,3) to[out=up,in=right] (-1,4);

    \draw (0,-0.34) node{\footnotesize{$1$}};
      \draw (0,5.7) node{\footnotesize{$\bar 1$}};
     \draw (4,-0.34) node{\footnotesize{$3$}};
     \draw (2,-0.34) node{\footnotesize{$2$}};
                \draw[-,thick] (6.6,4.5) to (6.5,4.5) to[out=left,in=up] (4.5,2.5)
                        to[out=down,in=left] (6.5,0.5)
                        to[out=right,in=down] (8.5,2.5)
                        to[out=up,in=right] (6.5,4.5);
                        \draw[-,line width=4pt,white] (5,0) to [out=up,in=down](6,1.7); \draw[-,line width=4pt,white] (6,1.7)to [out=up,in=down](5,5);
                        \draw[-,thick] (5,0) to [out=up,in=down](6,1.7);  \draw[-,thick] (6,1.7)to [out=up,in=down](5,5);
                         \draw (5,-0.34) node{\footnotesize{$4$}};
     \draw (5,5.5) node{\footnotesize{$\bar 2$}};
           \end{tikzpicture}~~.\end{equation}
Any $(m, s)$-tangle diagram
  $d$ decomposes $\{1, 2, \ldots, m, \bar s, \ldots, \bar 2, \bar 1 \}$ into  $\frac12(m+s)$  disjoint  pairs, say $(i_k, j_k)$'s such that $ i_k<j_k$, $ 1\le k\le \frac12(m+s)$ and  $i_o<i_l$,  $1\le o<l\le \frac12(m+s)$. We use  $(i_k, j_k)$ to denote the strand of $d$ such that the corresponding endpoints are labelled by $i_k$ and $j_k$.  Let
  \begin{equation}\label{conn1} \text{conn}(d)=\{(i_k, j_k)\mid  1\le k\le \frac12(m+s)\}\end{equation}
   and call it  the $(m, s)$-connector of $d$. So $\text{conn}(d_i)=\{(1,3), (2, \bar 1),(4, \bar 2)\}$, $1\le i\le 3$
   where $d_i$'s are given in \eqref{diagramc}. Let $$\text{conn}(m, s)=\{\text{conn}(d)\mid    d\in \mathbb B_{m,s}\}.$$

A strand connecting
a pair on different rows (resp., the same row) is called a vertical (resp., horizontal) arc.
Moreover, a horizontal arc connecting a pair on the top (resp., bottom) row is also
called a cup (resp., cap). So $\lcup $ is a cup and $\lcap$ is a cap.

 \begin{Defn}\cite[\S~2.3]{Mor}
Suppose  $d\in \mathbb B_{m,s}$  such that $\text{conn}(d)$ is given in \eqref{conn1}. Let $k=\frac 12 (m+s)$ and choose  $i_1,i_2, \ldots, i_{k}$ and one point on each loop as a sequence of base-points. Then $d$ is said to be
  \textsf{totally descending} if on traversing all the strands of $d$, starting from the base point of each component in order, each crossing is first met as an over-crossing.
  \end{Defn}
 All  tangle diagrams in \eqref{diagramc}
   are totally descending whereas  all tangle diagrams in \eqref{diagramc1}  are not:
    \begin{equation}\label{diagramc1}d_4=\begin{tikzpicture}[baseline = 25pt, scale=0.35, color=\clr]
  \draw[-,thick] (0,0) to[out=up,in=left] (2,1.5) to[out=right,in=up] (4,0);
  \draw[-,line width=5pt,white] (2,0) to  (0,5);
     \draw[-,thick] (2,0) to  (0,5);
    \draw (0,-0.34) node{\footnotesize{$1$}};
     \draw (6,5.7) node{\footnotesize{$\bar 2$}};
      \draw (0,5.7) node{\footnotesize{$\bar 1$}};
     \draw (4,-0.34) node{\footnotesize{$3$}};
      \draw (2,-0.34) node{\footnotesize{$2$}};
              \draw (6,-0.34) node{\footnotesize{$4$}};
               \draw[-,thick] (6,0) to[out=up,in=down] (6,5);
           \end{tikzpicture},\qquad
            d_5=\begin{tikzpicture}[baseline = 25pt, scale=0.35, color=\clr]
       \draw[-,thick] (6,2.3) to[out=up,in=down] (6,5);
       \draw[-,line width=4pt,white] (8,2.5) to[out=up,in=down] (0,5);
     \draw[-,thick] (8,2.5) to[out=up,in=down] (0,5);
      \draw[-,thick] (6,0) to[out=up,in=down] (6,2.4);
      \draw[-,thick] (0,0) to[out=up,in=left] (2,1.5) to[out=right,in=up] (4,0);
       \draw[-,line width=4pt,white]  (2,0) to[out=up,in=down] (8,2.5);
        \draw[-,thick] (2,0) to [out=up,in=down] (8,2.5);

    \draw (0,-0.34) node{\footnotesize{$1$}};
     \draw (6,5.7) node{\footnotesize{$\bar 2$}};
      \draw (0,5.7) node{\footnotesize{$\bar 1$}};
     \draw (4,-0.34) node{\footnotesize{$3$}};
      \draw (2,-0.34) node{\footnotesize{$2$}};
              \draw (6,-0.34) node{\footnotesize{$4$}};

           \end{tikzpicture},\qquad  d_6=\begin{tikzpicture}[baseline = 25pt, scale=0.35, color=\clr]
         \draw[-,thick] (0,0) to[out=up,in=left] (2,1.5) to[out=right,in=up] (4,0);\draw[-,thick] (-1.1,4) to (-1,4) to[out=left,in=up] (-2,3) to[out=down,in=left] (-1,2)to[out=right,in=down] (0,3) to[out=up,in=right] (-1,4);
       \draw[-,line width=4pt,white] (2,0) to[out=up,in=down] (0,5);
       \draw[-,thick] (2,0) to[out=up,in=down] (0,5);
    \draw (0,-0.34) node{\footnotesize{$1$}};
      \draw (0,5.7) node{\footnotesize{$\bar 1$}};
     \draw (4,-0.34) node{\footnotesize{$3$}};
     \draw (2,-0.34) node{\footnotesize{$2$}};
                \draw[-,thick] (6.6,4.5) to (6.5,4.5) to[out=left,in=up] (4.5,2.5)
                        to[out=down,in=left] (6.5,0.5)
                        to[out=right,in=down] (8.5,2.5)
                        to[out=up,in=right] (6.5,4.5);
                        \draw[-,line width=4pt,white] (5,0) to [out=up,in=down](6,1.7); \draw[-,line width=4pt,white] (6,1.7)to [out=up,in=down](5,5);
                        \draw[-,thick] (5,0) to [out=up,in=down](6,1.7);  \draw[-,thick] (6,1.7)to [out=up,in=down](5,5);
                         \draw (5,-0.34) node{\footnotesize{$4$}};
     \draw (5,5.5) node{\footnotesize{$\bar 2$}};
           \end{tikzpicture}~~.\end{equation}
\begin{Lemma} \label{spann}The morphism set $\Hom_{\mathcal B}( m, s)$ is spanned by all totally descending $(m,s)$-tangle diagrams.
\end{Lemma}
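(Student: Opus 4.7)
Since the family $\mathbb{B}_{m,s}$ already spans $\Hom_{\mathcal B}(m,s)$, it suffices to show that every $d \in \mathbb{B}_{m,s}$ is a $\Bbbk$-linear combination of totally descending tangle diagrams. The main engine is the skein relation (R2) in Definition~\ref{pqbc}, which lets us exchange one kind of crossing for the other at the cost of $\pm(q-q^{-1})$ times the diagram in which that crossing has been smoothed into two parallel vertical strands. For a tangle diagram $d$, let $c(d)$ denote its number of crossings. After fixing base points and the traversal order as prescribed in the definition of totally descending, let $p(d)$ denote the position in this traversal of the first crossing that is first met as an under-crossing, with the convention $p(d)=\infty$ when $d$ is already totally descending.

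We argue by a double induction, with outer parameter $c(d)$ and, for fixed $c(d)$, inner parameter $-p(d)$. The base case $c(d)=0$ is trivial since then $d$ has no crossings and $p(d)=\infty$. For the inductive step we may assume $p(d)<\infty$; let $x$ be the crossing at position $p(d)$. Writing $d$ as a composition $A \circ X \circ B$ in which $X$ is the single crossing at $x$ tensored with identity morphisms on the remaining strands, we apply (R2) inside $\mathcal{B}$ to obtain
$$d \;=\; d' \;\mp\; (q-q^{-1})\, d'',$$
where $d'$ is the tangle diagram in which the over- and under-strands at $x$ have been exchanged, while $d''$ is the smoothed diagram, which satisfies $c(d'')=c(d)-1$. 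The outer induction handles $d''$, expressing it as a linear combination of totally descending diagrams.

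For the term $d'$, observe that $d$ and $d'$ are topologically identical away from $x$, so the base points, the strands, and the order in which the traversal visits the crossings are all preserved. In particular, every crossing at traversal position strictly less than $p(d)$ was good in $d$ and remains good in $d'$, while the crossing $x$, previously bad, is now good in $d'$. Hence $p(d')>p(d)$, so the inner induction applies to $d'$ and the proof is complete. The main obstacle, which is really bookkeeping, is to verify that the local application of (R2) at $x$ gives a legitimate equality of morphisms in the strict monoidal supercategory $\mathcal{B}$; this follows by assembling $d$ from the generators via tensor products and compositions and isolating $X$ using the super-interchange law~\eqref{wwcirrten}, where the potential signs vanish because the identity morphisms surrounding $X$ are even.
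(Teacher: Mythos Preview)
Your proof is correct and follows essentially the same approach as the paper's: the paper's own proof invokes \cite[Theorem~2.6]{Mor} and outlines a double induction, first on the number of crossings and then on the number of non-descending crossings, using the skein relation (R2) to turn a bad crossing into a good one at the cost of a term with fewer crossings. Your inner induction on $-p(d)$ rather than on the count of non-descending crossings is a harmless reparametrization of the same idea.
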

\begin{proof} One can check the result by arguments similar to those in \cite[Theorem~2.6]{Mor}. More explicitly, these arguments are about induction firstly on the number of crossings and then on the number of non-descending crossings. The only difference is that we need to use Definition~\ref{pqbc}(R2) to replace a non-descending crossing in a tangle diagram  and obtain tangle diagrams with fewer non-descending crossings or fewer crossings.
\end{proof}

\begin{Defn}\label{reduced} A tangle diagram is called  reduced if
two strands cross each other at most once and neither a strand crosses itself nor there is a loop. A reduced  totally descending tangle diagram is both totally descending and reduced. \end{Defn}

\begin{Cor}\label{sjiws} The morphism space $\Hom_{\mathcal B}(\ob m,\ob s)$ is spanned by reduced  totally descending $(m,s)$-tangle diagrams.
\end{Cor}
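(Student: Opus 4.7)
By Lemma~\ref{spann}, every element of $\Hom_{\mathcal B}(\ob m,\ob s)$ is a linear combination of totally descending $(m,s)$-tangle diagrams, so it suffices to show that an arbitrary totally descending $(m,s)$-tangle diagram $d$ can be rewritten as a linear combination of reduced totally descending $(m,s)$-tangle diagrams. The plan is to argue by induction on the number of crossings of $d$, and to follow the same general template as the proof of Lemma~\ref{spann}: after each reduction step, the resulting terms have strictly fewer crossings (or fewer non-descending crossings), and one reapplies Lemma~\ref{spann} to return to the totally descending setting before invoking the induction hypothesis.

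Suppose $d$ is totally descending but not reduced, so one of the following holds: (i) two distinct strands cross more than once, (ii) some strand crosses itself, or (iii) $d$ contains a loop. In case (i), because $d$ is totally descending, the earlier strand (with respect to the base-point order) passes over the later one at every crossing between the two, so all such crossings are of the same type. Using the braid identities in (R1) together with the isotopy moves built from (R3) and Lemma~\ref{basicrel}, we may bring two such crossings adjacent, forming a bigon. The skein relation (R2), rewritten as $\sigma=\sigma^{-1}+(q-q^{-1})\cdot 1_2$, gives $\sigma^2=1_2+(q-q^{-1})\sigma$ for $\sigma=\swap$, and applying this to the bigon replaces $d$ with a sum of a diagram with two fewer crossings and $(q-q^{-1})$ times a diagram with one fewer crossing between these two strands.

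For case (ii), a self-intersection of a strand produces a kink, which can be straightened using the snake relation (R3) together with the untwisting identities (R4) and the consequences derived in Lemma~\ref{basicrel}, again strictly reducing the total crossing count. For case (iii), any loop can be isotoped off the rest of the diagram using (R1), (R3), (R4) and the identities of Lemma~\ref{basicrel}, possibly at the cost of producing lower-order terms when sliding it across crossings via (R2); once the loop is isolated, (R5) sends it to zero. In all three cases the outcome is a linear combination of tangle diagrams with strictly fewer crossings than $d$. Reapplying Lemma~\ref{spann} to those terms to restore the totally descending property, and then invoking the induction hypothesis, yields the desired expression of $d$ as a $\Bbbk$-linear combination of reduced totally descending $(m,s)$-tangle diagrams. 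The main obstacle is the bookkeeping in cases (ii) and (iii): one must verify that every isotopy-type maneuver is implementable using only (R1)--(R5) and Lemma~\ref{basicrel} without increasing the crossing count, and that every genuine reduction (bigon elimination via (R2), loop removal via (R5)) strictly decreases the inductive quantity.
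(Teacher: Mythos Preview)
Your inductive scheme is sound, but there is a misconception in case~(i) that makes your argument more complicated than necessary. Precisely because the earlier strand is consistently \emph{over} the later one, two successive crossings between them form a bigon of \emph{opposite} generator types---one is the positive crossing and the other its inverse---so they cancel directly via the first identity in~(R1), with no skein correction term. Your invocation of $\sigma^2 = 1 + (q-q^{-1})\sigma$ is therefore misplaced (though harmless for the induction, since crossing count still drops). The same remark applies to case~(iii): in a totally descending diagram the loop lies below every other strand, so it slides off using only~(R1) and the isotopy-type identities of~(R3), (R4) and Lemma~\ref{basicrel}; no~(R2) corrections ever appear.

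The paper exploits exactly this observation to bypass the induction entirely. If a totally descending diagram contains a loop, that loop is unknotted and can be separated from the rest by~(R1); its self-crossings are removed by Lemma~\ref{basicrel}(4)(5), leaving a scalar times a bubble, which vanishes by~(R5). If there is no loop, then~(R1) cancels any bigon between distinct strands and Lemma~\ref{basicrel}(4)(5) removes any self-crossing, so the diagram equals a nonzero scalar times a reduced totally descending diagram. Thus every totally descending diagram is already (a scalar multiple of) a reduced one or zero---no reapplication of Lemma~\ref{spann}, no~(R2), and no crossing-count induction are needed. Your approach works, but the paper's is shorter and makes the role of the totally descending hypothesis transparent.
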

\begin{proof}  First, we assume $T$ is a totally descending tangle diagram  containing  at least one loop. Thanks to Definition~\ref{pqbc}(R1), loops of $T$ are unknotted and lie  away from  horizontal and vertical arcs. By  Lemma~\ref{basicrel}(4)(5) and  Definition~\ref{pqbc}(R5), $T$ has to be  zero. Next, we consider $T$ without any loop.
By  Lemma~\ref{basicrel}(4)(5) and  Definition~\ref{pqbc}(R1), two strands cross each other at most once and no strand crosses itself. Consequently, by Lemma~\ref{spann}, any $(m, s)$-tangle diagram can be expressed as a linear combination of reduced totally descending tangle diagrams.
\end{proof}

Let $\mathbb{RB}_{m,s}$ be the set of all reduced  totally descending diagrams in $\mathbb B_{m,s}$. We remark that  there are many reduced totally descending diagrams with the same connector $c\in \text{conn}(m,s)$.
\begin{Lemma}\label{jsjsjs} Suppose $d, d_1\in \mathbb{RB}_{m,s} $ such that  $ \text{conn}(d)=\text{conn}(d_1)$. Then   $d=d_1$ up to a sign.
\end{Lemma}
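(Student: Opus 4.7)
The plan is to fix, for each connector $c \in \text{conn}(m,s)$, a canonical reduced totally descending diagram $d_c \in \mathbb{RB}_{m,s}$ (for example: draw all vertical arcs as straight-ish lines between the endpoints they connect, place the cups at distinct heights with the innermost nested last, place the caps similarly, and let the crossings be those forced by the planar embedding). Then I would show that any $d \in \mathbb{RB}_{m,s}$ with $\text{conn}(d) = c$ is equal to $\pm d_c$ modulo the defining relations, which gives the lemma.

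The first key step is a ``planar rigidity'' argument. Because $d$ is reduced (no loops, no self-crossings, each pair of strands crosses at most once) and $\text{conn}(d) = c$, the underlying planar graph of $d$ is determined up to planar isotopy by $c$ alone. Planar isotopies inside $\mathcal{B}$ are implemented by the braid relations (R1), the snake relations (R3), and the super interchange law in \eqref{wwcirrten}, together with the axioms of a strict monoidal supercategory. Using these, I can deform $d$ into a diagram with the same shape as $d_c$, at the cost only of signs: the second snake relation in (R3) contributes a $-1$ whenever a downward zigzag is straightened, and Lemma~\ref{basicrel}(1) contributes a $-1$ whenever two cups or two caps exchange relative heights, since $\lcup$ and $\lcap$ are odd. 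Crucially, neither (R4) nor the skein relation (R2) is invoked: an application of (R4) would require a strand that twists around a cup or a cap, which is ruled out by the reduced hypothesis, and (R2) produces a non-reduced resolution term that we never need.

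The second key step fixes the over/under data. Given the base-point ordering $1<2<\cdots<m<\bar s<\cdots<\bar 1$, if strands $(i_k,j_k)$ and $(i_l,j_l)$ with $i_k<i_l$ cross in a totally descending diagram, then this crossing is first encountered while traversing the strand through $i_k$, forcing that strand to pass over. Hence at every crossing, which strand is on top is a function of $c$ alone. After isotoping $d$ into the planar skeleton of $d_c$, the crossings of $d$ and $d_c$ therefore agree automatically, and we conclude $d = \epsilon\, d_c$ for some $\epsilon \in \{\pm 1\}$.

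The main obstacle is verifying that the accumulated sign $\epsilon$ is independent of the chosen sequence of moves transforming $d$ into $d_c$. This will follow from the observation that every sign-producing move corresponds to changing the parity of a combinatorial quantity attached to the presentation (roughly, the number of cup/cap height-inversions plus the number of downward zigzags); since both $d$ and $d_c$ are fixed diagrams and their connectors coincide, any two reduction paths must produce the same parity, so $\epsilon$ is intrinsic and the lemma follows.
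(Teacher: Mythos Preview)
Your overall strategy --- relating any two reduced totally descending diagrams with the same connector via a sequence of local moves, each contributing at most a sign --- is exactly the paper's approach. The paper carries this out by listing six explicit local moves and justifying each one by citing (R1), (R3)--(R4), and Lemma~\ref{basicrel}.

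There is, however, a genuine gap in your list of moves. You assert that (R4) is not needed because an application of it ``would require a strand that twists around a cup or a cap, which is ruled out by the reduced hypothesis.'' That is true only of the \emph{first} equation in (R4) (the untwisting of a self-crossed cup). The \emph{second} equation in (R4) is of a different nature: it says that a strand crossing one leg of a cup once may be slid under the cup's minimum to cross the other leg instead. Both sides of that move are reduced, have the same connector, and involve a single crossing between the two strands in question --- nothing about reducedness or total descent excludes it. Its cap analogue is Lemma~\ref{basicrel}(2), whose proof in the paper again invokes (R4). These slide-past-extremum moves are \emph{not} consequences of (R1), (R3), and the super interchange law alone: (R1) either cancels a pair of mutually inverse crossings or rearranges three strands, (R3) straightens a cup--cap pair, and the interchange law only adjusts relative heights. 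None of these lets a single strand pass under the extremum of a single cup or cap. So your claim that (R1), (R3), and super interchange implement all the required planar isotopies is false, and the paper's explicit use of (R4) is essential. Fortunately the missing moves are equalities (no scalar), so once you include them your conclusion is unchanged.

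Your final paragraph is also unnecessary: the lemma asserts only that $d = \pm d_1$, so it suffices that $d = \pm d_c$ and $d_1 = \pm d_c$ for \emph{some} signs; path-independence of the sign is not part of the claim.
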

\begin{proof} Since we are assuming that  $d, d_1\in \mathbb{RB}_{m,s} $ and $ \text{conn}(d)=\text{conn}(d_1)$, $d$ can be obtained from $d_1$
by a sequence of following  movements :
\begin{multicols}{2}
\item[(1)] changing of the order of crossings,
\item[(2)] \begin{tikzpicture}[baseline = 6pt, scale=0.5, color=\clr]
         \draw[-,thick,darkblue] (1.25,0) to[out=90,in=-90] (0.5,1);
         \draw[-,line width=4pt,white](1,1) to[out=down, in=right] (0.53,0) to[out=left, in=right] (0.47,0);
        \draw[-,thick] (1,1) to[out=down, in=right] (0.53,0) to[out=left, in=right] (0.47,0);
        \draw[-,thick] (0.49,0) to[out=left,in=down] (0,1);
\end{tikzpicture} $ \leftrightsquigarrow$
  \begin{tikzpicture}[baseline = 6pt, scale=0.5, color=\clr]
        \draw[-,thick,darkblue] (-0.25,0) to[out=90,in=-90] (0.55,1);
        \draw[-,thick] (1,1) to[out=down, in=right] (0.53,0) to[out=left, in=right] (0.47,0);
        \draw[-,line width=4pt,white](0.49,0) to[out=left,in=down] (0,1);
        \draw[-,thick] (0.49,0) to[out=left,in=down] (0,1);
  \end{tikzpicture},
  \item[(3)] \begin{tikzpicture}[baseline = 3pt, scale=0.5, color=\clr]
\draw[-,thick] (0.6,0) to (1.2,0.8);
\draw[-,line width=4pt,white](1.2,.3) to[out=left,in=down] (1,.6);
        \draw[-,thick] (1.2,0) to[out=up, in=right] (0.63,0.6) to[out=left, in=right] (0.57,0.6);
        \draw[-,thick] (0.59,0.6) to[out=left,in=up] (0,0);
  \end{tikzpicture} $\leftrightsquigarrow$
     \begin{tikzpicture}[baseline = 3pt, scale=0.5, color=\clr]
\draw[-,thick] (0.5,0) to (0,0.8);
\draw[-,line width=4pt,white](.3,.4) to[out=left,in=down] (.1,.3);
        \draw[-,thick] (1.2,0) to[out=up, in=right] (0.63,0.6) to[out=left, in=right] (0.57,0.6);
        \draw[-,thick] (0.59,0.6) to[out=left,in=up] (0,0);
  \end{tikzpicture}~,
  \item[(4)] \begin{tikzpicture}[baseline = 3pt, scale=0.5, color=\clr]
        \draw[-,thick] (1,0) to[out=up, in=right] (0.53,0.9) to[out=left, in=right] (0.47,0.9);
        \draw[-,thick] (0.49,0.9) to[out=left,in=up] (0,0);
        \draw[-,thick] (2.5,0) to[out=up, in=right] (2.03,0.5) to[out=left, in=right] (1.97,0.5);
        \draw[-,thick] (1.99,0.5) to[out=left,in=up] (1.5,0);
\end{tikzpicture}~$\leftrightsquigarrow$~\begin{tikzpicture}[baseline = 3pt, scale=0.5, color=\clr]
        \draw[-,thick] (1,0) to[out=up, in=right] (0.53,0.5) to[out=left, in=right] (0.47,0.5);
        \draw[-,thick] (0.49,0.5) to[out=left,in=up] (0,0);
        \draw[-,thick] (2.5,0) to[out=up, in=right] (2.03,0.9) to[out=left, in=right] (1.97,0.9);
        \draw[-,thick] (1.99,0.9) to[out=left,in=up] (1.5,0);
\end{tikzpicture}~,
\item[(5)]  \begin{tikzpicture}[baseline = 3pt, scale=0.5, color=\clr]
        \draw[-,thick] (0,0.5)to  [out=down, in=left] (0.5,0) to[out=right, in=down](1,0.5);
        \draw[-,thick] (1.4,0.8)to  [out=down, in=left] (1.9,0.2) to[out=right, in=down](2.4,0.8);
        \end{tikzpicture}~$\leftrightsquigarrow$~
        \begin{tikzpicture}[baseline = 3pt, scale=0.5, color=\clr] \draw[-,thick] (0,0.8)to  [out=down, in=left] (0.5,0.2) to[out=right, in=down](1,0.8);
        \draw[-,thick] (1.4,0.5)to  [out=down, in=left] (1.9,0) to[out=right, in=down](2.4,0.5);
        \end{tikzpicture}~,
\item[(6)]\begin{tikzpicture}[baseline = -0.5mm]
\draw[-,thick,darkblue] (0.5,0) to[out=down,in=left] (0.75,-0.25) to[out=right,in=down] (1,0);
\draw[-,thick,darkblue] (0,0) to[out=up,in=left] (0.25,0.25) to[out=right,in=up] (0.5,0);
\draw[-,thick,darkblue] (1,0) to (1,0.3);
\draw[-,thick,darkblue] (0,0) to (0,-0.3);
  \end{tikzpicture}~$\leftrightsquigarrow$~\begin{tikzpicture}[baseline = -0.5mm]
  \draw[-,thick,darkblue] (0,-0.25) to (0,0.25);
  \end{tikzpicture}~, and
 \begin{tikzpicture}[baseline = -0.5mm]
\draw[-,thick,darkblue] (0,0) to (0,0.3);
\draw[-,thick,darkblue] (0,0) to[out=down,in=left] (0.25,-0.25) to[out=right,in=down] (0.5,0);
\draw[-,thick,darkblue] (0.5,0) to[out=up,in=left] (0.75,0.25) to[out=right,in=up] (1,0);
\draw[-,thick,darkblue] (0,0) to (0,0.3);
\draw[-,thick,darkblue] (1,0) to (1,-0.3);
  \end{tikzpicture}
$\leftrightsquigarrow$  \begin{tikzpicture}[baseline = -0.5mm]
  \draw[-,thick,darkblue] (0,-0.25) to (0,0.25);
  \end{tikzpicture}~.\end{multicols}
\noindent Then the result follows by applying Definition~\ref{pqbc}(R1), (R3)-(R4) and Lemma~\ref{basicrel}.
\end{proof}

 Although there are many reduced totally descending diagrams with the same connector $c$, by Lemma~\ref{jsjsjs} it is reasonable   to  define
 \begin{equation}\label{dbms} \mathbb{DB}_{m,s}=\{D_c\mid c\in \text{conn}(m,s)\}.\end{equation} where $D_c$ is one of such  diagrams with connector $c$.  Obviously, $\mathbb{DB}_{m,s}=\emptyset$ if $m+s$ is odd.
 In the remaining case, the cardinality of $\mathbb{DB}_{m,s}$ is $(m+s-1)!!$.

\begin{Theorem}\label{main1} If   $m+s$ is even then   $\Hom_{\mathcal B}(\ob m,\ob s)$ is free over $\Bbbk$ with basis  $\mathbb{DB}_{m,s}$.\end{Theorem}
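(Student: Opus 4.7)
The spanning half is already in hand: by Corollary~\ref{sjiws} and Lemma~\ref{jsjsjs}, any morphism in $\Hom_{\mathcal B}(\ob m,\ob s)$ is a $\Bbbk$-linear combination of elements of $\mathbb{DB}_{m,s}$, and the cardinality of this set is exactly $(m+s-1)!!$ when $m+s$ is even. So the entire task is to establish linear independence of $\mathbb{DB}_{m,s}$ in $\Hom_{\mathcal B}(\ob m,\ob s)$.

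The plan is to separate the generic (field-theoretic) linear independence from the integrality statement, and to obtain the former by evaluating in a faithful module category. First I would reduce to the case where $\Bbbk$ is a field: the defining relations (R1)--(R5) have coefficients in $\Z[q^{\pm 1},(q-q^{-1})^{-1}]$, so $\mathcal B$ is defined by base change from the category over $\Z[q^{\pm 1},(q-q^{-1})^{-1}]$, and a spanning set of a finitely generated module over an integral domain fails to be a basis if and only if it fails to be a basis after passing to the fraction field. Thus it suffices to prove linear independence after tensoring with the fraction field of $\Bbbk$, i.e.\ to prove the theorem when $\Bbbk$ is a field.

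With $\Bbbk$ a field, I would invoke the monoidal superfunctor $\Psi\colon \mathcal B \to \U_q(\mathfrak p_n)\text{-}\mathbf{smod}$ promised by the outline (sending $\ob 1$ to the natural module $V$ and the four generators to the $R$-matrix, its inverse, and the quantum analogues of the periplectic cup/cap). Let me take $n$ large; specifically $n \ge \tfrac{1}{2}(m+s)$. Then I would evaluate $\Psi(D_c)\colon V^{\otimes m}\to V^{\otimes s}$ on a suitable basis of weight vectors $v_{\bf i}=v_{i_1}\otimes\cdots\otimes v_{i_m}$ of $V^{\otimes m}$. For each connector $c\in\text{conn}(m,s)$ I would design a source vector $v_{{\bf i}(c)}$ and a target weight $v_{{\bf j}(c)}$ so that the coefficient of $v_{{\bf j}(c)}$ in $\Psi(D_c)(v_{{\bf i}(c)})$ is an invertible scalar, while $\Psi(D_{c'})(v_{{\bf i}(c)})$ has no contribution to that weight component whenever $c'\ne c$. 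The natural way to arrange this is to put distinct labels on the two ends of each arc in $c$ (possible once $n\ge (m+s)/2$) and to use the fact that the periplectic cup/cap can only pair specific weight lines; any diagram with a different connector would be forced to either produce a zero weight or to land in a different weight component.

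The main obstacle here is the book-keeping with the super sign conventions and the nontriviality of the periplectic cups/caps (which, unlike in the orthosymplectic setting, are not symmetric under the braiding). To control the signs I would first establish a triangular structure: order $\text{conn}(m,s)$ by, say, lexicographic order on the list of pairs (with arcs sorted by their left endpoint), and verify that the coefficient matrix $\bigl(\text{coeff of }v_{{\bf j}(c)} \text{ in } \Psi(D_{c'})(v_{{\bf i}(c)})\bigr)_{c,c'}$ is unitriangular up to invertible scalars. Once unitriangularity is verified, linear independence is immediate, and combined with the spanning statement this gives the theorem. The free module structure over the original $\Bbbk$ then follows because the rank equals the size of the spanning set, so no relations can be imposed on $\mathbb{DB}_{m,s}$ even integrally.
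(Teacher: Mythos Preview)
Your overall strategy---reduce to a generic base, use the monoidal functor $\Phi$ into $\Uqpn$-modules for $n$ large, and test on carefully chosen weight vectors---is exactly the paper's route. Two points of comparison are worth making.

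First, the paper simplifies by reducing at the outset to $s=0$ via the $\Bbbk$-isomorphism $\Hom_{\mathcal B}(m,s)\cong\Hom_{\mathcal B}(m+s,0)$ of Lemma~\ref{isoskks}. You work with general $(m,s)$, which is fine but means you have to track a ``target weight'' $v_{\mathbf j(c)}$ as well as a source; the paper only needs a source vector $v_{h_d}$ and a scalar output.

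Second, and more substantively, your diagonality/triangularity step is only sketched, and in fact diagonality in the naive sense fails over $\mathbb C(q)$: the $R$-matrix $PS$ has correction terms proportional to $(q-q^{-1})$, so $\Phi(D_{c'})(v_{\mathbf i(c)})$ can easily land in the same weight space you are aiming at. The paper's resolution is a $(q-1)$-adic valuation argument. One introduces the subrings $\mathcal A_i=\{x\in\mathbb C(q):(q-1)^i\mid x\}$ and the lattices $V_j^{\otimes r}\subset V_q^{\otimes r}$; then modulo $V_1^{\otimes r}$ the action of each generating morphism degenerates to the classical (periplectic Brauer) action---in particular $\mathfrak t^{\pm}$ becomes the signed flip and the correction terms vanish. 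This gives genuine diagonality modulo $\mathcal A_1$: for $e_1,e_2\in\mathbb{DB}_{2r,0}$ one has $e_1v_{h_{e_2}}\equiv\pm\delta_{\text{conn}(e_1),\text{conn}(e_2)}\pmod{\mathcal A_1}$ (Lemma~\ref{acyijs}). A nontrivial relation $\sum c_d d=0$ can then be normalized so that some $c_{d_0}\in\mathcal A_0\setminus\mathcal A_1$, and evaluating at $v_{h_{d_0}}$ forces $c_{d_0}\in\mathcal A_1$, a contradiction. This valuation trick is the missing idea in your proposal; once you have it, the triangularity bookkeeping you were worried about disappears entirely. The descent from $\mathbb C(q)$ to $\mathbb Z[q,q^{-1}]$ and then to arbitrary $\Bbbk$ is exactly as you outlined.
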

The proof of Theorem~\ref{main1} will be given in section 4.
 In the remaining part of this section, we illustrate that  we can assume  $s=0$ when we  prove Theorem~\ref{main1}.
For any positive integer $m$, define two morphisms $\gamma_m$ and $\eta_m$ in $\mathcal B$ as follows:  \begin{equation}\label{varep}
 \gamma_{ m}=\begin{tikzpicture}[baseline = 25pt, scale=0.35, color=\clr]
        \draw[-,thick] (0,5) to[out=down,in=left] (2.5,2) to[out=right,in=down] (5,5);
        \draw[-,thick] (0.5,5) to[out=down,in=left] (2.5,2.5) to[out=right,in=down] (4.5,5);
        \draw(1.5,4.5)node{$\cdots$};\draw(3.5,4.5)node{$\cdots$};
        \draw[-,thick] (2,5) to[out=down,in=left] (2.5,4) to[out=right,in=down] (3,5);
        \draw (1,5.5)node {$ m$}; \draw (4,5.5)node {$ m$};
           \end{tikzpicture} \quad \text{ and }\quad
         \eta_{ m}= \begin{tikzpicture}[baseline = 5pt, scale=0.35, color=\clr]
           \draw[-,thick] (0,0) to[out=up,in=left] (2.5,4) to[out=right,in=up] (5,0);
           \draw[-,thick] (0.2,0) to[out=up,in=left] (2.3,3.5) to[out=right,in=up] (4.8,0);
           \draw[-,thick] (1.8,0) to[out=up,in=left] (2.5,1.5) to[out=right,in=up] (3.2,0);
           \draw(1,0.5)node{$\cdots$};\draw(4,0.5)node{$\cdots$};
          \draw (1,-0.5)node {$ m$}; \draw (4,-0.5)node {$ m$};
           \end{tikzpicture}.
           \end{equation}

\begin{Lemma}\label{tangle equa} For any positive integer $m$, we have
\begin{multicols}{2}
 \item[(1)] $(1_{ m}\otimes \eta_{ m})\circ (\gamma_{ m}\otimes 1_{ m})= (-1)^{\frac{m(m+1)}{2}} 1_m$,\item [(2)] $(\eta_{ m}\otimes 1_{ m})\circ ( 1_{ m}\otimes\gamma_{ m})=(-1)^{\frac{m(m-1)}{2}}1_{ m}$. \end{multicols} \end{Lemma}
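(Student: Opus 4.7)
\medskip
\textbf{Proof plan.} We prove (1) and (2) simultaneously by induction on $m$.

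The base case $m=1$ is immediate: identity (1) reduces to $(1_1\otimes\eta_1)\circ(\gamma_1\otimes 1_1)=-1_1$, which is exactly the second snake relation in Definition~\ref{pqbc}(R3), matching the predicted sign $(-1)^{1\cdot 2/2}=-1$; and identity (2) reduces to the first snake relation $(\eta_1\otimes 1_1)\circ(1_1\otimes\gamma_1)=1_1$, matching $(-1)^{1\cdot 0/2}=+1$.

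For the inductive step of (1), assume the formula at $m-1$. Using the recursive decompositions $\gamma_m=(1_1\otimes\gamma_{m-1}\otimes 1_1)\circ\gamma_1$ and $\eta_m=\eta_1\circ(1_1\otimes\eta_{m-1}\otimes 1_1)$ together with bifunctoriality of $\otimes$, rewrite
\[
(1_m\otimes\eta_m)\circ(\gamma_m\otimes 1_m)=(1_m\otimes\eta_1)\circ(1_{m+1}\otimes\eta_{m-1}\otimes 1_1)\circ(1_1\otimes\gam
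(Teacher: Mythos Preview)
Your approach is correct and is essentially a more explicit version of the paper's own argument. The paper's proof is a single sentence: ``Both (1) and (2) follow immediately from Definition~\ref{pqbc}(R3) and \eqref{wwcirrten}.'' Your induction simply unpacks this, peeling off one cup/cap pair at a time and applying the snake relation together with the super interchange law to track the accumulated sign. One caution: when you invoke ``bifunctoriality of $\otimes$'' you must use the \emph{super} interchange law \eqref{wwcirrten}, not the ordinary one, since $\lcup$ and $\lcap$ are odd; the extra sign $(-1)^m$ needed to pass from $(-1)^{(m-1)m/2}$ to $(-1)^{m(m+1)/2}$ in the inductive step comes from one factor of $-1$ from (R3) and $m-1$ further factors of $-1$ from sliding the odd morphism $\gamma_{m-1}$ (or $\eta_{m-1}$) past a single odd cup or cap.
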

\begin{proof} Both (1) and (2)  follow immediately from Definition~\ref{pqbc}(R3) and \eqref{wwcirrten}.\end{proof}

 \begin{Lemma}\label{isoskks} For any $m, s\in \mathbb N$ with  $ 2\mid (m+s)$,   $\Hom_{\mathcal B}(m,  s)\cong \Hom_{\mathcal B}(m + s,0)$ as  $\Bbbk$-modules.
 \end{Lemma}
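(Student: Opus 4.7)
\medskip
\noindent\textbf{Proof proposal.} The plan is to build mutually inverse $\Bbbk$-linear maps
\[
\Phi:\Hom_{\mathcal B}(m,s)\longrightarrow \Hom_{\mathcal B}(m+s,0),\qquad
\Psi:\Hom_{\mathcal B}(m+s,0)\longrightarrow \Hom_{\mathcal B}(m,s)
\]
by ``bending the $s$ top strands down''. Concretely, I would define
\[
\Phi(f)=\eta_{s}\circ (f\otimes 1_{s})\quad\text{for }f\in\Hom_{\mathcal B}(m,s),
\]
so that $f\otimes 1_s:m+s\to 2s$ and $\eta_s:2s\to 0$ compose to an element of $\Hom_{\mathcal B}(m+s,0)$; and
\[
\Psi(g)=\varepsilon\cdot (g\otimes 1_{s})\circ (1_{m}\otimes \gamma_{s})\quad\text{for }g\in\Hom_{\mathcal B}(m+s,0),
\]
where $\varepsilon\in\{\pm 1\}$ is a sign chosen to cancel the sign appearing below. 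Here $1_m\otimes\gamma_s:m\to m+2s$ and $g\otimes 1_s:(m+s)+s\to s$ compose to an element of $\Hom_{\mathcal B}(m,s)$. Both maps are plainly $\Bbbk$-linear.

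The heart of the argument is checking $\Phi\circ\Psi=\id$ and $\Psi\circ\Phi=\id$. The first composition gives, after one application of the super interchange law \eqref{wwcirrten} (to slide $\eta_s$ past the $1_s$ on the right and to pull $\gamma_s$ out of the middle), an expression of the shape
\[
\Phi(\Psi(g))=\varepsilon\cdot g\circ\bigl((1_m\otimes\eta_s)\circ(1_m\otimes\gamma_s\otimes 1_s)\bigr)
\]
up to an explicit sign arising from the parities of $\eta_s$, $\gamma_s$, and $g$; all are of parity $s\bmod 2$, so the sign is a power of $(-1)^s$ which is tractable since $m+s$ is even. The inner bracket is exactly the left-hand side of Lemma~\ref{tangle equa}(1) tensored on the left by $1_m$, hence equals $(-1)^{s(s+1)/2}\,1_{m+s}$. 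A symmetric computation reduces $\Psi\circ\Phi$ to Lemma~\ref{tangle equa}(2). Fixing $\varepsilon$ to absorb the product of all these signs makes $\Phi$ and $\Psi$ mutually inverse.

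The main obstacle is the sign bookkeeping: because $\lcup$ and $\lcap$ are odd generators, each time we slide a subdiagram past $\gamma_s$ or $\eta_s$ via \eqref{wwcirrten} a sign $(-1)^{s\cdot(\text{parity of the other factor})}$ appears, and one must verify that the cumulative sign is a fixed constant depending only on $m$ and $s$ (not on $f$ or $g$). This is where the assumption $2\mid(m+s)$ becomes essential: it forces the parities of $\Phi(f)$ and $f$ to match, so the total sign is well-defined. Once this is done, Lemma~\ref{tangle equa} finishes the calculation and the isomorphism follows.
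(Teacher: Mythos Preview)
Your approach is essentially the paper's: define mutually inverse maps by bending the $s$ top strands down via $\eta_s$ and back up via $\gamma_s$, then invoke Lemma~\ref{tangle equa} to see the composites are a fixed sign times the identity. The paper tensors $1_s$ on the left rather than the right (writing $\bar\eta_s(d)=\eta_s\circ(1_s\otimes d)$ and $\bar\gamma_s(e)=(1_s\otimes e)\circ(\gamma_s\otimes 1_m)$), a cosmetic mirror of your construction; your minor slip on the parity of $g$ (it is $\tfrac{m+s}{2}\bmod 2$, not $s\bmod 2$) is harmless, since all that matters is that every element of $\Hom_{\mathcal B}(m+s,0)$ is homogeneous of the \emph{same} parity, so the accumulated sign is a constant independent of $g$.
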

 \begin{proof}  By Lemma~\ref{tangle equa},  $ \bar\eta_s$ is the two-sided inverse of $\bar\gamma_s$ up to a sign, where
 $\bar\eta_s:$ $\Hom_{\mathcal B}(m,s)$ $\rightarrow \Hom_{\mathcal B}(  m+  s,0)$ is the $\Bbbk$-homomorphism such that $\bar\eta_s(d)=\eta_s\circ(1_s\otimes d)$ for any  $d\in\Hom_{\mathcal B}(  m,  s) $, and
 $\bar\gamma_s$
is the $\Bbbk$-homomorphism  such that $\bar \gamma_s(e)=(1_s\otimes e)\circ (\gamma_s\otimes 1_m)$ for any $e\in \Hom_{\mathcal B}(  m+s,0)$.
\end{proof}

\section{The quantized enveloping superalgebra $\mathrm{U}_q(\mathfrak{p}_n)$}

In this section, we will recall some basic results of the quantized enveloping superalgebras $\mathrm{U}_q(\mathfrak{p}_n)$ in \cite{AGG}.

\subsection{The quantized enveloping superalgebras $\mathrm{U}_q(\mathfrak{p}_n)$}
For a positive integer $n$, we set $I_{n|n}:=\{-n,\ldots,-1,1,\ldots,n\}$, on which we define the parity $[i]$  of $i\in I_{n|n}$ such that
$[i]=\bar 0$ (resp., $\bar 1$) if $i>0$ (resp., $i<0$).
Let $V$ be the $\mathbb C$-superspace with basis $\{v_i\mid i\in I_{n|n}\}$ such that the parity of $v_i$ is $[i]$. Then the even subspace of $V$  is  $V_{\bar{0}}=\mathbb C$-$\{v_i\mid 1\le i\le n\}$ and the odd subspace of $V$ is $V_{\bar{1}}=\mathbb C$-$\{v_i\mid -n\le i\le -1\}$. The endomorphism algebra
$\mathrm{End}(V)$ is an associative superalgebra with standard basis $E_{i,j}$ whose   parity is  $[i]+[j]$ for all  $i,j\in I_{n|n}$. Under the standard supercommutator, $\mathrm{End}(V)$ is {\em the general linear Lie superalgebra} $\mathfrak{gl}_{n|n}$. The super-transpose $(\ . \ )^{\mathrm{st}}$ on $\mathfrak{gl}_{n|n}$ is given by the formula $$(E_{i,j})^{\mathrm{st}}=(-1)^{[i]([j]+1)}E_{j,i}.$$ Let $\pi: \mathfrak{gl}_{n|n}\longrightarrow\mathfrak{gl}_{n|n}$ be the linear map such that $\pi(E_{i,j})=E_{-i,-j}$. Then the linear map $\ell: \mathfrak{gl}_{n|n}\longrightarrow\mathfrak{gl}_{n|n}$ sending $x$ to
 $-\pi(x^{\mathrm{st}})$ is an involution on $\mathfrak{gl}_{n|n}$. The {\em periplectic Lie superalgebra} is   $$\mathfrak{p}_n=\{x\in \mathfrak {gl}_{n|n}\mid \ell (x)=x\},$$    the subalgebra of  $\mathfrak{gl}_{n|n}$ fixed by the involution $\ell$. In terms of matrices,
\begin{equation}
\mathfrak{p}_n=\left\{\begin{pmatrix}A&B\\C&-A^t\end{pmatrix}\in\mathfrak{gl}_{n|n}\middle|B^{t}=B,\  C^{t}=-C\right\},
\end{equation}
where $t$ is  the usual  transpose.  The  periplectic bilinear form is
$\beta: V\otimes V\rightarrow \mathbb{C}$ such that
\begin{equation}\label{pform} \beta(v_i,v_j)=\delta_{i,-j}(-1)^{[i]} \text{ for all }i,j\in I_{n|n}.\end{equation}
It  is skew-symmetric, odd, and non-degenerate.
 Then the periplectic Lie superalgebra is spanned by all homogeneous $ x\in \mathfrak{gl}_{n|n}$ such that
 \begin{equation}\label{pform1} \beta(xu,v)+(-1)^{[x][u]}\beta(u,xv)=0, \text{for all  homogeneous  $u,v\in V$.}\end{equation}


Let $V_q=V\otimes\mathbb{C}(q)$ where
$\mathbb{C}(q)$ is the field of rational functions in an indeterminate $q$.
   Ahmed-Grantcharov-Guay~\cite{AGG}   defined
 $S \in\mathrm{End}(V_q)^{\otimes 2}$ such that
$$\begin{aligned}
S=&1+\sum\limits_{i=1}^n\big((q-1)E_{i,i}+(q^{-1}-1)E_{-i,-i}\big)\otimes (E_{i,i}+E_{-i,-i})\\
&+(q-q^{-1})\sum\limits_{1\leqslant|j|<|i|\leqslant n}(-1)^{[j]}\big(E_{i,j}-(-1)^{[i]([j]+1)}E_{-j,-i}\big)\otimes E_{j,i}\\
&+(q-q^{-1})\sum\limits_{i=-n}^{-1}E_{i,-i}\otimes E_{-i,i}.
\end{aligned}$$

\begin{Defn}\cite[Definition~3.6]{AGG}
The quantum periplectic superalgebra $\Uq(\mathfrak{p}_n)$ is the unital associative superalgebra over $\mathbb{C}(q)$ generated by homogeneous elements $t_{i,j}, t_{i,i}^{-1}$ with $1\leqslant |i|\leqslant |j|\leqslant n$ and $i,j\in I_{n|n}$, which satisfy the following defining relations:
\begin{eqnarray}
&t_{i,i}=t_{-i,-i},\ t_{-i,i}=0 \text{ if }i>0,\ t_{i,j}=0 \text{ if }|i|>|j|;&\\
&T^{12}T^{13}S^{23}=S^{23}T^{13}T^{12},&\label{eq:STT}
\end{eqnarray}
where $T=\sum\limits_{|i|\leqslant |j|} t_{i,j}\otimes E_{i,j}$ and the relation (\ref{eq:STT}) holds in $\Uqpn\otimes_{\mathbb C(q)}\left(\mathrm{End}(V_q)\right)^{\otimes 2}$. The parity of $t_{i,j}$ is $[i]+[j]$.
\end{Defn}

Thanks to \cite[Remark~3.7]{AGG},  $\Uq(\mathfrak{p}_n)$ is a Hopf superalgebra with the  comultiplication $\Delta$, counit $\varepsilon$ and antipode $\mathrm{S}$ given by
\begin{align}
\Delta(t_{i,j})=\sum\limits_{|i|\leqslant |k|\leqslant |j|}(-1)^{([i]+[k])([k]+[j])}t_{i,k}\otimes t_{k,j}, \quad \varepsilon(T)=1,\quad  \mathrm{S}(T)=T^{-1} .
\end{align}
\subsection{The category $\mathrm{U}_q(\mathfrak{p}_n)$-mod}
Given two left $\mathrm{U}_q(\mathfrak{p}_n)$-supermodules $M, N$, a homogeneous homomorphism $f: M\rightarrow N$ is a $\mathbb C$-linear map such that $$f(xm) = (-1)^{[x][f]}x f (m)$$ for homogeneous elements $m\in M, x\in \Uqpn$. We allow morphisms to be odd (i.e. to change the parity of elements they are applied to). Let $\mathrm{U}_q(\mathfrak{p}_n)$-mod be the category of left $\mathrm{U}_q(\mathfrak{p}_n)$-supermodules such that $\mathrm{Hom}_{\Uqpn}(M,N)$ is the set of all  linear combinations of homogeneous homomorphisms. Then $\mathrm{U}_q(\mathfrak{p}_n)$-mod admits a monoidal structure induced by
the coproduct and counit of a Hopf superalgebra in the usual sense.
Suppose    $$ P=\sum\limits_{a,b=-n}^n(-1)^{[b]}E_{a,b}\otimes E_{b,a}.$$ It is not difficult to verify
\begin{equation}\label{psaction} \begin{aligned}
PS(v_a\otimes v_b)
=&(-1)^{[a][b]}v_b\otimes v_a+(q-1)\delta_{a>0}(\delta_{a,-b}+\delta_{a,b})v_b\otimes v_a\\
&+(q^{-1}-1)\delta_{a<0}(\delta_{a,-b}-\delta_{a,b})v_b\otimes v_a+(q-q^{-1})\delta_{a>0}\delta_{a,-b}v_{-b}\otimes v_{-a}\\
&+(q-q^{-1})\delta_{|a|<|b|}v_{a}\otimes v_{b}+(q-q^{-1})(-1)^{[a]}\delta_{a,-b}\sum\limits_{1\leqslant |j|<|a|\leqslant n}v_j\otimes v_{-j}.
\end{aligned}\end{equation}



\begin{Lemma}\cite[Lemma 5.3]{AGG}\label{jjudd}
Let $\mathbb{C}(q)$ be a purely even $\Uqpn$-module.\footnote{In \cite{AGG},  $\mathbb{C}(q)$ is assumed to be an odd  $\Uqpn$-module.}  We have two odd $\Uqpn$-module homomorphisms $\vartheta: V_q\otimes V_q\rightarrow \mathbb{C}(q)$ and $\epsilon: \mathbb{C}(q)\rightarrow V_q\otimes V_q$ given by $\vartheta(v_a\otimes v_b)=\delta_{a,-b}(-1)^{[a]}$ and $\epsilon(1)=\sum\limits_{a=-n}^nv_a\otimes v_{-a}$.
\end{Lemma}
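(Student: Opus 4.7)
The plan is to verify two things separately for each of $\vartheta$ and $\epsilon$: the parity, and the $\Uqpn$-equivariance. The parity is immediate from the formulas. In $\vartheta(v_a\otimes v_b)=\delta_{a,-b}(-1)^{[a]}$ only terms with $b=-a$ survive, and such a pair has parity $[a]+[-a]=\bar 1$; similarly each summand $v_a\otimes v_{-a}$ of $\epsilon(1)$ has parity $\bar 1$. Since $\mathbb{C}(q)$ is purely even, both maps change parity and are therefore odd, as required.

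For the $\Uqpn$-equivariance of $\vartheta$, since $\mathbb{C}(q)$ carries the counit action, what must be checked is that for each generator $t_{i,j}$ and each pair $(a,b)$,
$$\vartheta\bigl(\Delta(t_{i,j})(v_a\otimes v_b)\bigr) \;=\; (-1)^{[t_{i,j}]}\,\varepsilon(t_{i,j})\,\vartheta(v_a\otimes v_b),$$
where the right-hand side vanishes unless $i=j$ (in which case $[t_{i,i}]=\bar 0$). My main tool would be the defining RTT relation \eqref{eq:STT}. Applying both sides of $T^{12}T^{13}S^{23}=S^{23}T^{13}T^{12}$ to a vector of the form $1\otimes v_a\otimes v_b$ in positions $1,2,3$ and then composing with $\mathrm{id}_{\Uqpn}\otimes\vartheta$ (after twisting $S^{23}$ by $P^{23}$ so that formula \eqref{psaction} is directly applicable) produces an identity in $\Uqpn\otimes\mathbb{C}(q)$. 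On the left, the factor $\vartheta$ cuts the sum down to terms with $b=-a$; on the right, the explicit expansion of $PS$ in \eqref{psaction} forces exactly the same restriction after $\vartheta$ is applied, and matching coefficients of basis vectors yields precisely the invariance identity above. This is the step that carries all the technical weight; the remaining work is the book-keeping of the expansion of $\Delta(t_{i,j})$.

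For $\epsilon$, my preferred route is adjunction. First I would verify the snake identities
$$(\vartheta\otimes 1_{V_q})\circ(1_{V_q}\otimes\epsilon)=\pm 1_{V_q},\qquad (1_{V_q}\otimes\vartheta)\circ(\epsilon\otimes 1_{V_q})=\pm 1_{V_q},$$
which are one-line calculations from $\vartheta(v_b\otimes v_a)=\delta_{a,-b}(-1)^{[b]}$ and $\epsilon(1)=\sum_c v_c\otimes v_{-c}$, up to a sign determined by the Koszul rule \eqref{tensorofsuper}. These identities exhibit $V_q$ as self-dual (up to parity shift) in the monoidal supercategory of $\Uqpn$-modules; once $\vartheta$ has been shown to be equivariant, the equivariance of $\epsilon$ is forced by a purely formal argument using the snake relations.

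The main obstacle I anticipate is not any one calculation but the uniform tracking of Koszul signs — in the coproduct formula for $\Delta(t_{i,j})$, in the superinterchange \eqref{superinterchange} when composing the odd maps $\vartheta$ and $\epsilon$ with tensor identities, and in converting $S^{23}$ to $P^{23}S^{23}$ to use \eqref{psaction}. Once the sign conventions are fixed consistently with \eqref{tensorofsuper}--\eqref{superinterchange}, the verification reduces to the linear algebra of the matrix $S$ against the support condition $a+b=0$ coming from $\vartheta$, and the identities all unfold by direct expansion.
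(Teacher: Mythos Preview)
The paper does not give its own proof of this lemma: it is quoted verbatim as \cite[Lemma~5.3]{AGG}, with only a footnote adjusting the parity convention on $\mathbb{C}(q)$. So there is no argument in the present paper to compare yours against; the authors simply import the result from \cite{AGG}.

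Your outline is reasonable in shape. The parity check is fine, and deducing the equivariance of $\epsilon$ from that of $\vartheta$ via the snake identities and Hopf-algebra duality is a clean and correct strategy (it needs the antipode on $\Uqpn$, which the paper records, so that $V_q^*$ is a $\Uqpn$-module and the canonical coevaluation $\mathbb{C}(q)\to V_q\otimes V_q^*$ is automatically a morphism; then $\epsilon$ is the composite with the isomorphism $V_q^*\cong V_q$ coming from the nondegenerate form $\vartheta$). Where your sketch is thin is the step for $\vartheta$: you assert that composing the $RTT$ relation \eqref{eq:STT} with $\mathrm{id}\otimes\vartheta$ yields the invariance identity, but this is not self-evident. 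The $RTT$ relation expresses compatibility of $T$ with $S$, not directly with the form; to extract invariance of $\vartheta$ one must use the specific way $S$ is built from the periplectic involution $\ell$ (i.e.\ the term $-(-1)^{[i]([j]+1)}E_{-j,-i}$ in $S$, which is exactly $\ell(E_{ij})$). A cleaner route, and presumably the one in \cite{AGG}, is a direct check against the generators: since $T=\sum t_{ij}\otimes E_{ij}$ records the $\Uqpn$-action on $V_q$, one writes out $\Delta(t_{ij})(v_a\otimes v_b)$ using the coproduct formula and verifies $\vartheta$ kills it for $i\neq j$ and scales it correctly for $i=j$, which amounts to the quantum version of the classical invariance \eqref{pform1}. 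Either way, that verification is the substantive content and is not yet carried out in your proposal.
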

Later on $\vartheta$ and  $\epsilon$ are called evaluation map and coevaluation map, respectively.
Composing $\vartheta$ and $\epsilon$ yields  an even $\Uqpn$-module homomorphism  $\mathfrak{c}=\vartheta\circ\epsilon: V_q^{\otimes 2}\rightarrow V_q^{\otimes 2}$.
In \cite{AGG}, Ahmed-Grantcharov-Guay also defined  another even $\Uqpn$-module homomorphism  $\mathfrak t=PS: V_q^{\otimes2}\rightarrow V_q^{\otimes 2} $.
For any $1\le i\le l-1$, let \begin{equation}\label{blt}  \mathfrak{t}_i=1^{\otimes {i-1}}\otimes \mathfrak t\otimes 1^{l-i-1}: V_q^{\otimes l}\rightarrow V_q^{\otimes l} \ \ \text{ and }\ \
 \mathfrak{c}_i=1^{\otimes {i-1}}\otimes \mathfrak c\otimes 1^{l-i-1}: V_q^{\otimes l}\rightarrow V_q^{\otimes l}.
\end{equation}

\subsection{ The periplectic $q$-Brauer algebra $\mathcal{B}_{q,l}$}
The following definition is given in \cite[Definition 5.1]{AGG} when the ground ring $\Bbbk$ is $\mathbb C(q)$.
\begin{Defn}\label{defn:qBrauer} Suppose that $\Bbbk$ is an integral domain of characteristic not $2$ and $l\in\mathbb N$.
The periplectic $q$-Brauer algebra $\mathcal{B}_{q,l}$ is the associative $\Bbbk$-algebra generated by elements $T_i$ and $E_i$ for $1\leqslant i\leqslant l-1$ satisfying the following relations:
\begin{itemize}\item[(1)]
$(T_i-q)(T_i+q^{-1})=0,\quad T_iT_j=T_jT_i,\quad T_kT_{k+1}T_k=T_{k+1}T_kT_{k+1}$,
\item[(2)] $E_i^2=0,\quad E_iE_j=E_jE_i,\quad E_{k+1}E_{k}E_{k+1}=-E_{k+1},\quad  E_{k}E_{k+1}E_{k}=-E_{k}$,
\item [(3)] $T_iE_j=E_jT_i,\quad E_iT_i=-q^{-1}E_i,\quad T_iE_i=qE_i$,
\item [(4)] $ T_kE_{k+1}E_k=-T_{k+1}E_k+(q-q^{-1})E_{k+1} E_k, \quad E_{k+1}E_{k}T_{k+1}=-E_{k+1}T_k+(q-q^{-1})E_{k+1} E_k$,
\end{itemize}
where  $1<|i-j|$ and $1\leqslant k\leqslant l-2$.
\end{Defn}
By convention, $\mathcal B_{q,0}=\mathcal B_{q,1}=\Bbbk$.
\begin{Lemma}\label{antiin} There is a  $\Bbbk$-linear anti-involution $\sigma:  \mathcal B_{q, l}\rightarrow \mathcal B_{q, l}$ such that  $\sigma(T_i)=-T^{-1}_i$ and $\sigma(E_i)=-E_i$, $1\le i\le l-1$.\end{Lemma}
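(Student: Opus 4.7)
The plan is to define $\sigma$ as an anti-algebra homomorphism on the free $\Bbbk$-algebra generated by the symbols $T_i^{\pm 1}, E_i$ via the prescribed formulas, and then verify that it descends to $\mathcal B_{q,l}$ by checking that every defining relation of Definition~\ref{defn:qBrauer} is preserved. Once this is done, involutivity is immediate: $\sigma^2(T_i)=\sigma(-T_i^{-1})=-\sigma(T_i)^{-1}=-(-T_i^{-1})^{-1}=T_i$ and $\sigma^2(E_i)=-\sigma(E_i)=E_i$, and since $T_i,E_i$ generate $\mathcal B_{q,l}$ this shows $\sigma^2=\mathrm{id}$. Recall also that the quadratic relation gives $T_i^{-1}=T_i-(q-q^{-1})$, which we will use repeatedly.

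For each defining relation $u=v$, we must verify that the reversed identity $\sigma(v)=\sigma(u)$ (with $\sigma$ applied term-by-term and the order of factors reversed) holds in $\mathcal B_{q,l}$. The relations in (1) are symmetric under this operation: the Hecke relation expands to $T_i^2-(q-q^{-1})T_i-1=0$, which is invariant under reversing; the commutation $T_iT_j=T_jT_i$ is symmetric, and applying inverses to the braid relation yields $T_k^{-1}T_{k+1}^{-1}T_k^{-1}=T_{k+1}^{-1}T_k^{-1}T_{k+1}^{-1}$, which produces exactly the required identity after tracking the three sign changes. The relations in (2) are likewise automatic: triple $E$-products pick up three minus signs that combine with the $-1$ on the right-hand side. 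For the relations in (3), commutation with $T_i$ for $|i-j|>1$ is symmetric, while the paired identities $T_iE_i=qE_i$ and $E_iT_i=-q^{-1}E_i$ are interchanged by $\sigma$: starting from $T_iE_i=qE_i$ and multiplying on the left by $T_i^{-1}$ gives $T_i^{-1}E_i=q^{-1}E_i$, which is precisely $\sigma$ applied to $E_iT_i=-q^{-1}E_i$; the other direction is analogous.

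The main obstacle is relation~(4). Applying $\sigma$ to $T_kE_{k+1}E_k=-T_{k+1}E_k+(q-q^{-1})E_{k+1}E_k$ yields, after cancelling the common minus sign, the identity
\[
E_kE_{k+1}T_k^{-1}=E_kT_{k+1}^{-1}-(q-q^{-1})E_kE_{k+1}.
\]
To derive this in $\mathcal B_{q,l}$, multiply the second equation of (4) on the left by $E_k$ and use $E_kE_{k+1}E_k=-E_k$ to simplify both sides; this yields the auxiliary identity $E_kT_{k+1}=E_kE_{k+1}T_k+(q-q^{-1})E_k$. Rearranging and substituting $T_{k+1}-(q-q^{-1})=T_{k+1}^{-1}$ gives $E_kT_{k+1}^{-1}=E_kE_{k+1}T_k$; combining this with $T_k^{-1}=T_k-(q-q^{-1})$ produces exactly the displayed identity. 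The case of $\sigma$ applied to the second equation of (4) is symmetric: multiply the first equation of (4) on the right by $E_{k+1}$ to obtain $T_kE_{k+1}=T_{k+1}E_kE_{k+1}+(q-q^{-1})E_{k+1}$, from which $T_k^{-1}E_{k+1}=T_{k+1}E_kE_{k+1}$ follows and, via $T_{k+1}^{-1}=T_{k+1}-(q-q^{-1})$, gives the desired $T_{k+1}^{-1}E_kE_{k+1}=T_k^{-1}E_{k+1}-(q-q^{-1})E_kE_{k+1}$.

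With all defining relations verified under $\sigma$, the map descends to a well-defined anti-homomorphism $\mathcal B_{q,l}\to\mathcal B_{q,l}$, and the observation on $\sigma^2$ above completes the proof.
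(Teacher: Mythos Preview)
Your proof is correct and follows exactly the approach the paper has in mind: the paper's proof simply reads ``The result follows directly from Definition~\ref{defn:qBrauer},'' and you have supplied the routine verification of each defining relation under $\sigma$, including the slightly less obvious relations in (4).
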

\begin{proof}The result follows directly from  Definition~\ref{defn:qBrauer}. \end{proof}
Let   $\mathfrak S_{l}$ be the symmetric group on $l$ letters. As Coxeter group, $\mathfrak S_l$ is generated by $s_i, 1\le i\le l-1$ subject to the relations
$$s_i^2=1, \quad s_is_j=s_js_i \quad \text{and} \quad s_ks_{k+1}s_k=s_{k+1}s_ks_{k+1},$$
for any $1\le i\le l-1$, $ |i-j|>1 $ and $1\le k\le l-2$.
Let $H_l$ be the Hecke algebra associated to  $ \mathfrak S_l$.
Then $H_l$ is the unital associative $\Bbbk$-algebra generated by $T_i$, $1\leq i\leq l-1$, satisfying the following relations
\begin{equation} \label{kkk1} (T_i-q)(T_i+q^{-1})=0,\ \  T_iT_j=T_jT_i,\ \ T_kT_{k+1}T_k=T_{k+1}T_kT_{k+1}\end{equation}
where  $1<|i-j|$ and $1\le k\le l-2$. Thanks to Definition~\ref{defn:qBrauer}, there is a surjective homomorphism from $\mathcal B_{q,l}$ to $H_l$, sending $T_i$ and  $E_i$ to $T_i$ and 0, respectively. We will see that $H_l$ is also a subalgebra of $\mathcal B_{q,l}$ after we prove a basis theorem for $\mathcal B_{q,l}$ next section.
\begin{Lemma} \label{key1}
In $\mathcal B_{q,l}$, we have
\begin{itemize}
\item[(1)] $E_{i+1}=T_iT_{i+1}E_iT_{i+1}^{-1}T_i^{-1}$, for $1\leq i<l-1$,
\item[(2)] $T_i^{-1}E_{i+1}E_i=-T_{i+1}E_i$, $E_{i+1}E_iT_{i+1}^{-1}=-E_{i+1}T_i$, for $1\leq i<l-1$,
\item[(3)] $E_iE_{i+1}T_i=E_iT_{i+1}^{-1}$, $T_{i+1}E_iE_{i+1}=T_i^{-1}E_{i+1}$, for $1\leq i<l-1$,
\item[(4)] $E_{i+1}T_iE_{i+1}=q^{-1}E_{i+1}$, $E_iT_{i+1}E_i=-qE_i$, for $1\leq i<l-1$,
\item[(5)] $E_iE_{i+2}T_{i+1}T_i=-E_iE_{i+2} T_{i+1}^{-1}T_{i+2}^{-1}$, $T_{i+2}T_{i+1} E_{i}E_{i+2}=- T_{i}^{-1}T_{i+1}^{-1}E_iE_{i+2} $, for $1\leq i<l-2$,
\item[(6)]$E_iT_{i+1}T_{i+2}T_{i}T_{i+1}E_i= -E_iE_{i+2}$, for $1\leq i<l-2$.
\end{itemize}
\end{Lemma}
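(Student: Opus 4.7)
The plan is to prove the six identities in the order they are stated, exploiting two organizing principles: the quadratic relation from Definition~\ref{defn:qBrauer}(1), which is equivalent to $T_i^{-1} = T_i - (q-q^{-1})$, and the anti-involution $\sigma$ of Lemma~\ref{antiin}, which allows us to obtain the ``right-handed'' version of any already-proven identity by applying $\sigma(T_i)=-T_i^{-1}$ and $\sigma(E_i)=-E_i$.

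For (2), I would first rewrite the defining relation $T_iE_{i+1}E_i = -T_{i+1}E_i + (q-q^{-1})E_{i+1}E_i$ in Definition~\ref{defn:qBrauer}(4) as $T_{i+1}E_i = (-T_i + (q-q^{-1}))E_{i+1}E_i = -T_i^{-1}E_{i+1}E_i$, which is exactly (2) first statement. The second statement comes identically from the mirror defining relation. Part (1) then follows immediately: combining the two halves of (2) gives $T_iT_{i+1}E_i = -E_{i+1}E_i = E_{i+1}T_iT_{i+1}$, and multiplying by $T_{i+1}^{-1}T_i^{-1}$ on the right yields (1). Part (3) is obtained by applying $\sigma$ to the two identities in (2), carefully tracking that $\sigma$ reverses order and produces three minus signs on each triple product. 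Part (4) first statement reduces via (2) to $E_{i+1}T_iE_{i+1} = -E_{i+1}E_iT_{i+1}^{-1}E_{i+1}$; using $T_{i+1}E_{i+1}=qE_{i+1}$ to get $T_{i+1}^{-1}E_{i+1} = q^{-1}E_{i+1}$ and then the relation $E_{i+1}E_iE_{i+1} = -E_{i+1}$ finishes it; the second statement is symmetric and uses $E_iT_i^{-1}=-qE_i$ together with $E_iE_{i+1}E_i = -E_i$.

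For (5) and (6) the plan is to use (1) as a substitution device. Replacing $E_{i+2}$ by $T_{i+1}T_{i+2}E_{i+1}T_{i+2}^{-1}T_{i+1}^{-1}$ in the left-hand side of (5), and then repeatedly applying (2) and the fact that $T_{i+2}$ and $T_i$ commute, should reduce $E_iE_{i+2}T_{i+1}T_i$ to $-E_iE_{i+2}T_{i+1}^{-1}T_{i+2}^{-1}$ after a short string of rewrites; the crucial intermediate step is $T_{i+2}E_{i+1}E_i = -T_{i+1}^{-1}E_{i+2}E_{i+1}E_i$, which is (2) first statement with indices shifted, composed on the right by $E_i$. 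The second statement of (5) then comes by applying $\sigma$. For (6) I would use the identity $T_iT_{i+1}E_i = -E_{i+1}E_i$ established in the course of proving (1) to collapse the rightmost $T_iT_{i+1}E_i$ block, then apply the shifted $T_{i+2}E_{i+1} = -T_{i+1}^{-1}E_{i+2}E_{i+1}$ to cancel $T_{i+1}T_{i+1}^{-1}$, leaving $E_iE_{i+2}E_{i+1}E_i$; commuting $E_iE_{i+2} = E_{i+2}E_i$ and invoking $E_iE_{i+1}E_i=-E_i$ produces $-E_iE_{i+2}$.

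The only real obstacle is bookkeeping: tracking signs under the ``periplectic twist'' (in particular the $-$ signs in $E_{k+1}E_kE_{k+1}=-E_{k+1}$ and $E_iT_i=-q^{-1}E_i$) and keeping indices straight when invoking shifted versions of (2) and (3). The braid relation is not needed explicitly because the substitution step via (1) handles all the non-trivial rearrangement of the three braid generators in (6); everything else reduces to commutations at distance $>1$ and the quadratic relation.
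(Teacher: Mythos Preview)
Your proposal is correct and matches the paper's approach almost exactly: parts (1)--(4) are left to the reader in the paper as straightforward consequences of Definition~\ref{defn:qBrauer}, your derivation of (6) coincides with the paper's, and both you and the paper obtain the second identity in (5) by applying $\sigma$. The only minor difference is in the first identity of (5): the paper inserts $E_{i+2}=-E_{i+2}E_{i+1}E_{i+2}$ and applies (3) twice, whereas you substitute $E_{i+2}=T_{i+1}T_{i+2}E_{i+1}T_{i+2}^{-1}T_{i+1}^{-1}$ via (1) and apply (2); both routes are equally short.
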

\begin{proof} Thanks to Definition~\ref{defn:qBrauer}, one can check (1)-(4) easily.   We verify (5)-(6) as an example. We have
$$\begin{aligned} E_iE_{i+2}T_{i+1}T_i&=-E_iE_{i+2}E_{i+1}E_{i+2}T_{i+1}T_i= -E_iE_{i+2}E_{i+1}T_{i+2}^{-1}T_i=-E_{i+2}E_iE_{i+1}T_iT_{i+2}^{-1}\\ & = -E_{i+2}E_iT_{i+1}^{-1}T_{i+2}^{-1}.\\
\end{aligned}$$ Applying the anti-involution $\sigma$  in Lemma~\ref{antiin} on the above equation yields the last equation in (5). By (2) and Definition~\ref{defn:qBrauer}(2), $E_iT_{i+1}T_{i+2}T_{i}T_{i+1}E_i=E_iE_{i+2}E_{i+1}E_i=-E_{i+2}E_i$, proving
(6).
\end{proof}

\begin{Theorem}\cite[Theorem 5.5]{AGG}\label{algebsbs} Suppose $\Bbbk=\mathbb C(q)$.   There is an algebra epimorphism  $$\phi: \mathcal{B}_{q,l}\longrightarrow \mathrm{End}_{\Uqpn}(V_q^{\otimes l})$$ such that   $\phi(T_i)= \mathfrak{t}_i$ and $\phi(E_i)= \mathfrak{c}_i$,   $1\leqslant i\leqslant l-1$.  When $n\geqslant l$, $\phi$    is injective and hence an isomorphism.
\end{Theorem}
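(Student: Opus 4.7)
The plan is to establish the three claims—that $\phi$ is a well-defined algebra homomorphism, that it is surjective, and that it is injective when $n\geqslant l$—in sequence. First I would verify that the operators $\mathfrak{t}_i$ and $\mathfrak{c}_i$ on $V_q^{\otimes l}$ satisfy every relation in Definition~\ref{defn:qBrauer}. Because each of these operators acts as the identity on all tensor slots except $\{i,i+1\}$, the relations involving indices with $|i-j|>1$ follow immediately from the super interchange law \eqref{superinterchange}. The remaining relations—the Hecke quadratic, the braid, $E_i^2=0$, $E_{i+1}E_iE_{i+1}=-E_{i+1}$, and the mixed relations such as $E_iT_i=-q^{-1}E_i$, $T_iE_i=qE_i$, and Definition~\ref{defn:qBrauer}(4)—reduce to identities in $\mathrm{End}(V_q^{\otimes 2})$ or $\mathrm{End}(V_q^{\otimes 3})$, and can be checked directly using the explicit formula \eqref{psaction} for $PS$ together with the formulas $\vartheta(v_a\otimes v_b)=\delta_{a,-b}(-1)^{[a]}$ and $\epsilon(1)=\sum_a v_a\otimes v_{-a}$ from Lemma~\ref{jjudd}. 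Note that $E_i^2=0$ is automatic since $\vartheta\circ\epsilon=0$ follows from the odd skew-symmetry of the periplectic form $\beta$ in \eqref{pform}.

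For surjectivity, I would compare with the classical limit. Take the localization $A=\mathbb{C}[q,q^{-1}]_{(q-1)}$ and an $A$-form $\mathcal{U}_A$ of $\Uqpn$ together with a compatible $A$-lattice $V_A^{\otimes l}$ inside $V_q^{\otimes l}$. One verifies that $\mathfrak{t}_i$ and $\mathfrak{c}_i$ preserve this lattice and that, modulo $(q-1)$, they specialize to the standard generators of Moon's periplectic Brauer algebra acting on $V^{\otimes l}$. Moon's classical Schur--Weyl duality~\cite{Moon} then says the periplectic Brauer algebra surjects onto $\mathrm{End}_{\U(\mathfrak{p}_n)}(V^{\otimes l})$. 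A standard Nakayama/flatness argument for commuting algebras lifts this surjectivity from $q=1$ to the generic fiber, giving surjectivity of $\phi$.

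Injectivity when $n\geqslant l$ then follows by dimension counting. Moon's theorem gives $\dim_{\mathbb{C}}\mathrm{End}_{\U(\mathfrak{p}_n)}(V^{\otimes l})=(2l-1)!!$, and upper-semi-continuity of the dimension of the centralizer across the specialization $q\to 1$ yields $\dim_{\mathbb{C}(q)}\mathrm{End}_{\Uqpn}(V_q^{\otimes l})\geqslant (2l-1)!!$. On the other hand, using Definition~\ref{defn:qBrauer} together with Lemma~\ref{key1}, one shows that any word in the generators $T_i,E_i$ can be straightened to a canonical form $T_u\cdot e_I\cdot T_v$ indexed by a Brauer-type pairing $I$ and coset representatives $u,v\in\mathfrak{S}_l$, producing a spanning set of $\mathcal{B}_{q,l}$ of size $(2l-1)!!$. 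Surjectivity of $\phi$ forces all three numbers to coincide, so $\phi$ is an isomorphism. The main obstacle is this last paragraph: producing a spanning set of the correct size requires a careful straightening procedure that systematically uses the braid, skein, and $E$-relations (essentially a Brauer-style normal form), and the matching lower bound on $\dim\mathrm{End}_{\Uqpn}(V_q^{\otimes l})$ requires verifying that the chosen $A$-form of $V_q^{\otimes l}$ genuinely computes the correct dimension at $q=1$—routine but the most delicate technical step.
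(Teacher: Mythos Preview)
The paper does not give its own proof of this theorem: it is quoted verbatim as \cite[Theorem~5.5]{AGG} and used as an input, so there is no argument in the present paper to compare your proposal against.

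That said, parts of your outline overlap with machinery the paper develops for other purposes. The spanning set of $\mathcal{B}_{q,l}$ of size $(2l-1)!!$ that you sketch in your last paragraph is precisely what the paper builds in Lemma~\ref{fque1}, Corollary~\ref{jdijss1} and Theorem~\ref{thm123}(1), via the normal form $\sigma(T_d)E^f T_w T_v$ with $d,v\in D_{f,l}$ and $w\in\mathfrak{S}_{l-2f}$. However, the paper's linear-independence argument does not go through Moon's classical theorem and a $q\to 1$ specialization as you suggest; instead it passes through the diagram category~$\mathcal{B}$, proving Theorem~\ref{main1} by evaluating totally descending tangle diagrams on carefully chosen vectors $v_{h_d}$ (Lemma~\ref{acyijs}) and working modulo the filtration $\mathcal{A}_i$ of $\mathbb{C}(q)$ from Definition~\ref{codeg}. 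The logical flow is therefore different from yours: the paper \emph{assumes} the well-definedness of~$\phi$ from \cite{AGG} (via Proposition~\ref{ksksmsk}) and uses it as a tool to prove its own basis theorem for $\mathcal{B}$ and $\mathcal{B}_{q,l}$, rather than re-deriving the full Schur--Weyl duality of \cite{AGG}.
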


\begin{Prop}\label{ksksmsk}Suppose  $\Bbbk=\mathbb C(q)$. There is a monoidal  superfunctor $\Phi: \mathcal{B}\longrightarrow \Uqpn$-mod such that  $\Phi(1)=V_q$,
$\Phi(\mathord{\begin{tikzpicture}[baseline = 2.5mm]
	\draw[-,thick,darkblue] (0.28,0) to[out=90,in=-90] (-0.28,.6);
	\draw[-,line width=4pt,white] (-0.28,0) to[out=90,in=-90] (0.28,.6);
	\draw[-,thick,darkblue] (-0.28,0) to[out=90,in=-90] (0.28,.6);
\end{tikzpicture}})= \mathfrak{t}$ ,$\Phi(\lcup)=\epsilon$ and $\Phi(\lcap)= \vartheta$.
\end{Prop}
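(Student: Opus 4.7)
The plan is to use the presentation of $\mathcal B$ by generators and relations. Since $\mathcal B$ is the free strict monoidal supercategory on the object $1$ and the four generating morphisms modulo (R1)--(R5), to construct $\Phi$ it suffices to specify the images on generators (with matching parities) and then verify the five relations in $\Uqpn$-mod. The parities already match: $\mathfrak t=PS$ is even, while $\epsilon$ and $\vartheta$ are odd by Lemma~\ref{jjudd}. For relations (R1) and (R2), which involve only crossings, everything reduces to the quadratic/skein relation $(\mathfrak t-q)(\mathfrak t+q^{-1})=0$ and the three-strand braid relation for $\mathfrak t_i$; the second crossing generator must then map to $\mathfrak t-(q-q^{-1})\,\mathrm{id}=\mathfrak t^{-1}$. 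All these are exactly the conditions encoded into the algebra epimorphism $\phi$ of Theorem~\ref{algebsbs}, so they hold automatically. Relation (R5), the loop removal, is immediate since $\vartheta\circ\epsilon(1)=\sum_{a\in I_{n|n}}(-1)^{[a]}=n-n=0$.

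The two snake identities in (R3) are verified by direct computation: applying both compositions to a basis element $v_a$, using the explicit formulas for $\epsilon$ and $\vartheta$ together with the super interchange rule \eqref{tensorofsuper}, yields $(\vartheta\otimes\mathrm{id})\circ(\mathrm{id}\otimes\epsilon)(v_a)=v_a$ and $(\mathrm{id}\otimes\vartheta)\circ(\epsilon\otimes\mathrm{id})(v_a)=-v_a$. The sign $-1$ in the second identity arises from $(-1)^{[c]+[-c]}=-1$, which holds for every $c\in I_{n|n}$ because $[i]$ and $[-i]$ always have opposite parity.

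The main obstacle is relation (R4), which couples the braiding with the duality morphisms. For the first equation $\mathfrak t\circ\epsilon=q\cdot\epsilon$, I substitute $b=-a$ into the explicit formula \eqref{psaction} for $PS$ and sum over $a\in I_{n|n}$; splitting the sum into $a>0$ and $a<0$, the ``inner'' off-diagonal summations $(q-q^{-1})(-1)^{[a]}\sum_{1\le|j|<|a|}v_j\otimes v_{-j}$ produced by positive and negative $a$ cancel term by term thanks to the sign $(-1)^{[a]}$, and the remaining diagonal contributions combine to give precisely $q\sum_a v_a\otimes v_{-a}=q\cdot\epsilon(1)$. The second equation of (R4) has the shape $(\mathrm{id}\otimes\mathfrak t)\circ(\epsilon\otimes\mathrm{id})=\pm(\mathfrak t^{-1}\otimes\mathrm{id})\circ(\mathrm{id}\otimes\epsilon)$ with the sign dictated by the odd parity of $\epsilon$; its verification proceeds by applying both sides to $v_a$, expanding via \eqref{psaction} and the analogous formula for $\mathfrak t^{-1}=\mathfrak t-(q-q^{-1})\,\mathrm{id}$, and comparing coefficients of each tensor word $v_i\otimes v_j\otimes v_k$ in a case analysis according to the signs of $i,j,k$. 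This computation is the technical heart of the proof; once completed, $\Phi$ extends uniquely to the claimed strict monoidal superfunctor by the universal property.
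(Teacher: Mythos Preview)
Your proposal is correct and follows exactly the approach of the paper: reduce to checking (R1)--(R5) on the images, cite \cite[Theorem~5.5]{AGG} (the paper's Theorem~\ref{algebsbs}) for (R1)--(R2), and handle (R3)--(R5) by direct computation with $\epsilon$, $\vartheta$ and the explicit formula \eqref{psaction}. Your outline of the (R4) verification---first showing $\mathfrak t\circ\epsilon=q\,\epsilon$ via cancellation of the inner sums, then checking $(\mathrm{id}\otimes\mathfrak t)\circ(\epsilon\otimes\mathrm{id})=(\mathfrak t^{-1}\otimes\mathrm{id})\circ(\mathrm{id}\otimes\epsilon)$ (the sign is in fact $+1$)---matches what the paper calls ``straightforward computation''; indeed the cleanest way to organize that last check is to compute the difference $(\mathrm{id}\otimes\mathfrak t)\circ(\epsilon\otimes\mathrm{id})-(\mathfrak t\otimes\mathrm{id})\circ(\mathrm{id}\otimes\epsilon)$ and observe it equals $-(q-q^{-1})(\mathrm{id}\otimes\epsilon)$, then invoke the skein relation.
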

\begin{proof} It is enough to verify that the images of generating morphisms of $\mathcal B$ satisfy the defining relations in Definition~\ref{pqbc}~(R1)-(R5). In fact,
Definition~\ref{pqbc}~(R1)-(R2) have already been verified in \cite[Theorem~5.5]{AGG}. One can check other relations by straightforward computation.
  \end{proof}

\section{A basis theorem of the periplectic  $q$-Brauer category }

\subsection{Proof of Theorem~\ref{main1}} The following definition is motivated by \cite[Lemma~3.1]{GRS}.

\begin{Defn} \label{codeg} For any $i\in \mathbb Z$, define  $\mathcal A_i=\{x\in \mathbb C(q)\mid
ev(x)\geq i\}$, where $ev: \mathbb C(q)\rightarrow \mathbb Z\cup\{\infty\}$ such that $ev(0)=\infty$ and
$ev(x)=j$ if $0\neq x=(q-1)^j g/h$ and  $g, h\in \mathbb C[q]$ such that $(g, q-1)=1$, $(h, q-1)=1$. \end{Defn}

\begin{Defn}For any $r\in \mathbb N$ and $j=0,1$, let $V^{\otimes r}_j$  be the free $\mathcal A_0$-submodule of $V_q^{\otimes r}$
with basis $\{(q-q^{-1})^jv_{\mathbf i}\mid \mathbf i\in I_{n\mid n}^r\}$, where  $v_\mathbf i=v_{i_1}\otimes v_{i_2}\otimes\ldots\otimes v_{i_r}$. In particular, $V^{\otimes 0}_j=\mathcal A_j$ for $j=0,1$.
\end{Defn}

\begin{Lemma}\label{jisjss} Suppose $j\in \{0,1\}$ and $a, b\in I_{n\mid n}$.
\begin{multicols}{2}

\item[(1)]$\mathfrak{t}^{\pm}(v_a\otimes v_b )\equiv(-1)^{[a][b]}v_b\otimes v_a  \pmod {V^{\otimes 2}_1}$.\item[(2)]$\mathfrak{t}^{\pm} V^{\otimes 2}_j\subseteq  V^{\otimes 2}_j$, $\vartheta V^{\otimes 2}_j\subseteq \mathcal A_j$ and $\epsilon \mathcal A_j\subseteq V^{\otimes 2}_j$.
\end{multicols}
\end{Lemma}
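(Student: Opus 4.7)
The plan is to read off everything from the explicit formula \eqref{psaction} for $\mathfrak{t}=PS$ together with the definitions of $\vartheta$ and $\epsilon$, after observing two elementary divisibility facts in $\mathcal A_0$. First, since
\[
q-1 = (q-q^{-1})\cdot\frac{q}{q+1}\quad\text{and}\quad q^{-1}-1 = -(q-q^{-1})\cdot\frac{1}{q+1},
\]
and the rational functions $q/(q+1)$ and $1/(q+1)$ both evaluate to something nonzero at $q=1$, they are units of $\mathcal A_0$. Thus the principal ideal generated by $q-q^{-1}$ in $\mathcal A_0$ coincides with $\mathcal A_1$, and consequently $V^{\otimes 2}_1 = (q-q^{-1})V^{\otimes 2}_0$.

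For part (1), I would handle $\mathfrak{t}^+=\mathfrak{t}$ directly: every summand on the right-hand side of \eqref{psaction} other than $(-1)^{[a][b]}v_b\otimes v_a$ carries a coefficient that is one of $q-1$, $q^{-1}-1$, or $q-q^{-1}$, and by the observation above each such coefficient lies in $(q-q^{-1})\mathcal A_0$. Hence the error lies in $V^{\otimes 2}_1$. For $\mathfrak{t}^- = \mathfrak{t}^{-1}$, I would invoke the skein relation Definition~\ref{pqbc}(R2) together with the functor $\Phi$ of Proposition~\ref{ksksmsk} to get the identity
\[
\mathfrak{t} - \mathfrak{t}^{-1} = (q-q^{-1})\,\mathrm{id}_{V_q^{\otimes 2}}
\]
of endomorphisms of $V_q^{\otimes 2}$. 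Rearranging and using the $\mathfrak{t}$-case gives
\[
\mathfrak{t}^{-1}(v_a\otimes v_b) \equiv \mathfrak{t}(v_a\otimes v_b) \equiv (-1)^{[a][b]}v_b\otimes v_a \pmod{V^{\otimes 2}_1}.
\]

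For part (2), I would treat the three maps separately. The statement for $\mathfrak{t}^{\pm}$ on $V^{\otimes 2}_0$ is immediate from (1) since both the leading term $(-1)^{[a][b]}v_b\otimes v_a$ and the $V^{\otimes 2}_1$-error lie in $V^{\otimes 2}_0$; the case $j=1$ then follows by pulling the scalar $q-q^{-1}$ out using the identification $V^{\otimes 2}_1 = (q-q^{-1})V^{\otimes 2}_0$ and $\mathbb C(q)$-linearity. For $\vartheta$, the defining formula shows $\vartheta(v_a\otimes v_b)\in\mathbb Z\subseteq\mathcal A_0$, so $\vartheta V^{\otimes 2}_0\subseteq\mathcal A_0$, and again the $j=1$ case follows by linearity together with $\mathcal A_1=(q-q^{-1})\mathcal A_0$. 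The case of $\epsilon$ is symmetric: $\epsilon(1)=\sum_a v_a\otimes v_{-a}\in V^{\otimes 2}_0$, which handles $j=0$, and scaling by $q-q^{-1}$ settles $j=1$.

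There is no real obstacle here; the only substantive input is the identification of $(q-q^{-1})$ with a uniformizer of $\mathcal A_0$ up to units, and the use of the categorical skein relation to reduce $\mathfrak{t}^{-1}$ to $\mathfrak{t}$.
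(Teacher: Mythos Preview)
Your proof is correct and follows essentially the same approach as the paper: invoke the explicit formula \eqref{psaction} for $\mathfrak{t}$, reduce $\mathfrak{t}^{-1}$ to $\mathfrak{t}$ via the skein relation (R2) transported through the functor $\Phi$ of Proposition~\ref{ksksmsk}, and read off the $\vartheta,\epsilon$ statements from Lemma~\ref{jjudd}. The paper's proof compresses all of this into two sentences; your explicit verification that $q-1$, $q^{-1}-1$, and $q-q^{-1}$ are associates in $\mathcal A_0$ is the only detail the paper leaves entirely to the reader.
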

\begin{proof} Obviously, (1) and the first inclusion in (2)  follow from \eqref{psaction},  Definition~\ref{pqbc}(R2) and Proposition~\ref{ksksmsk}.
The remaining inclusions in
(2)  follow form Lemma~\ref{jjudd}.
\end{proof}

 Throughout this subsection, we assume $n>r$, where $r$ is any fixed natural number.
 For  any  $d\in \mathbb{RB}_{2r,0}$ with $\text{conn}(d) =\{(i_1,j_1),\ldots,(i_r,j_r )\}$, define
\begin{equation}\label{hdd} h_d=(h_1, h_2,\ldots,h_{2r})\end{equation} such that $h_{i_k}=k$ and $h_{j_k}=-k$ for $k=1,2,\ldots, r$. Since $n>r$,  $h_d\in I_{n|n}^{2r}$. Recall  $\mathbb{DB}_{2r,0}$ in \eqref{dbms}.
\begin{Lemma}\label{acyijs}
For any $e_1, e_2\in \mathbb{DB}_{2r,0}$,
$e_1v_{h_{e_2}}\equiv \pm \delta_{\text{conn}(e_1), \text{conn}(e_2)} \pmod {\mathcal A_1}$.
\end{Lemma}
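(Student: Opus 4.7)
My plan is to work through the monoidal superfunctor $\Phi$ of Proposition~\ref{ksksmsk} and track everything modulo $\mathcal{A}_1$, reducing each crossing to a super-swap via Lemma~\ref{jisjss}. Since $e_1\in\mathbb{RB}_{2r,0}$ has no top endpoints and, being reduced, no loops, its underlying diagram consists of exactly $r$ caps together with some crossings between the descending strands. Thus $\Phi(e_1)$ is obtained by composing finitely many operators of the form $1^{\otimes a}\otimes \mathfrak{t}^{\pm}\otimes 1^{\otimes b}$ and $1^{\otimes a}\otimes \vartheta\otimes 1^{\otimes b}$.

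By Lemma~\ref{jisjss}(1)--(2), each $\mathfrak{t}^{\pm}$ coincides with the super-swap $v_a\otimes v_b\mapsto (-1)^{[a][b]}v_b\otimes v_a$ modulo $V_1^{\otimes 2}$, while $\vartheta$ and the identities preserve the filtrations $V_j^{\otimes *}$. Using \eqref{tensorofsuper}--\eqref{superinterchange}, these filtrations are respected by tensor products and compositions. Consequently, when we evaluate $\Phi(e_1)$ on a standard basis vector $v_{\mathbf{i}}$ with $\mathbf{i}\in I_{n|n}^{2r}$, the result modulo $\mathcal{A}_1$ is computed by replacing every $\mathfrak{t}^{\pm}$ by its super-swap leading term and then applying $\vartheta$ at the caps.

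Tracing the strands (reading bottom to top) therefore yields the reduction
\[
\Phi(e_1)(v_{\mathbf{i}})\equiv \pm\!\!\prod_{(a,b)\in\,\text{conn}(e_1)}\!\!\vartheta(v_{i_a}\otimes v_{i_b})\equiv\pm\!\!\prod_{(a,b)\in\,\text{conn}(e_1)}\!\!\delta_{i_a,-i_b}(-1)^{[i_a]}\pmod{\mathcal{A}_1},
\]
where the overall sign records the Koszul signs collected from swapping neighbouring tensor factors of different parities; since we only work modulo $\mathcal{A}_1$ the precise sign is irrelevant for the statement.

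To finish, I apply this with $\mathbf{i}=h_{e_2}$. By the definition \eqref{hdd}, the multiset of values of $h_{e_2}$ is $\{\pm 1,\pm 2,\dots,\pm r\}$, each value occurring exactly once, and the pairs of positions $(a,b)$ for which $h_{e_2}(a)=-h_{e_2}(b)$ are precisely the pairs in $\text{conn}(e_2)$. Hence $\prod_{(a,b)\in\text{conn}(e_1)}\delta_{h_{e_2}(a),-h_{e_2}(b)}$ equals $1$ exactly when $\text{conn}(e_1)=\text{conn}(e_2)$ and vanishes otherwise, which is the required congruence.

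The main obstacle I expect is bookkeeping: one needs to justify rigorously the replacement of the composite of many $\mathfrak{t}^{\pm}$'s by the composite of super-swaps modulo $V_1^{\otimes 2r}$ (which uses the filtration preservation of Lemma~\ref{jisjss}(2) together with the super-interchange law), and to ensure that the cap evaluations can be organised into the above product. Everything else is a direct computation once the leading-term reduction is in place.
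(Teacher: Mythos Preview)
Your proposal is correct and follows essentially the same approach as the paper: both use Lemma~\ref{jisjss} to replace each crossing by its leading super-swap modulo $V_1^{\otimes\ast}$ and then observe that the resulting cap-evaluations detect exactly whether the connectors agree. The only organisational difference is that the paper packages this as an induction on $r$ (peeling off one cap at a time after writing $e_1=d_1\circ d_1'$ with $d_1\in\mathbb{DB}_{2r-2,0}$), whereas you unroll the induction and treat all caps simultaneously.
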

\begin{proof}
By Lemma~\ref{jsjsjs}, we can choose $e_1$ such that all caps of $e_1$ are not at the same height. Then $$ e_1= \begin{tikzpicture}[baseline = 3pt, scale=0.5, color=\clr]
\draw[-,thick] (-1,1.5) to (3,1.5)to(3,2.5)to (-1,2.5) to (-1,1.5);\draw (1,2)node {$ d_1$};
      \draw[-,thick] (0,0) to [out=up, in=left](1,1)to   [out=right, in=up] (2,0) ;\draw (2,-0.5)node {$ j_k$};\draw (0,-0.5)node {$ i_k$};
 \draw[-,thick] (-1,1.4) to (-1,0)  ; \draw (-1,-0.5)node {$ 1$};\draw[-,thick] (-0.6,1.4) to (-0.6,0)  ; \draw (-0.6,-0.5)node {$ 2$};
\draw (-0.2,0.5)node {$ \ldots$}; \draw[-,thick] (0.2,1.4) to (0.2,0)  ;\draw (1,0.5)node {$ \ldots$}; \draw[-,thick] (1.6,1.4) to (1.6,0)  ;
\draw[-,thick] (2.4,1.4) to (2.4,0)  ;\draw (2.5,0.5)node {$ \ldots$}; \draw[-,thick] (3,1.4) to (3,0)  ; \draw (3,-0.5)node {$ 2r$};
 \end{tikzpicture}$$
 for some   $d_1\in \mathbb{DB}_{2r-2,0}$ and some $k, 1\le k\le r$.  So $e_1=d_1\circ d_1'$ where
 $d_1'$ is determined via above diagram in an obvious way. Similarly, we have  $e_2=d_2\circ d_2'$ where
 $d_2\in \mathbb{DB}_{2r-2,0}$.
Each crossing in $d_1'$ may be over crossing or under crossing, which   depends  on $e_1$.
By Lemma~\ref{jisjss}, we have
\begin{equation}\label{ssj} e_1v_{h_{e_2}}\equiv\pm  \delta_{\text{conn}(d_2'), \text{conn}(d_1')} d_1 v_{h_{d_2}}\pmod{ V^{\otimes {2r-2}}_1}
\end{equation}
where $v_{h_{d_2}}\in V^{\otimes {2r-2}}$ is obtained from $v_{h_{e_2}}$ by deleting its $i_k$-th and  $j_k$-th tensor factors. Now the results follow from  induction assumption for $r-1$.
\end{proof}

\textbf{Proof of Theorem~\ref{main1}:} Thanks to Lemma~\ref{isoskks}, we can assume $s=0$ and $m=2r$ for some $r\in \mathbb N$ when we prove Theorem~\ref{main1}. By Corollary~\ref{sjiws} and Lemma~\ref{jsjsjs}, it remains to show that $\mathbb{DB}_{2r, 0}$
is linear independent over $\Bbbk$. First, we assume that $\Bbbk=\mathbb C(q)$.
Suppose
\begin{equation}\label{lish}
\sum_{d\in B}c_d d=0
\end{equation}
for some   $B\subset \mathbb{DB}_{2r, 0}$ with $0\neq c_d\in\mathbb C(q) $.
Without loss of any generality, we can assume  $c_d\in \mathcal A_0$ for all such $d$'s. Otherwise, we can multiply some powers of $q-1$ on both sides of \eqref{lish} such that all coefficients are in $\mathcal A_0$.

Let $a$ be the minimal positive integer such that all $c_d\in \mathcal A_a  \setminus \mathcal A_{a-1}$.
Then there exists a $d_0\in B$ such that  $(q-1)^{-a}c_{d_0}\in \mathcal A_0\setminus \mathcal A_1$.
Multiplying  both sides of \eqref{lish} by $(q-1)^{-a}$, we
 can assume that there exists some $d_0\in B$ such that $c_{d_0}\in \mathcal A_0\setminus \mathcal A_1$. By Lemma~\ref{acyijs},
$0=\sum_{d\in B}c_d dv_{h_{d_0}}\equiv \pm c_{d_0} ~~(\text{mod }\mathcal A_1)$,
forcing $c_{d_0}=0$, a contradiction. So  $\mathbb{DB}_{2r, 0}$
is linear independent over $\mathbb C(q)$ and hence over $\mathbb Z[q, q^{-1}]$.
This proves   Theorem~\ref{main1} over $\mathbb Z[q, q^{-1}]$.
By  standard arguments on base change, we have Theorem~\ref{main1} over an arbitrary integral domain with characteristic not $2$.


 \begin{Defn} \label{rll}
 Given an $(m, s)$-connector $c$,  a \textsf{reduced lift} with respect to $c$ is either $D_c\in \mathbb{DB}_{m,s}$ or
  an $(m, s)$-tangle diagram obtained from  $D_c$ by exchanging some under or  over crossings.
  Let \begin{equation}\label{rl} \mathbb{RL}_{m,s}=\{r_c\mid r_c \text{ is any fixed reduced lift of $c$, and $c\in \text{conn}(m,s)$}\}.\end{equation}\end{Defn}

\begin{Cor}\label{newbasis}
The morphism space $\Hom_{\mathcal B}(m,s)$ has  basis given by $\mathbb{RL}_{m, s}$.
\end{Cor}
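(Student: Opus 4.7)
My plan is to show that the transition matrix from $\mathbb{RL}_{m,s}$ to the basis $\mathbb{DB}_{m,s}$ of Theorem~\ref{main1} is unitriangular up to sign with respect to a suitable filtration, hence invertible over $\Bbbk$. Since both sets are indexed by $\text{conn}(m,s)$ and $|\mathbb{DB}_{m,s}|=(m+s-1)!!$, this suffices.

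First I observe that for each $c\in\text{conn}(m,s)$, both $r_c\in\mathbb{RL}_{m,s}$ and $D_c\in\mathbb{DB}_{m,s}$ are reduced tangle diagrams with the same connector $c$; in particular they realize the same (minimal) number of crossings, which I denote $n_c$. This $n_c$ depends only on $c$. I then filter $\Hom_{\mathcal B}(\ob m,\ob s)$ by letting $F_k$ be the $\Bbbk$-span of $\{D_{c'}\in\mathbb{DB}_{m,s}\mid n_{c'}\le k\}$.

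The heart of the argument is the claim $r_c\equiv \pm D_c\pmod{F_{n_c-1}}$. To prove it, I apply the skein relation (R2) of Definition~\ref{pqbc} to each non-descending crossing of $r_c$ in turn. Each application of (R2) replaces a crossing by the opposite crossing, modulo $(q-q^{-1})$ times the diagram obtained by smoothing that crossing. The swap preserves the number of crossings, but applied in the right order it strictly decreases the number of non-descending crossings; after finitely many such swaps I reach a reduced totally descending diagram with connector $c$, which by Lemma~\ref{jsjsjs} equals $\pm D_c$. Each error term produced along the way is $(q-q^{-1})$ times an $(m,s)$-tangle diagram with strictly fewer than $n_c$ crossings. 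By the reduction procedures in the proofs of Lemma~\ref{spann} and Corollary~\ref{sjiws}---which use only (R1)--(R5) and never increase the crossing count---each such error term can be rewritten as a $\Bbbk$-linear combination of elements of $\mathbb{DB}_{m,s}$ all having $<n_c$ crossings, hence lying in $F_{n_c-1}$.

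Ordering the index set $\text{conn}(m,s)$ by increasing $n_c$, the transition matrix from $\mathbb{RL}_{m,s}$ to $\mathbb{DB}_{m,s}$ is block lower-triangular with diagonal entries $\pm 1$, and is therefore invertible over $\Bbbk$. Consequently $\mathbb{RL}_{m,s}$ is also a $\Bbbk$-basis of $\Hom_{\mathcal B}(\ob m,\ob s)$. The one step requiring care is the bookkeeping claim that the smoothed error terms, once expressed in the basis $\mathbb{DB}_{m,s}$, contribute only to $F_{n_c-1}$; this is the main obstacle, but it follows from the observation that the reduction to $\mathbb{DB}_{m,s}$ proceeds by the inductive scheme of Lemma~\ref{spann} and Corollary~\ref{sjiws}, in which every rewriting step either preserves or decreases the number of crossings of the underlying tangle diagrams.
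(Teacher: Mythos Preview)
Your proof is correct and follows essentially the same approach as the paper's: apply the skein relation (R2) to swap the non-descending crossings of $r_c$ back to those of $D_c$, observe that all error terms have strictly fewer crossings, and conclude that the transition matrix from $\mathbb{RL}_{m,s}$ to $\mathbb{DB}_{m,s}$ is unitriangular. You spell out more carefully than the paper why the smoothed error terms, once rewritten via Lemma~\ref{spann} and Corollary~\ref{sjiws}, still lie in $F_{n_c-1}$; note also that since $r_c$ is by definition obtained from the specific diagram $D_c$ by flipping some crossings, swapping them back via (R2) yields $D_c$ on the nose, so the diagonal entries are in fact $1$ rather than merely $\pm 1$.
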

\begin{proof} Thanks to Definition~\ref{pqbc}(R2), $r_c=D_c$ up to a linear combination of some $(m, s)$-diagrams in $\mathbb{DB}_{m,s}$  with fewer crossings.
This shows that the transition matrix between $\mathbb{RL}_{m, s}$ and $\mathbb{DB}_{m, s}$ is uni-uppertriangular.
Now, the result follows from  Theorem~\ref{main1}.
\end{proof}

\subsection{A basis of  periplectic $q$-Brauer algebra $\mathcal B_{q,l}$}
From here to the end of this section, $\Bbbk$ is always an integral domain with characteristic not $2$.
Later on, write $s_{i,j}=s_is_{i+1, j}$ if $i<j$, and $s_{i, i}=1$, and $s_{i,j}=s_{i, j+1}s_j$ if $i>j$.
 Define $ D_{0,l}=\tilde D_{0, l}=\{1\}$ and
\begin{equation}\label{dlf}\begin{aligned}
D_{f,l}& =\{s_{2f,i_f}s_{2f-1,j_f}\cdots s_{2,i_1}s_{1,j_1}\mid  i_1<\ldots<i_f, k\leq j_k<i_k\leq l, \text{ for }1\leq k\leq f \}, \\
\tilde D_{f,l}&= \{s_{2f,i_f}s_{2f-1,j_f}\cdots s_{2,i_1}s_{1,j_1}\mid   k\leq j_k<i_k\leq l, \text{ for }1\leq k\leq f \},\\
\end{aligned}
\end{equation}
if
$0< f\leq \lfloor l/2\rfloor$. Let $\mathcal B_l(f)$ be the subalgebra of $\mathcal B_{q,l}$ generated by $E_{2f+1}$, $T_j$, $2f+1\leq j\leq l-1$. By Lemma~\ref{key1}, we see that there is an algebra epimorphism
\begin{equation}\label{algepi} \psi: \mathcal B_{q, l-2f}\twoheadrightarrow \mathcal B_{l}(f)\end{equation}
 sending  the generators $E_i, T_i$ of $\mathcal B_{q, l-2f}$ to the generators
$E_{2f+i}$ and $T_{2f+i}$ of $\mathcal B_{q, l}$.
To simplify the notation, we write
$T_{i, j}=T_{i, j+1} T_j$ if $ i>j$, and $T_{i, i}=1$ and $T_{i, j}=T_i T_{i+1, j}$ if $i<j$. Let $E^0=1$ and for any $1\le f\le \lfloor l/2\rfloor$, define  \begin{equation}\label{ef} E^f=E_1E_3\cdots E_{2f-1}.\end{equation}

\begin{Lemma}\label{fque1} Let $N^f$ be the left $\mathcal B_l(f)$-module generated by $ V^f=\{E^fT_d\mid d\in D_{f,l} \}$, and $\tilde N^f$ be the left $\mathcal B_l(f)$-module generated by $\tilde V^f=\{E^fT_d\mid d\in \tilde D_{f,l} \}$.
 Then \begin{multicols}{3}
  \item[(1)] $N^1$ is a right $\mathcal B_{q,l}$-module.
\item[(2)]  $\tilde N^f$ is a right $\mathcal B_{q,l}$-module.
\item[(3)]   $\tilde N^f=N^f$.
\end{multicols}
\end{Lemma}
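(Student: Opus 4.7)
The plan is to observe first that (1) is the $f=1$ case of (2), since the ordering constraint $i_1<\cdots<i_f$ is vacuous on a single index, so $D_{1,l}=\tilde D_{1,l}$ and hence $\tilde N^1=N^1$. The inclusion $N^f\subseteq \tilde N^f$ in (3) is immediate from $D_{f,l}\subseteq \tilde D_{f,l}$, so the substantive work is to prove closure under the right $\mathcal B_{q,l}$-action in (2) and the reverse inclusion $\tilde N^f\subseteq N^f$ in (3).

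For (2), I would verify that for every generator $g\in\{T_1,\ldots,T_{l-1},E_1,\ldots,E_{l-1}\}$ of $\mathcal B_{q,l}$ and every $d\in\tilde D_{f,l}$, the product $E^fT_d\cdot g$ lies in $\tilde N^f$. Writing $T_d=T_{2f,i_f}T_{2f-1,j_f}\cdots T_{2,i_1}T_{1,j_1}$, I would commute $g$ leftward past the factors of $T_d$, starting from $T_{1,j_1}$, using the braid relations \eqref{kkk1} and the Hecke quadratic relation; at each step this either reproduces an element of the same form in $\tilde V^f$ (with different $i_k$ or $j_k$) or produces a genuine interaction. When $g$ eventually meets $E^f=E_1E_3\cdots E_{2f-1}$, its action is controlled by Lemma~\ref{key1}(1)--(4), which either absorb $g$ into $E^f$ up to scalars (as in $E_iT_{i+1}E_i=-qE_i$) or transform the expression into something of the shape $a\cdot E^fT_{d'}$ with $a\in\mathcal B_l(f)$ and $d'\in\tilde D_{f,l}$.

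For (3), suppose $d\in\tilde D_{f,l}$ with $(i_1,\ldots,i_f)$ not strictly increasing, and pick the smallest $k$ with $i_k\geq i_{k+1}$. The key step is to rewrite the two-block factor $s_{2k,i_k}s_{2k-1,j_k}s_{2k+2,i_{k+1}}s_{2k+1,j_{k+1}}$ by pushing the $(k{+}1)$-th block leftward through the $k$-th block. At the level of $E^fT_d$ the relevant factor of $E^f$ adjacent to the swap is $E_{2k-1}E_{2k+1}$, and the required identities are exactly Lemma~\ref{key1}(5)--(6), combined with the braid relations. These allow swapping the two blocks modulo left multiplication by elements of $\mathcal B_l(f)$ together with terms in which the number of factors of $T_i$ has strictly decreased. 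I would then induct on the pair (number of inversions of $(i_1,\ldots,i_f)$, total length of $d$) to conclude that every $E^fT_d$ lies in $\mathcal B_l(f)\cdot V^f=N^f$.

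The main obstacle is the straightening step in (3): Lemma~\ref{key1}(5)--(6) make the block swap possible, but one must carefully track the auxiliary terms produced when the block being moved crosses the $E$'s and verify that each such term stays in the span $\mathcal B_l(f)\cdot V^f$ rather than escaping the module. Setting up a well-ordering on $\tilde D_{f,l}$ under which every rewrite strictly decreases, and checking termination, is the core of the argument; the remainder is bookkeeping with Definition~\ref{defn:qBrauer} and Lemma~\ref{key1}.
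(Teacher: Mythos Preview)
Your plan is in the right direction but diverges from the paper at two points.

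For (2), the paper proves (1) by the direct case analysis you sketch, and then obtains (2) by iteration rather than redoing the verification for each $f$: since $E_1\mathcal B_{q,l}=N^1$ by (1), and everything left over after peeling off the $E_1$-layer lives in $\mathcal B_l(1)$, one applies (1) again inside $\mathcal B_l(1)$ via the shift $\psi:\mathcal B_{q,l-2}\twoheadrightarrow\mathcal B_l(1)$, and repeats. Your direct approach is not wrong, but it duplicates this iteration in disguise.

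The genuine gap is in (3). The paper does \emph{not} swap at the smallest out-of-order position, and it does \emph{not} induct on the inversion count. It first uses induction on $f$ to reduce to the situation $i_2<i_3<\cdots<i_f$, so the only possible disorder is $i_1\ge i_2$; then it proves a careful $f=2$ claim (via Lemma~\ref{key1}(5) and a several-case computation) that $E^2T_d$ lies in the $\mathcal B_l(2)$-span of $E^2T_{d'}$ with $i'_1<i_1$, and finishes by induction on $i_1$. The quantity guaranteed to drop is $i_1$, not the inversion count or the length: in every term the rewrite produces one finds $i'_2=i_1$, so for general $f$ a new inversion at position $(2,3)$ appears whenever $i_1\ge i_3$, and the auxiliary terms are not obviously shorter either. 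Thus your lexicographic ordering on (inversions, length) is not shown to decrease under the rewriting. Moreover, a swap at position $k>1$ must be carried out with the blocks $T_{2k-2,i_{k-1}}\cdots T_{1,j_1}$ still sitting to the right; these involve $T_j$ for $j$ up to $i_{k-1}-1$, which can be large and will interact with the swap. Replace your scheme by the paper's: outer induction on $f$, inner induction on $i_1$, with the $f=2$ computation as the engine.
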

\begin{proof} We prove (1) by showing that  $N^1$ is stable under the right action of $E_1$ and $T_j$'s. Write $d=s_{2,i_1}s_{1,j_1}$ with $1\leq j_1<i_1$. If $j_1=1$, then $i_1\ge 2$. Thanks to Lemma~\ref{key1}(4) and Definition~\ref{defn:qBrauer}(2), $E_1T_dE_1=-qE_1T_{3,i_1}\in N^1$ if $i_1\geq 3$ and $E_1T_dE_1=0$ if $i_1=2$. Suppose $j_1>1$. Then $i_1\ge 3$.
By Lemma~\ref{key1}(2),(4) and Definition~\ref{defn:qBrauer}(3),
$$E_1 T_{2,i_1}T_{1,j_1}E_1=\begin{cases}
                               -q^2  T_{3,i_1}E_1  & \hbox{if $j_1=2$,} \\
                                -E_3 T_{4,i_1}T_{3,j_1} E_1 & \hbox{if $j_1>2$.}
                              \end{cases}
$$
In any case,  $N^1E_1\subset N^1$.
One can check that $N^1T_i\subset N^1$ by using  Definition~\ref{defn:qBrauer}. In this case, it is enough to prove $ E_1T_{2,i}T_{1,j}\in N^1$ when $i\le j$. In fact, since $s_{2, j}s_{1, i-1}\in D_{1, l}$,
$$ E_1T_{2,i}T_{1,j}=E_1T_{1,j}T_{1,i-1}=-q^{-1}E_1T_{2,j}T_{1,i-1}\in N^1.$$ This completes the proof of (1). Using (1)  repeatedly yields (2).

 Suppose $i_1\ge i_2$ and $d=s_{4,i_2}s_{3,j_2}s_{2,i_1}s_{1,j_1}\in\tilde D_{2,l}$. We claim
$E^2T_d\in \tilde N^2_{i_1}$, where $\tilde N^2_{i_1}$ is the left $\mathcal B_l(2)$-submodule of $\tilde N^2$ generated by $V_{i_1}^2=\{E^2T_{d'}\mid d'=s_{4,i_2'}s_{3,j_2'}s_{2,i_1'}s_{1,j_1'}\in \tilde D_{2,l}, i_1'<i_1 \}$.

 In fact, since  $i_1\geq i_2>j_2$, by Lemma~\ref{key1}(5) we have
\begin{equation}\label{key2}  E^2T_d=E^2T_{2,i_1}T_{3,i_2-1}T_{2,j_2-1}T_{1,j_1}
=-E^2T_2^{-1}T_1^{-1}T_{4,i_1}T_{3,i_2-1}T_{2,j_2-1}T_{1,j_1}.\end{equation}
In order to prove our claim,  it is enough  to verify that the RHS of \eqref{key2} is in $\tilde N^2_{i_1}$.

If  $j_1\geq i_2>j_2$, then
the RHS of \eqref{key2} is $-E^2 T_{4,i_1}T_{3,j_1}T_{2,i_2-2}T_{1,j_2-2}\in \tilde N^2_{i_1}$. The last inclusion follows from  inequalities
   $i_1>i_2-2$, $i_2-2>j_2-2$ and $i_1>j_1$.

Suppose  $ j_1<i_2$.  There are two cases we need to discuss
(1) $j_1\geq j_2-1$, (2)  $j_1<j_2-1$.
In  case (1),   the RHS of \eqref{key2} is $-E^2 T_{4,i_1}T_2^{-1}T_{3,i_2-1}T_{2,j_1}T_{1,j_2-2}$. So,
\begin{equation}\label{jsjhd}\text{ RHS of \eqref{key2}}
= (q-q^{-1})E^2T_{4,i_1}T_{3,i_2-1}T_{2,j_1}T_{1,j_2-2}-E^2T_{4,i_1}
T_{2,i_2-1}T_{2,j_1}T_{1,j_2-2}.
\end{equation}
Since  $j_2-2<j_1<i_1$ and $i_2-1<i_1$, the first summand in the RHS of  \eqref{jsjhd} is in $\tilde N^2_{i_1}$. Let $Y$ be the second summand in the  RHS of  \eqref{jsjhd}. It is enough to prove $Y\in \tilde N^2_{i_1}$.
Note that we are assuming that $j_1<i_2$, we have    $j_1\leq i_2-1$.
Suppose  $j_1<i_2-1$.  Since
$i_2-1<i_1$, $j_2-2<i_2-1$ and $j_1+1<i_1$, we have  $Y=E^2T_{4,i_1}T_{3,j_1+1}T_{2,i_2-1}T_{1,j_2-2}\in \tilde N^2_{i_1}$.
 Suppose $ j_1=i_2-1$. Since $j_1<i_1$ and  $j_2-2<j_1-1$,  $Y=E^2T_{4,i_1}T_{3,j_1}T_{2,j_1-1}T_{j_1-1}^2T_{1,j_2-2}$.  So
\begin{equation}\label{ksjdjsd}
Y
= (q-q^{-1})E^2T_{4,i_1}T_{3,j_1}T_{2,j_1-1}T_{1,j_2-2}+ E^2T_{4,i_1}T_{3,j_1}T_{2,j_1}T_{1,j_2-2}\in \tilde N^2_{i_1}.
\end{equation}
This completes the proof of our claim in case (1).
In case (2), we have  $j_1<j_2-1<i_2-1<i_1$ and  $\text{ RHS of \eqref{key2}} =-E^2 T_{4,i_1}T_2^{-1}T_{3,i_2-1}T^{-1}_1T_{2,j_2-1}T_{1,j_1}$.
So
\begin{equation}\label{key3} \text{ RHS of \eqref{key2}}
=-q(q-q^{-1})E^2 T_{4,i_1}T_{3,i_2-1}T^{-1}_1T_{2,j_2-1}T_{1,j_1} -E^2 T_{4,i_1}T_{2,i_2-1}T^{-1}_1T_{2,j_2-1}T_{1,j_1}.
\end{equation}
The first summand  in the RHS of \eqref{key3} is in $\tilde N^2_{i_1} $ since $j_2-1<i_1$, $i_2-1<i_1$ and $j_1<j_2-1$.
We have
$$ T_{4,i_1}T_{2,i_2-1}T^{-1}_1T_{2,j_2-1}T_{1,j_1}= (q-q^{-1})T_{4,i_1}T_{3,j_2}T_{2,i_2-1}T_{1,j_1} + T_{4,i_1}T_{3,j_1+2}T_{2,i_2-1}T_{1,j_2-1} $$
and $i_2-1<i_1$, $j_2<i_1$, $j_1<i_2-1$, $j_1+2<i_1$ and $j_2-1<i_2-1$. This proves that  the second summand in the RHS of \eqref{key3} is also in $\tilde N^2_{i_1} $. So far, we have proved our claim in any case.

 Thanks to \eqref{dlf},  $ D_{f, l}\subset \tilde D_{f, l}$ forcing $\tilde N^f\supseteq N^f$. In order to prove (3), it is enough to prove  \begin{equation}\label{key4} E^f T_d\in N^f \text{ for any $d\in\tilde D_{f,l}$.}\end{equation}
We prove \eqref{key4} by induction on    $f$ and $i_1$.  When $f=1$  this result is trivial since $N^1=\tilde N^1$. In general
by induction assumption on $f$, we can assume that $i_2<i_3<\ldots< i_f$. If $i_1=2$, then $i_1<i_2$ and hence $d\in D_{f,l}$ and $ E^f T_d\in N^f$.  Suppose  $i_1>2$. It is enough to assume $i_1\geq i_2 $. By our previous  claim,
$ E^f T_d \subseteq M$ where $M$ is the left $ \mathcal B_l(f) $-module generated by $\{E^f T_{d'}\mid d'\in \tilde D_{f,l}^{i_1} \}$, and  $\tilde D_{f,l}^{i_1}\subset \tilde D_{f,l}$
consisting  of elements $d'=s_{2f,i_f'}s_{2f-1,j_f'}\cdots s_{2,i_1'}s_{1,j_1'}$ such that $i_1'<i_1$.
Using  induction assumption on $i_1$ yields  $ E^f T_d\in N^f$. This completes the proof of (3).
\end{proof}
\begin{Cor}\label{jdijss1} Suppose  $f\le \lfloor l/2\rfloor$. Then
\begin{itemize}\item[(1)] $E^f \mathcal B_{q, l}$ is the left $\mathcal B_l(f)$-module generated by $ V^f=\{E^fT_d\mid d\in D_{f,l} \}$.
 \item[(2)] $\mathcal B_{q, l} E^f$ is the  right $\mathcal B_l(f)$-module generated by $\{\sigma(T_d)E^f\mid d\in D_{f,l}\}$
where $\sigma$ is $\Bbbk$-linear anti-involution of $\mathcal B_{q,l}$  in Lemma~\ref{antiin}.\end{itemize}
\end{Cor}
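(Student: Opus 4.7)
The plan is to deduce this corollary directly from Lemma~\ref{fque1}, which has already done the heavy lifting. Part (1) is essentially the equality $E^f\mathcal B_{q,l}=N^f$, and part (2) follows from (1) by applying the anti-involution $\sigma$ of Lemma~\ref{antiin}.

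For part (1), I will establish both inclusions. First, $E^f=E^f T_1\in V^f$ (taking $d=1\in D_{f,l}$), so $E^f\in N^f$. By Lemma~\ref{fque1}(2)(3), $N^f=\tilde N^f$ is a right $\mathcal B_{q,l}$-module, whence $E^f\mathcal B_{q,l}\subseteq N^f$. Conversely, since $\mathcal B_l(f)$ is generated by $E_{2f+1}$ and $T_{2f+1},\ldots,T_{l-1}$, each such generator has index at distance $\ge 2$ from every odd index in $\{1,3,\ldots,2f-1\}$, so by the commutation relations in Definition~\ref{defn:qBrauer} (namely $E_iE_j=E_jE_i$ and $T_iE_j=E_jT_i$ whenever $|i-j|>1$) each generator of $\mathcal B_l(f)$ commutes with $E_1,E_3,\ldots,E_{2f-1}$. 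Hence $\mathcal B_l(f)E^f=E^f\mathcal B_l(f)$, and therefore
\[
\mathcal B_l(f)\cdot E^fT_d=E^f\mathcal B_l(f)T_d\subseteq E^f\mathcal B_{q,l},
\]
giving $N^f\subseteq E^f\mathcal B_{q,l}$. This proves (1).

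For part (2), I will transport (1) through the anti-involution $\sigma$ of Lemma~\ref{antiin}, which sends $E_i\mapsto -E_i$ and $T_i\mapsto -T_i^{-1}$. Thus $\sigma(E^f)=(-1)^f E^f$, and $\sigma$ restricts to an anti-involution of the subalgebra $\mathcal B_l(f)$ (since its generators $E_{2f+1}, T_{2f+1},\ldots,T_{l-1}$ are mapped among themselves up to sign and inverse). Applying $\sigma$ to the identity $E^f\mathcal B_{q,l}=\sum_{d\in D_{f,l}}\mathcal B_l(f)\cdot E^f T_d$ from (1), and using that $\sigma$ turns products around, yields
\[
\mathcal B_{q,l}\,E^f=\sum_{d\in D_{f,l}}\sigma(T_d)E^f\cdot \mathcal B_l(f),
\]
after absorbing the overall sign $(-1)^f$. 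This is exactly the assertion of (2).

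The argument has no real obstacle: the only points requiring a moment of care are the commutation check that places $\mathcal B_l(f)$ on either side of $E^f$, and the verification that $\sigma$ preserves $\mathcal B_l(f)$ so that ``left $\mathcal B_l(f)$-module'' transports to ``right $\mathcal B_l(f)$-module'' under $\sigma$. The genuine work — the stability of $N^f$ under right multiplication by $\mathcal B_{q,l}$ and the identification $N^f=\tilde N^f$ — has already been carried out in Lemma~\ref{fque1}.
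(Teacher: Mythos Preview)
Your proposal is correct and follows essentially the same approach as the paper, whose proof is simply ``(1) follows from Lemma~\ref{fque1} and (2) follows from (1) by applying the anti-involution~$\sigma$.'' You have merely spelled out the two inclusions for (1) and the transport under $\sigma$ for (2), which is exactly the intended argument. One tiny notational quibble: writing ``$E^f=E^fT_1$'' is ambiguous since $T_1$ also denotes the generator $T_{s_1}$; it would be cleaner to say $E^f=E^fT_d$ with $d$ the identity element of $D_{f,l}$.
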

\begin{proof} (1) follows from Lemma~\ref{fque1} and (2) follows from (1) by applying the anti-involution $\sigma$. \end{proof}
\begin{Theorem}\label{thm123}  Suppose $ \Bbbk$ is an integral domain with characteristic not $2$.
\begin{itemize}\item [(1)] The algebra $\mathcal B_{q, l}$ has basis  $\mathcal M$, where
$\mathcal M=\{ \sigma(T_d)E^f T_w T_v\mid 1\leq f\leq \lceil l/2 \rceil, d,v\in D_{f,l}, w\in\mathfrak S_{l-2f} \}$. In particular, the rank of $\mathcal B_{q,l}$ is $(2l-1)!! $.
\item [(2)]  As $\Bbbk$-algebras,  $\mathcal B_{q,l}\cong \Hom_{\mathcal B}(l,l)$ for any $l\in \mathbb N$.\end{itemize}
\end{Theorem}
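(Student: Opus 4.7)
My plan is to prove (1) and (2) together. I construct a $\Bbbk$-algebra homomorphism $\psi : \mathcal B_{q,l} \to \Hom_{\mathcal B}(l,l)$, show that $\psi$ is surjective, show that $\mathcal M$ spans $\mathcal B_{q,l}$, and then use the freeness of $\Hom_{\mathcal B}(l,l)$ of rank $(2l-1)!!$ from Theorem~\ref{main1} to force $\psi$ to be an isomorphism and $\mathcal M$ a basis.

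First, define $\psi$ on generators by sending $T_i$ to the crossing at strands $i,i+1$ (other strands vertical) and $E_i$ to a cap at bottom positions $(i,i+1)$ followed by a cup at top positions $(i,i+1)$ (other strands vertical). To see that $\psi$ is well defined I check each relation in Definition~\ref{defn:qBrauer} diagrammatically: the braid relations and the quadratic relation for $T_i$ come from (R1)--(R2); $E_i^2 = 0$ is the loop relation (R5); $E_{k+1}E_kE_{k+1} = -E_{k+1}$ and $E_kE_{k+1}E_k = -E_k$ follow from the snake relations (R3) together with Lemma~\ref{basicrel}(1); $T_iE_i = qE_i$ and $E_iT_i = -q^{-1}E_i$ are the untwisting relations (R4) combined with Lemma~\ref{basicrel}(4)--(5); and the mixed identities in Definition~\ref{defn:qBrauer}(4) are a further combination of (R2), (R4) and the super-interchange law. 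Surjectivity of $\psi$ then follows from Corollary~\ref{newbasis}, since every reduced lift in $\mathbb{RL}_{l,l}$ can be reorganized, via the snake relations, into a composition of adjacent crossings $\psi(T_i)$ and cup-caps $\psi(E_i)$.

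Next I show that $\mathcal M$ spans $\mathcal B_{q,l}$ by filtering with the two-sided ideals $J_f = \mathcal B_{q,l} E^f \mathcal B_{q,l}$. By Corollary~\ref{jdijss1}, any element of $J_f$ is a sum $\sum_{d,v\in D_{f,l}} \sigma(T_d)\,x_{d,v}\,E^f T_v$ with $x_{d,v} \in \mathcal B_l(f)$. The surjection~\eqref{algepi} and induction on $l$ (applied to $\mathcal B_{q,l-2f}$) express each $x_{d,v}$ as a linear combination of elements $\sigma(T_{d'})(E')^{f'}T_{w'}T_{v'}$ with $E' = E_{2f+1}E_{2f+3}\cdots E_{2f+2f'-1}$; since $E^f(E')^{f'} = E^{f+f'}$, the summands with $f' \ge 1$ lie in $J_{f+1}$, while the rest give precisely the $\mathcal M$-elements at level $f$. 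Summing over $f$, $\mathcal M$ spans $\mathcal B_{q,l}$. Combined with Step~1, $\psi(\mathcal M)$ spans $\Hom_{\mathcal B}(l,l)$, and the morphism $\psi\bigl(\sigma(T_d) E^f T_w T_v\bigr)$ is a reduced totally descending $(l,l)$-diagram whose connector has $f$ bottom caps determined by $d$, $f$ top cups determined by $v$, and $l-2f$ vertical arcs permuted by $w$. The parametrization in \eqref{dlf} is chosen precisely so that $(f,d,w,v)\mapsto \text{conn}\bigl(\psi(\sigma(T_d) E^f T_w T_v)\bigr)$ is a bijection onto $\text{conn}(l,l)$. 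By Lemma~\ref{jsjsjs} together with Theorem~\ref{main1}, reduced totally descending diagrams with distinct connectors are $\Bbbk$-linearly independent, so $\psi(\mathcal M)$ agrees with $\mathbb{DB}_{l,l}$ up to signs, giving $|\mathcal M| = (2l-1)!!$. Hence $\psi$ is an isomorphism and $\mathcal M$ is a $\Bbbk$-basis of $\mathcal B_{q,l}$.

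The main obstacle is the verification in the first step: although each individual relation reduces to a routine diagrammatic manipulation, the relations in Definition~\ref{defn:qBrauer}(4) mix $T$'s and $E$'s and require a coordinated use of (R2), (R3) and (R4) together with the super-interchange law, with careful sign tracking because $\lcup$ and $\lcap$ are odd morphisms. A secondary delicate point is the final bijection, where one must check that the conditions on $(i_k,j_k)$ in \eqref{dlf} pick out exactly one representative of each pairing of the $2f$ cap/cup endpoints, neither over- nor under-counting.
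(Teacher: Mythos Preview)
Your overall architecture matches the paper's: construct the homomorphism $\psi:\mathcal B_{q,l}\to\Hom_{\mathcal B}(l,l)$, prove surjectivity, show $\mathcal M$ spans via the filtration by the ideals $\mathcal B_{q,l}E^f\mathcal B_{q,l}$ and Corollary~\ref{jdijss1}, and then deduce everything from the rank of $\Hom_{\mathcal B}(l,l)$. Where you diverge is in the linear-independence step. The paper does \emph{not} attempt to match $\psi(\mathcal M)$ elementwise with $\mathbb{DB}_{l,l}$; it simply passes to the quotient field $F$ of $\Bbbk$, observes that $\dim_F\Hom_{\mathcal B}(l,l)=(2l-1)!!=|\mathcal M|$ while $\psi$ is surjective and $\mathcal M$ spans, and concludes by a dimension count that $\mathcal M$ is an $F$-basis (hence $\Bbbk$-linearly independent). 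This completely sidesteps both of your acknowledged ``delicate points''.

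Your direct-bijection argument is salvageable but, as written, contains an error: $\psi(\sigma(T_d)E^fT_wT_v)$ is \emph{not} totally descending in general, because $\sigma(T_i)=-T_i^{-1}$ introduces negative crossings in the top block. What you actually obtain is (up to sign) a single \emph{reduced lift} of a connector, so you must invoke Corollary~\ref{newbasis} (the $\mathbb{RL}_{l,l}$ basis), not Lemma~\ref{jsjsjs}$+$Theorem~\ref{main1}. Even then, two genuine verifications remain: (i) that the stacked diagram is \emph{reduced}, which requires $\ell(wv)=\ell(w)+\ell(v)$ for $w\in\widetilde{\mathfrak S}_{l-2f}$ and $v\in D_{f,l}$ (true, but it rests on $D_{f,l}$ consisting of minimal-length representatives with $v^{-1}$ increasing on $\{2f+1,\dots,l\}$, which you have not argued); and (ii) the bijection $(f,d,w,v)\mapsto\text{conn}$, which is correct but needs a careful check that the constraints in~\eqref{dlf} normalise the cap and cup pairings uniquely. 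The paper's dimension-count route buys you exactly the freedom to avoid (i) and (ii); your route buys working directly over $\Bbbk$ without passing to the quotient field, at the cost of those extra combinatorial checks.
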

\begin{proof}Let $\tilde H_{l-2f}$ be the Hecke algebra associated to the symmetric group $\mathfrak S_{l-2f}$ on $l-2f$ letters $2f+1, 2f+2, \ldots, l$.
There is a $\Bbbk$-algebra epimorphism from $\tilde H_{l-2f}$ to $\mathcal B_l(f)/I$ sending $T_{2f+i}$'s to $T_{2f+i}$'s,
  where $I$ is the two-sided ideal of $\mathcal B_l(f)$ generated by $E_{2f+1}$.  Let \begin{equation}\label{2sidei} \mathcal B^f_{q,l}=\mathcal B_{q, l} E^f \mathcal B_{q, l}.\end{equation}
  Thanks to  Lemma~\ref{fque1}(3) and Corollary~\ref{jdijss1},  $\mathcal B^f_{q,l}/\mathcal B^{f+1}_{q,l}$ is spanned by
$$\{ \sigma(T_d)E^f T_w T_v\mid  d,v\in D_{f,l}, w\in\mathfrak S_{l-2f} \}.$$  This proves that $\mathcal B_{q, l}$ is spanned by $\mathcal M$ as $\Bbbk$-module.
In order to prove that $\mathcal M$ is linear independent over $\Bbbk$, it is enough to prove that  $\mathcal M$ is linear independent over $F$, the quotient field of $\Bbbk$.

Suppose  both $\mathcal B_{q, l}$
and $\mathcal B$ are defined over $F$. There is an algebra  homomorphism
 $ \varphi: \mathcal B_{q,l} \rightarrow \Hom_{\mathcal B}(l,l)$
 sending  $T_i$ and $E_i$ to $1_{i-1}\otimes \begin{tikzpicture}[baseline = 2.5mm]
	\draw[-,thick,darkblue] (0.28,0) to[out=90,in=-90] (-0.28,.6);
	\draw[-,line width=4pt,white] (-0.28,0) to[out=90,in=-90] (0.28,.6);
	\draw[-,thick,darkblue] (-0.28,0) to[out=90,in=-90] (0.28,.6);
\end{tikzpicture}\otimes 1_{l-i-1}$ and $1_{i-1}\otimes \begin{tikzpicture}[baseline = 3pt, scale=0.5, color=\clr]
        \draw[-,thick] (1,-0.1) to[out=up, in=right] (0.53,0.4) to[out=left, in=right] (0.47,0.4);
        \draw[-,thick] (0.49,0.4) to[out=left,in=up] (0,-0.1);
        \draw[-,thick] (1,1.1) to[out=down, in=right] (0.53,0.6) to[out=left, in=right] (0.47,0.6);
        \draw[-,thick] (0.49,0.6) to[out=left,in=down] (0,1.1);
\end{tikzpicture}\otimes 1_{l-i-1} $, respectively.
  Using induction on the number of crossings for a reduced $(l, l)$-tangle diagram, one can write   any reduced $(l,l)$-tangle diagram as a linear combination of   $T_w d T_{w'}$ for some $w,w'\in \mathfrak S_l$, where
$d=( \begin{tikzpicture}[baseline = 3pt, scale=0.5, color=\clr]
        \draw[-,thick] (1,-0.1) to[out=up, in=right] (0.53,0.4) to[out=left, in=right] (0.47,0.4);
        \draw[-,thick] (0.49,0.4) to[out=left,in=up] (0,-0.1);
        \draw[-,thick] (1,1.1) to[out=down, in=right] (0.53,0.6) to[out=left, in=right] (0.47,0.6);
        \draw[-,thick] (0.49,0.6) to[out=left,in=down] (0,1.1);
\end{tikzpicture})^{\otimes r} \otimes 1_{l-2r}$ for some $1\leq r\leq l/2$. This proves that
 $\varphi$ is surjective. Thanks to Theorem~\ref{main1},   $\dim_F  \Hom_{\mathcal B}(l,l)=(2l-1)!!=\# \mathcal M$ and hence  $ \dim_F \mathcal B_{q, l}=(2l-1)!!$, forcing $\mathcal M$ to be linear independent over $F$. This completes the proofs of (1). Now, (2) follows from (1) and the fact that there is  an algebra epimorphism $ \varphi: \mathcal B_{q,l} \rightarrow \Hom_{\mathcal B}(l,l)$ over $\Bbbk$.
\end{proof}

\section{Representations of the periplectic $q$-Brauer category $\mathcal B$}
In this section we assume that  $\Bbbk$ is an algebraically closed field with characteristic not $2$. We show that $\mathcal B$ admits a split triangular decomposition in the sense of \cite{BS}. Consequently,  the category of locally finite dimensional right $\mathcal B$-modules is an upper finite fully stratified category in the sense of~\cite[Definition~3.34]{BS}.

\subsection{Split triangular decomposition}
Let $A$ be the locally unital associative $\Bbbk$-algebra associated to $\mathcal B$. Then
\begin{equation}\label{locaglebra}
A=\oplus_{a,b\in \mathbb N}1_aA1_b,\quad 1_a A1_b=\Hom_{\mathcal B}(b,a)
\end{equation}
with multiplication being induced by composition in $\mathcal B$. The mutually orthogonal idempotents $\{1_a\mid a\in \mathbb N\}$ serves as a family of
 distinguished local units of $A$. Although   $\mathcal B$ is a supercategory, we  regard $A$ as an ordinary associative $\Bbbk$-algebra by forgetting the super structure of $\mathcal B$.

\begin{Defn}\label{pm0} Let $A$ be the locally unital algebra associated to $\mathcal B$ in \eqref{locaglebra}. Define
\begin{itemize}
\item[(1)] $A^\circ:=\oplus_{r\in \mathbb N}A^\circ_r$, where  $A^\circ_r$ is the  $\Bbbk$-span of all $(r, r)$-tangle diagrams on which  there are neither caps nor cups.
\item [(2)] $A^-$: $\Bbbk$-span of all tangle diagrams on which  there are neither caps  nor crossings among   vertical strands.
\item [(3)] $A^+$: $\Bbbk$-span of all tangle diagrams on which  there are neither  cups nor crossings among   vertical strands.
\item [(4)] $A^\sharp$: $\Bbbk$-span of all tangle diagrams on which  there are no caps.
\item [(5)] $A^b$: $\Bbbk$-span of all tangle diagrams on which  there are no cups.
\end{itemize}
\end{Defn}
Thanks to Definition~\ref{pqbc}, it is not difficult  to verify that all tangle diagrams in  $A^\diamond$  are closed under composition and hence are  locally unital subalgebras of $A$ where
$\diamond\in \{\circ, -, +,\sharp, b\}$.
  Moreover,
  \begin{equation}\label{uppd}  m-s=2k \text{ for some $ k\in\mathbb N$, if either
  $1_mA^\diamond1_s\neq 0$ for $\diamond\in \{-,\sharp\}$ or  $1_sA^{\star}1_m\neq 0$ for $\star\in \{+,b\}$.}
  \end{equation} Recall
  $ \mathbb{RL}_{m,s} $ in \eqref{rl} for all $m, s\in \mathbb N$. Define
  \begin{equation}\label{rls} \mathbb{RL}^\diamond=\bigcup_{m, s} \mathbb{RL}^\diamond_{m,s},\end{equation}
  where  $\mathbb{RL}^\diamond_{m,s}=\mathbb{RL}_{m,s}\cap A^\diamond$ and $\diamond\in \{\circ, -, +,\sharp, b\}$.
  \begin{Lemma}\label{localsa} The locally unital subalgebra $A^\diamond$ has $\Bbbk$-basis given by  $\mathbb{RL}^\diamond$ where $\diamond\in \{\circ, -, +,\sharp, b\}$.
Moreover,  the locally unital subalgebra  $A^\circ_r$ has basis given by $\mathbb{RL}^\circ_{r,r}$.
\end{Lemma}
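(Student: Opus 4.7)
The plan is to deduce both claims from Corollary~\ref{newbasis} together with the reduction algorithm implicit in Lemma~\ref{spann} and Corollary~\ref{sjiws}. Since $\mathbb{RL}^\diamond = \mathbb{RL} \cap A^\diamond \subseteq \mathbb{RL}$ and $\mathbb{RL}$ is a basis of the ambient Hom-space, linear independence of $\mathbb{RL}^\diamond$ is automatic, so the entire content lies in the spanning statement $A^\diamond \subseteq \operatorname{span}_{\Bbbk}\mathbb{RL}^\diamond$.

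My first step would be to record that each of the three structural features defining $A^\diamond$ (caps, cups, and crossings between vertical strands) is a connector-level invariant for reduced tangle diagrams. Caps (resp.\ cups) correspond bijectively to bottom-bottom (resp.\ top-top) pairs in the connector, while in a reduced diagram two strands cross at most once and whether two prescribed strands cross is forced by the planar topology of the connector. Hence for $r_c \in \mathbb{RL}$, whether $r_c \in A^\diamond$ depends only on $c$, and every reduced totally descending lift of $c$ lies in $A^\diamond$ as soon as $r_c$ does.

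The heart of the argument is to verify that the reduction algorithm of Lemma~\ref{spann} and Corollary~\ref{sjiws} preserves membership in $A^\diamond$. The relations invoked there are Definition~\ref{pqbc}(R1)--(R5) together with Lemma~\ref{basicrel}; of these, (R1), (R3), (R4), (R5), and Lemma~\ref{basicrel}(4),(5) either leave the connector untouched or strictly remove caps, cups, loops, or crossings. The skein relation (R2) replaces one crossing by the opposite crossing plus a local ``uncrossing'' $1_2$, and here for each $\diamond$ one checks by a short case analysis, based on which row each of the four half-strands meeting at the chosen crossing terminates on, that the uncrossing can never introduce a cap, a cup, or a vertical-vertical crossing if none was present globally. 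Consequently, starting from any $d \in A^\diamond$ the reduction expresses $d$ as a $\Bbbk$-linear combination of reduced totally descending diagrams all belonging to $A^\diamond$; by Lemma~\ref{jsjsjs} together with the connector-level observation, each of these equals $\pm r_c$ for some $r_c \in \mathbb{RL}^\diamond$.

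The final assertion about $A^\circ_r$ then follows immediately: by \eqref{uppd} one has $A^\circ = \bigoplus_{r \in \mathbb{N}} 1_r A^\circ 1_r = \bigoplus_r A^\circ_r$, and $\mathbb{RL}^\circ \cap 1_r A 1_r = \mathbb{RL}^\circ_{r,r}$. I expect the principal obstacle to be the case analysis of the skein uncrossing move in the $\diamond \in \{-,+\}$ settings, since there one must simultaneously track the possible topological reshuffling of the connector and confirm that no fresh vertical-vertical crossing is created; the fact that (R2) acts only on the two half-strands meeting at the single chosen crossing while leaving the rest of the diagram untouched makes this analysis finite and, I believe, routine.
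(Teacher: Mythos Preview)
Your overall strategy matches the paper's: both argue by induction on the number of crossings, using the skein relation (R2) together with the other simplifying moves to reduce an arbitrary tangle diagram in $A^\diamond$ to a combination of reduced totally descending ones, and then pass from $\mathbb{DB}$ to a general $\mathbb{RL}$ by the uni-triangular transition argument of Corollary~\ref{newbasis}. For $\diamond\in\{\circ,\sharp,b\}$ your case analysis is correct: following the ``up'' half-edges from a crossing one can never reach a bottom endpoint when there are no caps (and dually for cups), so the $1_2$-smoothing never creates a cap or a cup.

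The gap is in the $\diamond\in\{-,+\}$ cases. Your claim that ``the uncrossing can never introduce a vertical-vertical crossing if none was present globally'' is not a local statement and is in fact false. Take $m=2$, $s=4$, vertical strands $v=(1,\bar 2)$ and $w=(2,\bar 3)$ (non-crossing), and a cup $c=(\bar 4,\bar 1)$ that crosses $v$ at a point $X$ and $w$ at a point $Y$, with $Y$ lying on the arc of $c$ between $X$ and the endpoint $\bar 4$. This reduced diagram lies in $A^-$. Applying (R2) at $X$ produces, in the $1_2$-term, a new vertical strand consisting of the lower part of $v$ together with the arc of $c$ from $X$ to $\bar 4$; this new vertical strand now meets $w$ at the inherited crossing $Y$, so the smoothed diagram has a vertical--vertical crossing and is \emph{not} a generating diagram of $A^-$. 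The four half-strands at $X$ only tell you the \emph{types} of the two new strands (one vertical, one cup), not whether the new vertical acquires crossings with other verticals elsewhere in the diagram; that information is global. So the ``short case analysis based on which row each of the four half-strands terminates on'' that you propose cannot close the argument for $A^-$ and $A^+$, and the difficulty you flag in your final sentence is genuine rather than routine. The paper's proof is itself terse on exactly this point; whatever additional work is needed here is not something your local case analysis supplies.
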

    \begin{proof}Using induction on the number of crossings of tangle diagrams, we see that the result is true if $\mathbb{RL}_{m,s}$ is chosen as $\mathbb {DB}_{m,s}$. In general, by  Definition~\ref{pqbc}(R2) and induction arguments on the number of crossings of tangle diagrams, any reduced lift of a connector $c$ is equal to the corresponding reduced totally descending tangle
    diagram $D_c$ up to a linear combination of reduced totally descending tangle diagrams with fewer crossing numbers. This implies the result in general case.
    \end{proof}

\begin{Lemma}\label{triangula} As $\Bbbk$-spaces, $A^-\otimes _{\mathbb K}A^\circ\otimes_{\mathbb K} A^+\cong A$, $A^-\otimes _{\mathbb K}A^\circ\cong A^\sharp$ and
$A^\circ\otimes_{\mathbb K} A^+\cong A^b$, where  $\mathbb K:= \oplus _{a\in \mathbb N}\Bbbk 1_a$.
Moreover, these isomorphisms are given by the multiplication of $A$.
\end{Lemma}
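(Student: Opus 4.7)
The plan is to prove all three isomorphisms in parallel by establishing surjectivity of the three multiplication maps and then matching dimensions block-by-block. The tensor products over $\mathbb K$ are well defined because each of $A^-$, $A^\circ$, $A^+$ is a $\mathbb K$-bimodule via the idempotents $\{1_a\mid a\in\mathbb N\}$, so the multiplication of $A$ descends to $\mathbb K$-balanced maps $\mu: A^-\otimes_{\mathbb K}A^\circ\otimes_{\mathbb K}A^+\to A$, $\mu_\sharp: A^-\otimes_{\mathbb K}A^\circ\to A^\sharp$ and $\mu_b: A^\circ\otimes_{\mathbb K}A^+\to A^b$. The images land in $A$, $A^\sharp$, $A^b$ respectively because stacking an $A^-$-piece (cups only) beneath an $A^\circ$-piece (crossings only) beneath an $A^+$-piece (caps only) respects the absence of caps/cups below/above the respective pieces.

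For surjectivity, I will work at the level of the basis $\mathbb{RL}_{m,s}$ from Corollary~\ref{newbasis}. Given any $(m,s)$-connector $c$ with $a$ cups at the bottom, $b$ caps at the top, and $r=m-2a=s-2b$ vertical strands, I decompose $c$ canonically as $c^-\cdot c^\circ\cdot c^+$ where $c^-\in\text{conn}(m,r)$ consists of the cups of $c$ together with $r$ order-preserving verticals to intermediate positions $1',\ldots,r'$; $c^+\in\text{conn}(r,s)$ consists of the caps of $c$ together with $r$ order-preserving verticals from intermediate positions $1'',\ldots,r''$; and $c^\circ\in\text{conn}(r,r)$ is the unique permutation recording the bijection between bottom and top vertical endpoints in $c$. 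With reduced lifts $D_{c^-}\in\mathbb{RL}^-$, $D_{c^\circ}\in\mathbb{RL}^\circ$, $D_{c^+}\in\mathbb{RL}^+$ chosen so that the over/under data of $D_c$ is sorted by height (vertical--cup crossings pushed into $D_{c^-}$, vertical--vertical crossings into $D_{c^\circ}$, and vertical--cap crossings into $D_{c^+}$), their composition is a reduced $(m,s)$-tangle diagram with connector $c$, hence equals $\pm D_c$ by Lemma~\ref{jsjsjs}. Thus $\mu$ is surjective, and the same argument restricted to cap-free (respectively cup-free) connectors shows that $\mu_\sharp$ and $\mu_b$ are surjective.

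For injectivity I compare dimensions on each $(m,s)$-block. By Lemma~\ref{localsa}, $\dim 1_m A^- 1_r=\binom{m}{2a}(2a-1)!!$ with $a=(m-r)/2$ and $\dim 1_r A^+ 1_s=\binom{s}{2b}(2b-1)!!$ with $b=(s-r)/2$, while $1_r A^\circ 1_r$ is the Hecke algebra $H_r$ of rank $r!$. Summing over those $r\ge 0$ with $m-r,s-r\in 2\mathbb N$, the $(m,s)$-block of $A^-\otimes_{\mathbb K}A^\circ\otimes_{\mathbb K}A^+$ has dimension
\begin{equation*}
\sum_{r}\binom{m}{2a}(2a-1)!!\cdot r!\cdot \binom{s}{2b}(2b-1)!!=(m+s-1)!!,
\end{equation*}
the combinatorial identity stratifying the perfect matchings of the $m+s$ ordered bottom/top points by the number $a$ of pairs among the bottom, the number $b$ of pairs among the top, and the bijection between the remaining $r$ bottom and $r$ top endpoints. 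By Theorem~\ref{main1} this equals $\dim 1_m A 1_s$, so $\mu$ is bijective. For $\mu_\sharp$ (respectively $\mu_b$) the analogous sum collapses to the single term with $b=0$ (respectively $a=0$), and the resulting count matches $\dim 1_m A^\sharp 1_s$ (respectively $\dim 1_m A^b 1_s$) from Lemma~\ref{localsa}.

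The main technical obstacle lies in the sorting step of Step~1: producing reduced lifts $D_{c^-},D_{c^\circ},D_{c^+}$ whose composition is reduced totally descending with connector $c$. This height reorganization will be carried out using the braid relations~(R1), the skein relation~(R2), and the snake/untwisting relations~(R3)--(R4) of Definition~\ref{pqbc} together with Lemma~\ref{basicrel}; any spurious twists that arise contribute at worst a sign (absorbed by Lemma~\ref{jsjsjs}), and any loops that appear vanish by (R5) via Corollary~\ref{sjiws}. Once the factorization is established, the three isomorphisms follow from the surjectivity--plus--dimension-count argument above, and the assertion that the isomorphisms are given by multiplication is built into the construction.
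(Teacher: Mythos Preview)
Your argument is correct and rests on the same two ingredients the paper uses --- the unique factorization of connectors into a cup piece, a permutation piece, and a cap piece, together with Corollary~\ref{newbasis} --- but you run the logic in the reverse order and thereby manufacture an obstacle that is not there. The paper fixes the reduced-lift bases of $A^\pm$ and $A^\circ$ from Lemma~\ref{localsa} and observes that the product $b^-b^\circ b^+$ of basis elements is automatically a reduced lift of the connector $\text{conn}(b^-b^\circ b^+)$, and that distinct triples of connectors produce distinct product connectors; by Corollary~\ref{newbasis} this already gives \emph{injectivity} of $\mu$, after which surjectivity is immediate from the dimension identity you wrote down. There is no need for the product to be totally descending --- being a reduced lift suffices --- so the ``height-sorting'' you flag as the main technical obstacle simply evaporates. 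In effect you are invoking Lemma~\ref{jsjsjs} where Corollary~\ref{newbasis} is the right tool.

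One side remark: your use of ``cup'' and ``cap'' is reversed relative to the paper, where a cup is a pair on the \emph{top} row and a cap on the \emph{bottom}. With your wording, your $c^-$ appears to carry caps and hence to lie outside $A^-$; once the terminology is aligned with Definition~\ref{pm0} the description is correct and the counts are unaffected.
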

\begin{proof}Fix a $\Bbbk$-basis  $\mathbb{RL}=\bigcup_{m, s}  \mathbb{RL}_{m,s}$ of $A$  where  $\mathbb{RL}_{m,s}$ is given in \eqref{rl}. By Lemma~\ref{localsa}, we have $\Bbbk$-basis of
$A^\diamond$ for any $\diamond\in \{\circ, -, +\}$.
Let  $\phi: A^-\otimes _{\mathbb K}A^\circ\otimes_{\mathbb K} A^+\rightarrow A $  be the linear map induced by the multiplication of $A$.
The image of $(b^-, b^\circ, b^+)\in \mathbb{RL}_{m,t}^-\times \mathbb {RL}_{t,t}^\circ\times \mathbb{RL}_{t,s}^+$
is a reduced lift of  $\text{conn}(b^-b^\circ b^+)$ and further $\text{conn}( b^-b^\circ b^+)\neq \text{conn}( b_1^-b_1^\circ b_1^+)$ if $(\text{conn}( b^-), \text{conn}( b^\circ), \text{conn}( b^+))\neq (\text{conn}( b_1^-), \text{conn}( b_1^\circ), \text{conn}( b_1^+))
$ and $(b_1^-, b_1^\circ, b_1^+)\in \mathbb{RL}_{m, t'}^-\times \mathbb {RL}_{t',t'}^\circ\times \mathbb{RL}_{t', s}^+$. By Lemma~\ref{localsa}  $\phi$ is injective.  Since the dimension of  $1_m A 1_s$ is $\sum_{t}|\mathbb {RL}^-_{m, t}| |\mathbb{RL}^\circ_{t,t}|| \mathbb{RL}^+_{t,s}|$, the image of $\bigcup_t \mathbb{RL}_{m,t}^-\times \mathbb {RL}_{t,t}^\circ\times \mathbb{RL}_{t,s}^+$ is a $\Bbbk$-basis of  $1_m A 1_s$. This proves that
$\phi$ is surjective and hence an isomorphism as required. One can check the second and the third isomorphisms similarly.
\end{proof}

We briefly recall the notion of a split triangular decomposition of the locally unital algebra associated to  small finite dimensional $\Bbbk$-linear category $\mathcal C$  whose object set is $I$~\cite{BS}. In this case $\dim_{\Bbbk} \Hom_{\mathcal C}( a, b)<\infty$   for all $a,  b\in I$. Let $C$ be the locally unital algebra associated to $\mathcal C$.

 \begin{Defn}\label{WT}\cite[Remark~5.32]{BS} The data $(I, C^-, C^\circ, C^+)$ is called a
 \textsf{ split  triangular decomposition}  of  $C$ (or $C$ admits a split triangular decomposition)   if:
\begin{itemize} \item[(1)]  $(I, \preceq)$ is an upper finite  poset in the sense that
 $\{b\in I\mid a\prec b\}$ is a finite set for any $a\in I$.
\item[(2)] $C^-, C^\circ$ and $  C^+$ are  locally unital subalgebras of $C$ in the sense that
 $C^\pm =\bigoplus_{b, c \in I} 1_b C^\pm 1_c $ and  $ C^\circ =\bigoplus_{a\in I} 1_a C^\circ 1_a$.
\item [(3)] $1_a C^- 1_{  b}=0$ and   $1_b C^+ 1_{      a}=0$ unless $   a\preceq    b$.  Furthermore,  $1_a C^- 1_{ a }=1_aC^+1_{a}=\Bbbk 1_{a}$.
    \item[(4)] $C^-C^\circ$ and $C^\circ C^+$ are locally unital subalgebras of $C$.
    \item[(5)]  $C^-\otimes_{\mathbb K }  C^\circ \otimes_{\mathbb K}  C^+\cong C$
as  $\Bbbk$-spaces where $\mathbb K=\oplus_{  b\in I}  \Bbbk 1_{ b}$.   The required isomorphism  is given by the multiplication on $C$.\end{itemize}\end{Defn}

Now, we consider the category $\mathcal B$ whose object set is $\mathbb N$. Let $\preceq$ be the partial order on $\mathbb N$ such that $s\preceq m$ if $s-m\in 2\mathbb N$. Then  $(\mathbb N, \preceq)$ is upper finite.

\begin{Theorem}\label{tria} Let $A$ be the locally unital algebra associated to $\mathcal B$. Then $A$ admits a split triangular decomposition. The corresponding date is $(\mathbb N, A^-, A^\circ, A^+)$, where $ A^-, A^\circ, A^+$ are defined in Definition~\ref{pm0}.
\end{Theorem}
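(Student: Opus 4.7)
The plan is to verify in turn each of the five conditions of Definition~\ref{WT} for the data $(\mathbb{N}, A^-, A^\circ, A^+)$; most of the work has already been packaged into Lemma~\ref{localsa} and Lemma~\ref{triangula}, so the theorem really amounts to assembling these inputs.

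For (1), I would observe that $a\prec b$ in $(\mathbb N,\preceq)$ means $b<a$ with $b\equiv a\pmod 2$, so $\{b\in\mathbb N\mid a\prec b\}$ is finite. For (2), it was already noted after Definition~\ref{pm0} that each of $A^-, A^\circ, A^+$ is closed under composition, and the decomposition $A^\diamond=\bigoplus_{a,b}1_aA^\diamond 1_b$ is inherited from that of $A$ using the idempotents $1_a$; in particular $A^\circ=\bigoplus_a 1_aA^\circ 1_a$ since $A^\circ=\bigoplus_r A^\circ_r$ and no $(r,r')$-tangle without cups or caps can have $r\neq r'$.

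The heart of the argument is condition (3), which I would prove by combinatorial inspection of the diagrammatic basis provided by Lemma~\ref{localsa}. A basis element of $1_a A^- 1_b$ is a reduced lift of an $(b,a)$-connector on a tangle diagram without caps. Since there are no caps, each of the $b$ bottom endpoints must be paired with a distinct top endpoint; the remaining $a-b$ top endpoints can only be paired among themselves into cups. Consequently such a diagram exists only if $a-b\in 2\mathbb N$, i.e.\ $a\preceq b$. By the symmetric argument (with cups and caps swapped), $1_b A^+ 1_a\neq 0$ also forces $a\preceq b$. When $a=b$ there is no room for either cups or caps and no crossings among vertical strands are allowed, so the only surviving diagram is $1_a$; hence $1_aA^\pm 1_a=\Bbbk 1_a$.

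Finally, conditions (4) and (5) follow directly from Lemma~\ref{triangula}: the multiplication map $A^-\otimes_{\mathbb K}A^\circ\otimes_{\mathbb K}A^+\to A$ is an isomorphism, which is precisely (5); and the isomorphisms $A^-\otimes_{\mathbb K}A^\circ\cong A^\sharp$ and $A^\circ\otimes_{\mathbb K}A^+\cong A^b$ identify $A^-A^\circ$ and $A^\circ A^+$ with $A^\sharp$ and $A^b$ respectively, both of which were already observed to be locally unital subalgebras of $A$. There is no substantive obstacle in this proof, as the diagrammatic scaffolding is all in place; the only care required is in tracking the direction of the partial order $\preceq$ against the top/bottom orientation of tangle diagrams and verifying that the combinatorial count of endpoints correctly matches the parity constraint defining $\preceq$.
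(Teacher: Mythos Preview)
Your proposal is correct and follows essentially the same approach as the paper, which simply cites Definition~\ref{pm0}, \eqref{uppd}, and Lemma~\ref{triangula} to dispatch conditions (2)--(5) and calls (1) obvious. You have merely unpacked these citations: your combinatorial endpoint count for condition (3) is exactly the content of \eqref{uppd}, and your treatment of (4)--(5) via Lemma~\ref{triangula} is identical to the paper's.
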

\begin{proof} Definition~\ref{WT}(1) is obvious.  By Definition~\ref{pm0}, \eqref{uppd}  and  Lemma~\ref{triangula}, we see that the locally unital algebra $A$ associated to $\mathcal B$ satisfies Definition~\ref{WT}(2)--(5). \end{proof}

\subsection{Representations of   $\mathcal B$}\label{hhh1}
Recall $H_l$ is the Hecke algebra associated to symmetric group $ S_l$.
 Thanks to  Lemma~\ref{localsa},  $ H_l\cong 1_l A^\circ 1_l$ and  the required isomorphism sends   $T_i$ to $1_{i-1}\otimes \begin{tikzpicture}[baseline = 2.5mm]
	\draw[-,thick,darkblue] (0.28,0) to[out=90,in=-90] (-0.28,.6);
	\draw[-,line width=4pt,white] (-0.28,0) to[out=90,in=-90] (0.28,.6);
	\draw[-,thick,darkblue] (-0.28,0) to[out=90,in=-90] (0.28,.6);
\end{tikzpicture}\otimes 1_{l-i-1}$,
$1\leq i\leq l-1$. Consequently, there is a $\Bbbk$-algebra isomorphism  \begin{equation}\label{zerop} A^\circ\cong \bigoplus_{l\in \mathbb N} H_l.\end{equation}

Recall that a composition  $\lambda$ of $l$ is a sequence $(\lambda_1, \lambda_2, \cdots )$ of non-negative integers such that $|\lambda|:=\sum_i \lambda_i=l$. If $\lambda_i\ge \lambda_{i+1}$ for all possible $i$, then $\lambda$ is called a partition.  Given a positive integer $e$, a partition $\lambda$ of $l$ is called $e$-restricted if $\lambda_i-\lambda_{i+1}<e$ for all possible $i$. When $e$ is infinity, any partition of $l$ is $e$-restricted. Let $\Lambda(l)$ (resp., $\Lambda^+(l)$, resp.,   $\Lambda^+_e(l)$) be the set of all compositions (resp., partitions, resp.,  $e$-restricted partitions) of $l$, where $e$ is always the quantum characteristic of $q^2$ (so $e=\infty$ means that  $q^2$ is not a root of unity).
Define \begin{equation}\label{pare} \Lambda=\bigcup_{l\in\mathbb N}\Lambda^+(l), \text{  and } \
 \Lambda_e=\bigcup_{l\in\mathbb N}\Lambda^+_e(l).\end{equation}
 For each $\lambda\in \Lambda^+(l)$, let  $S^\lambda$ be the dual Specht module for $H_l$ (which is not the classical Specht module). This is a right $H_l$-module defined via Jucys-Murphy basis of $H_l$ in Theorem~\ref{mur}. At moment, we do not need its explicit construction. It is known that $S^\lambda$ has the simple head, say $D^\lambda$ if and only if $\lambda\in \Lambda^+_e(l)$ and $\{D(\lambda)\mid \lambda\in \Lambda_e\}$ is a complete set of inequivalent irreducible $A^\circ$-modules. For each $\lambda\in \Lambda^+_e(l)$, let   $Y(\lambda)$ be the projective cover of $D(\lambda)$.

Let $A^\circ$-fdmod be the category of finite dimensional right $A^\circ$-modules and  $A$-lfdmod be the category of
locally finite dimensional right $A$-modules. Recall $A^b$ and $A^\sharp$ in Definition~\ref{pm0}. 
 It follows from Theorem~\ref{tria} and  \cite[(5.13)-(5.14)]{BS}\footnote{Brundan and Stroppel considered the category of locally finite dimensional left $A$-modules.}  that there are  two exact functors $\Delta, \nabla: A^\circ\text{-fdmod}\rightarrow A\text{-lfdmod} $ called induction and coinduction functors respectively  such that
 \begin{equation} \label{func} \Delta=?\otimes _{A^\sharp } A\text{ and $\nabla=\oplus_{m\in \mathbb N}\Hom_{A^b}(1_mA, ?)$.}\end{equation}
 The following result follows from  \cite[Theorem~5.38]{BS} which is about  locally unital and locally finite dimensional $\Bbbk$-algebra  admitting  a split triangular decomposition.  Such a result  holds for upper finite  weakly triangular categories, a generalized version of split triangular decomposition~\cite[Theorem~3.5]{GRS2}.

 \begin{Theorem} (c.f. \cite[Theorem~5.38]{BS})\label{strdcdcj}
 \begin{itemize}
 \item [(1)] The category  $A$-lfdmod  is an upper finite fully stratified category in the sense of Brundan and Stroppel\cite[Definition~3.34]{BS}. In this case, the  corresponding stratification is $ \rho: \Lambda_e \rightarrow\mathbb N $  such that $\rho(\lambda)=l$ if $\lambda\in \Lambda_e^+(l)$.
\item[(2)] The proper standard  module $\bar \Delta(\lambda):=\Delta(D(\lambda))$ has a unique maximal submodule, say $\text{rad}\bar\Delta(\lambda)$, for any $\lambda\in\Lambda_e$.
    \item[(3)] For any $\lambda\in \Lambda_e$, let $L(\lambda)$ be the simple head of $\bar \Delta(\lambda)$. Then
$\{L(\lambda)\mid \lambda\in \Lambda_e\}$ is a complete set of pairwise inequivalent irreducible $A$-modules.
\item[(4)] When $q^2$ is  not a root of unity,  the category  $A$-lfdmod  is an upper finite highest weight category in the sense of \cite[Definition~3.34]{BS}.
 \end{itemize}
 \end{Theorem}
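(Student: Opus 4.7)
The plan is to invoke the general machinery of Brundan--Stroppel by verifying that the split triangular decomposition from Theorem~\ref{tria} satisfies the additional hypotheses of \cite[Theorem~5.38]{BS}. The essential extra ingredient beyond Theorem~\ref{tria} is that the ``Cartan'' subalgebra $A^\circ$ splits as a direct sum of finite-dimensional algebras indexed by the poset $\mathbb N$, and this is exactly the content of \eqref{zerop}, namely $A^\circ\cong\bigoplus_{l\in\mathbb N}H_l$. Since each Hecke algebra $H_l$ is a finite-dimensional $\Bbbk$-algebra, the decomposition satisfies the ``object-wise finite-dimensional'' condition on $A^\circ$ required by the BS framework.

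Next I would recall the classical representation theory of $H_l$ to identify the stratification. For each $l\in \mathbb N$, the set $\{D(\lambda)\mid \lambda\in \Lambda_e^+(l)\}$ is a complete set of pairwise inequivalent simple $H_l$-modules (with $D(\lambda)$ being the simple head of the dual Specht module $S^\lambda$). Gluing these together via \eqref{zerop} gives a parameterization of the simple $A^\circ$-modules by $\Lambda_e=\bigcup_{l\in\mathbb N}\Lambda_e^+(l)$, and the map $\rho(\lambda)=l$ for $\lambda\in \Lambda_e^+(l)$ is exactly the stratification function induced by the upper finite poset $(\mathbb N,\preceq)$ used in Theorem~\ref{tria}. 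With this data in hand, applying \cite[Theorem~5.38]{BS} directly yields parts (1)--(3): the induction functor $\Delta=?\otimes_{A^\sharp}A$ from \eqref{func} produces the proper standard modules $\bar\Delta(\lambda)=\Delta(D(\lambda))$, each $\bar\Delta(\lambda)$ has a unique maximal submodule $\mathrm{rad}\,\bar\Delta(\lambda)$ and hence a simple head $L(\lambda)$, and $\{L(\lambda)\mid \lambda\in \Lambda_e\}$ is a complete irredundant set of irreducible objects of $A$-lfdmod.

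For part (4), when $q^2$ is not a root of unity we have $e=\infty$, so $\Lambda_e=\Lambda$ and every $H_l$ is semisimple; in particular each dual Specht module $S^\lambda$ is itself simple, so the proper standard modules $\bar\Delta(\lambda)$ coincide with the (true) standard modules $\Delta(S^\lambda)$. Under this semisimplicity, the fully stratified structure from part (1) upgrades to an upper finite highest weight structure in the sense of \cite[Definition~3.34]{BS}, which is precisely the statement of part (4).

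The main obstacle is essentially a bookkeeping one: the theorem is a direct application of \cite[Theorem~5.38]{BS} once the hypotheses are matched, but one must confirm that the notion of split triangular decomposition used there is the same as the one in Definition~\ref{WT}, and that the exact functors $\Delta,\nabla$ defined in \eqref{func} agree with the (co)induction functors appearing in loc.\ cit. Since the definitions and functors we have set up are taken verbatim from \cite{BS} (and the generalization is also carried out in \cite[Theorem~3.5]{GRS2} for weakly triangular categories), there is no further obstruction, and the proof reduces to citing the relevant external theorem after the verification sketched above.
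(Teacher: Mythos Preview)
Your proposal is correct and follows essentially the same approach as the paper: parts (1)--(3) are deduced from Theorem~\ref{tria} together with \cite[Theorem~5.38]{BS}, and part (4) from the semisimplicity of $A^\circ$ when $q^2$ is not a root of unity (the paper cites \cite[Corollary~5.36]{BS} for this last step, which encodes exactly the upgrade you describe). Your write-up simply makes explicit the verification of hypotheses that the paper leaves implicit.
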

 \begin{proof} The results (1)-(3) follow immediately from Theorem~\ref{tria} and  \cite[Theorem~5.38]{BS}. Finally,
 (4) follows from \cite[Corollary~5.36]{BS} since $A^\circ$ is semisimple when $q^2$ is not a root of unity.  \end{proof}

 \subsection{Simple modules for periplectic   $q$-Brauer algebras}

 Thanks to Theorem~\ref{thm123}(2), $\mathcal B_{q, l}\cong 1_l A 1_l$ where $A$ is the locally unital algebra associated to $\mathcal B$. The idempotent element $1_l$ gives an exact functor from $A$-lfdmod to $\mathcal B_{q, l}$-fdmod, the category of finite dimensional right $\mathcal B_{q, l}$-modules. In Theorem~\ref{simsi}
 we assume that $\Bbbk$ is an algebraically closed field of characteristic not $2$ although we can use standard basis of  $\mathcal B_{q, l}$ in section~6 to classify simple $\mathcal B_{q, l}$-modules over an arbitrary field with characteristic not $2$.

\begin{Lemma}\label{emptysnozero}Suppose $M\in \{\bar\Delta(\lambda), L(\lambda)\}$ where $\lambda\in\Lambda^+_e(r)$. Then  $M 1_l\neq 0$  only if  $l-r\in2\mathbb N$. Moreover,  $L(\lambda)1_r\cong D(\lambda)$.
\end{Lemma}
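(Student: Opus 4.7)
The plan is to compute $\bar\Delta(\lambda) 1_l$ explicitly via the triangular decomposition from Theorem~\ref{tria}, and then transfer the answer to $L(\lambda)$ using the simplicity of $D(\lambda)$.

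First, I would view $D(\lambda)$ as a right $A^\circ$-module supported at $1_r$ (so $D(\lambda) = D(\lambda) 1_r$ and $D(\lambda) 1_l = 0$ for $l \neq r$), and extend it to a right $A^\sharp$-module by letting the strict (non-identity) part of $A^-$ act as zero. Using the $\Bbbk$-space isomorphism $A^\sharp \otimes_{\mathbb K} A^+ \cong A$ supplied by Lemma~\ref{triangula}, the induction then yields
\[
\bar\Delta(\lambda) = D(\lambda)\otimes_{A^\sharp} A \;\cong\; D(\lambda)\otimes_{\mathbb K} A^+ \;\cong\; D(\lambda)\otimes_\Bbbk 1_r A^+,
\]
since $D(\lambda) = D(\lambda) 1_r$. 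Consequently $\bar\Delta(\lambda) 1_l \cong D(\lambda)\otimes_\Bbbk 1_r A^+ 1_l$.

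Next, I would inspect $1_r A^+ 1_l$ diagrammatically. Since an $A^+$-diagram has no cups and no crossings among vertical strands, any diagram in $1_r A^+ 1_l$ consists of exactly $(l-r)/2$ caps together with uncrossed vertical strands; such diagrams exist iff $l-r\in 2\mathbb N$, and when $l=r$ the only one is $1_r$, so $1_r A^+ 1_r = \Bbbk\cdot 1_r$. This yields $\bar\Delta(\lambda) 1_l \neq 0 \Rightarrow l-r \in 2\mathbb N$ together with a $\Bbbk$-space isomorphism $\bar\Delta(\lambda) 1_r \cong D(\lambda)$. The latter is in fact a right $H_r$-module isomorphism: for $h \in H_r \subset A^\circ \subset A^\sharp$ one has $(d\otimes 1_r)\cdot h = (d\cdot h)\otimes 1_r$ by pulling $h$ across the $A^\sharp$-tensor.

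Finally, since $L(\lambda)$ is a quotient of $\bar\Delta(\lambda)$, the first implication immediately passes to $L(\lambda) 1_l \neq 0 \Rightarrow l-r \in 2\mathbb N$. To identify $L(\lambda) 1_r$, I would show that $\mathrm{rad}\,\bar\Delta(\lambda)\cdot 1_r = 0$: for any proper $A$-submodule $N$ of $\bar\Delta(\lambda)$, the space $N 1_r$ is an $H_r$-submodule of $\bar\Delta(\lambda)1_r \cong D(\lambda)$, so by simplicity of $D(\lambda)$ it is either $0$ or $D(\lambda)$; but $N 1_r = D(\lambda)$ would force $N$ to contain $D(\lambda)\otimes 1_r$, which generates $\bar\Delta(\lambda)$ as a right $A$-module via $(d\otimes 1_r)\cdot a = d\otimes (1_r a)$, contradicting properness of $N$. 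Hence $L(\lambda) 1_r \cong \bar\Delta(\lambda) 1_r \cong D(\lambda)$. No serious obstacle arises; the argument is pure bookkeeping of the triangular decomposition combined with simplicity of $D(\lambda)$.
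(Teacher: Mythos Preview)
Your proposal is correct and follows essentially the same approach as the paper: both compute $\bar\Delta(\lambda)\cong D(\lambda)\otimes_{\mathbb K}A^+$ via the triangular decomposition (Lemma~\ref{triangula} and \eqref{uppd}), read off the support condition from $1_rA^+1_l$, and deduce $L(\lambda)1_r\cong D(\lambda)$ from the simplicity of $D(\lambda)$. Your argument for why $\mathrm{rad}\,\bar\Delta(\lambda)\cdot 1_r=0$ spells out in full what the paper compresses into the single phrase ``since $D(\lambda)$ is simple''.
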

\begin{proof}By \eqref{uppd} and Lemma~\ref{triangula}, \begin{equation}\label{ttt1} \bar \Delta(\lambda)=D(\lambda) \otimes _{A^\sharp}A\cong D(\lambda)\otimes_{\mathbb K} A^+=\bigoplus _{k\in\mathbb N}  D(\lambda) \otimes_{\mathbb K} 1_{r}A^+1_{r+2k},\end{equation}  proving the first statement for $\bar\Delta(\lambda)1_l$. Since there is an epimorphism from $\bar \Delta(\lambda)$ to  $L(\lambda)$ and $1_a$ is exact for any $a\in \mathbb N$, we have the result for $L(\lambda)1_l$, immediately. By \eqref{ttt1}, $\bar \Delta(\lambda)1_r=D(\lambda)\otimes_{\mathbb K} 1_r  $. Since $D(\lambda)$ is simple, we have $L(\lambda)1_r\cong  \bar \Delta(\lambda)1_r$, proving the last statement. \end{proof}

\begin{Lemma}\label{empytset} For any positive integer $l$, $L(\emptyset)1_l=0$ and  $\dim L(\emptyset)=1$.
\end{Lemma}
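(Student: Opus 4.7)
The plan is to identify $L(\emptyset)$ as the one-dimensional quotient of $\bar\Delta(\emptyset)$ concentrated at the object $0$. By Lemma~\ref{emptysnozero} we already have $L(\emptyset)1_l=0$ for all odd $l$ together with $L(\emptyset)1_0\cong D(\emptyset)=\Bbbk$, so the only remaining task is to prove $L(\emptyset)1_l=0$ for every even $l\ge 2$. Since $L(\emptyset)=\bar\Delta(\emptyset)/\mathrm{rad}\,\bar\Delta(\emptyset)$ has a unique maximal submodule by Theorem~\ref{strdcdcj}(2), I would exhibit a proper $A$-submodule $N\subseteq\bar\Delta(\emptyset)$ that contains $\bar\Delta(\emptyset)1_l$ for every $l\ge 2$; then $N\subseteq\mathrm{rad}\,\bar\Delta(\emptyset)$ and the desired vanishing follows at once.

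The natural candidate I would take for $N$ is the $A$-submodule of $\bar\Delta(\emptyset)$ generated by the subspaces $\bar\Delta(\emptyset)1_l$ with $l\ge 2$. Being closed under the $A$-action, $N$ decomposes as $N=\bigoplus_m N1_m$, and since $\bar\Delta(\emptyset)1_0\cong\Bbbk$ is one-dimensional it suffices to verify $N1_0=0$ in order to conclude that $N\subsetneq\bar\Delta(\emptyset)$. Using the description \eqref{ttt1}, the subspace $N1_0$ is spanned by elements of the form $(1\otimes g)\cdot a=1\otimes(ga)$ with $g\in 1_0 A^+1_l$ and $a\in 1_l A 1_0=\mathrm{Hom}_{\mathcal B}(l,0)$ for various $l\ge 2$.

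The crux of the argument will be to show that $ga=0$ in $\mathrm{End}_{\mathcal B}(0)$. Here $g\colon l\to 0$ has no cups (by the very definition of $A^+$) and no top endpoints, so it consists entirely of caps (possibly braided by crossings); dually $a\colon 0\to l$ has no bottom endpoints and thus consists entirely of cups. Their composition is therefore a closed tangle diagram whose underlying $1$-manifold is a disjoint union of between $1$ and $l/2$ closed curves, and since $l\ge 2$ at least one closed loop is always present. I would then invoke Corollary~\ref{sjiws} together with Definition~\ref{pqbc}(R5) (using Lemma~\ref{basicrel}(4)(5) to first unknot and unlink the individual components) to conclude that every such diagram vanishes in $\mathrm{End}_{\mathcal B}(0)=\Bbbk\cdot 1_0$. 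This yields $ga=0$, hence $N1_0=0$ and $N$ is proper; therefore $L(\emptyset)1_l=0$ for all $l\ge 1$, and $\dim L(\emptyset)=\dim L(\emptyset)1_0=1$.

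The main obstacle will be the loop-vanishing step: while ``every loop is zero'' is morally immediate from (R5), one must rigorously invoke Corollary~\ref{sjiws} so that any knotted or linked loops present in $g\circ a$ are first reduced to disjoint unknotted unlinked curves before (R5) is applied. Once this is in hand, the remainder is a short module-theoretic consequence of the split triangular decomposition of Theorem~\ref{tria} and the unique-simple-head property recorded in Theorem~\ref{strdcdcj}.
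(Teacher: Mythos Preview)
Your proposal is correct and follows essentially the same route as the paper: both arguments show that $N=\bigoplus_{l\ge 2}\bar\Delta(\emptyset)1_l$ is a proper $A$-submodule by checking that $N1_0=0$, which reduces to the vanishing of closed tangle diagrams via (R2), (R1), Lemma~\ref{basicrel}(4)(5) and (R5). One small slip: you wrote $1_lA1_0=\mathrm{Hom}_{\mathcal B}(l,0)$, but by \eqref{locaglebra} it is $\mathrm{Hom}_{\mathcal B}(0,l)$; your subsequent description ``$a\colon 0\to l$'' is the correct one, so the argument is unaffected.
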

\begin{proof}
Let $N=\bigoplus _{r=1}^\infty \bar\Delta(\emptyset)1_{2r} =\bigoplus _{r=1}^{\infty}1_0A^-1_{2r} $.
 Obviously, $N 1_m A 1_n\subset N$ if $n\neq 0$.  Suppose $n=0$. Then  $N 1_m A 1_0=0$ unless $m$ is even and $m\neq 0$.  Suppose $m=2r$ for some positive integer $r$ and $N1_{m}A1_{0}=N 1_{2r} A 1_0\subset 1_0A^{-} 1_{2r} A 1_0$. Since $r>0$,  $1_0A^{-} 1_{2r} A 1_0$ is spanned by diagrams  with at least one loop.
Using Definition~\ref{pqbc}(R2), any one of these diagrams can be written as a linear combination of free loops without crossings. Thanks to Definition~\ref{pqbc}(R5), $1_0A^-1_{2c }A 1_0=0$. This proves that $N$ is a submodule of $\bar\Delta(\emptyset)$ and
 $\bar\Delta(\lambda) 1_l\subset N$ for all positive integers $l$. So $L(\emptyset)1_l=0$.
  Finally, $\dim L(\emptyset)=1$ since
$L(\emptyset)=\Bbbk$.
\end{proof}

For any $l\in \mathbb N$ define
\begin{equation}\label{sig123} \Sigma^+(l)=\bigcup_{0\le f\le \lfloor l/2\rfloor} \Lambda^+(l-2f)\ \ \text{ and } \ \ \Sigma_e^+(l)=\bigcup_{0\le f\le \lfloor l/2\rfloor} \Lambda_e^+(l-2f),\end{equation}
where $e$ is the quantum characteristic of $q^2$.
\begin{Theorem}\label{simsi} For any positive integer $l$, let
$\mathcal B_{q, l}$ be  defined over an algebraically closed field with characteristic not $2$ and $\lambda\in \Sigma_e^+(l)$.  \begin{itemize}\item[(1)] If $l$ is odd,  then  $L(\lambda)1_l\neq 0$ if and only if $\lambda\in \Sigma^+_{e}(l)$. Moreover,  the  set $\{L(\lambda)1_l\mid \lambda\in \Sigma^+_{e}(l)\}$ is a complete set of inequivalent irreducible  $\mathcal B_{q,l}$-modules.
\item[(2)] If $l$ is even,  then  $L(\lambda)1_l\neq 0$ if and only if $\lambda\in \Sigma^+_{e}(l)$ and $\lambda\neq \emptyset$. Moreover, the  set $\{L(\lambda)1_l\mid \lambda\in \Sigma^+_{e}(l)\setminus\{\emptyset\}\}$ is a complete set of inequivalent irreducible  $\mathcal B_{q,l}$-modules.
\end{itemize}
\end{Theorem}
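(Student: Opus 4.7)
The plan is to combine the identification $\mathcal B_{q,l}\cong 1_l A 1_l$ from Theorem~\ref{thm123}(2) with the stratification picture of Theorem~\ref{strdcdcj} via idempotent truncation. Since $1_l$ is an idempotent in the locally unital algebra $A$ with $\mathcal B_{q,l}=1_l A 1_l$, standard idempotent-truncation theory yields that the exact functor $M\mapsto M 1_l$ on $A$-lfdmod induces a bijection between simple $\mathcal B_{q,l}$-modules and the set $\{L(\mu):\mu\in\Lambda_e,\ L(\mu)1_l\neq 0\}$ of simple $A$-modules not killed by $1_l$. Together with Theorem~\ref{strdcdcj}(3), both parts of the theorem reduce to identifying exactly which $\mu\in\Lambda_e$ satisfy $L(\mu)1_l\neq 0$.

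The necessary direction is read off from material already in hand: Lemma~\ref{emptysnozero} forces $\mu\in\Sigma_e^+(l)$, and Lemma~\ref{empytset} excludes $\mu=\emptyset$ whenever $l\geq 1$. Note that this second exclusion matters only for even $l$, as $\emptyset\notin\Sigma_e^+(l)$ when $l$ is odd. For the sufficient direction I must show $L(\lambda)1_l\neq 0$ for every $\lambda\in\Lambda_e^+(r)$ with $0<r\le l$ and $l-r=2f$. My approach is to exploit the ideal filtration $\mathcal B_{q,l}\supseteq \mathcal B_{q,l}^1\supseteq\cdots$ from~\eqref{2sidei}. By Corollary~\ref{jdijss1} together with the basis of Theorem~\ref{thm123}(1), the subquotient $\mathcal B_{q,l}^f/\mathcal B_{q,l}^{f+1}$ admits a cellular bimodule description whose ``middle algebra'' is the Hecke algebra $H_{l-2f}$ sitting inside $\mathcal B_{q,l}$ via the map $\psi$ of~\eqref{algepi}. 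The non-degeneracy of the induced pairing $a\mapsto E^f a E^f$ on $H_{l-2f}$ for $l-2f>0$ will follow from repeated application of the identity $E_i T_{i+1} E_i=-qE_i$ of Lemma~\ref{key1}(4), which shows that $E^f$ is not killed by a generic element of the inner Hecke subalgebra. Standard cellular-algebra arguments then lift each simple $H_{l-2f}$-module $D(\lambda)$, $\lambda\in\Lambda_e^+(l-2f)$, to a unique simple $\mathcal B_{q,l}$-module annihilated by $\mathcal B_{q,l}^{f+1}$ but not by $\mathcal B_{q,l}^f$; via the bijection of the first paragraph this lift must coincide with $L(\lambda)1_l$. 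The degenerate endpoint $l-2f=0$, $\lambda=\emptyset$, is precisely the case where the pairing collapses: the diagram $E^{l/2}\cdot E^{l/2}$ produces a closed loop removable by Definition~\ref{pqbc}(R5), so no new simple is contributed, in agreement with Lemma~\ref{empytset}.

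The main obstacle is the precise verification of the cell-bimodule decomposition of $\mathcal B_{q,l}^f/\mathcal B_{q,l}^{f+1}$ together with the non-degeneracy of the pairing $a\mapsto E^f a E^f$. Concretely, one must track carefully, using Definition~\ref{defn:qBrauer}, Lemma~\ref{key1}, and the basis of Theorem~\ref{thm123}(1), how $E^f a E^f$ behaves modulo $\mathcal B_{q,l}^{f+1}$ as $a$ ranges over $H_{l-2f}$, and then match the resulting simples with the idempotent truncations $L(\lambda)1_l$ through the exact functor $M\mapsto M 1_l$. This bookkeeping is what links the abstract stratification of Section~5 to the concrete algebraic structure of Section~4, and it is also where the dichotomy between $\lambda=\emptyset$ (even $l$, excluded) and $\lambda\neq\emptyset$ (included) emerges naturally.
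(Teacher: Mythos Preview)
Your necessary direction (via Lemmas~\ref{emptysnozero} and~\ref{empytset}) matches the paper exactly. For the sufficient direction, however, the paper takes a much shorter path than your proposed sandwich-cellular analysis. Given $\lambda\in\Lambda_e^+(r)$ with $r>0$ and $l-r=2k$, Lemma~\ref{emptysnozero} already gives $L(\lambda)1_r\cong D(\lambda)\neq 0$. The paper then observes that, by the snake relations in Definition~\ref{pqbc}(R3), the identity $1_r$ factors (up to a nonzero scalar) as
\[
1_r \;=\; c\cdot\bigl(1_{r-1}\otimes\lcap^{\otimes k}\otimes 1_1\bigr)\circ\bigl(1_r\otimes\lcup^{\otimes k}\bigr),
\]
where the left factor lies in $1_rA1_l$ and the right in $1_lA1_r$. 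Hence $L(\lambda)1_r$ factors through $L(\lambda)1_l$, forcing the latter to be nonzero. That is the whole argument.

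Your route via the ideal filtration $\mathcal B_{q,l}\supseteq\mathcal B_{q,l}^1\supseteq\cdots$ is a legitimate alternative, but the key technical claim needs repair. As written, ``the pairing $a\mapsto E^f a E^f$ on $H_{l-2f}$'' is the zero map: every $a\in\tilde H_{l-2f}$ commutes with $E^f$ (the indices are at distance $>1$), and $(E^f)^2=0$ since $E_1^2=0$. What you actually want is the sandwich bilinear form $D_{f,l}\times D_{f,l}\to \tilde H_{l-2f}$ sending $(v,w)$ to the $\tilde H_{l-2f}$-component of $E^fT_v\,\sigma(T_w)E^f$ modulo $\mathcal B_{q,l}^{f+1}$, and you need this form to hit a unit. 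That does follow from iterating Lemma~\ref{key1}(4), for instance via $E^f(T_2T_4\cdots T_{2f})E^f$, once the non-commuting factors are carefully ordered; having done so, a counting argument (rather than an explicit matching) finishes the proof, since the necessary direction bounds the number of simples from above by $|\Sigma_e^+(l)\setminus\{\emptyset\}|$ and your construction produces at least that many. What your approach buys is a structural explanation of why $\lambda=\emptyset$ is the unique degenerate case (no free strands, so the sandwich form collapses via the loop-removal relation), at the cost of anticipating the standardly-based machinery of Theorem~\ref{sstan} and substantially more bookkeeping than the paper's two-line diagrammatic trick.
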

\begin{proof}  By Lemmas~\ref{emptysnozero}--\ref {empytset}, we have  $\lambda\in \Sigma^+_{e}(l)$ (resp., $\Sigma^+_{e}(l)\setminus\{0\}$)
if $L(\lambda)1_l\neq 0$ and $l$ is  odd (resp., even).  Conversely, suppose $\lambda\in \Sigma^+_{e}(l)$. Then $\lambda \in \Lambda_e^+(r)$ and $l-r$ is even.  By  Lemma~\ref {emptysnozero}, $L(\lambda)1_r\cong D(\lambda)$. Since  $1_r=(1_{r-1}\otimes\lcap^{\otimes \frac12(l-r)}\otimes 1_1  )\circ (1_r \otimes \lcup^{\otimes \frac12(l-r)})$,  $(1_{r-1}\otimes\lcap^{\otimes \frac12(l-r)}\otimes 1_1  )\in A 1_l$ and   $L(\lambda)1_r\neq 0$, we have $L(\lambda)1_l\neq 0$. Finally, using the exact functor
$1_l: A\text{-lfdmod}\rightarrow \mathcal B_{q,l}\text{-fdmod}$ sending  $M$ to $  M1_l$, we have the remaining statements immediately.
\end{proof}

\subsection{Duality functor}
By Definition~\ref{pqbc},  there is  a contravariant auto-equivalence $\phi: \mathcal B\rightarrow \mathcal B$
such that $\phi$ fixes any  object of $\mathcal B$  and sends generating morphisms
 $\begin{tikzpicture}[baseline = 2.5mm]
	\draw[-,thick,darkblue] (0.28,0) to[out=90,in=-90] (-0.28,.6);
	\draw[-,line width=4pt,white] (-0.28,0) to[out=90,in=-90] (0.28,.6);
	\draw[-,thick,darkblue] (-0.28,0) to[out=90,in=-90] (0.28,.6);
\end{tikzpicture}, \begin{tikzpicture}[baseline = 2.5mm]
	\draw[-,thick,darkblue] (-0.28,-.0) to[out=90,in=-90] (0.28,0.6);
	\draw[-,line width=4pt,white] (0.28,-.0) to[out=90,in=-90] (-0.28,0.6);
	\draw[-,thick,darkblue] (0.28,-.0) to[out=90,in=-90] (-0.28,0.6);
\end{tikzpicture}$, $\lcup,\lcap$ to $-~\begin{tikzpicture}[baseline = 2.5mm]
	\draw[-,thick,darkblue] (-0.28,-.0) to[out=90,in=-90] (0.28,0.6);
	\draw[-,line width=4pt,white] (0.28,-.0) to[out=90,in=-90] (-0.28,0.6);
	\draw[-,thick,darkblue] (0.28,-.0) to[out=90,in=-90] (-0.28,0.6);
\end{tikzpicture}, -~\begin{tikzpicture}[baseline = 2.5mm]
	\draw[-,thick,darkblue] (0.28,0) to[out=90,in=-90] (-0.28,.6);
	\draw[-,line width=4pt,white] (-0.28,0) to[out=90,in=-90] (0.28,.6);
	\draw[-,thick,darkblue] (-0.28,0) to[out=90,in=-90] (0.28,.6);
\end{tikzpicture}, -\lcap,\lcup$, respectively, and $\phi(b\otimes b')=\phi(b)\otimes \phi(b')$ for all morphisms $b, b'$ in $\mathcal B$.  Standard arguments in \cite[\S3.3]{GRS2} shows that  $\phi$ induces an anti-automorphism of $A$.  Restricting it on  $1_mA1_m$ and $A^\circ $ yield  the corresponding  anti-involutions on $1_mA1_m$ and $A^\circ$, respectively.

 For any $M\in A$-lfdmod, let
 $M^*:=\oplus_{a\in \mathbb N}\Hom_{\Bbbk}(M1_a,\Bbbk)$. Then $M^*$ is a locally finite dimensional right $A$-module such that
 $$ (fx)(m)=f(m\phi(x)), \text { for }x\in A, f\in M^*, m\in M1_a.$$
 This induces an exact contravariant duality functor on $A$-lfdmod. Similarly, we have the exact contravariant duality functors on $1_mA1_m$-fdmod and $A^\circ$-fdmod.
 Let $*:  \Lambda_e\rightarrow \Lambda_e $ be the involution  map such that $D(\lambda)^*\cong D(\lambda^*)$.


 \begin{Cor}\label{blmul} Suppose $\lambda\in \Lambda_e$ and $\mu\in \Lambda$.
 We have
 \begin{itemize}
 \item[(1)]  $\bar\Delta(\lambda)^*\cong \bar\nabla(\lambda^*)$, $L(\lambda)^*\cong L(\lambda^*)$, where $\bar\nabla(\lambda):=\nabla(D(\lambda))$.
 \item [(2)] $\dim \Hom_A(\Delta(\mu),\bar\Delta(\lambda)^*)=\delta_{\mu,\lambda^*}$, where $\Delta(\lambda):=\Delta(Y(\lambda))$.
 \item[(3)] If $e>l$ and $\lambda,\mu \in\sum^{+}(l)$,  then  $(P(\lambda):\Delta(\mu))=[\Delta(\mu^t): L(\lambda^t)] $, where $P(\lambda)$ is the projective cover of $L(\lambda)$ and $\lambda^t$ is the transpose of $\lambda$.
 \end{itemize}
 \end{Cor}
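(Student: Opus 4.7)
The three parts fit the standard template for duality and BGG reciprocity in upper finite (fully) stratified categories \cite{BS}, so my plan is to check the hypotheses and invoke the general machinery.

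For part \emph{(a)}, the key input is that the contravariant auto-equivalence $\phi$ of $\mathcal B$ swaps caps with cups and over-crossings with under-crossings up to signs. Therefore $\phi$ induces an anti-isomorphism of locally unital algebras $A^\sharp\to A^b$ and restricts to an anti-involution on $A^\circ\cong\bigoplus_{l\in\mathbb N}H_l$. The latter determines the involution $*\colon\Lambda_e\to\Lambda_e$ by $D(\lambda)^*\cong D(\lambda^*)$. Since the $\Bbbk$-linear duality $*$ swaps left and right adjoints and is compatible with $\phi$, one obtains a natural isomorphism of functors $*\circ\Delta\cong\nabla\circ *$, which applied to $D(\lambda)$ yields
\[\bar\Delta(\lambda)^*=\Delta(D(\lambda))^*\cong\nabla(D(\lambda)^*)=\nabla(D(\lambda^*))=\bar\nabla(\lambda^*).\]
The second statement in \emph{(a)} then follows because $L(\lambda)$ is the head of $\bar\Delta(\lambda)$, so $L(\lambda)^*$ is the socle of $\bar\Delta(\lambda)^*=\bar\nabla(\lambda^*)$, and by the standard theory of fully stratified categories (\cite[Theorem~5.38]{BS}) the socle of $\bar\nabla(\mu)$ is $L(\mu)$.

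For part \emph{(b)}, I plan to apply the tensor--hom adjunction $\Hom_A(M\otimes_{A^\sharp}A,N)\cong\Hom_{A^\sharp}(M,N|_{A^\sharp})$ together with the standard identity
\[\Hom_{A^\sharp}(Y,\bar\nabla(L)|_{A^\sharp})\cong\Hom_{A^\circ}(Y,L),\]
valid for any projective $A^\circ$-module $Y$ and any simple $A^\circ$-module $L$; this reflects that $\bar\nabla(L)$ is coinduced from $L$ and that the ``positive'' part of $A^\sharp$ acts by zero on the relevant cosocle. Combined with part \emph{(a)} and the fact that $Y(\mu)$ is the projective cover of $D(\mu)$ in $A^\circ$-fdmod, this gives
\[\dim\Hom_A(\Delta(\mu),\bar\Delta(\lambda)^*)=\dim\Hom_{A^\circ}(Y(\mu),D(\lambda^*))=\delta_{\mu,\lambda^*}.\]

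For part \emph{(c)}, the hypothesis $e>l$ makes every Hecke algebra $H_r$ with $r\le l$ semisimple, so $Y(\mu)=D(\mu)$ and $\Delta(\mu)=\bar\Delta(\mu)$ for every $\mu\in\Sigma^+(l)$. The truncation of $A$-lfdmod to the coideal generated by $\Sigma^+(l)$ therefore becomes an upper finite highest weight category by \cite[Corollary~5.36]{BS}, in which classical BGG reciprocity $(P(\lambda):\Delta(\mu))=[\bar\nabla(\mu):L(\lambda)]$ applies. Invoking part \emph{(a)} and the exactness of $*$ rewrites the right hand side as $[\bar\Delta(\mu^*):L(\lambda^*)]=[\Delta(\mu^*):L(\lambda^*)]$. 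It remains to identify $\mu^*$ with $\mu^t$ for partitions of $r\le l$: since $H_r$ is semisimple, one factors $\sigma(T_i)=-T_i^{-1}$ as the bar anti-involution $T_i\mapsto T_i$ (which fixes every simple $S^\mu$) composed with the sign-twist algebra automorphism $T_i\mapsto -T_i^{-1}$ (under which $S^\mu$ is sent to $S^{\mu^t}$), completing the argument. The main obstacle is this final identification: careful tracking of signs coming from the supercategory structure is required to be sure that the abstract involution $*$ on $\Lambda_e$ coincides with the combinatorial transpose rather than with the Mullineux map or some other automorphism.
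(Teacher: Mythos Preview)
Your proposal is correct and follows essentially the same path as the paper's proof: both obtain $*\circ\Delta\cong\nabla\circ *$ from the fact that $\phi$ swaps $A^\sharp$ with $A^b$, deduce (2) from the standard orthogonality $\dim\Hom_A(\Delta(\mu),\bar\nabla(\lambda))=\delta_{\lambda,\mu}$ (the paper cites \cite[Lemma~3.48]{BS} rather than re-deriving it by adjunction), and for (3) combine BGG reciprocity $(P(\lambda):\Delta(\mu))=[\nabla(\mu):L(\lambda)]$ with (1) and the identification $\lambda^*=\lambda^t$. Your worry at the end is unfounded: the generating crossings are even, so the restriction of $\phi$ to $A^\circ$ involves no super-signs and is exactly the anti-involution $T_i\mapsto -T_i^{-1}$, whose factorisation you describe correctly (the paper simply invokes \cite[Exercise~3.14(iii)]{Ma} for the same conclusion).
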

 \begin{proof} A category which admits a split triangular decomposition is an upper finite weakly triangular category in the sense of \cite[Definition~2.1]{GRS2}. This enables us to use \cite[Lemma~3.11]{GRS2} to get
   $*\circ \Delta\cong \nabla\circ *$. Applying both of them on $D(\lambda)$ yields isomorphisms in (1). By general results in \cite[Lemma~3.48]{BS},  $\dim \Hom_A(\Delta(\mu),\bar\nabla(\lambda))=\delta_{\lambda,\mu}$ and hence (2) follows from (1), immediately.
Suppose $e>l$. Then $H_l$ is semisimple. In this case,   $S(\lambda)=D(\lambda)=Y(\lambda)$,  $\Delta(\lambda)=\bar\Delta(\lambda)$ and $ \nabla(\lambda)=\bar\nabla(\lambda)$, where $\nabla(\lambda):=\nabla(I(\lambda)) $ and $I(\lambda)$ is the injective hull of $D(\lambda)$.
Since the Hecke algebra $H_l$ is a symmetric algebra, $I(\lambda)=Y(\lambda)$ for all $\lambda\in \Lambda^+_e(l)$.
Thanks to \cite[Exercise~3.14(iii)]{Ma}, $\lambda^*=\lambda^t$, where $\lambda^t$ is the conjugate of $\lambda$.\footnote{In \cite{Ma}, $(T_i-q)(T_i+1)=0$ whereas we use $(T_i-q)(T_i+q^{-1})=0$.
 So our anti-involution induced by $\phi$ corresponds to the anti-involution of the Hecke algebra in \cite{Ma} which  sends $T_i$ to $-qT_i^{-1}$.} By \cite[Lemma~3.36]{BS} or \cite[Proposition~3.9(2)]{GRS2}, $(P(\lambda):\Delta(\mu))=[\nabla(\mu):L(\lambda)]$. Since $*$ is an exact functor, by (1)
 $ [\nabla(\mu):L(\lambda)]=[\nabla(\mu)^*:L(\lambda)^* ]=[\Delta(\mu^*):L(\lambda^*)] =[\Delta(\mu^t):L(\lambda^t)]$, proving  (3).
 \end{proof}



\section{Classical Brunching rules for   periplectic $q$-Brauer algebras}
The aim of this section is  to give the classical branching rule for right standard modules of   $\mathcal B_{q, l}$  over $\Bbbk$. We will use it to construct Jucys-Murphy basis for  the right standard modules of  $\mathcal B_{q, l}$ next section.

\subsection{ Cell filtration of cell  modules and  permutation modules for $\tilde H_{l-2f}$}
Suppose $0\le f\le \lfloor l/2\rfloor$. Let $\widetilde{\mathfrak S}_{l-2f}$ be the symmetric group on letters $\{2f+1, 2f+2, \ldots, l\}$. When $l$ is even and $f=l/2$, we set  $\mathfrak S_{l-2f}=1$. Let  $\tilde H_{l-2f}$ be the Hecke algebra associated to $\widetilde{\mathfrak S}_{l-2f}$. Then $\tilde H_{l-2f}$, a subalgebra of $H_{l}$, is isomorphic to $H_{l-2f}$. The corresponding isomorphism  sends  $T_{2f+i}$'s to $T_i$'s in \eqref{kkk1}.

For any $\lambda\in \Lambda(l-2f) $, let   $$x_\lambda=\sum_{w\in \widetilde{\mathfrak S}_\lambda}q^{\ell(w)}T_w,$$ where $\widetilde{\mathfrak S}_\lambda$ is the Young subgroup of $\widetilde{\mathfrak S}_{l-2f}$  with respect to $\lambda$. Each $\lambda$ corresponds to the Young diagram $[\lambda]$ such that there are $\lambda_i$ boxes in the $i$-th row of $[\lambda]$. A $\lambda$-tableau $\t$ is obtained from $[\lambda]$ by inserting $2f+1, 2f+2, \ldots, l$ into $[\lambda]$ without repetition. In this case, we write $shape(\t)=\lambda$ and call $\lambda$ the shape of $\t$.
 Let $\t^\lambda$ be the $\lambda$-tableau obtained from $[\lambda]$ by inserting $2f+1, 2f+2, \ldots, l$ successively from left to right along the rows of $[\lambda]$. If the entries in $\t$ increase from left to right along row and down column, $\t$ is called standard. In this case, $\lambda\in \Lambda^+(l-2f)$.
Let $\Std(\lambda)$ be the set of all standard $\lambda$-tableaux.
 Then each $\t\in \Std(\lambda)$ satisfies $\t=\t^\lambda d(\t)$ where $d(\t)$ is a distinguished right coset representative of $\widetilde{\mathfrak S}_\lambda$ in $\widetilde{\mathfrak S}_{l-2f}$.
The following result gives  the Jucys-Murphy basis of $\tilde H_{l-2f}$.
\begin{Theorem}\label{mur}\cite[Theorem~3.20]{Ma}  $\tilde H_{l-2f}$ has $\Bbbk$-basis given by
$\{ x_{\s,\t}\mid  \s,\t\in\Std(\lambda),  \lambda\in\Lambda^+(l-2f)  \}$,
where $x_{\s,\t}=T_{d(\s)}^*x_\lambda T_{d(\t)}$ and $*$ is the anti-involution on $\tilde H_{l-2f}$ fixing the generators $T_i$'s. It is a cellular basis in the sense of \cite[Definition~1.1]{GL}. \end{Theorem}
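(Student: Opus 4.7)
This statement is Murphy's cellular basis theorem for the Hecke algebra \cite{Mur}, and the cited proof in \cite{Ma} follows a well-established template. The plan is to verify the three conditions of a cellular basis relative to the dominance order $\unrhd$ on $\Lambda^+(l-2f)$ (so $\lambda\unrhd\mu$ iff $\sum_{i\le k}\lambda_i\ge \sum_{i\le k}\mu_i$ for all $k$), together with spanning and linear independence.

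First I would establish the $*$-invariance by hand: since $\widetilde{\mathfrak S}_\lambda$ is the Young subgroup one has $x_\lambda^*=x_\lambda$, so $x_{\s,\t}^*=T_{d(\t)}^*x_\lambda T_{d(\s)}=x_{\t,\s}$. Next, for linear independence, I would appeal to a dimension count: there are $\sum_{\lambda\in \Lambda^+(l-2f)}|\Std(\lambda)|^2$ elements in the proposed basis, which equals $(l-2f)!=\dim \tilde H_{l-2f}$ by the Robinson–Schensted correspondence, so it suffices to show the set spans.

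For spanning and for the multiplicative cellular axiom, the key computation is to show that for any $h\in \tilde H_{l-2f}$,
\begin{equation*}
x_{\s,\t}\,h \;\equiv\; \sum_{\u\in\Std(\lambda)} r_{\u}(h,\t)\,x_{\s,\u}\pmod{\tilde H^{\rhd\lambda}_{l-2f}},
\end{equation*}
where $\tilde H^{\rhd\lambda}_{l-2f}$ is the $\Bbbk$-span of all $x_{\s',\t'}$ with $\mathrm{shape}(\s')\rhd\lambda$, and the coefficients $r_{\u}(h,\t)$ depend only on $h$ and $\t$ (not on $\s$). It is enough to check this for $h$ equal to a generator $T_{2f+i}$. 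I would carry out the standard case analysis: depending on whether $\t s_{2f+i}$ is still standard, or has two entries in the same row, or two entries in the same column of $\lambda$, one uses (respectively) the braid/commutation relations, the identity $x_\lambda T_w=q^{\ell(w)}x_\lambda$ for $w\in\widetilde{\mathfrak S}_\lambda$, and the Garnir-type relation that expresses $x_\lambda T_{d(\t)}T_{2f+i}$ modulo $\tilde H^{\rhd\lambda}_{l-2f}$ as a combination of terms $x_\lambda T_{d(\u)}$ with $\mathrm{shape}(\u)=\lambda$. Spanning follows simultaneously by downward induction on $\unrhd$ together with induction on Coxeter length, since arbitrary products $T_{d(\s)}^*x_\lambda T_w$ can be reduced by these relations into the Murphy form plus higher-shape corrections.

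The main obstacle, as usual in this circle of ideas, is the Garnir-type identity: showing that when $\t s_{2f+i}$ is non-standard due to a column coincidence in $\lambda$, the product $x_\lambda T_{d(\t)s_{2f+i}}$ lies in $\sum_{\u\in\Std(\lambda)} \Bbbk\, x_\lambda T_{d(\u)} + \tilde H^{\rhd\lambda}_{l-2f}$. This is the place where dominance genuinely enters, via the identification of $\widetilde{\mathfrak S}_\lambda T_w \widetilde{\mathfrak S}_\mu$ double-coset combinatorics with the dominance order on shapes; once this identity is in place the cellular axioms and the spanning statement both follow, and comparison with the dimension count of the first paragraph gives a basis.
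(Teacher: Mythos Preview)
The paper does not give its own proof of this theorem; it is simply quoted as a known result from \cite[Theorem~3.20]{Ma}, so there is nothing in the paper to compare your proposal against. Your outline is essentially the standard Murphy--Mathas argument found in that reference, and the sketch is correct.
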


Suppose  $\lambda\in\Lambda^+(l-2f)$ and $\mu\in \Lambda^+(l-2f-1)$. We say that $\mu$ is obtained from $\lambda$ by removing the removable node $p=(k, \lambda_k)$ of $\lambda$ if $\mu_j=\lambda_j$, $j\neq k$ and $\mu_{k}=\lambda_k-1$. In this case, $\lambda$ is obtained from $\mu$ by adding the addable node $p$ of $\mu$.  We write $\lambda=\mu\cup p$ or $\mu=\lambda\setminus p$.
Let $\mathcal R_\lambda$ be the set of all
partitions obtained from $\lambda$ by removing a removable node and $\mathcal A_\lambda$ the set of all
partitions obtained from $\lambda$ by adding an addable node.

For any  $\lambda\in \Lambda^+(l-2f)$, let  $S^{\lambda}$ be  the cell module of   $\tilde H_{l-2f}$ with respect to the Jucys-Murphy basis in Theorem~\ref{mur}.  When $f=0$, it is the dual Specht module in subsection~\ref{hhh1}. By definition,    $S^{\lambda}$  is the free $\Bbbk$-module with basis
$\{x_{\t}\mid \t\in \Std(\lambda)\}$ where $x_\t:=x_{\t^\lambda, \t}+ \tilde H_{l-2f}^{\rhd \lambda}$ and $\tilde H_{l-2f}^{\rhd \lambda}$ is the two-sided ideal of $\tilde H_{l-2f}$ with $\Bbbk$-basis
$\{x_{\u, \s} \mid \u, \s \in \Std(\mu), \mu\rhd \lambda\}$. Since $\mathcal R_\lambda$
is a finite set, we arrange them as $\mu^{(1)}, \mu^{(2)},\ldots, \mu^{(a)}$ for some positive integer $a$ such that $\mu^{(1)}\rhd \mu^{(2)}\rhd \cdots\rhd \mu^{(a)}$. For any standard $\lambda$-tableau $\t$, let
$\t\downarrow_{l-1}$ be obtained from  $\t$ by removing the entry $l$. Then $\t\downarrow_{l-1}\in \Std(\mu)$ for some  $\mu\in \mathcal R_\lambda$. Let $S_i$ be the free $\Bbbk$-submodule of $S^{\lambda}$ spanned by $\{x_\t\mid  \t \in \Std(\mu^{(j)}), 1\le j\le i\}$. Then $S_i$ is a right $\tilde H_{l-2f-1}$-module such that $S^{\lambda}=S_a\supset S_{a-1}\supset \cdots \supset S_1\supset 0$, where $\tilde H_{l-2f-1}$ is the Hecke algebra associated to the symmetric group on $\{2f+1, 2f+2, \ldots, l-1\}$.

 \begin{Theorem}\label{hecbr}\cite[Proposition~6.1]{Ma} As $\tilde H_{l-2f-1}$-modules, $ S_i/S_{i-1}\cong S^{\mu^{(i)}}, 1\leq i\leq a$,
where $ S^{\mu^{(i)}}$ is the cell module  with respect to the Jucys-Murphy basis of
 $\tilde H_{l-2f-1}$ in Theorem~\ref{mur}. \end{Theorem}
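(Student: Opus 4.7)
The plan is to establish the branching rule by exhibiting an explicit $\tilde H_{l-2f-1}$-module isomorphism $\varphi_i : S^{\mu^{(i)}} \to S_i/S_{i-1}$ for each $i$, exploiting the fact that the Jucys-Murphy basis of $\tilde H_{l-2f}$ in Theorem~\ref{mur} is adapted to the subalgebra chain $\tilde H_{l-2f-1} \subset \tilde H_{l-2f}$. First I would verify that each $S_i$ is stable under the action of $\tilde H_{l-2f-1}$, that is, under $T_j$ for $2f+1 \le j \le l-2$. The crucial point is that such a $T_j$ fixes the position of the entry $l$: for any $\t \in \Std(\lambda)$ with $\t\downarrow_{l-1} \in \Std(\mu^{(j_0)})$, the expansion of $x_\t T_j$ in the Jucys-Murphy basis involves only $x_\s$ with $\s\downarrow_{l-1}$ either equal to $\t\downarrow_{l-1}$ or strictly more dominant, since the rewriting of $T_{d(\t)}T_j$ in terms of distinguished right coset representatives of $\widetilde{\mathfrak S}_\lambda$ in $\widetilde{\mathfrak S}_{l-2f}$ never moves the $l$-box, and any Garnir-style straightening terms of shape $\rhd \lambda$ vanish modulo $\tilde H_{l-2f}^{\rhd \lambda}$. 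This places $x_\t T_j$ in $S_{j_0}$, so $S_{j_0}$ is $\tilde H_{l-2f-1}$-stable.

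Next I define $\varphi_i: S^{\mu^{(i)}} \to S_i/S_{i-1}$ by $\varphi_i(x_\s) = x_{\s^+} + S_{i-1}$, where $\s^+$ denotes the unique standard $\lambda$-tableau with $\s^+\downarrow_{l-1} = \s$, obtained by placing $l$ into the removable node $p_i$ of $\lambda$ for which $\lambda \setminus p_i = \mu^{(i)}$. By construction $\varphi_i$ sends the Jucys-Murphy basis of $S^{\mu^{(i)}}$ bijectively onto $\{x_{\s^+}+S_{i-1} \mid \s \in \Std(\mu^{(i)})\}$, which is a $\Bbbk$-basis of $S_i/S_{i-1}$, so $\varphi_i$ is already a $\Bbbk$-linear isomorphism. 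To promote it to an $\tilde H_{l-2f-1}$-module homomorphism, it suffices to check $\varphi_i(x_\s T_j) = \varphi_i(x_\s)T_j$ on generators $T_j$ with $2f+1 \le j \le l-2$. One factors $d(\s^+) = d(\s) \cdot v_i$ where $v_i$ is a fixed reduced expression taking $l$ from the end of the last row of $\t^\lambda$ to the node $p_i$ (and in particular depends only on $p_i$, not on $\s$), and then compares the action of $T_j$ on $T_{d(\s)}x_{\mu^{(i)}}$ inside $\tilde H_{l-2f-1}$ with its action on $T_{d(\s^+)}x_\lambda = T_{d(\s)}T_{v_i}x_\lambda$ inside $\tilde H_{l-2f}$.

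The main obstacle is the bookkeeping of dominance: I need to show precisely that when $T_j$ is commuted (or, where the braid length drops, reduced) past $T_{v_i}$, every error term produced corresponds to a standard tableau whose restriction to $\{2f+1,\ldots,l-1\}$ has shape strictly dominating $\mu^{(i)}$, hence lies in $S_{i-1}$ and vanishes in the quotient. Once that verification is made, the two computations give matching linear combinations of basis elements, proving the intertwining property. This analysis is a standard application of the length function on $\widetilde{\mathfrak S}_{l-2f}$ together with the Garnir relations controlling the cellular filtration, and can be carried out uniformly as in the proof of \cite[Proposition~6.1]{Ma}.
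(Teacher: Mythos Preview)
The paper does not prove this theorem; it is quoted directly from \cite[Proposition~6.1]{Ma} as background, so there is no argument in the paper to compare with. Your sketch follows the standard proof given there, so in spirit it matches.

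There is, however, a concrete slip in your factorization. With the paper's (and Mathas's) right-module conventions one has $x_\t = x_\lambda T_{d(\t)} + \tilde H_{l-2f}^{\rhd\lambda}$, and for $\s\in\Std(\mu^{(i)})$ with $\s^+$ the standard $\lambda$-tableau obtained by placing $l$ in the node $p_i$, the correct identity is $d(\s^+)=v_i\, d(\s)$ with $v_i=s_{a_k,l}$ on the \emph{left} (here $a_k$ is the entry occupying $p_i$ in $\t^\lambda$); equivalently $x_{\s^+}=x_\lambda T_{v_i}T_{d(\s)}+\tilde H_{l-2f}^{\rhd\lambda}$. Your expressions $T_{d(\s)}x_{\mu^{(i)}}$, $T_{d(\s^+)}x_\lambda$, and the factorization $d(\s^+)=d(\s)\cdot v_i$ all have the sides reversed. (A quick test: for $\lambda=(3,1)$, $p_i=(1,3)$, and $\s=\t^{(2,1)}s_2$, one computes $d(\s^+)=s_3s_2=v_i\,d(\s)$, not $d(\s)\,v_i=s_2s_3$.) With the corrected factorization the intertwining check becomes transparent: since $T_j$ for $2f+1\le j\le l-2$ acts on the right of $T_{d(\s)}$ and $d(\s),s_j\in\widetilde{\mathfrak S}_{l-2f-1}$, the fixed prefix $x_\lambda T_{v_i}$ is untouched and the computation of $x_{\s^+}T_j$ genuinely reduces to that of $x_\s T_j$ in $S^{\mu^{(i)}}$, with the Garnir-type error terms landing in $S_{i-1}$ as you claim. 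This is exactly the mechanism the paper later exploits in the proof of Lemma~\ref{fiels1}.
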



   Suppose $(\lambda,\mu) \in\Lambda(l-2f)\times \Lambda(l-2f)$. Recall that a $\lambda$-tableaux of type $\mu$ is obtained from $[\lambda]$ by inserting integers  $i$ into
   $[\lambda]$ such that $i$ appears $\mu_i$ times.  A $\lambda$-tableau $S$ of type $\mu$ is called row semi-standard if the  entries in each row of  $S$ are non decreasing.
    $S$ is semistandard if $\lambda$ is a partition and $S$ is row semistandard and the entries
    in each column of $S$ are strictly increasing.
    Let $\mathcal T(\lambda,\mu)$ be the set of all semistandard $\lambda$-tableaux of type $\mu$.
 For any $\t\in\Std(\lambda)$, let $\mu(\t) $ be the  $\lambda$-tableau obtained from $\t$ by replacing each entry $i$ in $\t$ with $k$ if $i$ appears in the $k$-th row of $\t^{\mu}$.
 By~\cite[Example~4.2(ii)]{Ma},  $\mu(\t)$ is a row semistandard $\lambda$-tableau of type $\mu$.
Following \cite[Chapetr~4, \S2]{Ma}, write \begin{equation}\label{sst} x_{S,\t}=\sum_{\s\in\Std(\lambda), \mu(\s)=S}q^{\ell(d(\s))}x_{\s,\t}\end{equation}
where  $(S, \t)\in \mathcal T(\lambda,\mu)\times\Std(\lambda)$.

\begin{Lemma}\label{matsho}\cite[Corollary~4.10]{Ma} For any $\mu\in \Lambda(l-2f)$, the right $\tilde H_{l-2f}$-module $M^\mu:=x_\mu \tilde H_{l-2f}$  has basis $\{x_{S,\t}\mid S\in \mathcal T(\lambda,\mu),\t\in\Std(\lambda), \lambda\in\Lambda^+(l-2f) \}$. Arrange all such semi-standard tableaux as $S_1, S_2, \cdots, S_k$ such that $S_i\in  T(\lambda_i,\mu)$ and $i>j$ whenever $\lambda_i\rhd \lambda_j$. Let $M_i$ be the $\Bbbk$-submodule of $M^\mu$ spanned by
 $ \{x_{S_j,\t}\mid  \t\in \text{Std}(\lambda_j), i\le j\le k\}$. Then
 $M^\mu$ has a filtration $$M^\mu=M_1\supset M_2\supset \ldots\supset M_k\supset 0$$ such that $M_i/M_{i+1}\cong S^{\lambda_i}$. Further, the required isomorphism sends   $ x_{S_i,\t}+M_{i+1} $ to $x_{\lambda_i}T_{d(\t)} +\tilde H^{\rhd \lambda_i}_{l-2f} $.
\end{Lemma}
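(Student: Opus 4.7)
The plan is to follow the standard cellular-algebra approach used in \cite{Ma}. Since $\tilde H_{l-2f}$ carries the Murphy cellular basis of Theorem~\ref{mur}, the task is to identify the intersection of the right ideal $M^\mu=x_\mu\tilde H_{l-2f}$ with each cell-layer of this basis, and then exhibit the resulting filtration explicitly. First I would establish two preliminary facts: (i) $M^\mu$ is spanned by elements of the form $x_\mu T_w$ as $w$ ranges over a set of distinguished right coset representatives of $\widetilde{\mathfrak S}_\mu$ in $\widetilde{\mathfrak S}_{l-2f}$; (ii) using Theorem~\ref{mur} and the definition $x_{\s,\t}=T_{d(\s)}^*x_\lambda T_{d(\t)}$, the element $x_\mu$ itself can be expanded as a $\Bbbk$-linear combination of Murphy basis elements $x_{\s,\t}$ with $\text{shape}(\s)=\text{shape}(\t)=\lambda \gedom \mu$, by comparing the two presentations of the element $x_\lambda$ inside the Young subgroup structure.

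Next I would prove the spanning statement for $M^\mu$. The key combinatorial input is the $q$-analogue of the Garnir relations together with the semistandardisation procedure: given any $\s\in\Std(\lambda)$, the row semistandard tableau $\mu(\s)$ encodes $\s$ modulo the left action of $\widetilde{\mathfrak S}_\mu$, and summing the Murphy basis element $x_{\s,\t}$ over all $\s$ with $\mu(\s)=S$ (weighted by $q^{\ell(d(\s))}$) yields precisely $x_{S,\t}$ as in \eqref{sst}. Using the commutation of $x_\mu$ with $\sum_{w\in \widetilde{\mathfrak S}_\mu}q^{\ell(w)}T_w$ acting on the left and working modulo the two-sided ideal $\tilde H^{\rhd\lambda}_{l-2f}$, one shows that any $x_\mu T_{d(\t)}$ (for $\t\in\Std(\lambda)$) lies in the span of $\{x_{S,\t}\}_{S\in\mathcal T(\lambda,\mu)}$ plus terms of strictly higher dominance type. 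A secondary argument using a straightening procedure shows that one may further restrict $S$ to be semistandard (not merely row semistandard): any column-repeated $S$ can be rewritten in terms of strictly dominant shapes via the Garnir-type relations.

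Having obtained spanning, linear independence of the $x_{S,\t}$ follows by a triangularity argument: in the Murphy basis expansion, the coefficient of $x_{\s,\t}$ in $x_{S,\t}$ is $q^{\ell(d(\s))}\neq 0$ precisely when $\mu(\s)=S$, and different pairs $(S,\t)$ pick out disjoint sets of Murphy basis elements. Counting dimensions (or simply invoking the disjointness of the supports) then gives the basis assertion. For the filtration, define $M_i$ as the $\Bbbk$-span of all $x_{S_j,\t}$ with $j\ge i$ and verify stability under the right $\tilde H_{l-2f}$-action: if $h\in \tilde H_{l-2f}$, then $x_{S_j,\t}h$ expands via the cellular structure into terms $x_{S_j,\t'}$ (same shape) plus contributions from strictly dominant shapes $\lambda_{j'}\rhd\lambda_j$, hence lies in $M_j\subseteq M_i$ when $j\ge i$. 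The quotient map $M_i/M_{i+1}\to S^{\lambda_i}$ sending $x_{S_i,\t}+M_{i+1}\mapsto x_{\lambda_i}T_{d(\t)}+\tilde H^{\rhd\lambda_i}_{l-2f}$ is then well-defined, surjective by construction, and injective by the already established linear independence.

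The main obstacle will be step~2, namely proving that $x_\mu T_w$ can be rewritten modulo higher dominance layers as a linear combination indexed by \emph{semistandard} (not just row semistandard) tableaux of shape $\lambda$. This is the $q$-analogue of the classical semistandardisation argument and requires careful book-keeping of Garnir-type straightening relations together with the identity $x_\lambda T_s=q x_\lambda$ whenever $s$ is a simple transposition lying in $\widetilde{\mathfrak S}_\lambda$; keeping the powers of $q$ in \eqref{sst} consistent throughout the bookkeeping is the delicate point.
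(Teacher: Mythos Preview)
The paper does not supply a proof of this lemma at all: it is stated as a direct citation of \cite[Corollary~4.10]{Ma}, and the authors simply use it as a black box in the subsequent arguments (Lemmas~\ref{fiels1} and~\ref{fiels2}). So there is no ``paper's proof'' to compare against; your outline is effectively a sketch of how the cited result is established in Mathas's book, and it is broadly faithful to that source. The essential ingredients you identify---the cellular structure from Theorem~\ref{mur}, the semistandardisation of $\mu(\s)$, the triangularity of $x_{S,\t}$ against the Murphy basis, and the right-module stability of each $M_i$ coming from the cell-ideal property---are exactly the ones used in \cite[\S4.2]{Ma}.

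One small clarification on your filtration-stability step: when you write that $x_{S_j,\t}h$ produces ``contributions from strictly dominant shapes $\lambda_{j'}\rhd\lambda_j$'', note that these contributions a priori lie only in $M^\mu\cap\tilde H_{l-2f}^{\rhd\lambda_j}$, and you need the spanning part of the basis statement (already established at that point) to conclude that this intersection is contained in the span of the $x_{S_{j'},\t'}$ with $j'>j$. This is implicit in your ordering of the argument but worth making explicit to avoid any appearance of circularity. With that caveat, your proposal is a correct reconstruction of the standard proof.
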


\subsection{A standard filtration of a standard module for $\mathcal B_{q,l}$} In this subsection, we prove that $\mathcal B_{q,l}$ is a standardly based algebra in the sense of \cite{DR}. We give a filtration of a standard module, which will  be used to construct its Jucys-Murphy basis.

Recall  $\Sigma^+(l)$ in \eqref{sig123}. There is a partial order $\unlhd$ on  $ \Sigma^+(l)$  such that $ \mu\unlhd \lambda $ for $\mu, \lambda\in \Sigma^+(l)$ if $|\mu|>|\lambda|$ or $|\mu|=|\lambda|$ and $\sum_{j=1}^k \mu_j\leq \sum_{j=1}^k\lambda_j$ for all possible  $k$.
For any $  w,v\in D_{f,l}, \s,\t\in \Std(\lambda)$ and $\lambda\in \Lambda^+(l-2f)$, define
 \begin{equation}\label{cst} C_{(w,\s),(v,\t)}^\lambda= \sigma (T_{w})E^f x_{\s,\t} T_v,\end{equation}
 where  $\sigma$ is the anti-involution of  $\mathcal B_{q,l}$ in Lemma~\ref{antiin} and
 $x_{\s,\t} $'s are given in Theorem~\ref{mur}. In Theorem~\ref{sstan}, $\Bbbk$ can be an arbitrary field with characteristic not $2$.

\begin{Theorem}\label{sstan}  Suppose  $\mathcal B_{q,l}$  is the periplectic $q$-Brauer algebra over $\Bbbk$. For any $\lambda\in \Lambda^+(l-2f)$, define  $I(\lambda)= D_{f,l} \times \Std(\lambda)$.
\begin{itemize}\item[(1)]  $\mathcal B_{q,l}$ has $\Bbbk$-basis
$S=\{ C_{(w,\s),(v,\t)}^\lambda \mid  (w, \s), (v, \t)\in I(\lambda), \lambda\in \Lambda^+(l-2f), 0\leq f\leq \lfloor l/2\rfloor \}$.
\item [(2)] For any $a\in  \mathcal B_{q,l}$, we have
$$\begin{aligned} a C_{(w,\s),(v,\t)}^\lambda & \equiv \sum_{(w_1, \s_1)\in I(\lambda)} r_{(w_1,\s_1),\lambda}(a, (w, \s)) C_{(w_1,\s_1),(v,\t)}^\lambda \pmod{\mathcal B_{q,l}^{\rhd \lambda}}\cr
 C_{(w,\s),(v,\t)}^\lambda a& \equiv\sum_{(v_1, \t_1)\in I(\lambda)} r_{\lambda, (v_1,\t_1)}( (v, \t), a ) C_{(w,\s),(v_1,\t_1)}^\lambda \pmod{\mathcal B_{q,l}^{\rhd \lambda}}\cr
\end{aligned} $$
where $\mathcal B_{q,l}^{\rhd \lambda}$ is the free $\Bbbk$-submodule generated by
 all $C_{(y,\u),(z,\u_1)}^\mu$'s such that $ (y, \u), (z, \u_1)\in I(\mu)$ and $\mu\rhd\lambda$.
Further, the coefficient $r_{(w_1,\s_1),\lambda}(a, (w, \s))$ (resp., $r_{\lambda, (v_1,\t_1)}( (v, \t), a )$ is independent of $(v, \t)$ (resp., $(w, \s)$).
\end{itemize}
\end{Theorem}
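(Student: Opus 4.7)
My plan is to deduce both parts from Theorem~\ref{thm123}(1), Theorem~\ref{mur}, and Corollary~\ref{jdijss1}, the first controlling the overall size of $\mathcal B_{q,l}$, the second providing cellular structure on $\tilde H_{l-2f}$, and the third describing how left/right ideals generated by $E^f$ decompose. The central organizing filtration is by the two-sided ideals $\mathcal B_{q,l}^{>f}:=\mathcal B_{q,l}E^{f+1}\mathcal B_{q,l}$; since $|\mu|<|\lambda|$ (i.e.\ the $f$-index strictly increases) is already encoded in $\rhd$ on $\Sigma^+(l)$, the refined ideal $\mathcal B_{q,l}^{\rhd\lambda}$ will be built by adding the within-$\tilde H_{l-2f}$ dominance strata coming from Theorem~\ref{mur} on top of $\mathcal B_{q,l}^{>f}$.

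For (1), note that $E^f=E_1E_3\cdots E_{2f-1}$ commutes with the subalgebra $\tilde H_{l-2f}\subset \mathcal B_{q,l}$, since the relevant indices are separated by at least $2$ (Definition~\ref{defn:qBrauer}). The Jucys-Murphy basis $\{x_{\s,\t}\}$ and the standard basis $\{T_\alpha\}_{\alpha\in\widetilde{\mathfrak S}_{l-2f}}$ of $\tilde H_{l-2f}$ are related by an invertible transition. Substituting this change of basis into the middle factor of the basis $\{\sigma(T_d)E^fT_\alpha T_v\}$ from Theorem~\ref{thm123}(1) converts it into $S$, and cardinalities match: $|S|=\sum_f|D_{f,l}|^2(l-2f)!=(2l-1)!!$.

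For (2), the crux is the following structural claim: for any $a\in\mathcal B_{q,l}$ and $w\in D_{f,l}$,
\[
a\,\sigma(T_w)E^f\;\equiv\;\sum_{w_1\in D_{f,l}}\sigma(T_{w_1})E^f\cdot h_{a,w,w_1}\pmod{\mathcal B_{q,l}^{>f}},\tag{$*$}
\]
with $h_{a,w,w_1}\in\tilde H_{l-2f}$ depending only on $a$ and $w$. To prove $(*)$, apply Corollary~\ref{jdijss1}(2) to write $a\,\sigma(T_w)E^f=\sum_{w_1}\sigma(T_{w_1})E^f\cdot h'_{a,w,w_1}$ with $h'_{a,w,w_1}\in\mathcal B_l(f)$. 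Decompose $h'_{a,w,w_1}=\tilde h+h''$ where $\tilde h\in\tilde H_{l-2f}$ (using that $\mathcal B_l(f)/\langle E_{2f+1}\rangle\cong \tilde H_{l-2f}$ via the epimorphism $\psi$ of \eqref{algepi}) and $h''$ lies in the two-sided ideal of $\mathcal B_l(f)$ generated by $E_{2f+1}$; then, using that $E^f$ commutes with $\mathcal B_l(f)$, any $E_{2f+1}$-factor in $h''$ combines with $E^f$ to produce $E^{f+1}$, pushing that contribution into $\mathcal B_{q,l}^{>f}$.

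Given $(*)$, the left action on $C^\lambda_{(w,\s),(v,\t)}=\sigma(T_w)E^fx_{\s,\t}T_v$ becomes (modulo $\mathcal B_{q,l}^{>f}$)
\[
\sum_{w_1}\sigma(T_{w_1})E^f\,(h_{a,w,w_1}\,x_{\s,\t})\,T_v,
\]
and the cellular multiplication property of the Jucys-Murphy basis in Theorem~\ref{mur} rewrites each $h_{a,w,w_1}x_{\s,\t}$ as $\sum_{\s_1}\rho(h_{a,w,w_1},\s,\s_1)\,x_{\s_1,\t}$ modulo $\tilde H_{l-2f}^{\rhd\lambda}$, with $\rho$ independent of $\t$. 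Setting $r_{(w_1,\s_1),\lambda}(a,(w,\s)):=\rho(h_{a,w,w_1},\s,\s_1)$ and tracking how $\tilde H_{l-2f}^{\rhd\lambda}$ lifts (via $\sigma(T_{w_1})E^f(-)T_v$) into $\mathcal B_{q,l}^{\rhd\lambda}$ yields the first congruence. The second congruence is obtained in the same way from the mirror claim
$\sigma(T_v)E^f\cdot a\equiv\sum_{v_1}h^{\rm R}_{a,v,v_1}\cdot E^f\sigma(T_{v_1})\pmod{\mathcal B_{q,l}^{>f}}$, which follows either by applying the anti-involution $\sigma$ of Lemma~\ref{antiin} to $(*)$ or directly from Corollary~\ref{jdijss1}(1). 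Two-sidedness of $\mathcal B_{q,l}^{\rhd\lambda}$ is then a consequence of these two congruences. The main obstacle is establishing $(*)$: everything else is bookkeeping, but $(*)$ requires simultaneously controlling the $D_{f,l}$-decomposition from Corollary~\ref{jdijss1} and reducing the $\mathcal B_l(f)$-coefficients to $\tilde H_{l-2f}$ modulo the $E_{2f+1}$-ideal — a passage that ultimately rests on the carefully chosen commutativity of $E^f$ with $\mathcal B_l(f)$ and the identification $E^f\cdot E_{2f+1}=E^{f+1}$.
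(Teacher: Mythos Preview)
Your proposal is correct and follows essentially the same route as the paper. The paper's own proof is a single sentence: ``(1) follows immediately from Theorems~\ref{thm123} and~\ref{mur} and (2) follows from Lemma~\ref{fque1} and Theorem~\ref{mur}''; you have simply unpacked these citations, invoking Corollary~\ref{jdijss1} (an immediate consequence of Lemma~\ref{fque1}) for the $D_{f,l}$-decomposition and making explicit the passage from $\mathcal B_l(f)$ to $\tilde H_{l-2f}$ modulo the ideal generated by $E_{2f+1}$. One small slip: your mirror claim should read $E^fT_v\cdot a\equiv\sum_{v_1}\tilde h_{v_1}E^fT_{v_1}\pmod{\mathcal B_{q,l}^{>f}}$ rather than $\sigma(T_v)E^f\cdot a$, but your reference to Corollary~\ref{jdijss1}(1) shows you have the right statement in mind.
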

\begin{proof} In fact, (1) follows immediately from Theorems~\ref{thm123} and ~\ref{mur} and (2) follows from  Lemma~\ref{fque1} and Theorem~\ref{mur}.\end{proof}

The set  $S$  is  a standard basis of  $\mathcal B_{q,l}$  in the sense of \cite[(1.2.1)]{DR}. It    may not be a cellular basis of $\mathcal B_{q,l}$ in the sense of \cite{GL} since we have  not found an anti-involution of $\mathcal B_{q,l}$ sending $C_{(w,\s),(v,\t)}^\lambda$ to $C_{(v,\t),(w,\s)}^\lambda$.

 \begin{Cor}Let  $\mathcal B_{q, l}$ be  the periplectic Brauer algebra over an algebraically closed field of characteristic not $2$. Then  $\mathcal B_{q, l}$ is a quasi-hereditary algebra if and only if $e>l$ and $l$ is odd.\end{Cor}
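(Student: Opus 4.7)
The approach relies on the standardly based structure of $\mathcal B_{q,l}$ established in Theorem~\ref{sstan} and the classification of simple modules in Theorem~\ref{simsi}. The key principle is that for a standardly based algebra, quasi-heredity is equivalent to the coincidence of the indexing set of cell modules with the indexing set of simple modules, together with each cell module having a simple head and a unitriangular decomposition matrix.

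First, I would do the bookkeeping on labels. By Theorem~\ref{sstan}, the cell (= standard) modules of $\mathcal B_{q,l}$ are indexed by $\Sigma^+(l)=\bigcup_{0\le f\le \lfloor l/2\rfloor}\Lambda^+(l-2f)$, whereas by Theorem~\ref{simsi} the irreducible $\mathcal B_{q,l}$-modules are indexed by $\Sigma^+_e(l)$ (removing $\emptyset$ if $l$ is even). Since $\Lambda^+_e(r)\subsetneq \Lambda^+(r)$ precisely when $r\ge e$ (the partition $(e)$ is never $e$-restricted), and the maximum value of $l-2f$ is $l$, one has $\Sigma^+_e(l)=\Sigma^+(l)$ if and only if $e>l$. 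Similarly, $\emptyset\in\Sigma^+(l)$ iff $l=2f$ for some admissible $f$, i.e., iff $l$ is even. Combining these, the indexing sets match if and only if $e>l$ and $l$ is odd.

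For necessity, note that a quasi-hereditary algebra has exactly as many standard modules as simple modules. If either $e\le l$ or $l$ is even, then by the paragraph above the number of cell modules of $\mathcal B_{q,l}$ strictly exceeds the number of simple modules, contradicting quasi-heredity.

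For sufficiency, assume $e>l$ and $l$ is odd. Then each $H_{l-2f}$ is semisimple, so the $H_{l-2f}$-cell modules $S^\lambda$ are simple ($S^\lambda=D^\lambda$) for all $\lambda\in\Lambda^+(l-2f)=\Lambda^+_e(l-2f)$. I would then use the filtration $\mathcal B_{q,l}^{\rhd\lambda}\subset\mathcal B_{q,l}$ from Theorem~\ref{sstan} to produce a chain of two-sided ideals of $\mathcal B_{q,l}$ whose successive quotients are generated (as $(\mathcal B_{q,l},\mathcal B_{q,l})$-bimodules) by the cells at a single label $\lambda\in\Sigma^+(l)$. The key point is that each such quotient is, up to base change from $H_{l-2f}$ to $\mathcal B_{q,l}$, a matrix block coming from the simple $H_{l-2f}$-module $S^\lambda$; semisimplicity of $H_{l-2f}$ forces the endomorphism algebra of each cell module to be $\Bbbk$, giving a heredity ideal. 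Finally I would invoke Corollary~\ref{blmul}(3), which supplies the BGG-type reciprocity $(P(\lambda):\Delta(\mu))=[\Delta(\mu^t):L(\lambda^t)]$ and confirms that every projective indecomposable has a $\Delta$-filtration with the correct multiplicities, completing the verification of the Cline--Parshall--Scott axioms.

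The main obstacle is the sufficiency direction: Theorem~\ref{strdcdcj}(4) gives the upper finite highest weight property only in the generic case $e=\infty$, while we need it under the weaker hypothesis $e>l$. Consequently, rather than quoting that theorem I must argue directly that, for \emph{this} $l$, each cell module of $\mathcal B_{q,l}$ has a simple head and that the decomposition matrix with respect to $\unlhd$ is unitriangular. This uses the semisimplicity of every $H_{l-2f}$ with $l-2f\le l<e$ (so that the ``anchor'' Hecke algebra pieces cause no obstruction), together with the triangular multiplication rules of Theorem~\ref{sstan}(2). Once these are in place, both the $\Delta$-filtration of projectives and the reciprocity from Corollary~\ref{blmul}(3) are available, and quasi-heredity follows.
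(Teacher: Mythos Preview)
Your approach is essentially the same as the paper's: both hinge on the Du--Rui criterion that a standardly based algebra over a field is quasi-hereditary if and only if it is \emph{full} (this is your ``key principle''; the paper cites it as \cite[Theorem~3.2.1]{DR}). Your bookkeeping on when the simple-module labels coincide with $\Sigma^+(l)$ is correct and matches the paper's use of Theorem~\ref{simsi}.

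Two comments. First, your necessity paragraph is phrased via the general fact that a quasi-hereditary algebra has as many standard modules as simples, and then compares this with the number of \emph{cell} modules. Taken literally this has a gap: quasi-heredity is a property of the algebra, and there is no a priori reason a hypothetical heredity chain must have the cell modules $C(\lambda)$ as its standard modules. What actually closes this gap is precisely the Du--Rui theorem (your ``key principle'' from the opening paragraph), which says that for a standardly based algebra the failure of fullness already rules out quasi-heredity with \emph{any} structure. You should invoke that principle directly in the necessity step rather than the general counting fact.

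Second, once the Du--Rui criterion is granted, both directions are immediate from the label comparison; the paper's proof is accordingly a two-line argument. Your sufficiency discussion---building heredity ideals by hand, checking simple heads via semisimplicity of each $H_{l-2f}$, and invoking the reciprocity of Corollary~\ref{blmul}(3)---is correct in spirit but entirely unnecessary: it re-proves the ``full $\Rightarrow$ quasi-hereditary'' direction of \cite[Theorem~3.2.1]{DR} in this special case. The extra appeal to Theorem~\ref{strdcdcj}(4) or Corollary~\ref{blmul}(3) is not needed.
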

\begin{proof} It is proved in \cite[Theorem~3.2.1]{DR} that a  standardly based algebra over a field is quasi-hereditary if and only if it is ''full'' in the sense of \cite[Definition~1.3.1]{DR}.  By Theorem~\ref{sstan}, $\mathcal B_{q, l}$   is quasi-hereditary if and only if the simple $\mathcal B_{q, l}$-modules are parameterized by $\Sigma^+(l)$. Now, the result follows from Theorem~\ref{simsi}.\end{proof}

 For any $\lambda\in \Sigma^+(l)$, let $C(\lambda)$ be the (right) standard module of $\mathcal B_{q,l}$ with respect to the standard basis in  Theorem~\ref{sstan}.  Thanks  to \cite[Definition~2.1.2]{DR},  up to an isomorphism, $C(\lambda)$  can be considered as  the free $\Bbbk$-module with basis
$\{E^f x_\lambda T_{d(\t)}T_v +\mathcal B_{q, l}^{\rhd \lambda}\mid  (v, \t)\in I(\lambda)\} $.
The following result establishes a relationship between $\Delta(S^\lambda)$ in $A$-lfdmod and  $C(\lambda)$.

\begin{Lemma}\label{iso321}  Suppose $\Bbbk$ is an algebraically closed field with characteristic not $2$ and $\lambda\in \Sigma^+(l)$. \begin{itemize} \item [(1)] As right $\mathcal B_{q,l}$-modules, $ C(\lambda)\cong \Delta(S^\lambda)1_l$.
\item[(2)] As right $H_l$-modules, $\Delta(S^\lambda)1_l\cong \text{Ind}_{H_{2f}\otimes \tilde H_{l-2f}}^{H_l} \Hom_{\mathcal B}(2f, 0) \boxtimes S^\lambda$.\end{itemize}
\end{Lemma}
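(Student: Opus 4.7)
The plan is to prove (1) by constructing an explicit right $\mathcal B_{q,l}$-module isomorphism $\Phi:\Delta(S^\lambda)1_l \to C(\lambda)$ via the triangular decomposition (Lemma~\ref{triangula}), and then to deduce (2) from this construction.

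First, Lemma~\ref{triangula} yields that the multiplication $A^\sharp \otimes_{\mathbb K} A^+ \to A$ is a $\Bbbk$-space isomorphism, exhibiting $A$ as a free left $A^\sharp$-module. Since $S^\lambda = S^\lambda 1_{l-2f}$, this gives
\[\Delta(S^\lambda)1_l = S^\lambda\otimes_{A^\sharp}A\,1_l \cong S^\lambda\otimes_\Bbbk (1_{l-2f}A^+1_l).\]
Writing $E^f = \iota_f \pi_f$ for the evident cup-cap factorization -- $\iota_f \in 1_l A^- 1_{l-2f}$ the $f$ cups at positions $(2k-1,2k)$ and $\pi_f \in 1_{l-2f} A^+ 1_l$ the corresponding $f$ caps -- I claim $\{\pi_f T_v \mid v \in D_{f,l}\}$ is a $\Bbbk$-basis of $1_{l-2f}A^+1_l$. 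Indeed, pre-composing with $\iota_f$ carries this set to $\{E^f T_v\}\subseteq\mathcal B_{q,l}$, a sub-basis of the basis from Theorem~\ref{thm123}(1), and pre-composition by $\iota_f$ is injective on morphism spaces by Lemma~\ref{tangle equa}; the cardinality matches the dimension of $1_{l-2f}A^+1_l$ counted directly via Lemma~\ref{localsa}.

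Now define $\Phi:\Delta(S^\lambda)1_l \to C(\lambda)$ on bases by
\[\Phi(x_\t \otimes \pi_f T_v) = E^f x_\lambda T_{d(\t)} T_v + \mathcal B_{q,l}^{\rhd\lambda},\]
which is a $\Bbbk$-linear bijection by Theorem~\ref{sstan}(1). For right $\mathcal B_{q,l}$-equivariance, given $a \in \mathcal B_{q,l}$ I would decompose $\pi_f T_v \cdot a \in 1_{l-2f} A 1_l$ via the triangular iso as $\sum_i h_i b_i$ with $h_i \in 1_{l-2f} A^\sharp 1_{s_i}$ and $b_i \in 1_{s_i} A^+ 1_l$. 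Terms with $s_i < l-2f$ force $h_i$ into a strictly non-trivial $A^-$-part of $A^\sharp$, whose action on $S^\lambda$ vanishes through the defining quotient $A^\sharp \twoheadrightarrow A^\circ$; correspondingly, on the target side $\iota_f \cdot (\,\cdot\,) \cdot b_i$ lies in $\mathcal B^{f'}_{q,l}$ for $f'>f$, hence inside $\mathcal B_{q,l}^{\rhd\lambda}$ by the definition of $\rhd$ on $\Sigma^+(l)$. Terms with $s_i = l-2f$ have $h_i \in \tilde H_{l-2f}$ and $b_i = \pi_f T_{v_i}$, and the identity $\iota_f \cdot (x_\lambda T_{d(\t)} h_i)' \cdot \pi_f T_{v_i} = E^f\, x_\lambda T_{d(\t)} h_i T_{v_i}$, arising from $\iota_f\pi_f = E^f$ together with the commutation of $E^f$ with the image of $\tilde H_{l-2f}$, matches source and target modulo $\mathcal B_{q,l}^{\rhd\lambda}$.

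For (2), the construction in (1) shows $\Delta(S^\lambda)1_l \cong S^\lambda \otimes_\Bbbk (1_{l-2f}A^+1_l)$ as right $H_l$-modules. A direct graphical factorization then yields a right $H_l$-module isomorphism $1_{l-2f}A^+1_l \cong \Hom_{\mathcal B}(2f,0) \otimes_{H_{2f}} H_l$, sending $\pi'\cdot T_v \mapsto \pi' \otimes T_v$ with $H_{2f}$ acting on $\Hom_{\mathcal B}(2f,0)$ by permuting the bottom points of the cap diagram. Using that $H_{2f}$ and $\tilde H_{l-2f}$ commute inside $H_l$ via the Young-subgroup inclusion, and absorbing the $\tilde H_{l-2f}$-subaction on $S^\lambda$ through the transfer obtained from the triangular decomposition, one rearranges to
\[\Delta(S^\lambda)1_l \cong (\Hom_{\mathcal B}(2f,0) \otimes_\Bbbk S^\lambda) \otimes_{H_{2f}\otimes\tilde H_{l-2f}} H_l = \Ind_{H_{2f}\otimes\tilde H_{l-2f}}^{H_l}(\Hom_{\mathcal B}(2f,0) \boxtimes S^\lambda).\]
The main obstacle is the equivariance check in (1): it requires careful tracking of the triangular decomposition of $\pi_f T_v \cdot a$ and aligning the vanishing of the ``extra cup'' terms on both sides, via the trivial $A^-$-action on $S^\lambda$ on the source and the $\rhd$-filtration of $\mathcal B_{q,l}$ on the target.
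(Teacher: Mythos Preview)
Your proposal is correct and follows essentially the same route as the paper: your $\pi_f$ is the paper's $\tilde E^f=\lcap^{\otimes f}\otimes 1_{l-2f}$, your map $\Phi$ is the inverse of the paper's $\phi$ in \eqref{zzz1}, and for part~(2) the paper uses the same factorisation via the bijection $D_{f,l}\cong D_{f,2f}\times\mathcal D_{2f,l-2f}$. The only notable difference is in the equivariance check for~(1): rather than decomposing $\pi_f T_v\cdot a$ through the triangular decomposition as you do, the paper observes directly (via Corollary~\ref{jdijss1}) that both $E^f\mathcal B_{q,l}$ and $\tilde E^f\mathcal B_{q,l}$ are left $\mathcal B_l(f)$-modules with parallel generating sets, and that modulo the ``extra cap'' ideals $\mathcal B_{q,l}^{f+1}$ and $\tilde I_{f+1}$ the acting algebra reduces to $\tilde H_{l-2f}$ on both sides---this packages your vanishing argument for the $s_i<l-2f$ terms into a single structural statement.
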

\begin{proof} Thanks to Theorem~\ref{tria} and \eqref{func},
 $\Delta(S^\lambda)1_l$ has   basis  \begin{equation}\label{basis322} \{x_\lambda T_{d(\t)}\otimes \tilde E^f T_v \mid \t\in \Std(\lambda), v\in D_{f,l}\}\end{equation}
where $\tilde E^f = \lcap^{\otimes f}\otimes 1_{l-2f}$. Obviously, there is a $\Bbbk$-linear isomorphism $\phi:   C(\lambda)\rightarrow  \Delta(S^\lambda)1_l$ such that
 \begin{equation} \label{zzz1} \phi (E^f x_\lambda T_{d(
\t)} T_v +\mathcal B_{q,l}^{\rhd \lambda})=x_\lambda T_{d(\t)}\otimes \tilde E^f T_v.\end{equation}
 Since  $\tilde E^f=a_f E^f$, where $a_f=1_1\otimes \lcap^{\otimes f}\otimes 1_{l-2f-1}$, by Theorem~\ref{thm123}(2)  and Corollary~\ref{jdijss1}, both
$E^f \mathcal B_{q, l}$ and $\tilde E^f \mathcal B_{q, l}$ are left $\mathcal B_{l}(f)$-module spanned by
$\{E^f T_d\mid d\in D_{f,l}\}$ and $\{\tilde E^f T_d\mid d\in D_{ f,l}\}$, respectively.
Recall the two-sided ideal $\mathcal B_{q, l}^{f+1}$ in \eqref{2sidei}.
If we consider $E^f \mathcal B_{q, l}$ (resp., $\tilde E^f \mathcal B_{q, l}$) up to $\mathcal B_{q, l}^{f+1}$ (resp., $\tilde I_{f+1} $, the subspace of $1_{l-2f}A1_l$ spanned by tangle diagrams with at least $f+1$ cups), we can use Hecke algebra $\tilde H_{l-2f}$ to replace $\mathcal B_{l}(f)$. Consequently,   the $\Bbbk$-linear isomorphism $\phi$ in \eqref{zzz1} is a right $\mathcal B_{q, l}$-homomorphism, proving (1).

Obviously, $\text{Ind}_{H_{2f}\otimes \tilde H_{l-2f}}^{H_l} \Hom_{\mathcal B}(2f, 0) \boxtimes S^\lambda$ has basis $$\{ ( \lcap^{\otimes f}T_{d_1}\otimes x_\t)\otimes T_{d_2}\mid (\t, d_1, d_2)\in \Std(\lambda)\times D_{f,2f}\times \mathcal D_{2f,l-2f} \}$$
 where $ \mathcal D_{2f, l-2f}$ is the distinguished right coset representatives of   $\mathfrak S_{2f}\times \mathfrak S_{l-2f}$ in $\mathfrak S_l$. There is a bijection between $D_{f, l}$ and
 $D_{f,2f}\times  \mathcal D_{2f,l-2f}$ such that any $d\in D_{f, l}$ can  uniquely be written as $d_1d_2$ such that $(d_1, d_2)\in D_{f,2f}\times  \mathcal D_{2f,l-2f} $. Consequently, there is a $\Bbbk$-linear
 isomorphism $\phi:  \Delta(S^\lambda)1_l\rightarrow   \text{Ind}_{H_{2f}\otimes \tilde H_{l-2f}}^{H_l} \Hom_{\mathcal B}(2f, 0) \boxtimes S^\lambda$  such that
 \begin{equation}\label{rrr321} \phi(x_\t\otimes \tilde E^f T_d)=  (\lcap^{\otimes f}T_{d_1} \otimes x_\t)\otimes T_{d_2}\end{equation} where
 $d=d_1d_2$ and $(d_1, d_2)\in D_{f,2f}\times  \mathcal D_{2f,l-2f} $. Finally, it follows from Corollary~\ref{jdijss1} that  $\phi$ is a right $H_l$-homomorphism, proving (2).
 \end{proof}

\begin{Cor}\label{ijres2} Suppose  $\lambda\in\Lambda^+(l-2)$ and $e>l$.
As right $H_l$-modules, $C(\lambda)\cong \oplus _{\mu} S^\mu $
where $\mu\in \Lambda^+(l)$ obtained from  $\lambda$ by  adding two boxes which are  not in the same row.
\end{Cor}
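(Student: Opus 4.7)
The plan is to reduce the statement to a classical branching calculation for the Hecke algebra $H_l$. Taking $f=1$ in Lemma~\ref{iso321}(2) already gives an isomorphism of right $H_l$-modules
\[
C(\lambda) \;\cong\; \mathrm{Ind}^{H_l}_{H_2\otimes \tilde H_{l-2}}\bigl(\Hom_{\mathcal B}(2,0)\boxtimes S^\lambda\bigr),
\]
so it suffices to (i) identify the right $H_2$-module $\Hom_{\mathcal B}(2,0)$, and (ii) decompose the resulting induced module into Specht modules.

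For (i), Theorem~\ref{main1} shows that $\Hom_{\mathcal B}(2,0)$ is one-dimensional, spanned by $\lcap$. The braid relation~(R1) combined with Theorem~\ref{thm123}(2) identifies the crossing appearing in Lemma~\ref{basicrel}(3) (the one in which the ``right'' strand passes over the ``left'') with the image of $T_1^{-1}$. Hence Lemma~\ref{basicrel}(3) reads $\lcap\cdot T_1^{-1} = -q\,\lcap$, equivalently $\lcap\cdot T_1 = -q^{-1}\lcap$. Since $-q^{-1}$ is precisely the sign eigenvalue of $(T_1-q)(T_1+q^{-1})=0$, it follows that $\Hom_{\mathcal B}(2,0)\cong S^{(1,1)}$ as a right $H_2$-module.

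For (ii), the hypothesis $e>l$ makes $H_l$ semisimple, so induction is exact and carries Specht modules to direct sums of Specht modules. The Pieri rule for the generic Hecke algebra (equivalently, the Littlewood--Richardson rule applied to the factor $S^{(1,1)}$) then yields
\[
\mathrm{Ind}^{H_l}_{H_2\otimes \tilde H_{l-2}}\bigl(S^{(1,1)}\boxtimes S^\lambda\bigr) \;\cong\; \bigoplus_\mu S^\mu,
\]
summed over partitions $\mu\in\Lambda^+(l)$ obtained from $\lambda$ by adding a vertical strip of length two, i.e.\ two addable nodes lying in distinct rows. This matches the statement of the corollary exactly. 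The only delicate point is step~(i): the orientation of the crossing and the super-interchange must be tracked with care to be certain that the cap transforms via the sign character rather than the trivial character of $H_2$.
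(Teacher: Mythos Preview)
Your proof is correct and follows essentially the same approach as the paper: invoke Lemma~\ref{iso321}(2) with $f=1$, identify $\Hom_{\mathcal B}(2,0)\cong S^{(1,1)}$ via Lemma~\ref{basicrel}(3), and then apply the Littlewood--Richardson (Pieri) rule in the semisimple regime $e>l$. Your step~(i) is actually more explicit than the paper's, which simply cites Lemma~\ref{basicrel}(3) together with the skein relation to obtain the sign character; your careful identification of the crossing in Lemma~\ref{basicrel}(3) with $T_1^{-1}$ via Theorem~\ref{thm123}(2) and (R1) is correct and leads to the same conclusion $\lcap\cdot T_1=-q^{-1}\lcap$.
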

\begin{proof}Suppose $f=1$. Thanks to Lemma~\ref{iso321}, we have right $H_l$-isomorphism
\begin{equation} \label{LRC} C(\lambda)\cong \text{Ind}_{H_{2}\otimes \tilde H_{l-2}}^{H_l} \Hom_{\mathcal B}(2,0) \boxtimes S^{\lambda}.\end{equation}  However, by Lemma~\ref{basicrel}(3) and Definition~\ref{defn:qBrauer}(R2),  $\Hom_{\mathcal B}(2,0)\cong S^{(1,1)}$ as right  $H_2$-modules.  Now the result follows from Littlewood-Richardson rule for semi-simple Hecke algebras. In this case, all Littlewood-Richardson coefficients appearing on the decomposition of RHS of \eqref{LRC}  is $1$.
\end{proof}

 We are going to construct  a $\mathcal B_{q,l-1}$-filtration of $C(\lambda)$. For any  $\lambda\in \Sigma^+(l)$,
let \begin{equation}\label{arrowlambda}
\mathcal{RA}(\lambda)=  \Sigma^+(l-1)\cap (\mathcal A_\lambda\cup \mathcal R_\lambda).\end{equation}


\begin{Defn}\label{deofymulambda}
 Suppose $\lambda\in\Lambda^+(l-2f)$ and $\mu\in \mathcal{RA}(\lambda)$.
\begin{itemize}
\item [(1)] If $\mu=\lambda\setminus p $ and $p=(k,\lambda_k)$,
let  $y_{\mu}^\lambda= E^fx_\lambda T_{a_k,l}$, where $a_k=2f+\sum_{i=1}^k\lambda_i$.
\item [(2)] If $\mu=\lambda\cup p $   and $p=(k,\lambda_k+1)$, let $y_{\mu}^\lambda=E_{2f-1}T_{l,2f}^{-1}T_{b_k,2f-1}^{-1}E^{f-1}x_{\mu} $, where $b_k=2f-1+\sum_{i=1}^k\lambda_i$.
\end{itemize}
\end{Defn}
 Definition~\ref{deofymulambda}(2) happens only if $f>0$.
Thanks to Definition~\ref{defn:qBrauer},  we rewrite  $y_{\mu}^\lambda$ as follows:
\begin{equation}\label{symulambda}
y_{\mu}^\lambda=\begin{cases}
                        \sum_{i=a_{k-1}+1}^{a_k}q^{a_k-i}T_{i,l}E^f x_\mu & \hbox{if $\mu=\lambda\setminus p$,} \\
                        E^fx_\lambda T_{l,2f}^{-1}T_{b_k,2f-1}^{-1}\sum _{i=b_{k-1}+1}^{b_k}q^{b_k-i}T_{b_k,i} & \hbox{if $\mu=\lambda\cup p $.}\\
                        \end{cases}
\end{equation}

For any $\lambda\in\Lambda^+(l-2f)$  , both $\mathcal R_\lambda$ and $\mathcal A_\lambda$ are finite sets.
There are two positive integers $a$ and $m$ such that  $\mathcal R_\lambda=\{\mu^{(1)},\mu^{(2)},\ldots, \mu^{(a)}\}$, $\mathcal A _\lambda=\{\mu^{(a+1)},\mu^{(a+2)},\ldots, \mu^{(m)}\}$ and
$$\mathcal{RA}(\lambda)= \left\{
                           \begin{array}{ll}
                             \mathcal R_\lambda & \hbox{ if $f=0$;} \\
                            \mathcal R_\lambda \cup\mathcal A_\lambda & \hbox{ if $f>0$.}
                           \end{array}
                         \right.
$$
Moreover, we can arrange elements in $\mathcal{RA}(\lambda)$ such that
 \begin{equation}\label{ppp11} \mu^{(1)} \rhd  \mu^{(2)}\rhd \ldots \rhd \mu^{(a)}\rhd\mu^{(a+1)}\rhd  \mu^{(a+2)}\rhd \ldots \rhd \mu^{(m)}
 \end{equation}
with respect the partial order $\rhd$ on $\Sigma^+(l-1)$.
For $1\leq k\leq m$, define \begin{equation}\label{nkk} N_k=\sum_{j=1}^ky_{\mu^{(j)}}^\lambda \mathcal B_{q,l-1} + \mathcal B_{q,l}^{\rhd\lambda}\end{equation}
 where $y_{\mu^{(j)}}^\lambda$'s are given in \eqref{symulambda}. Then   $N_k$  is  a  right $\mathcal B_{q,l-1}$-submodule of $C(\lambda)$. When $k>a$, $N_k$  is  defined only if $f>0$.

\begin{Lemma}\label{fiels1}As $\mathcal B_{q,l-1}$-modules,  $N_k/N_{k-1}\cong C(\mu^{(k)}) $,  $1\leq k\leq a$.\end{Lemma}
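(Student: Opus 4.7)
Plan: The goal is to produce a right $\mathcal B_{q,l-1}$-module isomorphism $\psi_k\colon N_k/N_{k-1} \to C(\mu^{(k)})$ for each $k$ with $1 \le k \le a$. Using the second expression in \eqref{symulambda}, we have $y_{\mu^{(k)}}^\lambda = \bigl(\sum_{i=a_{k-1}+1}^{a_k} q^{a_k-i} T_{i,l}\bigr) E^f x_{\mu^{(k)}}$; the left factor lies outside $\mathcal B_{q,l-1}$ (every summand contains $T_{l-1}$), whereas $E^f x_{\mu^{(k)}}$ is precisely the canonical generator of the standard module $C(\mu^{(k)})$ since $\mu^{(k)} \in \Lambda^+((l-1)-2f)$. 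The natural candidate map sends $y_{\mu^{(k)}}^\lambda h + N_{k-1}$ to $E^f x_{\mu^{(k)}} h + \mathcal B_{q,l-1}^{\rhd \mu^{(k)}}$ for $h \in \mathcal B_{q,l-1}$, so the whole argument reduces to checking that this "forgetting the left $T_{i,l}$ factor" is unambiguous and compatible with the $\mathcal B_{q,l-1}$-action.

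First I would verify that $\psi_k$ is well defined, i.e., that whenever $y_{\mu^{(k)}}^\lambda h \in N_{k-1} + \mathcal B_{q,l}^{\rhd \lambda}$, we have $E^f x_{\mu^{(k)}} h \in \mathcal B_{q,l-1}^{\rhd \mu^{(k)}}$. Via Lemma~\ref{iso321}, the filtration $N_\bullet$ of $C(\lambda)$ pulls back on the Hecke factor to the cellular filtration of $S^\lambda\!\downarrow_{\tilde H_{l-2f-1}}$ in Theorem~\ref{hecbr}, since $E^f$ and the $v$-data from $D_{f,l}$ are untouched. This is the place where $k \le a$ is essential: only removing moves occur at this stage, so the cup count $f$ is preserved and the analysis reduces cleanly to the Hecke level, where the compatibility is governed by the standard basis in Theorem~\ref{mur} and the dominance order on $\mathcal R_\lambda$.

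Next I would verify $\mathcal B_{q,l-1}$-linearity and bijectivity. Linearity reduces to moving the generators $T_j, E_j$ of $\mathcal B_{q,l-1}$ (all with $j \le l-2$) past the left factor $\sum_i q^{a_k-i} T_{i,l}$; most indices commute directly, and the boundary cases $j = a_k - 1$ or $j = a_{k-1}$ produce quadratic Hecke cross terms whose Hecke shadow is strictly higher in the dominance order on $\mathcal R_\lambda$, hence lie in $N_{k-1}$ by \eqref{ppp11}. Surjectivity is immediate since $E^f x_{\mu^{(k)}}$ generates $C(\mu^{(k)})$. For injectivity I would compare $\Bbbk$-ranks: by Theorem~\ref{sstan} the rank of $C(\mu^{(k)})$ is $|D_{f,l-1}|\,|\Std(\mu^{(k)})|$, and combining \eqref{dlf} with the Hecke branching (which splits $\Std(\lambda) = \bigsqcup_j \Std(\mu^{(j)})$ according to the position of the entry $l$) shows that the successive quotients $N_k/N_{k-1}$ in the filtration of $C(\lambda)$ have exactly matching ranks.

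The main obstacle will be the combined bookkeeping in step one together with the boundary commutations in step two. Expanding $y_{\mu^{(k)}}^\lambda h$ via Theorem~\ref{mur} produces a sum indexed over pairs $(\s,\t) \in \Std(\lambda)^2$, and one must identify precisely which terms survive modulo $N_{k-1}$ and match them against the image in $C(\mu^{(k)})$. The cleanest route is to use the semistandard refinement $x_{S,\t}$ from \eqref{sst} and Lemma~\ref{matsho}, together with the canonical ordering \eqref{ppp11} on $\mathcal R_\lambda$, to ensure that all "wrong" branching contributions fall either into higher-dominance ideals $\mathcal B_{q,l}^{\rhd \lambda}$ or into strictly smaller $N_j$'s with $j<k$.
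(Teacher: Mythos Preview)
Your proposal is correct and follows essentially the same approach as the paper: define the map $y_{\mu^{(k)}}^\lambda h + N_{k-1} \mapsto E^f x_{\mu^{(k)}} h + \mathcal B_{q,l-1}^{\rhd\mu^{(k)}}$ and verify it via Corollary~\ref{jdijss1} and the Hecke branching rule (Theorem~\ref{hecbr}). Two minor simplifications: once well-definedness is established, $\mathcal B_{q,l-1}$-linearity is automatic from the form of the definition (no commutation past the $T_{i,l}$ factor is needed), and Lemma~\ref{matsho} plays no role for $k\le a$ --- the paper uses it only in the $k>a$ case (Lemma~\ref{fiels2}), whereas for $k\le a$ linear independence of $M_k$ follows directly since $y_{\mu^{(k)}}^\lambda T_{d(\u)}T_w = E^f x_\lambda T_{d(\t)} T_w$ with $\t\downarrow_{l-1}=\u$ is already a standard basis element of $C(\lambda)$.
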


\begin{proof} 
Thanks to  Corollary~\ref{jdijss1} and Theorem~\ref{hecbr},   any element in  $y_{\mu^{(k)}}^\lambda \mathcal B_{q,l-1}+N_{k-1}$ can be written as a linear combination of elements in \begin{equation}\label{bbasis} M_k:=\{y_{\mu^{(k)}}^\lambda T_{d(\u)}T_w+N_{k-1}\mid  \u\in\Std(\mu^{(k)}), w\in D_{f,l-1}\}.\end{equation} Further, $M_k$ is $\Bbbk$-linear independent since $y_{\mu^{(k)}}^\lambda T_{d(\u)}T_w+ \mathcal B_{q,l}^{\rhd \lambda}=E^fx_\lambda T_{d(\t)}T_w+\mathcal B_{q,l}^{\rhd \lambda}$ with
 $\t\downarrow_{l-1}=\u$, a basis element of $C(\lambda)$. So $N_k/N_{k-1}$ is  free over $\Bbbk$ with basis $M_k$. We have   a well-defined  $\Bbbk$-linear isomorphism  $\phi: N_k/N_{k-1}\rightarrow  C(\mu^{(k)}) $ such that
$$\phi(y_{\mu^{(k)}}^\lambda T_{d(\u)}T_w+N_{k-1})= E^f x_{\mu^{(k)}}T_{d(\u)}T_w + \mathcal B_{q,l-1}^{\rhd\mu^{(k)}},$$  for all  $\u\in\text{Std}(\mu^{(k)})$ and $w\in D_{f,l-1}$. Using Corollary~\ref{jdijss1}  and Theorem~\ref{hecbr} again, we see that $\phi$ is a right $\mathcal B_{q,l-1}$-homomorphism and hence $\phi$ is a  $\mathcal B_{q,l-1}$-isomorphism
\end{proof}
If $\lambda\in\Lambda^{+}(l)$ (i.e., $f=0$), then $C(\lambda)$ has  a standard  filtration  in Lemma~\ref{fiels1}.
From here to the end of this section, we assume  $\lambda\in \Lambda^{+}(l-2f)$ with $f>0$ and  deal with $N_k/N_{k-1}$, $a+1\leq k\leq m$.
The following result has already been given  in the proof of \cite[Corollary 5.4, Lemma~5.5]{Eng}.

\begin{Lemma}\label{tablsji}
Suppose that $\mu\in\Lambda^+(l-2f+1)$ and $\mu\rhd \mu^{(m)}$.
\begin{itemize}
\item[(1)]Suppose  $\mu\not\in \{\mu^{(a+1)}, \ldots, \mu^{(m)}\}$. Then  $\text{shape}(\s\downarrow_{l-2})\rhd \lambda$ if $(\s,\mu^{(m)}(\s)) \in \Std(\mu) \times\mathcal T(\mu,\mu^{(m)} )$.
\item[(2)] Suppose  $\mu=\mu^{(j)}=\lambda\cup (k,\lambda_k+1)$ for some  $a+1\leq j\leq m$. Let  $\s_j=\t^{\mu^{(j)}}s_{b_j,l-1}$, where $b_j=2f-1+\sum_{i=1}^k\lambda_i$.  Then   $\s_j$ is the unique standard $\mu^{(j)}$-tableau $\s$ satisfying
    $\mu^{(m)}(\s)\in \mathcal T(\mu^{(j)},\mu^{(m)} )$. In this case, $\text{shape}(\s\downarrow_{l-2})=\lambda$.
\end{itemize}
\end{Lemma}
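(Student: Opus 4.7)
My plan is to follow the combinatorial approach of \cite[Corollary~5.4, Lemma~5.5]{Eng}, which settles the analogous statement in the non-quantum periplectic Brauer setting; both parts concern only (semi)standard tableaux, so no feature of $q$ enters and the arguments transfer verbatim to $\mathcal{B}_{q,l}$. I outline the key steps below.

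First I would make $\mu^{(m)}$ explicit. By the ordering \eqref{ppp11} and the convention in the $\Sigma^+$-order that larger size is smaller in $\rhd$, $\mu^{(m)}$ is the dominance-minimal element of $\mathcal A_\lambda$, so $\mu^{(m)}=\lambda\cup p_0$ with $p_0=(k_0,\lambda_{k_0}+1)$ the bottom-most addable node of $\lambda$. Consequently the maximal entry $l-1$ of $\t^{\mu^{(m)}}$ occupies row $k_0$, precisely at the added cell.

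For part (2): a direct computation with $b_j=2f-1+\sum_{i=1}^k\lambda_i$, the value at the added node of $\t^{\mu^{(j)}}$, shows that the cycle $s_{b_j,l-1}=s_{b_j}s_{b_j+1}\cdots s_{l-2}$ pushes $b_j$ to position $l-1$. Hence $\s_j$ places the maximal entry $l-1$ at the added node $(k,\lambda_k+1)$, and removing it yields $\text{shape}(\s_j\downarrow_{l-2})=\lambda$. For uniqueness, the maximal entry of $\s$ becomes a $k_0$ in $\mu^{(m)}(\s)$, and strict column-increase together with the type constraints forces this $k_0$ into the unique cell of $\mu^{(j)}$ able to receive it, namely the added node; the positions of the remaining large entries are then pinned down successively by the same analysis, yielding $\s=\s_j$.

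For part (1): writing $q$ for the cell of $\s$ containing $l-1$, we have $\text{shape}(\s\downarrow_{l-2})=\mu\setminus q$. The hypothesis $\mu\notin\{\mu^{(a+1)},\ldots,\mu^{(m)}\}=\mathcal A_\lambda$ means $\mu\setminus q\ne\lambda$ for every removable node $q$, so we must show $\mu\setminus q\rhd\lambda$ strictly in dominance. Semistandardness of $\mu^{(m)}(\s)$ restricts the $\lambda_{k_0}+1$ copies of the label $k_0$ (the largest label appearing in $\mu^{(m)}(\s)$) to rows $\le k_0$ of $\mu$; combined with the hypothesis $\mu\rhd\mu^{(m)}$ and the fact that $\mu$ is not of the form $\lambda\cup p$, this forces $q$ (which holds the label $k_0$) into a position from which removal strictly dominates $\lambda$. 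The main obstacle is precisely this combinatorial bookkeeping: one must exhaustively exclude every placement of the $k_0$-block that would leave $\mu\setminus q\unlhd\lambda$, which is the heart of \cite[Lemma~5.5]{Eng}; I would either transcribe that argument or, since the set-up is already aligned, invoke it directly.
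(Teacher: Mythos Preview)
Your proposal is correct and takes essentially the same approach as the paper: the paper does not give its own proof but simply records that ``the following result has already been given in the proof of \cite[Corollary~5.4, Lemma~5.5]{Eng}'', and you likewise defer to that reference while sketching the combinatorics. One small slip: \cite{Eng} treats Brauer and Birman--Murakami--Wenzl algebras, not the periplectic Brauer algebra, but as you note the lemma is purely tableau-combinatorial and independent of the algebra, so the argument carries over unchanged.
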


\begin{Lemma}\label{jaijai} Suppose $(\s,\mu^{(m)}(\s)) \in \Std(\mu)\times \mathcal T(\mu,\mu^{(m)} )$. Then $E^f T_{l,2f}^{-1}T_{l-1,2f-1}^{-1} T^*_{d(\s)}x_{\mu} \in  \mathcal B_{q,l}^{ \rhd \lambda}$
whenever  $\text{shape}(\s\downarrow_{l-2})\rhd \lambda$.
\end{Lemma}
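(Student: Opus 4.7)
The plan is to express $E^fT_{l,2f}^{-1}T_{l-1,2f-1}^{-1}T^*_{d(\s)}x_\mu$ as a $\Bbbk$-linear combination of the standard basis elements $C^\nu_{(*,*),(*,*)}$ of Theorem~\ref{sstan}, and then to verify that every indexing shape $\nu$ that occurs satisfies $\nu\rhd\lambda$ in the partial order on $\Sigma^+(l)$. Recall that in this convention, any partition of strictly smaller size automatically dominates $\lambda$, so all terms coming from ``deeper'' cup ideals are free.

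The first step is to normalize the prefix $E^fT_{l,2f}^{-1}T_{l-1,2f-1}^{-1}$. Iteratively applying Lemma~\ref{key1}(2) to commute the rightmost cup $E_{2f-1}$ past the permutation factors, together with the braid relations of Definition~\ref{defn:qBrauer}(1) and the commuting relation for $E$'s and far away $T$'s, I expect an identity of the form
\[
E^f T_{l,2f}^{-1}T_{l-1,2f-1}^{-1} \;=\; c\, E^{f-1}E_{l-1}\cdot h \;+\; r,
\]
where $c$ is a sign (possibly times a power of $q$), $h\in H_l$ is an explicit Hecke element, and the residual $r$ already lies in the two-sided ideal $\mathcal B_{q,l}^{f+1}$. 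By Theorem~\ref{thm123}, $\mathcal B_{q,l}^{f+1}$ expands in the standard basis with indexing partitions of size at most $l-2f-2<|\lambda|$, hence $r\in \mathcal B_{q,l}^{\rhd\lambda}$ automatically.

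Next, I would multiply by $T^*_{d(\s)}x_\mu$ and analyse $E_{l-1}h\,T^*_{d(\s)}x_\mu$. Since $h\,T^*_{d(\s)}x_\mu\in H_l$, Theorem~\ref{mur} and Lemma~\ref{matsho} allow us to expand it in the semistandard/Jucys--Murphy basis of the relevant Hecke subalgebra. The crucial point is that $E_{l-1}$ acts as a contraction at the pair of positions $(l-1,l)$: combining the identities $E_{l-1}T_{l-1}=-q^{-1}E_{l-1}$, the commutation of $E_{l-1}$ with Hecke generators $T_j$ for $|j-(l-1)|>1$, and the quasi-idempotence of $x_\mu$ over the Young subgroup $\widetilde{\mathfrak S}_\mu$, the contraction turns $E_{l-1}\cdot(h\,T^*_{d(\s)}x_\mu)$ into a linear combination of elements indexed by tableaux of shape $\s\downarrow_{l-2}$ and some Hecke-algebra tail on the remaining letters. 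This is precisely the content of Lemma~\ref{tablsji}, which identifies the ``quotient shape'' after removing the entries $l-1$ and $l$ from $\s$ under the assumption $\mu^{(m)}(\s)\in\mathcal T(\mu,\mu^{(m)})$.

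By the hypothesis $\nu:=\text{shape}(\s\downarrow_{l-2})\rhd\lambda$, every term in the resulting standard-basis expansion lies in $\mathcal B_{q,l}^{\rhd\nu}\subseteq\mathcal B_{q,l}^{\rhd\lambda}$. Combining this with the residual from the normalization step, we obtain $E^fT_{l,2f}^{-1}T_{l-1,2f-1}^{-1}T^*_{d(\s)}x_\mu\in\mathcal B_{q,l}^{\rhd\lambda}$, which is the claim.

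The main obstacle is making the contraction precise: to prove that $E_{l-1}$ applied to the Jucys--Murphy basis element $T^*_{d(\s)}x_\mu$ produces Hecke elements genuinely indexed by $\s\downarrow_{l-2}$ and not by some smaller shape. The combinatorial heart---that the cup at $(l-1,l)$ extracts exactly the truncated tableau $\s\downarrow_{l-2}$---is furnished by Lemma~\ref{tablsji}, whose classical analogue was already established by Enge in \cite{Eng}. Transporting the argument to the present quantum setting is then mostly a matter of careful bookkeeping for the powers of $q$ and $q-q^{-1}$ appearing when using Lemma~\ref{key1} and Definition~\ref{defn:qBrauer} in place of their non-quantum counterparts.
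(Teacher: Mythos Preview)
The paper's own proof is a single sentence: the argument in \cite[Lemma~5.3]{Eng} uses only the braid relations of the Hecke algebra, and since these hold identically in $\mathcal B_{q,l}$, the proof transfers verbatim. Your proposal takes a substantially different route and carries genuine gaps.

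First, the normalization identity $E^fT_{l,2f}^{-1}T_{l-1,2f-1}^{-1}=c\,E^{f-1}E_{l-1}h+r$ with $r\in\mathcal B_{q,l}^{f+1}$ is only conjectured. Iterating Lemma~\ref{key1}(1) does push the cup $E_{2f-1}$ rightward, but the interaction with the two long products $T_{l,2f}^{-1}$ and $T_{l-1,2f-1}^{-1}$ is not of the clean form $E_iT_{i+1}^{-1}T_i^{-1}$; along the way one picks up repeated factors whose expansion via the quadratic relation produces side terms that are not visibly in $\mathcal B_{q,l}^{f+1}$. The claimed shape of the remainder $r$ therefore needs real justification.

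Second, and more seriously, you identify the contraction step as the ``main obstacle'' and leave it unresolved. Invoking Lemma~\ref{tablsji} does not close this gap: that lemma is a purely combinatorial classification of which standard $\mu$-tableaux $\s$ have $\mu^{(m)}(\s)$ semistandard; it says nothing about how $E_{l-1}$ acts on $hT^*_{d(\s)}x_\mu$ inside $\mathcal B_{q,l}$. Note also that $x_\mu$ lives in the Hecke algebra on $\{2f-1,\ldots,l-1\}$, so position $l$ (one end of your cup $E_{l-1}$) is outside its support, making the ``contraction extracts $\s\downarrow_{l-2}$'' heuristic far from immediate.

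The direct route, following Enyang, avoids moving cups altogether. One works entirely with Hecke-algebra braid relations to rewrite $T_{l,2f}^{-1}T_{l-1,2f-1}^{-1}T^*_{d(\s)}x_\mu$ so that a factor $x_\nu$ (appropriately shifted to positions $\{2f+1,\ldots,l\}$) appears, where $\nu=\text{shape}(\s\downarrow_{l-2})\rhd\lambda$; membership in $\mathcal B_{q,l}^{\rhd\lambda}$ then follows from the standard basis description in Theorem~\ref{sstan}. This is both shorter and avoids the unverified steps in your outline.
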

\begin{proof} The arguments in the proof \cite[Lemma~5.3]{Eng}  depends only on  the braid relations for the  Hecke algebras. Therefore, one can imitate arguments there to verify our current result. We leave details to the reader. \end{proof}

\begin{Lemma}\label{spans}As $\mathcal B_{q, l-1}$-modules, $C(\lambda)=N_m$ where $N_m$ is given in \eqref{nkk}.
\end{Lemma}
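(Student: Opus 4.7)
The strategy is to show directly that every basis element $E^f x_\lambda T_{d(\t)} T_v + \mathcal B_{q,l}^{\rhd\lambda}$ of $C(\lambda)$ (indexed by $\t \in \Std(\lambda)$ and $v \in D_{f,l}$) already lies in $N_m$. I split the argument according to whether $v \in D_{f,l-1}$ or $v \in D_{f,l} \setminus D_{f,l-1}$. An element $v \in D_{f,l}\setminus D_{f,l-1}$ is precisely one whose reduced expression has $i_f = l$, i.e.\ one of its arcs touches the position $l$.

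For the first case, iterate Lemma~\ref{fiels1} for $k = 1, \dots, a$. Under the isomorphism $N_k/N_{k-1} \cong C(\mu^{(k)})$ the basis element $y_{\mu^{(k)}}^\lambda T_{d(\u)} T_w + N_{k-1}$ corresponds to $E^f x_\lambda T_{d(\t)}T_w + \mathcal B_{q,l}^{\rhd\lambda}$, where $\t$ is obtained from $\u \in \Std(\mu^{(k)})$ by placing $l$ at the removable node $(k,\lambda_k)$. Since every $\t \in \Std(\lambda)$ is obtained in this way for a unique $k \leq a$ and a unique $\u$, the submodule $N_a$ already contains every basis element with $v \in D_{f,l-1}$.

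For the second case, I aim to show that for each addable node $(k,\lambda_k+1)$ indexing some $\mu^{(j)}$ with $a < j \leq m$, the element $y_{\mu^{(j)}}^\lambda$ reduces modulo $N_{j-1}$ to a nonzero scalar multiple of a basis element with $v_j = s_{2f,l}\, s_{2f-1,b_k}$. Unfolding the definition $y_{\mu^{(j)}}^\lambda = E_{2f-1} T_{l,2f}^{-1} T_{b_k,2f-1}^{-1} E^{f-1} x_{\mu^{(j)}}$, I will expand $E^{f-1} x_{\mu^{(j)}}$ via the semistandard decomposition of Lemma~\ref{matsho} with respect to type $\mu^{(m)}$, and then commute $E_{2f-1} T_{l,2f}^{-1} T_{b_k,2f-1}^{-1}$ past each summand using the braid-type identities of Lemma~\ref{key1} and the defining relations in Definition~\ref{defn:qBrauer}. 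Lemma~\ref{tablsji}(2) pins down the unique standard $\mu^{(j)}$-tableau whose restriction to $l-2$ has shape $\lambda$, namely $\s_j = \t^{\mu^{(j)}} s_{b_j,l-1}$; Lemma~\ref{jaijai} (applied after the anti-involution $\sigma$) absorbs all remaining contributions into $\mathcal B_{q,l}^{\rhd\lambda}$, while any summand whose restriction has shape equal to some $\mu^{(i)}$ with $a < i < j$ is by construction already in $N_{j-1}$. Finally, sweeping through the right action of $\mathcal B_{q,l-1}$ on the distinguished leading term $y_{\mu^{(j)}}^\lambda \bmod N_{j-1}$ generates every basis element with $v \in D_{f,l}\setminus D_{f,l-1}$, so combined with the first case we conclude $N_m = C(\lambda)$.

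The main obstacle is the braid-relation bookkeeping in the second case: the commutation of $E_{2f-1} T_{l,2f}^{-1} T_{b_k,2f-1}^{-1}$ past $x_{\mu^{(j)}}$ must be performed carefully enough to confirm that, modulo $N_{j-1} + \mathcal B_{q,l}^{\rhd\lambda}$, only the contribution indexed by $\s_j$ survives. Lemmas~\ref{tablsji} and \ref{jaijai} are designed precisely for this purpose, so once this commutation is verified the remainder of the argument is formal.
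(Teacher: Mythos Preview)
Your first case is handled correctly and agrees with the paper. The second case has a genuine gap. You propose to show that each $y_{\mu^{(j)}}^\lambda$ (for $a<j\le m$) is, modulo $N_{j-1}$, a nonzero multiple of one particular basis vector of $C(\lambda)$, and then to conclude by ``sweeping through the right action of $\mathcal B_{q,l-1}$.'' But that final sweep is precisely the statement to be proved: knowing that $N_m$ contains $m-a$ specific elements and is $\mathcal B_{q,l-1}$-stable does not force it to contain all basis vectors with $v\notin D_{f,l-1}$, and nothing in your outline supplies this. The machinery you cite (Lemmas~\ref{tablsji} and~\ref{jaijai}) is exactly what the paper uses later in Lemma~\ref{fiels2}, whose proof \emph{invokes} the present lemma for the dimension count $\dim N_m=\dim C(\lambda)$; so importing that argument here would be circular. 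There is also a mismatch in your intermediate claim: by~\eqref{symulambda} the element $y_{\mu^{(j)}}^\lambda$ is built from $T_{l,2f}^{-1}T_{b_k,2f-1}^{-1}$, whereas the basis vectors of $C(\lambda)$ involve $T_v$ with $v\in D_{f,l}$; these differ by iterated skein corrections that you do not control.

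The paper's proof is short and runs in the correct direction. For $v\notin D_{f,l-1}$ one has $T_v=T_{2f,l}\,b$ with $b\in H_{l-1}$, and one checks that $E^f x_\lambda T_{d(\t)}T_v\in E^f x_\lambda T_{2f,l}\,\mathcal B_{q,l-1}+\mathcal B_{q,l}^{\rhd\lambda}$. This is then shown to lie in $\sum_{j=2f+1}^{l} E^f x_\lambda T_{j,l}\,\mathcal B_{q,l-1}+E^f x_\lambda T_{l,2f}^{-1}\,\mathcal B_{q,l-1}+\mathcal B_{q,l}^{\rhd\lambda}$, whose first summand is already in $N_a$ by the first case and whose second summand lies in $N_m$ by the form~\eqref{symulambda} of the generators $y_{\mu^{(j)}}^\lambda$ together with~\eqref{nkk}.
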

\begin{proof}
Recall that the standard module  $C(\lambda)$ has basis $\{E^f x_\lambda T_{d(\t)}T_v +\mathcal B_{q, l}^{\rhd \lambda}\mid  (v, \t)\in I(\lambda)\} $.
If $v\in D_{f,l-1}$, then \begin{equation}\label{key123}  E^f x_\lambda T_{d(\t)}T_v+\mathcal B_{q, l}^{\rhd \lambda}\in E^f x_\lambda T_{d(\t)}\mathcal B_{q,l-1} +\mathcal B_{q, l}^{\rhd \lambda}\subset N_a.\end{equation} Otherwise  $T_v=T_{2f,l}b$ for some $b\in H_{l-1}$. So,
\begin{equation}\label{key12} E^f x_\lambda T_{d(\t)}T_v+\mathcal B_{q, l}^{\rhd \lambda}\in E^f x_\lambda T_{2f,l}  \mathcal B_{q,l-1} +\mathcal B_{q, l}^{\rhd \lambda}
\subset\sum_{j=2f+1}^l  E^f x_\lambda T_{j, l} \mathcal B_{q,l-1}+ E^f x_\lambda T_{l,2f}^{-1}\mathcal B_{q,l-1}+\mathcal B_{q, l}^{\rhd \lambda}
\end{equation} By \eqref{nkk}, the last term in the RHS of  \eqref{key12} is  in $N_m$. Thanks to \eqref{key123}, the summation in the RHS of  \eqref{key12} is in $N_a\subset N_m$. This implies  $C(\lambda)\subset N_m$ and hence $C(\lambda)= N_m$ as required.
\end{proof}

\begin{Lemma}\label{fiels2} As $\mathcal B_{q,l-1}$-modules, $N_k/N_{k-1}\cong  C(\mu^{(k)}) $ for  any  $a+1\leq k\leq m$.\end{Lemma}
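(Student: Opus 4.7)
The proof should follow the template of Lemma~\ref{fiels1}, but will use Lemmas~\ref{tablsji} and \ref{jaijai} to handle the combinatorial subtleties introduced by passing from removal of a box to addition. The key observation is that, by the first form in Definition~\ref{deofymulambda}(2),
$$y^\lambda_{\mu^{(k)}} = E_{2f-1} T_{l,2f}^{-1} T_{b_k, 2f-1}^{-1} E^{f-1} x_{\mu^{(k)}},$$
so right multiplication by $\mathcal B_{q,l-1}$ acts only on the trailing factor $E^{f-1} x_{\mu^{(k)}}$, which is the canonical generator of the standard module $C(\mu^{(k)})$ inside $\mathcal B_{q,l-1}$ (recall $|\mu^{(k)}|=(l-1)-2(f-1)$, so $\mu^{(k)}$ is a partition of the right size for $\mathcal B_{q,l-1}$ with $f-1$ caps).

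First I would show that $N_k/N_{k-1}$ is spanned by
$$M'_k:=\{y^\lambda_{\mu^{(k)}}T_{d(\u)}T_w+N_{k-1}\mid \u\in\Std(\mu^{(k)}),\ w\in D_{f-1,l-1}\}.$$
By Corollary~\ref{jdijss1}, $E^{f-1}x_{\mu^{(k)}}\mathcal B_{q,l-1}$ is the left $\mathcal B_{l-1}(f-1)$-span of products $E^{f-1}(x_{\mu^{(k)}}h)T_w$ with $h\in\tilde H_{l-2f+1}$ and $w\in D_{f-1,l-1}$. Lemma~\ref{matsho} expands each $x_{\mu^{(k)}}h$ in the semistandard basis $x_{S,\t}$ with $S\in\mathcal T(\nu,\mu^{(k)})$, $\t\in\Std(\nu)$, $\nu\trianglerighteq\mu^{(k)}$. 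The contributions with $\nu=\mu^{(k)}$ yield exactly the elements of $M'_k$. For $\nu\rhd\mu^{(k)}$ I would split according to Lemma~\ref{tablsji}: contributions whose $\nu\notin\{\mu^{(a+1)},\ldots,\mu^{(m)}\}$ have $\text{shape}(\s\downarrow_{l-2})\rhd\lambda$, and Lemma~\ref{jaijai} then places them in $\mathcal B_{q,l}^{\rhd\lambda}\subset N_{k-1}$; contributions with $\nu=\mu^{(j)}$ for some $a+1\le j<k$ are governed by the unique tableau $\s_j$ of Lemma~\ref{tablsji}(2) and fall inside $y^\lambda_{\mu^{(j)}}\mathcal B_{q,l-1}\subset N_{k-1}$.

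With spanning established, define $\phi:N_k/N_{k-1}\to C(\mu^{(k)})$ by
$$\phi\bigl(y^\lambda_{\mu^{(k)}}T_{d(\u)}T_w+N_{k-1}\bigr)=E^{f-1}x_{\mu^{(k)}}T_{d(\u)}T_w+\mathcal B_{q,l-1}^{\rhd\mu^{(k)}}.$$
The codomain has basis indexed by $\Std(\mu^{(k)})\times D_{f-1,l-1}$, so once $\phi$ is shown to be well-defined it is automatically surjective, and a cardinality comparison forces linear independence of $M'_k$, making $\phi$ a $\Bbbk$-linear isomorphism. Right $\mathcal B_{q,l-1}$-linearity is then inherited from the $\mathcal B_{q,l-1}$-linearity of the standard construction of $C(\mu^{(k)})$, because the left factor $E_{2f-1}T_{l,2f}^{-1}T_{b_k,2f-1}^{-1}$ is inert under the right action; so Corollary~\ref{jdijss1} and Theorem~\ref{hecbr} applied to $E^{f-1}x_{\mu^{(k)}}\mathcal B_{q,l-1}$ transport verbatim to $y^\lambda_{\mu^{(k)}}\mathcal B_{q,l-1}$ modulo $N_{k-1}$.

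\textbf{The principal obstacle} is the well-definedness of $\phi$: one must show that any relation dragging $y^\lambda_{\mu^{(k)}}b$ into $N_{k-1}$ also drags $E^{f-1}x_{\mu^{(k)}}b$ into $\mathcal B_{q,l-1}^{\rhd\mu^{(k)}}$. This is precisely the point at which Lemmas~\ref{tablsji} and \ref{jaijai} must be applied carefully, and by analogy with the parallel result in \cite{Eng} for the ordinary periplectic Brauer algebra I expect the verification to reduce to the same combinatorial bookkeeping once the braid/skein identities of Definition~\ref{defn:qBrauer} have been used to align the relevant expressions.
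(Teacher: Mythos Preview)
Your spanning argument and the definition of $\phi$ follow the paper's template, but the way you propose to close the argument---by directly verifying that $\phi$ is well-defined---is not how the paper proceeds, and you have correctly identified this as an obstacle that you do not actually overcome.

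\textbf{The missing idea.} The paper does \emph{not} check well-definedness of $\phi$ directly. Instead, after establishing that $M_k$ spans $N_k/N_{k-1}$ (so that $\dim N_k/N_{k-1}\le |M_k|=\dim C(\mu^{(k)})$ for every $k$), it invokes a \emph{global} dimension count: by Lemma~\ref{spans} one has $N_m=C(\lambda)$, and the branching rule for BMW cell modules \cite[Theorem~2.3]{Wen} gives $\dim C(\lambda)=\sum_{j=1}^m\dim C(\mu^{(j)})$. Summing the inequalities (with equality already known for $j\le a$ by Lemma~\ref{fiels1}) forces equality for every $j$. Thus $M_k$ is a basis, $\phi$ is well-defined by fiat on basis elements, and only afterwards is $\phi$ checked to be a $\mathcal B_{q,l-1}$-homomorphism. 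You never invoke Lemma~\ref{spans} or the Wenzl dimension identity; without them your ``cardinality comparison'' has no teeth, since spanning alone gives only the upper bound.

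\textbf{Two smaller gaps in the spanning step.} First, when you expand $E^{f-1}h$ via Corollary~\ref{jdijss1} you keep only the Hecke-type summands $h_dE^{f-1}T_d$; the paper also isolates a remainder $b\in E^{f-1}\mathcal B_{q,l-1}\cap\mathcal B_{q,l}^f$ and appeals to \cite[Claim~5.7]{Eng} to show that $E_{2f-1}T_{l,2f}^{-1}T_{b_k,2f-1}^{-1}x_{\mu^{(k)}}b$ lands in $N_a$. Second, Lemmas~\ref{tablsji} and \ref{jaijai} are formulated for semistandard tableaux of type $\mu^{(m)}$ and for the prefactor $E^fT_{l,2f}^{-1}T_{l-1,2f-1}^{-1}$, not for type $\mu^{(k)}$ and the $k$-dependent prefactor $E_{2f-1}T_{l,2f}^{-1}T_{b_k,2f-1}^{-1}$. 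The paper first rewrites $y^\lambda_{\mu^{(k)}}h$ in the uniform form $E^fT_{l,2f}^{-1}T_{l-1,2f-1}^{-1}x_{\s_k,\t^{\mu^{(k)}}}h_dT_d$ and then uses the identity $x_{\s_k,\u}=q^{-\ell(d(\s_k))}x_{S_k,\u}$ with $S_k=\mu^{(m)}(\s_k)$ to pass to the semistandard basis of $M^{\mu^{(m)}}$; only in that common setting do the two lemmas apply as stated.
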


\begin{proof} First, we find a set which spans $N_k/N_{k-1}$ as $\Bbbk$-module.  Suppose  $h\in \mathcal B_{q,l-1}$. Thanks to Corollary~\ref{jdijss1},   $E^{f-1}  h+ \mathcal B_{q,l}^{f+1}$ can be written as a linear combination of elements  $ h_d E^{f-1}T_d+ \mathcal B_{q,l}^{f+1}$ and $b+\mathcal B_{q,l}^{f+1}$, where  $b\in E^{f-1}\mathcal B_{q,l-1}\cap \mathcal B_{q,l}^f$, $ d\in D_{f-1,l-1}$ and $ h_d\in \tilde H_{l-2f+1}$, the Hecke algebra associated to the symmetric group on $\{2f-1, 2f, \ldots, l-1\}$. Hence
$$y_{\mu^{(k)}}^\lambda h \equiv \sum_{ d\in D_{f-1,l-1}}y_{\mu^{(k)}}^\lambda h_dT_d + E_{2f-1}T_{l,2f}^{-1}T_{b_k,2f-1}^{-1} x_{\mu^{(k)}}b ~(\text{mod }  \mathcal B_{q,l}^{f+1}). $$
By  arguments similar to those  for the proof of \cite[Claim~5.7]{Eng} we have   $$ E_{2f-1}T_{l,2f}^{-1}T_{b_k,2f-1}^{-1} x_{\mu^{(k)}} b+ \mathcal B_{q,l}^{\rhd\lambda}\in N_a$$
for any $b\in E^{f-1}\mathcal B_{q,l-1}\cap \mathcal B_{q,l}^f $.  Write $x=T_{l,2f}^{-1}T_{b_k,2f-1}^{-1} x_{\mu^{(k)}}$. For $a+1\leq k\leq m$,
\begin{equation}\label{sss123}
y_{\mu^{(k)}}^\lambda h
\equiv
\sum_{ d\in D_{f-1,l-1}}E^fx h_dT_d~ \equiv \sum_{ d\in D_{f-1,l-1}}E^f T_{l,2f}^{-1}T_{l-1,2f-1}^{-1} x_{\s_k,\t^{\mu^{(k)}}} h_dT_d~\pmod {  N_a}\end{equation}
where  $\s_k\in \Std(\mu^{(k)})$  is given in Lemma~\ref{tablsji}(2).
Let  $S_k=\mu^{(m)}(\s_k)$. By Lemma~\ref{tablsji}(2), \begin{equation}\label{Pndjhdh} x_{\s_k,\u}=q^{-\ell(d(\s_k))}x_{S_k,\u }
\end{equation}
for any $\u\in \Std(\mu^{(k)})$. Rewriting  RHS of \eqref{sss123} via  \eqref{Pndjhdh} and
  Lemma~\ref{matsho}, we have
 $$
y_{\mu^{(k)}}^\lambda h
\equiv q^{-\ell(d(\s_k))}\sum_{ d\in D_{f-1,l-1}}E^f T_{l,2f}^{-1}T_{l-1,2f-1}^{-1}
\sum_{(\t, S)\in\text{Std}(\mu)\times\mathcal T(\mu, \mu^{(m)}), \mu\unrhd\mu^{(k)} }a_{S,\s} x_{S,\s} T_d
\pmod {  N_a}.$$
If $\mu\neq \mu^{(j)}$, by   Lemma~\ref{tablsji}(1) and
   Lemma~\ref{jaijai}, the corresponding terms on the RHS of the above equality  is in $\mathcal B_{q, l}^{\rhd \lambda}$. If $\mu=\mu^{(j)}$,  by Lemma~\ref{tablsji}(2),  $S= S_j$. Using \eqref{Pndjhdh} again, we see that   $N_k/N_{k-1}$ is $\Bbbk$-module spanned by
\begin{equation}\label{mkspandedd}
M_k =\{ y_{\mu^{(k)}}^\lambda T_{d(\t)} T_d+N_{k-1}\mid  \t\in \Std(\mu^{(k)}), d\in D_{f-1,l-1}\}.\end{equation}
So, $\dim N_k/N_{k-1}\leq \dim C(\mu^{(k)})=|M_k|$.
By the branching rule for the cell module of BMW algebras in \cite[Theorem~2.3]{Wen}, we have $\dim C(\lambda)=\sum _{j=1}^m \dim C(\mu^{(j)})$. By  Lemma~\ref{spans},
 $\dim N_m=\dim C(\lambda)$, forcing  $\dim N_k/N_{k-1}=\dim C(\mu^{(k)})=|M_k|$.
In particular,  $M_k$ is a basis of $N_k/N_{k-1}$ for all $1\le k\le m$.  We have a well-defined $\Bbbk$-linear isomorphism $\phi: N_k/N_{k-1}\rightarrow C(\mu^{(k)})$ for any $a+1\le k\le m$ such that
\begin{equation}\label{pppqqq} \phi(y_{\mu^{(k)}}^\lambda T_{d(\t)} T_d+N_{k-1})=E^f x_{\mu^{(k)}}T_{d(\t)} T_d+\mathcal B_{q, l-1}^{\rhd \mu^{(k)}}\end{equation}
for any  $\t\in \Std(\mu^{(k)})$ and $ d\in D_{f-1,l-1}$.  By arguments similar to those above (more explicitly, using Corollary~\ref{jdijss1} and Lemma~\ref{matsho})
   we see that  the $\Bbbk$-linear isomorphism  $\phi$ in \eqref{pppqqq}  is a right  $\mathcal B_{q,l-1}$-homomorphism.
\end{proof}

\begin{Theorem}\label{Burnch}Suppose $\lambda\in \Sigma^+(l)$.
The standard module $C(\lambda)$ has a $\mathcal B_{q,l-1}$-filtration $0\subset N_1\subset \ldots \subset N_m$ such that $N_k/N_{k-1}\cong C(\mu^{(k)})$ for $1\leq k\leq m$.
\end{Theorem}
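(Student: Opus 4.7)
The plan is to assemble the theorem directly from the three lemmas just proven, since all of the substantive work has been done. First, I would fix the enumeration $\mu^{(1)} \rhd \mu^{(2)} \rhd \cdots \rhd \mu^{(m)}$ of $\mathcal{RA}(\lambda)$ as in \eqref{ppp11}, and for $1 \le k \le m$ define $N_k$ via formula \eqref{nkk}, using the elements $y^\lambda_{\mu^{(k)}}$ from Definition~\ref{deofymulambda}. Each $N_k$ is a right $\mathcal B_{q,l-1}$-submodule of $C(\lambda)$ by construction, and the chain $N_1 \subset N_2 \subset \cdots \subset N_m$ is increasing by definition. Setting $N_0 = \mathcal B_{q,l}^{\rhd\lambda}/\mathcal B_{q,l}^{\rhd\lambda} = 0$ in $C(\lambda)$ gives the bottom of the filtration.

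Next, I would invoke Lemma~\ref{spans}, which asserts $C(\lambda) = N_m$, to conclude that the filtration exhausts the standard module. For the successive quotients, the two cases are handled by separate lemmas: if $1 \le k \le a$, so that $\mu^{(k)} \in \mathcal R_\lambda$ is obtained from $\lambda$ by removing a box, then Lemma~\ref{fiels1} supplies the desired isomorphism $N_k/N_{k-1} \cong C(\mu^{(k)})$. If instead $a+1 \le k \le m$ (which only occurs when $f > 0$), so that $\mu^{(k)} \in \mathcal A_\lambda$ is obtained from $\lambda$ by adding a box, then Lemma~\ref{fiels2} gives the same isomorphism.

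Since there is essentially no obstacle left — the hard work was absorbed into Lemmas~\ref{fiels1}, \ref{fiels2}, and \ref{spans}, with the addable-box case (Lemma~\ref{fiels2}) being the genuinely difficult one, requiring the analysis via $\s_k$, the semistandard tableaux machinery of Lemma~\ref{tablsji}, and the dimension count using the BMW branching rule — the proof reduces to a short concatenation. I would simply write: by Lemma~\ref{spans} we have $C(\lambda) = N_m$; for $1 \le k \le a$ the isomorphism $N_k/N_{k-1} \cong C(\mu^{(k)})$ follows from Lemma~\ref{fiels1}; for $a+1 \le k \le m$ it follows from Lemma~\ref{fiels2}. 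This completes the proof.
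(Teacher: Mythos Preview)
Your proposal is correct and takes essentially the same approach as the paper: the paper's proof reads simply ``The result follows immediately from Lemmas~\ref{fiels1} and \ref{fiels2},'' and your argument is a slightly more explicit version of this, additionally citing Lemma~\ref{spans} for the exhaustion $C(\lambda)=N_m$ (which in the paper is absorbed into the dimension count inside the proof of Lemma~\ref{fiels2}).
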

\begin{proof}The result follows immediately  from  Lemmas~\ref{fiels1} and \ref{fiels2}.\end{proof}

\section{Jucys-Murphy elements and Jucys-Murphy basis}

\subsection{The Jucys-Murphy basis of any standard module for $ \mathcal B_{q,l}$}
Suppose $\lambda\in \Sigma^+(l)$. Write $\mu\rightarrow \lambda$ if $\mu\in \mathcal {RA}(\lambda)$ (see \eqref{arrowlambda}).
 An up-down tableau $\mathbf t$  of type $\lambda$ is a sequence of partitions $\mathbf t=(\mathbf t_0, \mathbf t_1,\ldots,\mathbf t_l)$ such that
$\mathbf t_0=\emptyset$, $\mathbf t_i\rightarrow\mathbf t_{i+1}$, $0\leq i\leq l-1$  and  $\mathbf t_l=\lambda$.
Let $\mathscr T_l^{ud}(\lambda)$ be the set of up-down tableaux of type $\lambda$.
There is a partial order $\lhd$ on $\mathscr T_l^{up}(\lambda)$ such that   $\mathbf t\lhd\mathbf s $ if $\mathbf t_k\lhd\mathbf s_k$ for some $k$  and $\mathbf t_j=\mathbf s_j$ for all $k<j\leq l$.

\begin{Defn}\label{murr} Suppose $\mathbf t\in \mathscr T_l^{up}(\lambda)$ and  $\lambda\in \Lambda^+(l-2f)$. Let $\mathrm m_{\mathbf t_1}=1$. If $\mathrm m_{\t_{l-1}}$ is available, we define $\mathrm m_{\mathbf t_l}$ inductively as follows:
\begin{itemize}\item [(1)]
$ \mathrm m_{\mathbf t_l}= \sum_{j=a_{k-1}+1}^{a_k}T_{j,i}\mathrm m_{\mathbf t_{l-1}} $ if $ \lambda=\mu\cup p$ with $p=(k,\mu_k+1)$, and $a_i=2f+\sum_{j=1}^i \lambda_j$.
\item [(2)] $ \mathrm m_{\mathbf t_l}= E_{2f-1}T_{l,2f}^{-1}T_{b_k,2f-1}^{-1}\mathrm m_{\mathbf t_{l-1}} $ if $\lambda=\mu\setminus p$ with $p=(k,\mu_k)$, and $b_i=2f-1+\sum_{j=1}^i \lambda_j$.
\end{itemize}
\end{Defn}
Later on, we denote $\mathrm m_{\t_l}$ by $\mathrm m_\t$.
Thanks to Definition~\ref{murr},  $ \mathrm m_{\mathbf t}= E^f x_\lambda b_{\mathbf t}$ where $b_{\mathbf t}=b_{\mathbf t_l}$ which can be defined inductively as
\begin{equation}\label{bbb123}  b_{\mathbf t_l} =\begin{cases}
                        T_{a_k,l}b_{\mathbf t_{l-1}} & \hbox{ if  $ \lambda=\mu\cup p$ with $p=(k,\mu_k+1)$, } \\
                        T^{-1}_{l,2f}\sum_{j=b_{k-1}+1}^{b_k}q^{b_k-j}T^{-1}_{j,2f-1} b_{\mathbf t_{l-1}} , & \hbox{ if $\lambda=\mu\setminus p$ with $p=(k,\mu_k)$.}\\
                      \end{cases}
\end{equation}

\begin{Theorem}\label{murphyb}
Suppose that $\lambda\in \Lambda^+(l-2f)$.
\begin{itemize}
\item[(1)] $\{\mathrm m_{\mathbf t}+\mathcal B_{q,l}^{\rhd \lambda}\mid \mathbf t\in \mathscr T_l^{up}(\lambda) \}$ is a $\Bbbk$-basis of $C(\lambda)$.
\item[(2)] For any  $1\leq j\leq m$, let $M_{j}$ be the $\Bbbk$-submodule of $C(\lambda)$ spanned by $$\{ \mathrm m_{\mathbf t} + \mathcal B_{q,l}^{\rhd \lambda} \mid \mathbf t\in \mathscr T_l^{up}(\lambda), \mathbf t_{l-1}\unrhd \mu^{(j)}\}$$ where  $\mu^{(j)}$'s are given  in \eqref{ppp11}. Then $M_j$ is a $\mathcal B_{q,l-1}$-submodule of $C(\lambda)$.
\item[(3)] As $\mathcal B_{q,l-1}$-submodules, $M_j/M_{j-1} \cong  C(\mu^{(j)})$. The required isomorphism  sends $ \mathrm m_{\mathbf t}+M_{j-1}$ to $ \mathrm m_{\mathbf t\downarrow_{l-1}}+ \mathcal B_{q,l-1}^{\rhd \mu^{(j)}}$, where $\mathbf t\downarrow_{l-1}=(\mathbf t_0,\mathbf t_1,\ldots,\mathbf t_{l-1})\in  \mathscr T_{l-1}^{ud}(\mu^{(j)})$.
\item[(4)] $M_j=N_j$ for $1\leq j\leq m$, where $N_j$'s are given in \eqref{nkk}.
 \end{itemize}
\end{Theorem}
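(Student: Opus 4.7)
The plan is to prove all four assertions simultaneously by induction on $l$, the base case $l\le 1$ being trivial since $\mathcal B_{q,l}=\Bbbk$. For the inductive step I shall exploit the $\mathcal B_{q,l-1}$-filtration $0\subset N_1\subset\cdots\subset N_m=C(\lambda)$ of Theorem~\ref{Burnch}, whose successive quotients $N_j/N_{j-1}$ are identified with $C(\mu^{(j)})$ via the explicit isomorphisms $\phi_j$ of Lemmas~\ref{fiels1} and \ref{fiels2}, together with the natural bijection
\[ \mathscr T_l^{ud}(\lambda)\longleftrightarrow\bigsqcup_{j=1}^m\mathscr T_{l-1}^{ud}(\mu^{(j)}),\qquad \mathbf t\longleftrightarrow \mathbf t\downarrow_{l-1}, \]
obtained by decomposing an up-down tableau according to its penultimate entry $\mathbf t_{l-1}\in\mathcal{RA}(\lambda)$.

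The heart of the argument is the central identity
\[
  \phi_j\bigl(\mathrm m_{\mathbf t}+N_{j-1}\bigr)=\mathrm m_{\mathbf t\downarrow_{l-1}}+\mathcal B_{q,l-1}^{\rhd\mu^{(j)}}\qquad\text{whenever } \mathbf t_{l-1}=\mu^{(j)}.
\]
To verify it I work with the reformulation $\mathrm m_\mathbf t=E^f x_\lambda b_\mathbf t$ from \eqref{bbb123} together with the two expressions for $y^\lambda_{\mu^{(j)}}$ provided by Definition~\ref{deofymulambda} and \eqref{symulambda}. In the \emph{removal} case $\mu^{(j)}=\lambda\setminus p$ one has $b_\mathbf t=T_{a_k,l}b_{\mathbf t\downarrow_{l-1}}$, so $\mathrm m_\mathbf t=E^f x_\lambda T_{a_k,l}b_{\mathbf t\downarrow_{l-1}}=y^\lambda_{\mu^{(j)}}b_{\mathbf t\downarrow_{l-1}}$; in the \emph{addition} case $\mu^{(j)}=\lambda\cup p$ the recursive definition directly gives $\mathrm m_\mathbf t=E_{2f-1}T_{l,2f}^{-1}T_{b_k,2f-1}^{-1}\mathrm m_{\mathbf t\downarrow_{l-1}}=y^\lambda_{\mu^{(j)}}b_{\mathbf t\downarrow_{l-1}}$. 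In either case, applying $\phi_j$ (which by Lemmas~\ref{fiels1} and \ref{fiels2} sends $y^\lambda_{\mu^{(j)}}h+N_{j-1}$ to $E^{f'}x_{\mu^{(j)}}h+\mathcal B_{q,l-1}^{\rhd\mu^{(j)}}$ for every $h\in\mathcal B_{q,l-1}$) returns $E^{f'}x_{\mu^{(j)}}b_{\mathbf t\downarrow_{l-1}}+\mathcal B_{q,l-1}^{\rhd\mu^{(j)}}$, which by the inductive hypothesis is exactly $\mathrm m_{\mathbf t\downarrow_{l-1}}+\mathcal B_{q,l-1}^{\rhd\mu^{(j)}}$, as required.

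Once this central identity is established, the four claims follow at one stroke: pulling back the inductive Jucys-Murphy basis of each $C(\mu^{(j)})$ through $\phi_j^{-1}$ yields, together with the bijection above, a $\Bbbk$-basis $\{\mathrm m_\mathbf t+N_{j-1}\mid \mathbf t_{l-1}=\mu^{(j)}\}$ of $N_j/N_{j-1}$, and stacking these bases through the filtration produces (1); the subspace $M_j$ then coincides with $N_j$ on the nose, proving (4) and hence (2) as a tautological consequence; finally (3) is nothing other than the central identity restricted to $M_j/M_{j-1}=N_j/N_{j-1}$. The principal technical obstacle will be a careful comparison of Definition~\ref{murr}(1), whose recursive formula naturally produces a $q$-weighted sum $\sum_{j=a_{k-1}+1}^{a_k}q^{a_k-j}T_{j,l}$, with the single-term expression $T_{a_k,l}$ appearing in $b_\mathbf t$ via \eqref{bbb123}: reconciling them requires the classical Hecke-algebra identity $\sum_{j=a_{k-1}+1}^{a_k}q^{a_k-j}T_{j,a_k}\,x_{\mu^{(j)}}=x_\lambda$ together with the fact that $T_{i,l}$ commutes with $E^f$ for every $i>2f$. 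Everything beyond this is bookkeeping through the filtration of Theorem~\ref{Burnch} and the explicit form of the isomorphisms $\phi_j$.
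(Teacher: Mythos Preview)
Your proposal is correct and follows essentially the same approach as the paper: induction on $l$, invoking the explicit $\mathcal B_{q,l-1}$-isomorphisms $\phi_j$ from Lemmas~\ref{fiels1} and~\ref{fiels2} to transport the inductive Jucys--Murphy basis of each $C(\mu^{(j)})$ back through the filtration of Theorem~\ref{Burnch}, and recognising $\mathrm m_\mathbf t = y^\lambda_{\mu^{(j)}} b_{\mathbf t\downarrow_{l-1}}$ in both the removal and addition cases. The paper argues (4) by the inclusion $M_j\subset N_j$ plus a dimension count, whereas you obtain $M_j=N_j$ directly from the stacked basis; these are equivalent, and your identification of the Hecke identity needed to reconcile Definition~\ref{murr}(1) with \eqref{bbb123} is exactly the content of \eqref{symulambda}.
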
\begin{proof}Using the explicit isomorphisms in the proof of Lemma~\ref{fiels1} and Lemma~\ref{fiels2} (i.e. the classical branching rule), we immediately have that (e.g., \cite[Theorem~5.9]{Eng} for BMW algebras) $C(\lambda)$ has a basis
$$\{y_{\mu}^\lambda b_\mathbf u+ \mathcal B_{q,l}^{\rhd \lambda}\mid \mathbf u\in \mathscr T_{l-1}^{up}(\mu), \mu \rightarrow\lambda\}$$
 if $C(\mu)$ has a basis $\{E^k x_\mu b_\mathbf u+ \mathcal B_{q,l-1}^{\rhd \mu}\mid \mathbf u\in\mathscr T_{l-1}^{up}(\mu) \}$ for any $\mu\rightarrow\lambda$, where $k$ is the integer such that $\mu\in \Lambda^+(l-1-2k)$. Moreover,
$$y_{\mu^{(j)}}^\lambda b_\mathbf u+M_{j-1}\mapsto  E^k x_{\mu^{(j)}} b_\mathbf u+ \mathcal B_{q,l-1}^{\rhd \mu^{(j)}}$$
determines an isomorphism $M_j/M_{j-1}\cong C(\mu^{(j)})$ of $\mathcal B_{q,l-1}$-modules. Note that $\mathrm m_{\mathbf u}=E^k x_\mu b_\mathbf u$ for $\mathbf u\in \mathscr T_{l-1}^{ud}(\mu)$.
Now (1)--(3) follow from induction on $l$ since $ y_{\mu}^\lambda b_\mathbf u=\mathrm m_\mathbf t$ with $\mathbf t\downarrow_{l-1}=\mathbf u$ by Definition~\ref{murr}.

By  Definition~\ref{murr} we see that $ \mathrm m_\mathbf t+\mathcal B_{q,l}^{\rhd \lambda}\in N_j$ if $\mathbf t_{l-1}=\mu^{(k)}$ for $k\leq j$. So,
$ M_j\subset N_j$ for any $1\leq j\leq m$. Then (4) follows by comparing dimensions.\end{proof}

The basis in Theorem~\ref{murphyb}(1) is called the \textsf{Jucys-Murphy basis} of $C(\lambda)$. we can  give branching rule for standard modules in the category of left $\mathcal B_{q, l}$-modules and hence to give the Jucys-Murphy basis for $\mathcal B_{q, l}$. We will not give details since  we do not need them  in this paper,


\subsection{Jucys-Murphy elements of $\mathcal B_{q, l}$} For any positive integer $i, 1\le i\le l$ define
\begin{equation}\label{mure} x_i=\begin{cases} 0 &\text{if $i=1$,}\\ \sum_{j=1}^{i-1}(j,i)+q^{-1}\bar{(j,i)} & \text{if $2\leq i\leq l$,}\\
\end{cases}\end{equation}  where
$(j,i)=T_{j, i} T_{i-1, j}$, and $\bar {(j,i)}=T_{j,1}T_{i,2}E_1T_{2,i}T_{1,j}$.
By Definition~\ref{defn:qBrauer}, it is easy to verify the following equality: \begin{equation}\label{sjsssdj}
x_{i+1}=T_ix_iT_i+ T_i +q^{-1}\bar {(i,i+1)}.
\end{equation}
\begin{Lemma}\label{commuhsgxt}  For any $2\leq i\leq l$ and  $y\in \mathcal B_{q,i-1}$, $yx_{i}=x_iy$.
\end{Lemma}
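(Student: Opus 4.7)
We argue by induction on $i$. The base case $i=2$ is immediate since $\mathcal{B}_{q,1}=\Bbbk$ consists of scalars. For the inductive step, assume that $x_i$ centralizes $\mathcal{B}_{q,i-1}$. We must verify that $x_{i+1}$ commutes with every generator $T_k$ and $E_k$ of $\mathcal{B}_{q,i}$ for $1\le k\le i-1$. The key tool is the recursion
\begin{equation*}
x_{i+1}=T_ix_iT_i+T_i+q^{-1}\,\overline{(i,i+1)}
\end{equation*}
established in \eqref{sjsssdj}, which decomposes $x_{i+1}$ into a Hecke-type contribution $T_ix_iT_i+T_i$ (corresponding to the ``transposition'' summands $(j,i+1)$) and a single cup-cap correction $q^{-1}\overline{(i,i+1)}$.

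\emph{Case 1: $1\le k\le i-2$.} Since $|k-i|>1$, the relations in Definition~\ref{defn:qBrauer} give $[T_k,T_i]=[E_k,T_i]=0$, and the inductive hypothesis gives $[T_k,x_i]=[E_k,x_i]=0$. Hence $T_k$ and $E_k$ commute with $T_ix_iT_i+T_i$, and it only remains to check commutation with $\overline{(i,i+1)}=T_{i,1}T_{i+1,2}E_1T_{2,i+1}T_{1,i}$. This is achieved by sliding $T_k$ (respectively $E_k$) through the outer Hecke words using the braid relations, so that the generator is transported next to the central $E_1$; since the indices on either side of $E_1$ that meet $T_k,E_k$ after this transport are $\ge 3$, the far-commutation relations $T_jE_1=E_1T_j$ and $E_jE_1=E_1E_j$ ($j\ge 3$) in Definition~\ref{defn:qBrauer} finish the verification.

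\emph{Case 2: $k=i-1$.} This is the decisive case and is handled by direct expansion. For $T_{i-1}$, we combine the braid relation $T_{i-1}T_iT_{i-1}=T_iT_{i-1}T_i$, the quadratic relation $T_{i-1}^2=1+(q-q^{-1})T_{i-1}$, and the inductive hypothesis (note $T_{i-1}\in\mathcal{B}_{q,i-1}$, so $[T_{i-1},x_i]=0$) to rewrite $[T_{i-1},T_ix_iT_i]$. The resulting terms cancel against $[T_{i-1},T_i+q^{-1}\overline{(i,i+1)}]$ after a brief computation. For $E_{i-1}$ one uses the mixed relations in Definition~\ref{defn:qBrauer}(3)--(4) together with Lemma~\ref{key1}(4)--(6): expanding $[E_{i-1},T_ix_iT_i]$ produces a Hecke contribution (from $\sum_j(j,i)$) that does \emph{not} vanish on its own, and this is compensated precisely by the expansion of $q^{-1}[E_{i-1},\overline{(i,i+1)}]$, exactly because $\overline{(i,i+1)}$ was designed to cancel the failure of the Hecke Jucys-Murphy sum to commute with $E_{i-1}$.

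\emph{Main obstacle.} The hard part is the $E_{i-1}$ sub-case in Case~2: a considerable number of terms appear when one applies Definition~\ref{defn:qBrauer}(4) and Lemma~\ref{key1} to expand the commutators $[E_{i-1},T_ix_iT_i]$ and $[E_{i-1},\overline{(i,i+1)}]$, and the cancellation rests on a careful term-by-term accounting that exploits the specific product form of $\overline{(j,i)}=T_{j,1}T_{i,2}E_1T_{2,i}T_{1,j}$. This mirrors the analogous (and classical) commutativity verification for Jucys--Murphy elements in BMW and Brauer algebras, where the cup-cap correction plays precisely the same role.
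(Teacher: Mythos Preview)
Your Case~2 contains a genuine error: the claim ``$T_{i-1}\in\mathcal B_{q,i-1}$'' is false. By Definition~\ref{defn:qBrauer}, $\mathcal B_{q,i-1}$ is generated by $T_j,E_j$ for $1\le j\le i-2$, so $T_{i-1}\notin\mathcal B_{q,i-1}$, and the inductive hypothesis does \emph{not} give $[T_{i-1},x_i]=0$. In fact this commutator is nonzero already for $i=2$: $x_2=T_1+q^{-1}E_1$, and $T_1E_1=qE_1\ne -q^{-1}E_1=E_1T_1$. With $[T_{i-1},x_i]\ne 0$ the recursive rewriting of $[T_{i-1},T_ix_iT_i]$ you describe cannot be carried out; the same indexing problem undermines the $E_{i-1}$ sub-case. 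Separately, the ``sliding'' argument in Case~1 for $E_k$ is not valid as stated: there is no braid-type identity that transports $E_k$ cleanly through the decreasing word $T_{k+1}T_k$ occurring in $T_{i,1}$, so you cannot reach the central $E_1$ in the way you suggest.

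The paper avoids both difficulties. For the Hecke generators it does \emph{not} use induction: working directly with the expansion $x_i=\sum_{j}(j,i)+q^{-1}\sum_{j}\overline{(j,i)}$, it observes that for $k\le i-2$ the element $T_k$ commutes with each $\overline{(j,i)}$ when $j\notin\{k,k+1\}$, and commutes with the pair $\overline{(k,i)}+\overline{(k+1,i)}$. For the $E$-generators the paper checks only $E_1$, which suffices since each $E_k$ is a $T$-conjugate of $E_1$ by Lemma~\ref{key1}(1) and the conjugating $T_j$'s ($j\le k\le i-2$) already commute with $x_i$. The identity $[E_1,x_i]=0$ is then obtained from the recursion \eqref{sjsssdj} together with the key computation $E_1\,\overline{(3,4)}=\overline{(3,4)}\,E_1$, established via Lemma~\ref{key1}(5)--(6). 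Your strategy can be repaired by adopting this division of labour: handle $T_k$ directly from the definition of $x_i$, and for $E_k$ reduce to $E_1$.
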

\begin{proof} It is known that  $\sum_{j=1}^{i-1}(j,i) $ is a Jucys-Murphy element for the Hecke algebra $H_{i}$ satisfying $T_k \sum_{j=1}^{i-1}(j,i) =\sum_{j=1}^{i-1}(j,i) T_k$ for any $1\le k\le i-2$.  Further, by direct computation, we have
$$T_k(\bar {(k,i)}+\bar{(k+1,i)} )= (\bar {(k,i)}+\bar{(k+1,i)} )T_k \  \text{ and $T_k\bar{(j,i)}=\bar{(j,i)}T_k $}$$ if either $j<k$ or $j>k+1$. So $T_k  \sum_{j=1}^{i-1}q^{-1}\bar{(j,i)}= \sum_{j=1}^{i-1}q^{-1}\bar{(j,i)}T_k$, proving $T_kx_i=x_iT_k$.  When $i=1,2,3$, it is easy to verify  $E_1x_i=x_iE_1$. It is enough to assume  $i\ge 4$ when we prove $E_1x_i=x_iE_1$.  Thanks to  Lemma~\ref{key1}(5)-(6),  we have $E_1T_{3,1}T_{4,2}E_1 T_{2,4}T_{1,3}= -E_1E_3 T_{2,4}T_{1,3}=E_1E_3$,
and
$ T_{3,1}T_{4,2} E_1 T_{2,4}T_{1,3}  E_1= -T_{3,1} T_{4,2}  E_1E_3 =E_1E_3$.
So $E_1\bar {(3,4)}=\bar {(3,4)}E_1 $. By \eqref{sjsssdj},   $E_1x_4=x_4E_1$. In general, we have $\bar{ (i, i+1)}=T_{i-1}T_i \bar{(i-1,i) }T_i T_{i-1}$ and $E_1\bar{ (i, i+1)}=\bar{ (i, i+1)} E_1$ for $i\ge 4$.  Using   \eqref{sjsssdj} and induction assumption on $i-1$ yields  $E_1x_i=x_iE_1$ as required.
\end{proof}

By Lemma~\ref{commuhsgxt}, $\{x_i\mid 1\leq i\leq l\}$ generates an abelian subalgebra $L_l$ of $\mathcal B_{q,l}$. However, $\sum_{i=1}^l x_i$ may not be a central element of  $\mathcal B_{q,l}$ in general. For example $x_1+x_2$ is not central in $\mathcal B_{q,2}$.

\begin{Lemma}\label{actofxn} Suppose $2f+1\leq i\leq l-1$ and $1\leq j\leq f$.
\begin{itemize}
\item[(1)] $E^f \bar {(i,l)}\equiv 0 \pmod{ \mathcal B_{q, l}^{f+1} }$,
\item[(2)] $ E^f((2j,l)+(2j-1,l)+q^{-1}(\bar{(2j-1,l)}+\bar{(2j,l)}))=0$.
\end{itemize}
\end{Lemma}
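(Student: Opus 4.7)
The plan for both parts is to manipulate $E^{f}$ through the braid factors using three basic tools: the commutation $E_{2k-1}T_j=T_jE_{2k-1}$ whenever $|j-(2k-1)|\ge 2$, the absorption $E_{2k-1}T_{2k-1}=-q^{-1}E_{2k-1}$ from Definition~\ref{defn:qBrauer}(3) (which implies $E^{f}T_{2k-1}=-q^{-1}E^{f}$ for every $1\le k\le f$), and the collapse identities of Lemma~\ref{key1}(4)--(6), together with the unknotting relation $E_iE_{i+1}E_i=-E_i$ from Definition~\ref{defn:qBrauer}(2).

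For (1), the plan is to use Lemma~\ref{key1}(6), $E_iT_{i+1}T_{i+2}T_iT_{i+1}E_i=-E_iE_{i+2}$, to extract an extra factor $E_{2f+1}$. Since $i\ge 2f+1$ and $l\ge 2f+2$, I would first split $T_{i,1}=T_{i,2f+1}T_{2f+1,1}$ and $T_{l,2}=T_{l,2f+2}T_{2f+2,2}$, and analogously on the right side of $\bar{(i,l)}$. The outer factors (generators of index $\ge 2f+1$) commute with $E^{f}$ and can be pulled past it to the outside. After absorbing the odd-indexed $T_{2k-1}$'s ($1\le k\le f$) into $E^{f}$ via the absorption relation and applying braid moves, the inner portion rearranges to contain a subword of the form $E_{2f-1}\,T_{2f}T_{2f+1}T_{2f-1}T_{2f}\,E_{2f-1}$; Lemma~\ref{key1}(6) collapses this to $-E_{2f-1}E_{2f+1}$. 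Combined with the remaining factors of $E^{f}$, this yields $\pm E^{f+1}$ times a braid word, placing $E^{f}\bar{(i,l)}$ in the two-sided ideal $\mathcal B_{q,l}^{f+1}$.

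For (2), I would first establish the symmetries $(2j-1,l)=T_{2j-1}\,(2j,l)\,T_{2j-1}$ and $\bar{(2j,l)}=T_{2j-1}\,\bar{(2j-1,l)}\,T_{2j-1}$, which follow from the decompositions $T_{2j-1,l}=T_{2j-1}T_{2j,l}$, $T_{l-1,2j-1}=T_{l-1,2j}T_{2j-1}$, $T_{2j,1}=T_{2j-1}T_{2j-1,1}$, $T_{1,2j}=T_{1,2j-1}T_{2j-1}$. Together with $E^{f}T_{2j-1}=-q^{-1}E^{f}$ these give
\[
E^{f}\bigl[(2j-1,l)+(2j,l)+q^{-1}\bar{(2j-1,l)}+q^{-1}\bar{(2j,l)}\bigr]=\bigl[E^{f}(2j,l)+q^{-1}E^{f}\bar{(2j-1,l)}\bigr]\bigl(1-q^{-1}T_{2j-1}\bigr),
\]
reducing the claim to the key identity $E^{f}\bar{(2j-1,l)}=-q\,E^{f}(2j,l)$. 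To prove it, I would push the leftmost $E_1$ of $E^{f}$ past $T_{2j-1,1}T_{l,2}$ using the commutation with $T_k$, $k\ge 3$, reaching a configuration $(\text{outer braid})\cdot E_1 T_2 T_3 T_1 T_2 E_1 \cdot (\text{braid})$; Lemma~\ref{key1}(6) collapses the $E_1\cdots E_1$ pair to $-E_1E_3$. Iterated applications of Lemma~\ref{key1}(4), $E_iT_{i+1}E_i=-qE_i$, combined with the unknotting relation $E_iE_{i+1}E_i=-E_i$, then resolve the auxiliary $E_{2k}$ factors that arise when commuting $T_{2k}$ past $E_{2k-1}\in E^{f}$, producing the scalar $-q$ and reassembling $E^{f}(2j,l)$. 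The main obstacle is the careful bookkeeping of signs and auxiliary $E$-factors through this chain; the case $f=2$, $j=2$, $l=5$ is a template, where $E_1E_3\bar{(3,5)}=-q\,E_1E_3 T_4$ is verified via successive applications of $E_1T_2T_3T_1T_2E_1=-E_1E_3$ and $E_3T_4E_3=-qE_3$.
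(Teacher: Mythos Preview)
Your strategy for (2) is sound and in one respect neater than the paper's: the factorisation
\[
E^{f}\bigl[(2j,l)+(2j-1,l)+q^{-1}\bar{(2j-1,l)}+q^{-1}\bar{(2j,l)}\bigr]=\bigl(E^{f}(2j,l)+q^{-1}E^{f}\bar{(2j-1,l)}\bigr)(1-q^{-1}T_{2j-1})
\]
reduces everything to the single key identity $E^{f}\bar{(2j-1,l)}=-qE^{f}(2j,l)$. The paper instead checks three identities for $j=f$ (that key identity together with $E^{f}\bar{(2f,l)}=E^{f}(2f,l)T_{2f-1}$ and $E^{f}(2f-1,l)=-q^{-1}E^{f}(2f,l)T_{2f-1}$) and then passes to general $j$ by a different, simpler device: since $E^{j}$ is a factor of $E^{f}$, once $E^{j}[\cdots]=0$ is known one multiplies on the left by $E_{2j+1}\cdots E_{2f-1}$. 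This spares you from proving the key identity for $j<f$, which is precisely where your sketch becomes hand-wavy.

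For (1), there is a genuine gap. You assert that after absorbing the odd $T_{2k-1}$'s ``the inner portion rearranges to contain a subword $E_{2f-1}T_{2f}T_{2f+1}T_{2f-1}T_{2f}E_{2f-1}$'', so that Lemma~\ref{key1}(6) applies. But $E^{f}\bar{(i,l)}$ contains each $E_{2k-1}$ ($1\le k\le f$) exactly once, plus a single $E_1$ in the middle of $\bar{(i,l)}$; there is no second copy of $E_{2f-1}$ available unless you first \emph{promote} that central $E_1$ up through $E_3,E_5,\dots$. Carrying out that promotion is exactly Lemma~\ref{key1}(1): iterating $T_kT_{k+1}E_k=E_{k+1}T_kT_{k+1}$ gives $T_{2f+1,1}T_{2f+2,2}E_1=E_{2f+1}T_{2f+1,1}T_{2f+2,2}$, hence $E^{f}T_{2f+1,1}T_{2f+2,2}E_1=E^{f+1}T_{2f+1,1}T_{2f+2,2}$ in one stroke, and the paper then reduces arbitrary $i$ to $i=2f+1$ via the conjugation $\bar{(i,l)}=T_{i,2f+1}\,\bar{(2f+1,l)}\,T_{2f+1,i}$. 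Your route through Lemma~\ref{key1}(6) can be made to work, but only by iterating it $f$ times (first pair the two $E_1$'s to produce $E_3$, then pair the two $E_3$'s, and so on), which your sketch does not supply.
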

\begin{proof}  We have $E^f \bar{(2f+1,l)}=T_{l,2f+2} E^f T_{2f+1,1}T_{2f+2,2}E_1 T_{2,l}T_{1,2f+1}$. By Lemma~\ref{key1}(1) \begin{equation}\label{eee123} T_{l,2f+2} E^f T_{2f+1,1}T_{2f+2,2}E_1 T_{2,l}T_{1,2f+1}= T_{l,2f+2} E^{f+1} T_{2f+1,1}T_{2f+2,2}T_{2,l}T_{1,2f+1}\in \mathcal B_{q, l}^{f+1}.\end{equation}
This proves (1) when $i=2f+1$. In general   (1) follows from \eqref{eee123} and  $ \bar{(i,l)}= T_{i,2f+1} \bar{(2f+1,l)} T_{2f+1,i}$.
We have $E_1(1,l)=-q^{-1}E_1(2,l)T_1$,  $E_1\bar{(1,l)}= -qE_1(2,l)$ and   $E_1\bar{(2,l)} =E_1(2,l)T_1$. Via them, it is easy to verify (2) when $f=1$.
In general, one can check that $E^f\bar {(2f-1,l)}=-qE^f(2f,l)$, $E^f \bar{(2f,l)}=E^f(2f,l)T_{2f-1}$ and $ E^{f}(2f-1,l)=-q^{-1}E^f(2f,l)T_{2f-1}$.
So $$ E^f((2f,l)+(2f-1,l)+q^{-1}(\bar{(2f-1,l)}+\bar{(2f,l)}))=0.$$ This prove (2) when $j=f$.
In particular, we have  $ E^j((2j,l)+(2j-1,l)+q^{-1}(\bar{(2j-1,l)}+\bar{(2j,l)}))=0$. This implies (2) since $E^j$ is a factor of $E^f$ for any $1\le j\le f$.
\end{proof}

\begin{Lemma}\label{actofjme} Suppose $\lambda\in \Sigma^+(l)$ and $\mu^{(j)}=\lambda \setminus p_j$,
 $1\leq j\leq a$. Then $$y_{\mu^{(j)}}^\lambda x_l \equiv \frac{q^{2c(p_j)}-1}{q-q^{-1}} y_{\mu^{(j)}}^\lambda \pmod{ N_{j-1}}$$ where $N_{j-1}$ is given  in \eqref{nkk} and $c(p_j)=k-i$ if $p_j=(i, k)$.
\end{Lemma}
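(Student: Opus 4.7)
The approach is to reduce the claim to a classical Jucys--Murphy computation inside $\tilde H_{l-2f}$, the Hecke subalgebra of $\mathcal B_{q,l}$ generated by $T_{2f+1},\ldots,T_{l-1}$, and then lift the resulting triangular decomposition back into $C(\lambda)/N_{j-1}$. Two preliminary observations power everything: first, $N_{j-1}$ contains $\mathcal B^{\rhd\lambda}_{q,l}$ and in particular the two-sided ideal $\mathcal B^{f+1}_{q,l}$, because any $\mu\in\Lambda^+(l-2g)$ with $g\geq f+1$ satisfies $|\mu|<|\lambda|$ and hence $\mu\rhd\lambda$ in $(\Sigma^+(l),\unlhd)$; second, since every factor $E_s$ of $E^f=E_1E_3\cdots E_{2f-1}$ has $s\leq 2f-1$ while every generator $T_j$ of $\tilde H_{l-2f}$ has $j\geq 2f+1$, Definition~\ref{defn:qBrauer}(3) ensures $E^f$ commutes elementwise with $\tilde H_{l-2f}$. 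Writing $x_l=\sum_{r=1}^{l-1}((r,l)+q^{-1}\bar{(r,l)})$, the plan is to use Lemma~\ref{actofxn}(2) to cancel the $r\leq 2f$ terms of $E^f x_l$ in pairs, and Lemma~\ref{actofxn}(1) to push each remaining $E^f\bar{(r,l)}$ into $\mathcal B^{f+1}_{q,l}$, obtaining $E^f x_l\equiv E^f\tilde L_l\pmod{\mathcal B^{f+1}_{q,l}}$, where $\tilde L_l:=\sum_{r=2f+1}^{l-1}(r,l)$ is the classical $l$-th Jucys--Murphy element of $\tilde H_{l-2f}$. Combined with the commutation this gives
\[
y_{\mu^{(j)}}^\lambda x_l=x_\lambda T_{a,l}\cdot E^f x_l\equiv E^f x_\lambda T_{a,l}\,\tilde L_l\pmod{N_{j-1}}.
\]

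\textbf{Hecke computation and lifting.} Next I would verify that $T_{a,l}=T_aT_{a+1}\cdots T_{l-1}$ is a distinguished right coset representative of $\tilde S_\lambda$ in $\tilde S_{l-2f}$---this follows because the values $a=a_i$ and $a+1$ lie in different rows (rows $i$ and $i+1$) of $\lambda$, so no generator of $\tilde S_\lambda$ shortens the permutation $s_as_{a+1}\cdots s_{l-1}$. The associated standard tableau $\mathfrak t^{(j)}:=\mathfrak t^\lambda\cdot T_{a,l}$ has $l$ in the box $p_j=(i,\lambda_i)$, and a direct inspection shows $\mathfrak t^{(j)}\downarrow_{l-1}=\mathfrak t^{\mu^{(j)}}$, the row-reading tableau of $\mu^{(j)}$. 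Consequently, modulo $\tilde H^{\rhd\lambda}_{l-2f}$, the element $x_\lambda T_{a,l}$ represents the Murphy basis element $x_{\mathfrak t^{(j)}}$ of the cell module $S^\lambda$, and Mathas's triangularity theorem~\cite[Theorem~3.32]{Ma} yields
\[
x_\lambda T_{a,l}\,\tilde L_l\equiv \frac{q^{2c(p_j)}-1}{q-q^{-1}}\,x_\lambda T_{a,l}+\sum_{\mathfrak s\rhd\mathfrak t^{(j)}}r_\mathfrak s\,x_\lambda T_{d(\mathfrak s)}\pmod{\tilde H^{\rhd\lambda}_{l-2f}},
\]
with $c(p_j)=\lambda_i-i=k-i$. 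To lift the sum, I would argue: every $\mathfrak s\rhd\mathfrak t^{(j)}$ satisfies $\mathfrak s\downarrow_{l-1}\unrhd\mathfrak t^{\mu^{(j)}}$, but the $\unrhd$-maximality of $\mathfrak t^{\mu^{(j)}}$ in $\mathrm{Std}(\mu^{(j)})$ rules out equality (else $\mathfrak s=\mathfrak t^{(j)}$), so $\mathrm{shape}(\mathfrak s\downarrow_{l-1})=\mu^{(s)}$ for some $s<j$ in the ordering \eqref{ppp11}; each $E^f x_\lambda T_{d(\mathfrak s)}$ then represents a basis element of $N_s/N_{s-1}\cong C(\mu^{(s)})$ via Lemma~\ref{fiels1}, hence lies in $N_{j-1}$. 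Since $E^f\cdot \tilde H^{\rhd\lambda}_{l-2f}\subseteq\mathcal B^{\rhd\lambda}_{q,l}\subseteq N_{j-1}$ as well, combining with the first step completes the proof.

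\textbf{Main obstacle.} The hard part will be this last lifting step---matching the dominance order on $\mathrm{Std}(\lambda)$, which governs Murphy's triangular action of $\tilde L_l$, with the filtration order on $\mathcal R_\lambda=\{\mu^{(1)},\ldots,\mu^{(a)}\}$ from \eqref{ppp11}, which governs the $N_s$. The observation that makes the two orderings compatible is the identification $\mathfrak t^{(j)}\downarrow_{l-1}=\mathfrak t^{\mu^{(j)}}$, so that every Murphy-basis term strictly above $\mathfrak t^{(j)}$ restricts to a strictly more dominant partition on $\{2f+1,\ldots,l-1\}$ and thereby falls into an earlier part of the filtration $N_{j-1}$.
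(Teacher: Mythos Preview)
Your proposal is correct and follows essentially the same route as the paper's proof: both reduce $y_{\mu^{(j)}}^\lambda x_l$ modulo $\mathcal B_{q,l}^{f+1}$ to $E^f x_\lambda T_{a_j,l}\,\tilde L_l$ via Lemma~\ref{actofxn}, then invoke the Jucys--Murphy triangularity theorem from Mathas. The paper's proof is terser---it simply cites \cite[Theorem~3.32]{Ma} and stops---while you explicitly unpack the lifting step, checking that the dominance-order error terms $x_\lambda T_{d(\mathfrak s)}$ with $\mathfrak s\rhd\mathfrak t^{(j)}$ land in $N_{j-1}$; this is implicit in the paper via the identification of the Hecke filtration (Theorem~\ref{hecbr}) with the $N_k$ filtration established in Lemma~\ref{fiels1}.
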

\begin{proof} By Lemma~\ref{actofxn},  $y_{\mu^{(j)}}^\lambda x_l\equiv y_{\mu^{(j)}}^\lambda \sum_{i=2f+1}^{l-1}(i,l) \pmod {\mathcal B_{q, l}^{f+1}}$. Since  $y_{\mu^{(j)}}^\lambda=E^f x_\lambda T_{a_j, l}$ (see Definition~\ref{deofymulambda}(1)),  by \cite[Theorem~3.22]{Ma}, we have the formula as required~\footnote{ The  Hecke algebra $H_l$ is defined via
$(T_i-q)(T_i+1)=0$ in \cite{Ma}.     Our $q^{-1} T_i$ (resp., $q^2$) is $T_i$ (resp., $q$) in \cite{Ma}. Consequently, our $q^{-1}x_i$ is $L_i$ in \cite{Ma}.}.
\end{proof}

\begin{Lemma}\label{cumopp} Suppose $\lambda\in\Lambda^+(l-2f)$,  $2f\leq i\leq l-1$ and  $2f<j\leq l-1$.
\begin{multicols}{2}
\item[(1)] $E^fx_\lambda  T_{l,2f}^{-1}(i,l)\in { N_a} $,
\item[(2)] $E^f x_\lambda  T_{l,2f}^{-1}((2f-1,l)+q^{-1})  \in N_a $,
\item[(3)] $E^f x_\lambda  T_{l,2f}^{-1}\bar{(2f-1,l)} =0$,
\item[(4)]$E^f x_\lambda  T_{l,2f}^{-1}(\bar{(j,l)}-q^{-1}(2f-1,j))\in  N_a $,
\end{multicols}
 where  $N_a$ is given in \eqref{nkk}.
\end{Lemma}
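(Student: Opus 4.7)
The plan is to establish each of (1)--(4) by direct computation with the defining relations of $\mathcal B_{q,l}$, building on three core simplifications. First, expanding $T_{l,2f}^{-1}=T_{2f}^{-1}T_{2f+1}^{-1}\cdots T_{l-1}^{-1}$ and $T_{l,2}=T_{l-1}T_{l-2}\cdots T_2$, successive cancellations $T_j^{-1}T_j=1$ from the right yield the key identity
\begin{equation*}
T_{l,2f}^{-1}T_{l,2}=T_{2f-1,2},
\end{equation*}
and $T_{l,2f}^{-1}$ commutes with $T_{2f-1,1}$ since their indices are disjoint; together these rewrite every $T_{l,2f}^{-1}\bar{(j,l)}$-type factor in terms of elements of $H_{2f}$ and $T_{2,l}T_{1,2f-1}$. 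Second, $x_\lambda\in\widetilde H_{l-2f}$ commutes with $H_{2f}$ and hence with $E^f$, allowing us to freely move $E^f x_\lambda$-prefixes past any factor that lives in the first $2f$ strands. Third, we repeatedly invoke $E_1^2=0$, $E_iT_i=-q^{-1}E_i$, $T_iE_i=qE_i$, Lemma~\ref{key1}, and Definition~\ref{defn:qBrauer}(3)--(4).

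For (3), the cancellation above rewrites
\begin{equation*}
E^fx_\lambda T_{l,2f}^{-1}\bar{(2f-1,l)}=E^fT_{2f-1,1}T_{2f-1,2}E_1\,x_\lambda T_{2,l}T_{1,2f-1}.
\end{equation*}
The strategy is to slide the explicit $E_1$ on the right past $E_3E_5\cdots E_{2f-1}$ (which it commutes with) and then past $T_{2f-1,1}T_{2f-1,2}$ via braid relations and Lemma~\ref{key1}(4), so that it meets the leftmost $E_1$ hidden in $E^f=E_1E_3\cdots E_{2f-1}$, whereupon $E_1^2=0$ annihilates the whole expression. The sliding is performed by induction on $f$; the base $f=1$ is immediate because $T_{2f-1,1}T_{2f-1,2}$ degenerates to $1$ and $E_1 x_\lambda E_1=E_1^2 x_\lambda=0$, and for the inductive step we isolate the rightmost $E_{2f-1}$ together with its neighbouring $T_{2f-1}, T_{2f-2}$ factors.

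For (1), (2), (4), expand $(i,l)=T_{i,l}T_{l-1,i}$ and $\bar{(j,l)}=T_{j,1}T_{l,2}E_1T_{2,l}T_{1,j}$, apply the cancellation identity so that the $T_l$-factors are absorbed by $T_{l,2f}^{-1}$, and then push the remaining element through the basis $\{E^fx_\lambda T_{d(\t)}T_v+\mathcal B_{q,l}^{\rhd\lambda}\}$ of $C(\lambda)$. Each resulting summand is either of the form $E^fx_\lambda T_{a_k,l}\cdot h+\mathcal B_{q,l}^{\rhd\lambda}$ with $h\in\mathcal B_{q,l-1}$ and $a_k$ a row-end of $\lambda$ (hence in $N_a$ by definition), or has its right endpoint $i$ strictly interior to a row of $\lambda$, in which case $x_\lambda T_i=qx_\lambda$ telescopes it onto the next row-end. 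In (2), the extra $q^{-1}$ is precisely what is needed to cancel the boundary contribution at position $2f-1$ (which is not a row of $\lambda$), and in (4) the combination $\bar{(j,l)}-q^{-1}(2f-1,j)$ is engineered so that the Garnir-style obstruction produced by the $E_1$ in $\bar{(j,l)}$ (via the cancellation $T_{l,2f}^{-1}T_{l,2}=T_{2f-1,2}$) is exactly matched, modulo $\mathcal B_{q,l}^{\rhd\lambda}$, by the unwanted term coming from $q^{-1}(2f-1,j)$.

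The main obstacle is the bookkeeping in (3) and (4): one must monitor how each $T_i$ traverses every $E_{2k-1}$ in $E^f$, and verify that every stray term can be assigned either to one of the $y_{\mu^{(j)}}^\lambda\mathcal B_{q,l-1}$-summands or to $\mathcal B_{q,l}^{\rhd\lambda}$. An induction on $f$ (peeling off the rightmost $E_{2f-1}$ together with strands $2f-1,2f$) appears to be the cleanest organising principle, with the case analysis for (4) further split according to whether $j$ lies before or after the last row-end of $\lambda$.
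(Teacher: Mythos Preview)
Your proposal is a sketch rather than a proof, and several of its load-bearing steps are either wrong or not actually argued.

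First, your ``key identity'' has an off-by-one error: the telescoping gives $T_{l,2f}^{-1}T_{l,2}=T_{2f,2}$, not $T_{2f-1,2}$.  This is exactly what the paper uses in (3), writing $E^f x_\lambda T_{l,2f}^{-1}\bar{(2f-1,l)}=E^f x_\lambda T_{2f-1,1}T_{2f,2}E_1T_{2,l}T_{1,2f-1}$.  From here the paper applies Lemma~\ref{key1}(5) (not (4) as you cite) repeatedly to show this equals $(-1)^{f-1}x_\lambda E^f E_1\cdot(\text{rest})=0$.  Your proposed induction ``peeling off $E_{2f-1}$'' via Lemma~\ref{key1}(4) does not give the needed identity: the relation $E_{i+1}T_iE_{i+1}=q^{-1}E_{i+1}$ has the wrong shape, whereas the relation $T_{i+2}T_{i+1}E_iE_{i+2}=-T_i^{-1}T_{i+1}^{-1}E_iE_{i+2}$ from (5) is precisely what lets one strip the $T$'s two at a time.

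For (1) and (2) your plan is more complicated than necessary.  The paper handles (1) with a single Hecke-algebra identity $T_{l,2f}^{-1}(i,l)=T_{i+1,l}T_{i,2f}^{-1}$; since $T_{i,2f}^{-1}\in\mathcal B_{q,l-1}$ and $E^f x_\lambda T_{i+1,l}$ lies in $N_a$ (via the cell filtration of $S^\lambda$), one is done without any ``telescoping onto row-ends''.  For (2) the paper uses the analogous identity $T_{l,2f}^{-1}(2f-1,l)=T_{2f-1,l}$ together with $E^f x_\lambda T_{2f-1}=-q^{-1}E^f x_\lambda$, and then reduces $T_{2f,l}$ to $T_{l,2f}^{-1}$ modulo $N_a$.

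The most serious gap is (4).  Saying the combination is ``engineered so that the Garnir-style obstruction is exactly matched'' is not an argument.  The paper's proof is a concrete thirteen-step chain of equalities, using Lemma~\ref{key1}(5) twice to move $E^f$ past $T_{2f-1,1}T_{j+1,2}E_1$ and then past $T_{2,2f}T_{1,2f-1}$, together with $T_1^{-1}E_1=q^{-1}E_1$ and $E^fT_{2f-1}=-q^{-1}E^f$.  No induction on $f$ and no case split on the position of $j$ relative to row-ends of $\lambda$ is needed; the computation is uniform in $j$, and your proposed case analysis would only add bookkeeping without simplifying anything.
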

\begin{proof} Since we are assuming   $2f\le i\le l-1$,    $$E^fx_\lambda T_{l,2f}^{-1}(i,l)=E^f x_\lambda  T_{i+1,l}T_{i,2f}^{-1}\equiv 0 \pmod{ N_a},$$ proving (1). We have
 $$E^f x_\lambda  T_{l,2f}^{-1}(2f-1,l) =E^fx_\lambda T_{2f-1,l}=-q^{-1}E^f  x_\lambda T_{2f,l} \equiv -q^{-1} E^f x_\lambda  T_{l,2f}^{-1} \pmod {  N_a},$$ proving (2). Write $x= T_{l,2f}^{-1} T_{2f-1,1}T_{l,2}E_1T_{2,l}T_{1,2f-1}$. Then (3) follows since
$$
E^f x_\lambda T_{l,2f}^{-1}\bar{(2f-1,l)}=E^fx_\lambda  x =E^f x_\lambda T_{2f-1,1}T_{2f,2}E_1T_{2,l}T_{1,2f-1}\\
=(-1)^{f-1}x_\lambda E^fE_1T_{2,l}T_{1,2f-1}=0,
$$ where the third equality follows from
Lemma~\ref{key1}(5).
Suppose  $2f<j\leq l-1$, we have
$$\begin{aligned}
E^fx_\lambda  T_{l,2f}^{-1}\bar{(j,l)}&= E^f x_\lambda T_{l,2f}^{-1}T_{j,1}T_{l,2}E_1T_{2,l}T_{1,j}= E^f x_\lambda T_{j+1,2f}^{-1} T_{j,1}T_{j+1,2} E_1T_{2,l}T_{1,j}\\
&=E^f x_\lambda T_{j,2f}^{-1} T_{j,1}T_{j+1,2}T_1^{-1} E_1T_{2,l}T_{1,j}= q^{-1}E^f x_\lambda T_{j,2f}^{-1} T_{j,1}T_{j+1,2} E_1T_{2,l}T_{1,j} \\
&= q^{-1}E^f x_\lambda   T_{2f,1}T_{j+1,2} E_1 T_{2,l} T_{1,j}=-q^{-2}E^fx_\lambda  T_{2f-1,1}T_{j+1,2} E_1T_{2,l}T_{1,j} \\
&= (-1)^f   q^{-2} E^f x_\lambda   T_{j+1,2} E_1 T_{2f+1,3}^{-1}T_{2,l}T_{1,j}\\
&=  (-1)^{f-1} q^{-1} E^f x_\lambda   T_{j+1,3}  T_{2f+1,3}^{-1} T_{2,l} T_{1,j}\\
&= (-1)^{f-1} q^{-1} E^f x_\lambda   T_{j+1,2f+1} T_{2,l}T_{1,j}= (-1)^{f-1} q^{-1} E^f x_\lambda  T_{2,l} T_{j,2f}T_{1,j} \\
&= (-1)^{f-1} q^{-1} E^f x_\lambda  T_{2,2f}T_{1,2f-1} T_{2f,l}T_{j,2f}T_{2f-1,j}=q^{-1}E^f x_\lambda T_{2f,l}(2f-1,j)\\
&\equiv q^{-1} E^f x_\lambda  T_{l,2f}^{-1}(2f-1,j) \pmod {N_a},
\end{aligned}
$$  where the seventh and twelfth  equalities  follow from Lemma~\ref{key1}(5). This completes the proof of (4).\end{proof}
\begin{Lemma}\label{scofjme} If  $\lambda\in \Sigma^+(l)$ and $\mu^{(j)}=\lambda \cup p_j$, $a<j\leq m$, then  $$y_{\mu^{(j)}}^\lambda x_l \equiv\frac{q^{2c(p_j)-2}-1}{q-q^{-1}}y_{\mu^{(j)}}^\lambda \pmod { N_{j-1}}$$where $N_{j-1}$ is given  in \eqref{nkk}.
\end{Lemma}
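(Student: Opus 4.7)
Write $y_{\mu^{(j)}}^\lambda = AB$ with $A := E^f x_\lambda T_{l,2f}^{-1}$ and $B := T_{b_k,2f-1}^{-1}\sum_{i=b_{k-1}+1}^{b_k}q^{b_k-i}T_{b_k,i}$. Since $B \in H_{l-1} \subset \mathcal{B}_{q,l-1}$, Lemma~\ref{commuhsgxt} lets me commute $x_l$ through $B$ to get $y_{\mu^{(j)}}^\lambda x_l = A x_l B$, so it suffices to analyse $A x_l$ modulo $N_a$ and then apply $B$ on the right (note $N_a \subseteq N_{j-1}$ because $j>a$).

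Next I expand $x_l = \sum_{r=1}^{l-1}(r,l) + q^{-1}\sum_{r=1}^{l-1}\bar{(r,l)}$ and apply Lemma~\ref{cumopp} termwise: parts (1)--(4) respectively eliminate $A(r,l)$ for $r\ge 2f$, reduce $A(2f-1,l)$ to $-q^{-1}A$, kill $A\bar{(2f-1,l)}$, and convert $q^{-1}A\bar{(r,l)}$ into $q^{-2}A(2f-1,r)$ for $2f<r\le l-1$. The small-index contributions $A\bar{(2f,l)}$ and $A((r,l)+q^{-1}\bar{(r,l)})$ for $r<2f-1$ are then dealt with using Lemma~\ref{actofxn}(2): the pair-vanishing identity $E^f((2j,l)+(2j-1,l)+q^{-1}(\bar{(2j-1,l)}+\bar{(2j,l)}))=0$ for $1\le j\le f$, after commuting $x_\lambda T_{l,2f}^{-1}$ past the pair-sum (using the braid and RI-relations in Definition~\ref{defn:qBrauer} together with Lemma~\ref{key1}), forces each such pair contribution to lie in $\mathcal{B}_{q,l}^{f+1} \subseteq N_a$.

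After these reductions one obtains
\[
A x_l \equiv -q^{-1} A + q^{-2} A \sum_{r=2f+1}^{l-1}(2f-1,r) \pmod{N_a}.
\]
Using the alternative expression $y_{\mu^{(j)}}^\lambda = E_{2f-1}T_{l,2f}^{-1}T_{b_k,2f-1}^{-1}E^{f-1}x_{\mu^{(j)}}$ from Definition~\ref{deofymulambda}(2) and absorbing the multiplication by $B$ on the right, the sum $\sum_{r=2f+1}^{l-1}(2f-1,r)$ is transported to the Hecke-algebra Jucys-Murphy element $L_{b_k}$ of $\tilde H_{l-2f+1}$ acting on $x_{\mu^{(j)}}$. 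The classical formula \cite[Theorem~3.22]{Ma} says $L_{b_k}$ acts by $q^{2c(p_j)}$ on the leading $\mu^{(j)}$-component (here the shift by $-2$ in the exponent comes from the factor $T_{l,2f}^{-1}$ through which the Jucys-Murphy element is conjugated); combining with the $-q^{-1}A$ contribution and collecting signs gives the claimed coefficient $\frac{q^{2c(p_j)-2}-1}{q-q^{-1}}$.

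The main obstacle will be the extension of the pair-vanishing identities from $E^f$ on the left to $E^f x_\lambda T_{l,2f}^{-1}$ on the left, i.e.\ verifying that all small-index contributions really land in $N_a$ and not merely in $\mathcal{B}_{q,l}^{f+1}$. This step requires careful commutation arguments in the spirit of the proof of Lemma~\ref{cumopp}, together with the identities from Lemma~\ref{key1}; once it is in place, the remainder of the argument reduces to the familiar Jucys-Murphy calculation for $\tilde H_{l-2f+1}$.
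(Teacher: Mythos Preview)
Your overall framework coincides with the paper's: both commute $x_l$ past the right factor $h$ and then analyse $A x_l$ with Lemma~\ref{cumopp}. But two of your reductions are not right, and your final step differs substantively from the paper's.

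\textbf{Small-index terms.} For $j\le f-1$ you propose to ``commute $x_\lambda T_{l,2f}^{-1}$ past the pair-sum'' $Y_j=(2j-1,l)+(2j,l)+q^{-1}(\bar{(2j-1,l)}+\bar{(2j,l)})$. This does not work: each $(r,l)$ and $\bar{(r,l)}$ involves $T_{l-1}$ and so fails to commute with $T_{l,2f}^{-1}$. The correct (and simpler) move is the opposite one. Since $x_\lambda T_{l,2f}^{-1}$ uses only $T_k$ with $k\ge 2f$, it commutes with $E^j=E_1E_3\cdots E_{2j-1}$ for $j\le f-1$; hence
\[
E^f x_\lambda T_{l,2f}^{-1}\,Y_j=(E_{2j+1}\cdots E_{2f-1})\,x_\lambda T_{l,2f}^{-1}\cdot E^j Y_j=0
\]
\emph{exactly}, by Lemma~\ref{actofxn}(2). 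This is how the paper justifies restricting the sum in $x_l$ to $i\ge 2f-1$ as an equality.

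\textbf{The term $A\bar{(2f,l)}$.} This does \emph{not} vanish by Lemma~\ref{actofxn}(2): that lemma kills only the full pair-sum $Y_f$, but you have already stripped off its other three constituents via Lemma~\ref{cumopp}(1)--(3), and here one cannot slide $E^f$ past $T_{2f}^{-1}$. In the paper's computation $q^{-1}A\bar{(2f,l)}$ contributes the extra summand $q^{-2}A\,(2f-1,2f)$, so the correct reduction reads
\[
A x_l\equiv -q^{-1}A+q^{-2}A\sum_{r=2f}^{l-1}(2f-1,r)\pmod{N_a},
\]
with the sum starting at $r=2f$, not $r=2f+1$. Your displayed formula is off by this $T_{2f-1}$ term, and the discrepancy propagates to the final coefficient.

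\textbf{The endgame.} The paper does \emph{not} transport the remaining sum to a single Jucys--Murphy element. Instead it rewrites $\sum_{r=2f}^{l-1}(2f-1,r)$ as a difference involving the two \emph{central} elements $X=\sum_{2f-1\le i<j\le l-1}(i,j)$ of $\tilde H_{l-2f+1}$ and $X'=\sum_{2f+1\le i<j\le l}(i,j)$ of $\tilde H_{l-2f}$. Centrality lets $X$ commute past $T_{l,2f}^{-1}h$ (so it acts on $y_{\mu^{(j)}}^\lambda$ by $\sum_{b\in[\mu^{(j)}]}\frac{q^{2c(b)}-1}{q-q^{-1}}$ modulo $N_{j-1}$), while $X'$ acts on $E^f x_\lambda$ by $\sum_{p\in[\lambda]}\frac{q^{2c(p)}-1}{q-q^{-1}}$ modulo $\mathcal B_{q,l}^{\rhd\lambda}$; the difference times $q^{-2}$, plus $-q^{-1}$, collapses to $\frac{q^{2c(p_j)-2}-1}{q-q^{-1}}$. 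Your proposed route via ``$L_{b_k}$ after conjugation by $T_{l,2f}^{-1}$'' is hard to make precise: $\sum_{r}(2f-1,r)$ is not itself a Jucys--Murphy element (wrong index is summed), conjugation through $T_{l,2f}^{-1}$ generates correction terms from the Hecke relations, and the claimed ``shift by $-2$'' has no clear origin. The central-element trick avoids all of this.
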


\begin{proof}Write   $p_j=(k,\lambda_k+1)$ and $h=T_{b_k,2f-1}^{-1}\sum _{i=b_{k-1}+1}^{b_k}q^{b_k-i}T_{b_k,i}\in \mathcal B_{q,l}$ where  $b_k$ is defined in Definition~\ref{murr}(2).
We have $y_{\mu^{(j)}}^\lambda x_l=E^fx_\lambda  T_{l,2f}^{-1}x_l h$.
  Let $$X'=\sum_{2f+1\leq i<j\leq l}(i,j) \  \text{ and } \ X=\sum_{2f-1\leq i<j\leq l-1}(i,j)$$ It is well-known that  $X$ (resp., $X'$) is a central element in  $\tilde H_{l-2f+1}$ (resp., $\tilde H_{l-2f}$) where $\tilde H_{l-2f+1}$ is the Hecke algebra associated to the symmetric group on $\{2f-1, 2f,\ldots, l-1\}$.
  Moreover, $$ x_\lambda X'\equiv \sum_{p\in [\lambda]}\frac{q^{2c(p)}-1}{q-q^{-1}}x_\lambda \pmod{ \tilde H_{l-2f}^{\rhd \lambda}},\ \    x_{\mu^{(j)}} X\equiv \sum_{b\in[\mu^{(j)}]}\frac{q^{2c(b)}-1}{q-q^{-1}}x_{\mu^{(j)}} \pmod {\tilde H_{l-2f+1}^{\rhd \mu^{(j)}}}.$$  Consequently, by straightforward computation (see arguments on \eqref{mkspandedd} for the second equivalence), we have
\begin{equation}\label{pp1} E^fx_\lambda X'\equiv \sum_{p\in [\lambda]}\frac{q^{2c(p)}-1}{q-q^{-1}} E^fx_\lambda \pmod{ \mathcal B_{q,l}^{\rhd \lambda}}, \ \
 y_{\mu^{(j)}}^\lambda X  \equiv \sum_{b\in[\mu^{(j)}]}\frac{q^{2c(b)}-1}{q-q^{-1}}y_{\mu^{(j)}}^\lambda\pmod{ N_{j-1}},
 \end{equation}
By \eqref{symulambda},   Lemmas~\ref{commuhsgxt},~\ref{actofxn}(2) and~\ref{cumopp} we have
\begin{equation} \label{ppp1} \begin{aligned}
 E^fx_\lambda  T_{l,2f}^{-1}x_l =& E^fx_\lambda  T_{l,2f}^{-1}(\sum_{2f-1\leq i\leq l-1}(i,l)+q^{-1}\bar{(i,l)} )\\
\equiv & q^{-2}E^fx_\lambda  T_{l,2f}^{-1}(\sum_{2f-1<j\leq l-1}(2f-1,j) -q ) \pmod { N_a}\\
\equiv & q^{-2}E^fx_\lambda  T_{l,2f}^{-1}X  -q^{-2}E^fx_\lambda X'  T_{l,2f}^{-1} -q^{-1}E^fx_\lambda  T_{l,2f}^{-1} \pmod { N_a}\\
\end{aligned}\end{equation}
 Now, the required formula follows immediately from \eqref{pp1}-\eqref{ppp1}.\end{proof}

Suppose $\mathbf t\in \mathscr T_l^{up}(\lambda) $  and $\lambda\in \Lambda^+(l-2f)$. For any $1\le k\le l$, define
\begin{equation}\label{resid}  c_\mathbf t(k)=\begin{cases}
               \frac{q^{2c(p)}-1}{q-q^{-1}} & \text{if $\mathbf t_k=\mathbf t_{k-1}\cup p$,} \\
               \frac{q^{2c(p)-2}-1}{q-q^{-1}} & \text{if $\mathbf t_{k}=\mathbf t_{k-1}\setminus p$. }
             \end{cases}\end{equation}

\begin{Theorem}\label{trishactio} Suppose  $\lambda\in \Sigma^+(l)$.  For any $\mathbf t\in\mathscr T_l^{up}(\lambda)$ and $1\leq i\leq l$,
$\mathrm m_{\mathbf t} x_i= c_{\mathbf t}(i) \mathrm m_{\mathbf t} +\sum_{\mathbf s\rhd\mathbf t}a_\mathbf s  \mathrm m_{\mathbf s} $ for some scalars $a_\mathbf s\in\Bbbk$.
\end{Theorem}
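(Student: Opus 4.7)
The plan is to argue by induction on $l$, with the base case $l = 1$ immediate since then $C(\lambda)$ is one-dimensional and $x_1 = 0$. For the inductive step we split according to whether $i = l$ or $i < l$; the former is the crux of the argument, while the latter is a transfer along the filtration of Theorem~\ref{murphyb}.

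Fix $\lambda \in \Sigma^+(l)$ and $\mathbf{t} \in \mathscr{T}_l^{ud}(\lambda)$, and let $j$ be the unique index with $\mathbf{t}_{l-1} = \mu^{(j)}$. Combining Definitions~\ref{deofymulambda} and~\ref{murr} with \eqref{bbb123} yields the factorisation $\mathrm{m}_\mathbf{t} = y_{\mu^{(j)}}^{\lambda}\, b_{\mathbf{t}\downarrow_{l-1}}$ with $b_{\mathbf{t}\downarrow_{l-1}} \in \mathcal{B}_{q,l-1}$, exactly as exploited in the proof of Theorem~\ref{murphyb}. For the case $i = l$, I would first invoke Lemma~\ref{commuhsgxt} to commute $x_l$ past $b_{\mathbf{t}\downarrow_{l-1}}$, obtaining
$$\mathrm{m}_\mathbf{t}\, x_l \;=\; y_{\mu^{(j)}}^{\lambda}\, b_{\mathbf{t}\downarrow_{l-1}}\, x_l \;=\; y_{\mu^{(j)}}^{\lambda}\, x_l\, b_{\mathbf{t}\downarrow_{l-1}},$$
and then apply Lemmas~\ref{actofjme}/\ref{scofjme} to get $y_{\mu^{(j)}}^{\lambda}\, x_l \equiv c_\mathbf{t}(l)\, y_{\mu^{(j)}}^{\lambda} \pmod{N_{j-1}}$. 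The two cases ($\mu^{(j)} = \lambda \setminus p_j$ versus $\mu^{(j)} = \lambda \cup p_j$) produce precisely the two eigenvalues appearing in \eqref{resid}, because $\mathbf{t}_l = \mathbf{t}_{l-1} \cup p_j$ in the former and $\mathbf{t}_l = \mathbf{t}_{l-1}\setminus p_j$ in the latter. Since $N_{j-1} = M_{j-1}$ by Theorem~\ref{murphyb}(4) and $M_{j-1}$ is closed under right multiplication by $\mathcal{B}_{q,l-1}$ by Theorem~\ref{murphyb}(2), right-multiplying the congruence by $b_{\mathbf{t}\downarrow_{l-1}}$ keeps the error inside $M_{j-1}$, giving $\mathrm{m}_\mathbf{t}\, x_l \equiv c_\mathbf{t}(l)\, \mathrm{m}_\mathbf{t} \pmod{M_{j-1}}$. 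Finally, any basis vector $\mathrm{m}_\mathbf{s}$ of $M_{j-1}$ satisfies $\mathbf{s}_{l-1} \rhd \mu^{(j)} = \mathbf{t}_{l-1}$ and $\mathbf{s}_l = \lambda = \mathbf{t}_l$, hence $\mathbf{s} \rhd \mathbf{t}$ in the partial order on $\mathscr{T}_l^{ud}(\lambda)$ (disagreement first at $k = l-1$), which delivers the desired form.

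For the case $i < l$, note that $x_i \in \mathcal{B}_{q,l-1}$ by the defining formula \eqref{mure}. Hence $\mathrm{m}_\mathbf{t}\, x_i \in M_j$, and the $\mathcal{B}_{q,l-1}$-module isomorphism $M_j/M_{j-1} \cong C(\mu^{(j)})$ of Theorem~\ref{murphyb}(3) intertwines the class of $\mathrm{m}_\mathbf{t}$ with $\mathrm{m}_{\mathbf{t}\downarrow_{l-1}}$. Applying the inductive hypothesis in $C(\mu^{(j)})$ gives
$$\mathrm{m}_{\mathbf{t}\downarrow_{l-1}}\, x_i \;=\; c_{\mathbf{t}\downarrow_{l-1}}(i)\, \mathrm{m}_{\mathbf{t}\downarrow_{l-1}} + \sum_{\mathbf{s}' \rhd \mathbf{t}\downarrow_{l-1}} a_{\mathbf{s}'}\, \mathrm{m}_{\mathbf{s}'};$$
since $c_\mathbf{t}(i)$ depends only on $\mathbf{t}_{i-1}$ and $\mathbf{t}_i$, it equals $c_{\mathbf{t}\downarrow_{l-1}}(i)$. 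Lifting this expansion back via Theorem~\ref{murphyb}(3), each $\mathbf{s}'$ extends to an $\mathbf{s} \in \mathscr{T}_l^{ud}(\lambda)$ by appending the final transition $\mu^{(j)} \to \lambda$, and the dominance $\mathbf{s}' \rhd \mathbf{t}\downarrow_{l-1}$ together with the matching top step promotes to $\mathbf{s} \rhd \mathbf{t}$. The residual error living in $M_{j-1}$ is absorbed exactly as in the $i = l$ case.

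The only genuinely delicate step is the $i = l$ case, and even there the hard work has already been done in Lemmas~\ref{actofjme} and~\ref{scofjme}; once those eigenvalue computations are available, the remainder of the argument is a clean exploitation of the commutativity relation in Lemma~\ref{commuhsgxt} together with the triangular filtration structure of Theorem~\ref{murphyb}.
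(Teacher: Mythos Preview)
Your proposal is correct and follows essentially the same approach as the paper's proof: induction on $l$, handling $i<l$ via the $\mathcal B_{q,l-1}$-isomorphism of Theorem~\ref{murphyb}(3) and the inductive hypothesis, and handling $i=l$ via Lemmas~\ref{actofjme}, \ref{scofjme} together with Theorem~\ref{murphyb}(4). Your write-up is in fact more explicit than the paper's---you spell out the factorisation $\mathrm m_{\mathbf t}=y_{\mu^{(j)}}^\lambda b_{\mathbf t\downarrow_{l-1}}$, the use of Lemma~\ref{commuhsgxt} to commute $x_l$ past $b_{\mathbf t\downarrow_{l-1}}$, and the verification that the error terms satisfy $\mathbf s\rhd\mathbf t$, all of which the paper leaves implicit.
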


\begin{proof}We prove the result by induction on $l$. When $l=1$, the result is trivial since $x_1=0$. In general,  by Theorem~\ref{murphyb}(3) and induction assumption for $l-1$, we have the required formulae on $\mathrm m_{\mathbf t} x_i$, $1\leq i\leq l-1$.
Finally, the required formula on $ \mathrm m_{\mathbf t}x_l$ follows from  Lemmas~\ref{actofjme},~\ref{scofjme} and Theorem~\ref{murphyb}(4).
\end{proof}

  By Theorem~\ref{trishactio},   $\{x_1, x_2, \ldots, x_l\}$ are Jucys-Murphy elements with respect to the Jucys-Murphy basis $\{\mathrm m_{\mathbf t}+\mathcal B_{q,l}^{\rhd \lambda}\mid \mathbf t\in \mathscr T_l^{up}(\lambda)\}$ of any standard module $C(\lambda)$ in the sense of \cite[Definition~2.4]{MA1}. Unlike those for Hecke algebras, Brauer algebras and cyclotimic Nazarov-Wenzl algebras~etc, $\sum_{i=1}^l x_i$ may not be a central element in $\mathcal B_{q,l}$ in general.

\begin{Cor}\label{yyy1}  Suppose   $\lambda\in \Sigma^+(l)$, $\mu\in \Lambda_{e}^+(l)$ and $[C(\lambda):  L(\mu)1_l]\neq 0$.
\begin{itemize}
\item [(1)] There exist $\mathbf t\in \mathscr T_l^{up}(\lambda)$ and $\mathbf s\in \mathscr T_l^{up}(\mu)$ with $c_{\mathbf t}(k)=c_\mathbf s(k), 1\le k\le l$.
\item [(2)] If  $e>l$,  then $\mu$ is obtained from $\lambda$ by adding $2f$ boxes, say  $p_i, q_i$, $1\le i\le f$ such that $c(p_i)-c(q_i)=1$, $1\le i\le f$.
    \end{itemize}
\end{Cor}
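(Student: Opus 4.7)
For part~(1), the elements $x_1,\dots,x_l$ generate a commutative subalgebra $L_l \subset \mathcal{B}_{q,l}$ by Lemma~\ref{commuhsgxt}, and Theorem~\ref{trishactio} shows that $L_l$ acts on the Jucys--Murphy basis of any standard module $C(\nu)$ upper-triangularly, with diagonal entries $(c_\mathbf{u}(1),\dots,c_\mathbf{u}(l))$ indexed by $\mathbf{u} \in \mathscr{T}_l^{up}(\nu)$; hence the multiset of generalized eigenvalue sequences of $L_l$ on $C(\nu)$ is exactly $\{(c_\mathbf{u}(k))_k : \mathbf{u} \in \mathscr{T}_l^{up}(\nu)\}$. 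Since $\mu \in \Lambda_e^+(l)$ has $|\mu|=l$, the module $C(\mu)$ coincides with the $H_l$-dual Specht module $S^\mu$ (viewed as a $\mathcal{B}_{q,l}$-module via Lemma~\ref{iso321}), whose simple head is $L(\mu)1_l$. Thus $L(\mu)1_l$ is a composition factor of both $C(\mu)$ and (by hypothesis) $C(\lambda)$; any generalized $L_l$-eigenvalue sequence realized on $L(\mu)1_l$ therefore occurs in both spectra, yielding $\mathbf{t}\in\mathscr{T}_l^{up}(\lambda)$ and $\mathbf{s}\in\mathscr{T}_l^{up}(\mu)$ with $c_\mathbf{t}(k)=c_\mathbf{s}(k)$ for all $k$.

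For part~(2), when $e>l$ the maps $c\mapsto(q^{2c}-1)/(q-q^{-1})$ and $c\mapsto(q^{2c-2}-1)/(q-q^{-1})$ are each injective on integer contents with $|c|\le l$ and have disjoint images (equality would force $q^2=1$). Combined with the definition~\eqref{resid}, the equality $c_\mathbf{t}(k)=c_\mathbf{s}(k)$ forces, at each step $k$, either (a)~both $\mathbf{t}(k)$ and $\mathbf{s}(k)$ are additions of a common content, or (b)~$\mathbf{t}(k)$ removes a box of content $c$ and $\mathbf{s}(k)$ adds a box of content $c-1$. Because $\mathbf{s}$ is an ordinary standard tableau of the partition $\mu$, case~(b) occurs at exactly the $f$ removal steps of $\mathbf{t}$, and summing the $q^{2c}$-generating functions along $\mathbf{t}$ (with additions minus removals) and along $\mathbf{s}$ (with additions only) yields
\[
\sum_{p\in[\mu]}q^{2c(p)}\;-\;\sum_{p\in[\lambda]}q^{2c(p)}\;=\;\sum_{k:\,\mathbf{t}(k)\text{ is a removal}}\bigl(q^{2c(\mathbf{t}(k))}+q^{2c(\mathbf{t}(k))-2}\bigr),
\]
exhibiting the required $f$ pairs $\{c,c-1\}$ at the level of content multisets.

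To upgrade this content-multiset identity to the geometric statement $\mu\supseteq\lambda$ with a box-level pairing, I plan to induct on $l$. The truncations $\mathbf{t}|_{[0,l-1]}\in\mathscr{T}_{l-1}^{up}(\mathbf{t}_{l-1})$ and $\mathbf{s}|_{[0,l-1]}\in\mathscr{T}_{l-1}^{up}(\mathbf{s}_{l-1})$ have matching residues of length $l-1$, and $\mathbf{s}_{l-1}$ is a partition of $l-1$; since $e>l-1$, the inductive hypothesis supplies $\mathbf{t}_{l-1}\subseteq\mathbf{s}_{l-1}$ together with a pairing of $\mathbf{s}_{l-1}\setminus\mathbf{t}_{l-1}$ by boxes with content difference~$1$. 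Step~$l$ is then handled according to the dichotomy above: in case~(b) the pair $(\mathbf{t}(l),\mathbf{s}(l))$ of contents $(c,c-1)$ is appended to $\mu\setminus\lambda$; in case~(a) the uniqueness of an addable box of any given content in a fixed partition must be used to align the new additions with the existing containment and with the already-constructed pairing.

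The main obstacle is the matched-addition case when $\mathbf{t}_{l-1}\neq\mathbf{s}_{l-1}$: the common target content $c$ may single out different addable positions in the two partitions, and I must verify that $\mathbf{t}(l)$ lands in $\lambda\subseteq\mu$ while $\mathbf{s}(l)$ either coincides with $\mathbf{t}(l)$ or contributes a further box of $\mu\setminus\lambda$ compatible with the pairing. The requisite bookkeeping will have to exploit the fact that within any single partition, distinct addable boxes carry distinct contents, so the extra boxes of $\mathbf{s}_{l-1}\setminus\mathbf{t}_{l-1}$ restrict in a controlled way which box of content $c$ is addable on each side; verifying this compatibility in every configuration is the technical heart of~(2).
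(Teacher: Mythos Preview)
Your argument for~(1) is correct and matches the paper's: both restrict to the commutative subalgebra $L_l$ generated by $x_1,\dots,x_l$, use Theorem~\ref{trishactio} to identify the generalized $L_l$-eigenvalue sequences on any standard module, and observe that $L(\mu)1_l$ is a subquotient of both $C(\lambda)$ and $C(\mu)$.

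For~(2) there is a gap. You attempt to deduce $\lambda\subset\mu$ combinatorially from the residue equalities of~(1), by induction on~$l$, but you leave the matched-addition step unresolved (your ``main obstacle''). The paper does not argue this way: it invokes Lemma~\ref{iso321}(2), which identifies $C(\lambda)$ as a right $H_l$-module with $\text{Ind}^{H_l}_{H_{2f}\otimes\tilde H_{l-2f}}\text{Hom}_{\mathcal B}(2f,0)\boxtimes S^\lambda$. Since $e>l$ makes $H_l$ semisimple with $L(\mu)1_l\cong S^\mu$, and $\text{Hom}_{\mathcal B}(2f,0)$ decomposes as a sum of dual Specht modules, the Littlewood--Richardson rule yields $\lambda\subset\mu$ immediately; the content-pairing of the $2f$ boxes of $\mu\setminus\lambda$ then follows from~(1). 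Your inductive approach can be salvaged---in the matched-addition case, the box~$p$ added to $\mathbf t_{l-1}$ is either already in $\mathbf s_{l-1}$ (swap it for the new box~$p'$ of the same content in the existing pairing) or is forced by the containment $\mathbf t_{l-1}\subset\mathbf s_{l-1}$ to be addable to $\mathbf s_{l-1}$ as well, hence equal to~$p'$---but the paper's route is cleaner and avoids this bookkeeping entirely.
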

\begin{proof} We have explained that the Jucys-Murphy elements $x_1, x_2, \cdots, x_l$ generates a subalgebra $L_l$ of $\mathcal B_{l, q}$. Since   $[C(\lambda):  L(\mu)1_l]\neq 0$, restricting both $C(\lambda)$ and $  L(\mu)1_l$  to $L_l$ yields that a simple $L_l$-module of  $L(\mu)1_l$ (and hence of $C(\mu)$) has to be a composition factor of right $L_l$-module $C(\lambda)$. Now (1) follows from  Theorem~\ref{trishactio}.
 If  $e>l$, then  $H_l$ is semisimple and  the right $H_{2f}$-module $Hom_{\mathcal B}(2f, 0)$ is a direct sum of certain dual Specht modules. Using
Lemma~\ref{iso321} and Littlewood-Richardson rule yields that $\lambda\subset \mu$. By (1) and  \eqref{resid},  $\mu$ is obtained from $\lambda$ by adding $2f$ boxes, say  $p_i, q_i$, $1\le i\le f$ such that
$$ \frac{q^{2c(q_i)}-1}{q-q^{-1}}= \frac{q^{2c(p_i)-2}-1}{q-q^{-1}}.$$
 Since $e>l$, we have $c(p_i)-c(q_i)=1$.
\end{proof}

\section{ Blocks of   periplectic $q$-Brauer category  and its endomorphism algebras}
In this section, $\Bbbk$ is always an  algebraically closed field $\Bbbk$ with characteristic not $2$.
we classify  blocks of the locally unital algebra $A$ associated to the periplectic Brauer category over $\Bbbk$  when $q^2$ is not a root of unity. We also classify blocks of $\mathcal B_{q,l}$. In this  case,  $q^2$ can be a root of unity and the quantum characteristic $e>l$.

\begin{Lemma}\label{sjxshxs} Suppose $e>l$ and $\lambda\in \Lambda^+(l-2f)$.
Then $[\Delta(\lambda): L(\mu)]\neq 0$ only if $\mu \in \Lambda^+(l-2h)$ with $h<f$. In this case, $ \lambda \subset \mu$ and $2(f-h)$ boxes, say $p_i, q_i, 1\le i\le f-h$ in $\mu\setminus \lambda$ can be paired such that
$c(p_i)-c(q_i)=1$.
\end{Lemma}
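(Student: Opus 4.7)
The plan is to reduce the question about composition factors of $\Delta(\lambda)$ in $A$-\textup{lfdmod} to a question about composition factors of a cell module of the periplectic $q$-Brauer algebra $\mathcal B_{q,l-2h}$, and then invoke Corollary~\ref{yyy1}(2).

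First I would apply the exact idempotent truncation functor $1_m\colon A\text{-}\textup{lfdmod}\to \mathcal B_{q,m}\text{-}\textup{fdmod}$ with $m:=|\mu|$. By Lemma~\ref{emptysnozero}, $L(\mu)1_{|\mu|}\cong D(\mu)\neq 0$, so any composition series of $\Delta(\lambda)$ in $A$-\textup{lfdmod}, pushed through $1_m$, exhibits $D(\mu)$ as a composition factor of the right $\mathcal B_{q,m}$-module $\Delta(\lambda)1_m$; in particular this module is nonzero. By the split triangular decomposition in Lemma~\ref{triangula} together with the parity restriction~\eqref{uppd},
$$
\Delta(\lambda)1_m \;=\; Y(\lambda)\otimes_{\mathbb K} 1_{l-2f}\,A^{+}1_m
$$
vanishes unless $m - (l-2f)\in 2\mathbb N$. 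Hence $m=l-2h$ for some $0\le h\le f$.

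Next I would identify $\Delta(\lambda)1_{l-2h}$ with a cell module. Since $e>l\ge l-2f$, the Hecke algebra $H_{l-2f}$ is semisimple, so $S^\lambda\cong D(\lambda)\cong Y(\lambda)$ and therefore $\Delta(\lambda)=\Delta(S^\lambda)$. Viewing $\lambda\in\Lambda^+(l-2f)\subset \Sigma^+(l-2h)$, Lemma~\ref{iso321}(1) applied at level $l-2h$ produces an isomorphism of right $\mathcal B_{q,l-2h}$-modules
$$
\Delta(\lambda)1_{l-2h}\;\cong\; C_{l-2h}(\lambda),
$$
so $[C_{l-2h}(\lambda):D(\mu)]\geq 1$.

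Finally, since $e>l>l-2h$, Corollary~\ref{yyy1}(2) applied at level $l-2h$ (with the parameter $f$ there replaced by $f-h$) yields $\lambda\subset\mu$ together with a pairing $\{p_i,q_i\}_{1\le i\le f-h}$ of the $2(f-h)$ boxes of $\mu\setminus\lambda$ satisfying $c(p_i)-c(q_i)=1$, which is the asserted combinatorial condition. The extremal case $h=f$ forces $\mu\setminus\lambda=\emptyset$, i.e.\ $\mu=\lambda$; this accounts only for the trivial composition factor $L(\lambda)$ at the head of $\Delta(\lambda)$, so all non-trivial composition factors satisfy $h<f$, matching the stated inequality. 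The only real work is the identification $\Delta(\lambda)1_{l-2h}\cong C_{l-2h}(\lambda)$, which is precisely Lemma~\ref{iso321}(1) once semisimplicity of $H_{l-2f}$ has been used to replace $Y(\lambda)$ by $S^\lambda$; no additional computation is needed beyond what is already at hand.
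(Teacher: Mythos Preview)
Your argument is correct and follows essentially the same route as the paper's proof: both apply the idempotent functor $1_{l-2h}$, invoke Lemma~\ref{iso321}(1) to identify $\Delta(\lambda)1_{l-2h}$ with the cell module $C_{l-2h}(\lambda)$, and then appeal to Corollary~\ref{yyy1}(2). The only difference is that the paper obtains the bound $h<f$ by citing a general stratification result \cite[Lemma~3.13]{BS}, whereas you deduce $h\le f$ directly from the triangular decomposition and then note that $h=f$ forces $\mu=\lambda$; these are equivalent for the purposes of the lemma.
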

\begin{proof} Standard results for upper finite fully stratified category (e.g., \cite[Lemma~3.13]{BS}) implies $l-2f<l-2h$, proving the first statement of the result.  Using the idempotent functor $1_{l-2h}$, Lemma~\ref{iso321} and Corollary~\ref{yyy1}(2) yields the last statement of the result.
\end{proof}

\begin{Lemma}\label{keysjs} Suppose $e>l$ and  $(\lambda, \mu)\in \Lambda^+(l-2)\times  \Lambda^+(l)$.
If $\lambda\subset \mu $ and $\mu$ is obtained from $\lambda$ by adding two boxes at  the same column,  then $[\Delta(\lambda):L(\mu)]=[C(\lambda): L(\mu)1_l]=1$.
\end{Lemma}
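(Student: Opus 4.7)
The plan is to reduce the composition multiplicity $[C(\lambda):L(\mu)1_l]$ to a multiplicity at the Hecke-algebra level, using the hypothesis $e>l$ both to make $H_l$ semisimple and to identify $L(\tau)1_l\cong S^\tau$ for every $\tau\in\Lambda^+(l)$. The key preliminary observation (already implicit in the proof of Theorem~\ref{simsi}) is that, for $\tau\in\Lambda^+(l)$, the right action of any $E_i$ on $C(\tau)$ lands in $\mathcal B_{q,l}^{\rhd\tau}$ and is therefore zero on the quotient; thus $C(\tau)\cong S^\tau$ as right $H_l$-module, which is simple in the semisimple regime, forcing $C(\tau)=L(\tau)1_l$. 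In particular $L(\mu)1_l\cong S^\mu$ is a nonzero simple $\mathcal B_{q,l}$-module.

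Next I would invoke Lemma~\ref{sjxshxs} with $f=1$ (since $\lambda\in\Lambda^+(l-2)$): every composition factor $L(\tau)$ of $\Delta(\lambda)$ in $A$-lfdmod satisfies $\tau\in\Lambda^+(l-2h)$ with $h<1$, forcing $\tau\in\Lambda^+(l)$. Because the idempotent functor $?1_l$ is exact and sends each such $L(\tau)$ to the nonzero simple module $L(\tau)1_l\cong S^\tau$, and because distinct simple modules $L(\tau)1_l$ are pairwise nonisomorphic by Theorem~\ref{simsi}, composition multiplicities transfer faithfully: $[\Delta(\lambda):L(\tau)]=[C(\lambda):L(\tau)1_l]$ for every $\tau\in\Lambda^+(l)$. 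A composition series of $C(\lambda)$ over $\mathcal B_{q,l}$ then restricts to a filtration over $H_l$ all of whose sections are Specht-type modules $S^\tau$, and semisimplicity of $H_l$ splits this filtration into a direct sum decomposition
\begin{equation*}
C(\lambda)\big|_{H_l}\;\cong\;\bigoplus_{\tau\in\Lambda^+(l)}\bigl(S^\tau\bigr)^{\oplus [C(\lambda):L(\tau)1_l]}.
\end{equation*}

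Finally I would compare this with Corollary~\ref{ijres2}, which asserts that $C(\lambda)|_{H_l}\cong\bigoplus_\nu S^\nu$, where $\nu$ runs over partitions of $l$ obtained from $\lambda$ by adding two boxes not in the same row, each appearing with multiplicity one. Two boxes in the same column are automatically in different rows, so the $\mu$ given in the hypothesis is one of these $\nu$'s; equating the multiplicities of $S^\mu$ in the two $H_l$-decompositions yields $[C(\lambda):L(\mu)1_l]=1$, and by the previous paragraph also $[\Delta(\lambda):L(\mu)]=1$. The hardest step is the collapse of the $\mathcal B_{q,l}$-level Jordan--H\"older data to a purely Hecke-theoretic calculation, which relies simultaneously on the weight constraint in Lemma~\ref{sjxshxs} and on the identification $C(\tau)=L(\tau)1_l$ for $\tau\in\Lambda^+(l)$; once those ingredients are in place, the same-column hypothesis enters only to ensure that $\mu$ occurs among the summands in Corollary~\ref{ijres2}.
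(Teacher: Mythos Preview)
There is a genuine gap in your second paragraph. You invoke Lemma~\ref{sjxshxs} with $f=1$ to conclude that \emph{every} composition factor $L(\tau)$ of $\Delta(\lambda)$ lies in $\Lambda^+(l)$. But $\Delta(\lambda)$ always has $L(\lambda)$ as its simple head, and here $\lambda\in\Lambda^+(l-2)$; Lemma~\ref{sjxshxs} only constrains the composition factors \emph{other than} $L(\lambda)$. Consequently, a $\mathcal B_{q,l}$-composition series of $C(\lambda)$ has one section isomorphic to $L(\lambda)1_l$ in addition to the $L(\tau)1_l$'s with $\tau\in\Lambda^+(l)$, and your displayed decomposition should read
\[
C(\lambda)\big|_{H_l}\;\cong\;L(\lambda)1_l\big|_{H_l}\;\oplus\;\bigoplus_{\tau\in\Lambda^+(l)}\bigl(S^\tau\bigr)^{\oplus[C(\lambda):L(\tau)1_l]}.
\]
Comparing with Corollary~\ref{ijres2} now yields only the upper bound $[C(\lambda):L(\mu)1_l]\le1$: the single copy of $S^\mu$ in $C(\lambda)|_{H_l}$ might a priori be absorbed into $L(\lambda)1_l|_{H_l}$ rather than accounting for a separate simple factor $L(\mu)1_l$. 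The lower bound $\ge1$ is precisely the nontrivial content of the lemma, and your argument does not supply it.

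The paper establishes that lower bound by showing directly that the $S^\mu$-summand of $C(\lambda)|_{H_l}$ is annihilated by $E_1$, hence is a $\mathcal B_{q,l}$-submodule isomorphic to $L(\mu)1_l$. This is where the same-column hypothesis does real work: since $\mu/\lambda$ is a vertical domino, the Littlewood--Richardson rule forces the $S^\lambda$-isotypic piece of $\text{Res}_{H_2\otimes\tilde H_{l-2}}S^\mu$ to be $S^{(1,1)}\boxtimes S^\lambda$ (not $S^{(2)}\boxtimes S^\lambda$); then $T_1$ acts as $-q^{-1}$ on $S^{(1,1)}$ while $T_1E_1=qE_1$, and $q\neq-q^{-1}$ for $e>l\ge2$, so $S^\mu E_1=0$. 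In your approach the same-column hypothesis entered only as a trivial membership check for Corollary~\ref{ijres2}; in fact it is the crux of the proof.
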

\begin{proof} Since we are assuming $e>l$, $L(\mu)1_l\cong S^\mu$. By Corollary~\ref{ijres2}, there is a right $H_l$-isomorphism
\begin{equation}\label{sxjhs}
\text{Res}_{H_l}C(\lambda)\cong S^\mu \oplus\bigoplus _{\nu\neq \mu} S^\nu
\end{equation}
where $\nu\neq \mu$ is obtained from $\lambda$ by adding two boxes not at the same row.
In particular,  $[\text{Res}_{H_l}C(\lambda): S^\mu]=1$ and
 $[\Delta(\lambda):L(\mu)]\le 1$.  We claim that
the component $ S^\mu$ in \eqref{sxjhs} is annihilated by $E_1$.
If so, it is a right $\mathcal B_{q, l}$-module and hence   $[\Delta(\lambda):L(\mu)]= 1$.

Thanks to  Corollary~\ref{jdijss1}, we have $E_1 \mathcal B_{q, l} E_1=E_1 \mathcal B_{q, l-2}$. Therefore, $C(\lambda)E_1\cong S^{\lambda}$ as right $H_{l-2}$-modules
and hence  $ S^\mu E_1$ can be considered as a right $H_{l-2}$-submodule of $S^\lambda$.
Using Littlewood-Richardson rule and Frobenius reciprocity, we have
\begin{equation}\label{zzz12} \text{Res}_{H_2\otimes \tilde H_{l-2}} S^\mu=S^{(1,1)}\boxtimes S^\lambda \oplus\bigoplus_{\nu\neq \lambda}S^{\gamma}\boxtimes S^\nu ,\end{equation} where $\gamma\in\{(1,1), (2)\}$.
In particular, \eqref{zzz12} gives a decomposition of right $H_{l-2}$-modules. Since $ S^\mu E_1\subset S^{\lambda}$, we have $ S^{\gamma}\boxtimes S^\nu E_1=0 $ for any $\nu\neq \lambda$. This proves that
$S^\mu E_1=S^{(1,1)} \boxtimes S^\lambda E_1$. However, by Definition~\ref{defn:qBrauer}(R4), $T_1E_1=qE_1$ and $T_1$ acts on $S^{(1, 1)} $ as $-q^{-1}$. Since we are assuming $e>l$, we have $q\neq -q^{-1}$, forcing  $S^{(1,1)}\boxtimes S^\lambda E_1=0$. This completes the proof of our claim.
\end{proof}

Two simple $A$-modules $L(\lambda)$ and $L(\mu)$ are said to be  in the same block if there is a sequence $\lambda^{(1)}=\lambda, \lambda^{(2)}, \ldots, \lambda^{(k)}=\mu$ in $\Lambda_e$  such that there is a
nontrivial extension between  $L(\lambda^{(i)})$ and $L(\lambda^{(i+1)})$ for $i=1,2,\ldots,k-1$.
In the current case, it is equivalent to saying that there is a sequence $\lambda^{(1)}=\lambda, \lambda^{(2)}, \ldots, \lambda^{(k)}=\mu$ in $\Lambda_e$  such that either $L(\lambda^{(i)})$ is a composition factor of $\Delta(\lambda^{(i+1)})$ or  $L(\lambda^{(i+1)})$ is a composition factor of   $\Delta(\lambda^{(i)})$ for all $1\le i\le k-1$.

We recall $2$-core of a partition $\lambda$ in \cite[\S5.3]{Ma} as follows. It is a partition obtained from $\lambda$  by iteratively removing
rim $2$-hooks from the Young diagram of $[\lambda]$ until no more rim $2$-hook can be removed.

\begin{Lemma}\label{bolshs} Suppose that $q^2$ is not a root of unity.  If $\lambda$ and $\mu$ have the same $2$-core, then right $A$-modules $L(\lambda)$ and $L(\mu)$ are in the same block.
\end{Lemma}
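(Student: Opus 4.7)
The plan is to establish the block relation $L(\lambda) \sim L(\mu)$ whenever $\lambda$ and $\mu$ differ by a single rim $2$-hook, and then invoke transitivity. Two partitions share a $2$-core if and only if each one can be reduced to that common $2$-core by iteratively stripping off rim $2$-hooks, so I would connect $\lambda$ to $\mu$ by first removing rim $2$-hooks from $\lambda$ until the common $2$-core is reached and then adding rim $2$-hooks to build $\mu$, applying the single-hook statement at every intermediate step. Because $q^2$ is not a root of unity the quantum characteristic $e$ is infinite, $\Lambda_e$ is the set of all partitions, and every intermediate label is legal.

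For the single rim $2$-hook step, where $\nu' = \nu \cup h$ with $h$ a vertical or horizontal pair of boxes, I would split into two subcases. In the vertical subcase (two boxes in the same column), Lemma~\ref{keysjs} gives $[\Delta(\nu):L(\nu')]=1$ directly (noting that Theorem~\ref{strdcdcj}(4) places us in a highest weight setting where $\Delta=\bar\Delta$), so the equivalent description of the block relation recalled just before Lemma~\ref{sjxshxs} places $L(\nu)$ and $L(\nu')$ in a common block.

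In the horizontal subcase (two boxes in the same row), Lemma~\ref{keysjs} does not apply, and Corollary~\ref{ijres2} in fact shows that $S^{\nu'}$ is not even an $H_l$-summand of $C(\nu)$, so a direct repetition of the vertical argument fails. The way around this is to pass to transposes: if $h$ is a horizontal rim $2$-hook in $\nu'$, then $h^t$ is a vertical rim $2$-hook in $(\nu')^t$ with $(\nu')^t = \nu^t \cup h^t$, so the vertical subcase (applied to $\nu^t$ and $(\nu')^t$, still legal because $e=\infty$) places $L(\nu^t)$ and $L((\nu')^t)$ in the same block. I would then apply the contravariant duality $M \mapsto M^*$ on $A$-lfdmod from Corollary~\ref{blmul}. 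This functor is exact and preserves the block decomposition; combined with the identification $\alpha^* = \alpha^t$ for partitions (valid here because $A^\circ$ is semisimple, as recorded in the proof of Corollary~\ref{blmul}), this gives that $L(\nu) = L((\nu^t)^*)$ and $L(\nu') = L(((\nu')^t)^*)$ also share a block.

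The main obstacle is precisely the horizontal subcase: the proof of Lemma~\ref{keysjs} exploits that $T_1$ acts as $-q^{-1}$ on $S^{(1,1)}$ and as $q$ on the image of $E_1$, an asymmetry that is intrinsic to the vertical (antisymmetric) side. The duality trick circumvents this by converting the horizontal problem into the already-solved vertical problem for transposed partitions and then transporting the conclusion back along the block-preserving duality; gluing the two subcases together with transitivity through the common $2$-core then finishes the proof.
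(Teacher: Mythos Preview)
Your proof is correct and runs parallel to the paper's. Both reduce to the single rim $2$-hook step and treat the vertical subcase via Lemma~\ref{keysjs}. For the horizontal subcase the paper also transposes to obtain $[\Delta(\nu^t):L((\nu')^t)]\neq 0$ from Lemma~\ref{keysjs}, but then feeds this into the BGG-type reciprocity of Corollary~\ref{blmul}(3) to conclude $(P(\nu'):\Delta(\nu))\neq 0$, hence same block. Your route---observing directly that the contravariant duality $*$ is exact, sends $L(\alpha)$ to $L(\alpha^t)$, and therefore preserves the block partition---skips the reciprocity step and is a little cleaner for this lemma. The paper's detour has the side benefit of producing an explicit $\Delta$-filtration multiplicity in a projective, which is exactly the kind of information reused in the proof of Lemma~\ref{bolshs2}; your argument does not supply that, but it is not needed for Lemma~\ref{bolshs} itself.
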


\begin{proof}
 Suppose $\lambda $ is obtained from $ \mu$ by removing a rim $2$-hook.
  If  boxes in this $2$-hook are  at the same column,  by Lemma ~\ref{keysjs},
$[\Delta(\lambda):L(\mu)]\neq 0$, forcing  $L(\lambda)$ and $L(\mu)$ to be in the same block.
If boxes in this $2$-hook are at the same row, then $\lambda^t$ is obtained from $ \mu^t$ by two  boxes in the same column. Thanks to Lemma~\ref{keysjs},  $[\Delta(\lambda^t):L(\mu^t)]\neq 0$. Using Corollary~\ref{blmul}(3), we have $[P(\lambda):L(\mu)]=[\Delta(\lambda^t):L(\mu^t)]\neq 0$. Again, $L(\lambda)$ and $L(\mu)$ are in the same block. The general case follows from this observation and the definition on  $2$-core of a partition. \end{proof}

\begin{Lemma}\label{bolshs2} Suppose  $e>l$. If  $\lambda ,\mu \in\Sigma^+_{e}(l)$ and  $\lambda, \mu$  have the same $2$-core, then $L(\lambda)1_l$  and $L(\mu)1_l$ are in the same block of $\mathcal B_{q,l}$.
\end{Lemma}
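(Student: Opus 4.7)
The plan is to parallel the proof of Lemma~\ref{bolshs} at the level of $\mathcal{B}_{q,l}\text{-fdmod}$, transferring block-connectedness along the exact idempotent functor $?1_l\colon A\text{-lfdmod}\to\mathcal{B}_{q,l}\text{-fdmod}$. Since $e>l$, Theorem~\ref{strdcdcj}(4) makes $A\text{-lfdmod}$ an upper finite highest weight category, $\Sigma^+_e(l)=\Sigma^+(l)$, and by Theorem~\ref{simsi} together with Lemma~\ref{emptysnozero} the simple $\mathcal{B}_{q,l}$-modules $L(\nu)1_l$ are exactly those with $\nu\in\Sigma^+(l)\setminus\{\emptyset\}$.

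First I would construct a chain $\lambda=\nu^{(0)},\nu^{(1)},\ldots,\nu^{(k)}=\mu$ inside $\Sigma^+(l)\setminus\{\emptyset\}$ whose consecutive terms differ by a single rim $2$-hook. If the common $2$-core is nonempty this works by reducing each of $\lambda,\mu$ down to the $2$-core; if the $2$-core is empty I would instead reroute through a larger common intermediate of size at most $l$ so as to avoid passing through $\emptyset$. For each consecutive pair, WLOG $|\nu^{(i+1)}|=|\nu^{(i)}|+2$. In the vertical $2$-hook case, Lemma~\ref{keysjs} gives $[\Delta(\nu^{(i)}):L(\nu^{(i+1)})]=1$ in $A\text{-lfdmod}$; applying $?1_l$ and using $L(\nu^{(i+1)})1_l\neq 0$ yields $[C(\nu^{(i)}):L(\nu^{(i+1)})1_l]\neq 0$, where $C(\nu^{(i)})\cong\Delta(\nu^{(i)})1_l$ by Lemma~\ref{iso321}. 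By the standardly based structure of Theorem~\ref{sstan}, the module $C(\nu^{(i)})$ is cyclic with simple head $L(\nu^{(i)})1_l$, hence indecomposable, so $L(\nu^{(i)})1_l$ and $L(\nu^{(i+1)})1_l$ lie in a common block of $\mathcal{B}_{q,l}$. In the horizontal case I would transpose to obtain a vertical $2$-hook between $\nu^{(i+1)t}$ and $\nu^{(i)t}$, apply the vertical case to place $L(\nu^{(i)t})1_l$ and $L(\nu^{(i+1)t})1_l$ in a common block, and then descend back via the duality on $\mathcal{B}_{q,l}\text{-fdmod}$ induced by $\phi$, which sends $L(\nu)1_l$ to $L(\nu^t)1_l$ by Corollary~\ref{blmul}(3) and preserves blocks.

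The main obstacle I anticipate is the chain-routing when the common $2$-core is empty and $l$ is too small to admit a rerouting, the extremal instance being $l=2$ with $\{\lambda,\mu\}=\{(2),(1,1)\}$, where no rim-$2$-hook chain exists inside $\Sigma^+(l)\setminus\{\emptyset\}$. Such exceptional configurations I would handle by bypassing the chain and invoking Corollary~\ref{blmul}(3) in $A\text{-lfdmod}$ directly: for the example above, $(P((2)):\Delta(\emptyset))=[\Delta(\emptyset):L((1,1))]\neq 0$ (the right side via Lemma~\ref{keysjs} after transposing) places $L((1,1))$ among the composition factors of $P((2))$, and applying $?1_l$ yields an indecomposable $P((2))1_l$ in $\mathcal{B}_{q,l}\text{-fdmod}$ (cyclic with simple head $L((2))1_l$) containing $L((1,1))1_l$ as a composition factor, giving the common block for $L((2))1_l$ and $L((1,1))1_l$. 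The key technical point of this BGG-type detour is verifying the indecomposability of $P(\nu)1_l$ as a right $\mathcal{B}_{q,l}$-module for $\nu\in\Sigma^+(l)\setminus\{\emptyset\}$, which I plan to derive from the general fact that the idempotent functor $?1_l$ preserves projectivity from $A$ to $\mathcal{B}_{q,l}$ combined with the nonvanishing of the head, forcing the head of $P(\nu)1_l$ to be the simple module $L(\nu)1_l$.
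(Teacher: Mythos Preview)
Your proposal is correct and follows essentially the same approach as the paper: both reduce to a rim $2$-hook chain using Lemma~\ref{keysjs} for vertical dominoes and the transpose/duality argument (via Corollary~\ref{blmul}) for horizontal ones, and both handle the empty $2$-core case by the BGG-type computation $(P((2)):\Delta(\emptyset))=[\Delta(\emptyset):L((1,1))]=1$ to link $L((2))1_l$ and $L((1,1))1_l$. The paper is slightly more streamlined in that it does not attempt your ``rerouting'' detour but goes directly to the $P((2))$ argument once the $2$-core is empty, and it leaves implicit the standard fact you flag---that $P(\nu)1_l$ remains an indecomposable projective under idempotent truncation when $L(\nu)1_l\neq 0$.
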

\begin{proof} We use Lemma~\ref{keysjs} when we prove this result. It is enough  to assume  $e>l$. By Theorem~\ref{simsi}, $L(\lambda) 1_l\neq 0$ if $\lambda\neq 0$. Therefore, if the $2$-core of $\lambda$ is not $\emptyset$, the result follows immediately from  Lemmas~\ref{keysjs}-\ref{bolshs} by applying the idempotent functor $1_l$.

Finally, we assume that the $2$-core of $\lambda$ is $\emptyset$. In this case, by  Lemmas~\ref{keysjs}-\ref{bolshs} again, either  $L(2)1_l $ or $L(1,1)1_l$ is in the block containing $L(\lambda)1_l$.  By Corollary~\ref{blmul}(3) and Lemma~\ref{keysjs} , $[P(2): \Delta(\emptyset)]
=[\Delta(\emptyset): L(1,1)]=1$.
 However, by standard results on upper finite fully stratified category (e.g., \cite[Lemma~3.36]{BS}), each projective cover of a simple module has a finite $\Delta$-flag. In particular,
$$[P(2):L(1,1)]\geq [P(2): \Delta(\emptyset)][\Delta(\emptyset): L(1,1)]\ge 1.$$
 Consequently,  $L(\lambda)1_l$,  $L(2)1_l$  and  $L(1,1)1_l$ are always in the same block. In any case, $L(\lambda)1_l$  and $L(\mu)1_l$ are in the same block of $\mathcal B_{q,l}$.
\end{proof}

\begin{Theorem}Suppose $\lambda, \mu\in \Lambda$.
\begin{itemize}
\item[(1)] If $e=\infty$, then  $L(\lambda)$ and $L(\mu)$ are in the same block if and only if $\lambda$ and $\mu$ have   the same  $2$-core.
\item[(2)] If $e>l$ and $\lambda, \mu \in\Sigma^+_{e} (l)$, then    $L(\lambda)1_l$ and $L(\mu)1_l$ are in the same block if and only if $\lambda$ and $\mu$ have  the same $2$-core.
    \item[(3)] The locally unital algebra $A$ associated to the periplectic Brauer category is always not semisimple.
    \item[(4)] $\mathcal B_{q, 1}$ is always semsimple and  $\mathcal B_{q, l}$ is always not semisimple if $l\ge 2$.
\end{itemize}
\end{Theorem}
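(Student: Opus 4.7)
The ``if'' halves of parts (1) and (2) are precisely Lemmas~\ref{bolshs} and~\ref{bolshs2}, so only the converse implications require work. Standard stratification theory for the upper finite fully stratified category $A$-lfdmod, together with the exact idempotent functor $1_l$ in the case of $\mathcal B_{q,l}$, tells us that two simples $L(\lambda)$ and $L(\mu)$ lie in the same block exactly when there is a chain $\lambda=\lambda^{(0)},\ldots,\lambda^{(k)}=\mu$ with each consecutive pair linked by a nonzero composition multiplicity inside some proper standard module. Since $e=\infty$ in (1) and $e>l$ in (2), Lemma~\ref{sjxshxs} applies at every step: whenever $[\bar\Delta(\alpha):L(\beta)]\neq 0$ with $\alpha\neq\beta$, the Young diagrams satisfy $\alpha\subset\beta$ and $\beta\setminus\alpha$ partitions into pairs $\{p_i,q_i\}$ with $c(p_i)-c(q_i)=1$. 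So both converses reduce to a single combinatorial claim.

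\emph{Combinatorial claim:} if $\lambda\subset\mu$ and $\mu\setminus\lambda$ admits a pairing $\{(p_i,q_i)\}$ with $c(p_i)-c(q_i)=1$, then $\lambda$ and $\mu$ share the same $2$-core. I would prove this via the $2$-abacus model for partitions: cells of content $c\equiv r\pmod 2$ correspond to bead moves on runner $r$ of the $2$-abacus, and the $2$-core of a partition is determined by the bead counts on each runner, up to a fixed normalization. Since $c(p_i)$ and $c(q_i)$ have opposite parities, each pair contributes exactly one cell on each runner, so all $f$ pairs together change both runner bead counts by the same amount $f$ and preserve the relative invariant computing the $2$-core. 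The main obstacle is that a single pair $\{p_i,q_i\}$ need not form a rim $2$-hook of any intermediate partition, so one cannot simply peel off pairs as rim hooks; the abacus bookkeeping must be carried out globally, independent of the order of additions. Once the claim is established, transitivity of ``same $2$-core'' along the chain settles the converse of (1) and (2).

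For (3), I would exhibit a proper standard module of length at least $2$. By Lemma~\ref{empytset}, $L(\emptyset)1_2=0$. On the other hand, by the description of proper standard modules in \eqref{ttt1}, $\bar\Delta(\emptyset)1_2\cong D(\emptyset)\otimes_{\mathbb K}1_0 A^+ 1_2$, which is one-dimensional since $1_0 A^+ 1_2$ is spanned by $\lcap$. Hence $\bar\Delta(\emptyset)1_2\cong L(\mu)1_2$ for some $\mu\neq\emptyset$, so $\bar\Delta(\emptyset)$ admits both $L(\emptyset)$ (its head, by Theorem~\ref{strdcdcj}(2)) and $L(\mu)$ as composition factors and therefore is not simple. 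A semisimple category cannot contain a non-simple module, so $A$ is not semisimple for any value of $e$.

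Finally, for (4), the case $l=1$ is immediate since $\mathcal B_{q,1}=\Bbbk$ is a field. For $l\ge 2$ I split on the quantum characteristic of $q^2$. If $e\le l$ then $H_l$ is not semisimple, and since by Definition~\ref{defn:qBrauer} the Hecke algebra $H_l$ is a quotient of $\mathcal B_{q,l}$ and every quotient of a semisimple algebra is semisimple, $\mathcal B_{q,l}$ is not semisimple. If $e>l$ then by part (2), the $e$-restricted partitions $(l)$ and $(1^l)$ (both belonging to $\Sigma_e^+(l)\setminus\{\emptyset\}$) share the same $2$-core (namely $\emptyset$ for $l$ even and $(1)$ for $l$ odd), so $L((l))1_l$ and $L((1^l))1_l$ are distinct simples lying in the same block of $\mathcal B_{q,l}$; since each block of a semisimple algebra contains a unique simple, $\mathcal B_{q,l}$ is not semisimple.
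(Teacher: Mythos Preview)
Your proof is essentially correct and largely parallels the paper's argument. The differences worth noting:

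For the ``only if'' direction of (1) and (2), the paper avoids your abacus computation by introducing the invariant $\sharp\lambda:=$ (number of boxes of even content) $-$ (number of boxes of odd content). Since each pair $\{p_i,q_i\}$ contributes one box of each parity, $\sharp$ is preserved under the linkage relation; the paper then cites \cite[Lemma~7.3.3]{Col} for the equivalence $\sharp\lambda=\sharp\mu\iff\lambda,\mu$ have the same $2$-core. This is exactly what your abacus argument is encoding (the runner bead-count difference \emph{is} $\sharp$), but the $\sharp$ formulation is cleaner and sidesteps the ``global bookkeeping'' obstacle you flag. For (2) the paper invokes Corollary~\ref{yyy1}(2) rather than Lemma~\ref{sjxshxs}, though as you observe, either works once $e>l$.

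For (3) your approach is genuinely different and more direct. The paper argues by cases: if $q^2$ is a root of unity then some $H_l$ is non-semisimple and hence so is its quotient-preimage $1_lA1_l$; if $e=\infty$ then part (1) exhibits blocks with more than one simple. Your argument---$\bar\Delta(\emptyset)1_2$ is one-dimensional while $L(\emptyset)1_2=0$, so $\bar\Delta(\emptyset)$ is non-simple with simple head, hence non-semisimple---works uniformly for all $e$ and is arguably more elegant. One wording correction: ``a semisimple category cannot contain a non-simple module'' is false (direct sums of simples are non-simple); what you need is that it cannot contain a non-semisimple module, which $\bar\Delta(\emptyset)$ is because it has a unique maximal submodule yet at least two composition factors.

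For (4) your argument matches the paper's.
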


\begin{proof} Thanks to Lemmas~\ref{bolshs}-\ref{bolshs2}, it remains to show the only if parts of (1) and (2). Let $\sharp \lambda$ be the number of even contents minus the number of odd contents of $\lambda$.
By Lemma~\ref{sjxshxs},  $\sharp \lambda=\sharp \mu$ if $[\Delta(\lambda):L(\mu)]\neq0$.
It is proved in \cite[Lemma~7.3.3]{Col} that  $\sharp \lambda=\sharp \mu$ if and only if $\lambda$ and $\mu$  have  the same $2$-core. This proves the only if part of (1).
 Similarly, we can check the  only if part of (2). In this case, we have to use  Corollary~\ref{yyy1}(2) instead of Proposition~\ref{sjxshxs}.
   Since there is an algebra epimorphism from $\mathcal B_{q, l}$ to $H_l$, $\mathcal B_{q, l}$ is not semisimple if $H_l$ is not semisimple. Therefore, we can assume $e>l$ when we prove (4). In this case, (4) follows from (2).  Similarly, if $q^2$ is a root of unity, then $A$ is not semisimple.  Otherwise, $1_lA1_l\cong \mathcal B_{q, l}$ is semisimple for any $l\in \mathbb N$. In particular, $H_l$ is semisimple when $l>e$,  a contradiction. When $q^2$ is not a root of unity, (3) follows from (1).  In fact,   one can easily find a block which contains at least two simple modules in generic case.
\end{proof}

\small

\begin{thebibliography}{DWH99}


\bibitem{AGG} {\scshape  S. Ahmed,  D. Grantcharov,  and N. Guay}, {\og Quantized enveloping superalgebra of type P\fg}. \emph{Lett. Math. Phys.}  \textbf{111} (2021), no. 3, Paper No. 84, 17 pp.

\bibitem{B+9} {\scshape M. Balagovic, Z. Daugherty, I. Entova-Aizenbud, I. Halacheva, J. Hennig, M. S. Im, G. Letzer, E. Norton, V. Serganova, C. Stroppel}, {\og  The affine VW supercategory \fg},  \emph{Selecta Math.} (N.S.) \textbf{26} (2020), no. 2, Paper No. 20, 42 pp.
   \bibitem {BCV} {\scshape S. Barbier, A. Cox, M. Visscher}, {\og  The blocks of the periplectic Brauer algebra in positive characteristic\fg}, \emph{J. Algebra.} \textbf{534} (2019), 289--312.

\bibitem{BCK} {\scshape J. Brundan, J. Comes, and J. Kujawa}, {\og  A basis theorem for the degenerate affine oriented Brauer-Clifford supercategory\fg}, \emph{ Canad. J. Math.} \textbf{71} (2019), 1061--1101.
\bibitem{BEA} {\scshape J. Brundan and  A.P. Ellis}, {\og  Monoidal supercategories\fg}, \emph{ Commun. Math. Phys.} \textbf{351} (2017), 1045--1089.

\bibitem{BS}{ \scshape J.Brundan, C. Stroppel}, {\og Semi-infinite highest weight categories\fg}, \emph{Memoir AMS}, to appear.
\bibitem{CP}{\scshape C. Chen and Y. Peng}, {\og Affine periplectic Brauer algebras\fg},
\emph{J. Algebra}, \textbf{501} (2018), 345--372.
\bibitem{Col}{\scshape K. Coulembier}, {\og The periplectic Brauer algebra\fg},
\emph{Proc. London Math. Soc.}, \textbf{117} (2018), no.3, 441--482.
\bibitem{Col1}{\scshape K. Coulembier and M. Ehrig}, {\og The periplectic Brauer algebra II: Decomposition multiplicities\fg},
\emph{J. Comb. Algebra}, \textbf{2} (2018), no.1, 19--46.
\bibitem{Col2}{\scshape K. Coulembier and M. Ehrig}, {\og The periplectic Brauer algebra III: The Deligne category\fg},
\emph{Algebr. Represet. Theory}, \textbf{24} (2021),  993--1027.
\bibitem{DKM}{\scshape N.Davidson, J. Kujawa and R. Muth}, {\og Webs of type $P$\fg},
arXiv:2019.03410v1[math. RT].
\bibitem{DKM1}{\scshape N.Davidson, J. Kujawa and R. Muth}, {\og Howe duality  of type $P$\fg},
arXiv:2109.03984v1[math. RT].
\bibitem{DR}{\scshape J. Du and H. Rui} {\og Based algebras and standard bases for quasi-hereditary algebras \fg}, \emph{Trans. A.M.S.
}, \textbf{350}  (1998) 3207--3235.
\bibitem{Eng}{\scshape J. Enyang} {\og Specht modules and semisimplicity criteria for Brauer and Birman-Murakami-Wenzl algebras\fg}, \emph{J. Algebr. Comb.}, \textbf{26}  (2007) 291--341.

\bibitem{GRS}{\scshape M.Gao, H.Rui, and L.Song}, {\og A basis theorem for the affine  Kauffman  category and its    cyclotomic quotients\fg},  \emph{J. Algebra}, \textbf{608}  (2022) 774--846.
\bibitem{GRS2}{\scshape M.Gao, H.Rui, and L.Song}, {\og Representations of weakly triangular categories \fg}, arXiv:2012.02945[math.RT].

\bibitem{GL}{\scshape J.Graham and G. Lehrer}, {\og Cellular algebras\fg}, \emph{Invent. Math.}, \textbf{123} (1996), 1--34. 
\bibitem{KT}{\scshape J.Kujawa and B. Tharp}, {\og The marked Brauer category\fg}, \emph{J. Lond. Math.Soc.}, (2) \textbf{95} (2017), 393--413.
\bibitem{Ma}{\scshape A.  Mathas}, \emph{\og Iwahori-Hecke algebras and Schur algebras of the symmetric group \fg },  \emph{University Lecture Series}, \textbf{15},  American Mathematical Society, Providence, RI, 1999.
    \bibitem{Moon} {\scshape D. Moon}, {\og Tensor product represenation of the Lie superalgebra $\mathfrak p(n)$ and their centralizers \fg}, \emph{ Comm. Algebra}, \textbf{31} (2003), 2095--2140. 
 \bibitem{MA1} {\scshape A. Mathas} {\og  Seminormal forms and Gram determinants for cellular algebras\fg} With an appendix by Marcos Soriano, \emph
{J. Reine Angew. Math.} \textbf{619} (2008), 141-173.
\bibitem{Mor}{\scshape H.R. Morton}, {\og  A basis for the Birman-Wenzl algebras\fg},
arXiv:1012.3116v1[math.QA].

\bibitem{Tu1} {\scshape V.Turaev}, {\og Quantum Invariants of Knots and 3-Manifolds\fg}, \emph{De Gruyter Studies in Mathematics} \textbf{18}, De Gruyter, Berlin, 2016.
        \bibitem{Wen} {\scshape H. Wenzl}, {\og  Quantum groups and subfactors of type B, C, and D.\fg},
\emph{Comm. Math. Phys} \textbf{133} (1990),  383-432.
\end{thebibliography}
\end{document}